\theoremstyle{plain}
\newtheorem{thm}{Theorem}[section]
\newtheorem{lemma}[thm]{Lemma}
\newtheorem{prop}[thm]{Proposition}
\newtheorem{cor}[thm]{Corollary}
\newtheorem{introtheorem}{Theorem}
\numberwithin{equation}{section}
\numberwithin{figure}{section}
\theoremstyle{definition}
\newtheorem{definition}[thm]{Definition}
\newtheorem{example}[thm]{Example}
\newtheorem{remark}[thm]{Remark}
\newtheorem{obs}[thm]{Observation}
\newtheorem{claim}[thm]{Claim}
\newtheorem{qn}[thm]{Question}
\newcounter{mynote}
\newcommand{\G}{\Gamma} 
\newcommand{\D}{\Delta}
\renewcommand{\L}{\Lambda}
\newcommand{\gequal}{\bumpeq}
\renewcommand{\S}{\Sigma}
\title[RAAG subgroups of RACGs and RAAGs]{Right-angled Artin subgroups of right-angled Coxeter and Artin groups}
\author{Pallavi Dani}
\author{Ivan Levcovitz}
\date{}
\thanks{This work of the first author was supported
by a grant from the Simons Foundation (\#426932, Pallavi Dani) and by NSF Grant No.~DMS-1812061. This work of the second author was supported by the Israel Science Foundation and in part by a Technion fellowship.}
\begin{document}

\begin{abstract}
We determine when certain natural classes of subgroups of right-angled Coxeter groups (RACGs) and right-angled Artin groups (RAAGs) are themselves RAAGs.   
We characterize finite-index \emph{visual RAAG} subgroups of $2$-dimensional RACGs.
As an application, we
show that any $2$-dimensional, one-ended RACG with planar defining graph is quasi-isometric to a RAAG if and only if it is commensurable to a RAAG.
Additionally, we give new examples of RACGs with non-planar defining graphs which are commensurable to RAAGs.

Finally,
we 
give a new proof of a result of Dyer:
every subgroup 
generated by conjugates of  RAAG generators is itself a RAAG.
\end{abstract}
\maketitle

\section{Introduction}
Let $\Gamma$ be a finite simplicial graph with vertex set $V(\G)$ and edge set $E(\G)$. The \textit{right-angled Artin group}
(RAAG for short) 
 associated to $\G$ is the group $A_\G$ given by the presentation:
\[A_\G = \langle V(\G) ~ |~ st = ts \text{ for all } (s,t) \in E(\Gamma) \rangle \]

This article is concerned with the following question.  Given a finite set $S$ of elements in a group, when is the group generated by $S$ isomorphic to a RAAG in the ``obvious'' way (i.e.~with $S$ as the ``standard'' RAAG generating set)?  To make this precise, we define the notion of \emph{RAAG system}.
\begin{definition}[RAAG system]\label{def:RAAG_system}
Let $G$ be any group with generating set $S$. 
Let $\Delta$ be the graph whose vertex set is in bijection with $S$ and which has an edge between distinct $s, t \in S \equiv V(\Delta)$ if and only if $s$ and $t$ commute. 
We call $\Delta$ the \textit{commuting graph} associated to $S$.
There is a 
canonical 
 homomorphism $\phi: A_{\Delta} \to G$ extending the bijection $V(\Delta) \to S$. We say that $(G, S)$ is a \textit{RAAG system} if $\phi$ is an isomorphism. In particular, $(A_\G, V(\G))$ is a RAAG system for any RAAG $A_\G$.
\end{definition}

The  \textit{right-angled Coxeter group} (RACG for short) 
 associated to the finite simplicial graph $\G$ is the group $W_\G$ given by the presentation:
\[ W_\G = \langle V(\G) ~ |~ s^2 = 1 \text{ for all } s \in V(\G), st = ts \text{ for all } (s,t) \in E(\Gamma) \rangle \] 

In this article we study subgroups $G$ 
generated 
 by particular 
natural 
 subsets
  $S$ 
  of right-angled Coxeter and Artin groups, and we 
 give  characterizations for 
  when $(G, S)$ is a RAAG system
or  a finite-index RAAG system. 

A theorem of Davis--Januszkiewicz states that every RAAG is 
commensurable to 
 some RACG \cite{DJ}.  
This leads to the following question  addressing the converse:
\begin{qn} \label{ques:commensurable}
Which RACGs are commensurable to RAAGs?
\end{qn}

 A RACG that is commensurable to a RAAG is, in particular, quasi-isometric to a RAAG.
By considering different quasi-isometry invariants, one sees that the converse to 
the Davis--Januszkiewicz theorem above
is far from being true. For instance, there are many RACGs that are one-ended hyperbolic (such as virtual hyperbolic surface groups), while no RAAG is both one-ended and hyperbolic. Furthermore, RAAGs  have linear, quadratic or infinite divergence \cite{Behrstock-Charney}, whereas the divergence of a 
RACG can be a polynomial of any degree \cite{Dani-Thomas}. 
 Restricting to RACGs of at most quadratic divergence is still not enough to guarantee they are
quasi-isometric to RAAGs.
For instance, the Morse boundary of a RAAG with quadratic divergence is always totally disconnected (see~\cite{charney-sultan, cordes-hume}), 
while the Morse boundary of a RACG of quadratic divergence can have nontrivial connected components \cite{Behrstock}. 
The above examples show that there are numerous families of RACGS which are not quasi-isometric and, hence, not commensurable to any RAAG.
Within the subclass of one-ended RACGs with planar, triangle-free defining graphs, Nguyen--Tran 
characterize those quasi-isometric to RAAGs~\cite{nguyen-tran}. 
Theorem~\ref{intro_thm_planar} below answers Question~\ref{ques:commensurable} in this setting. 

We note that every RACG (indeed, every Coxeter group) is virtually special, and therefore has a finite-index subgroup which is a subgroup of a RAAG~\cite{haglund-wise}.  However, this subgroup is not of finite index in the RAAG, which would be required for establishing commensurability. 

One approach to proving that a RACG is commensurable to a RAAG is to look for finite index subgroups that are isomorphic to RAAGs. 
We focus on a 
 class of subgroups of RACGs, 
introduced by LaForge in his PhD thesis~\cite{laforge}, 
 that are logical candidates for being RAAGs.
Given a RACG defined  by a graph $\G$ and two non-adjacent vertices $s, t \in V(\G)$, it follows that $st$ is an infinite order element of $W_\G$. There is then a correspondence between edges of the complement graph $\G^c$ with such infinite order elements of $\G$. Given a subgraph $\L$ of $\G^c$,
let $G$ be the subgroup generated by $E(\L)$ (thought of as infinite order elements of $W_\G$). 
As $G$ is generated by the edges of $\L$, we may as well assume that $\L$ has no isolated vertices. 
A natural question is: 

\begin{qn}
When is 
$(G, E(\L))$ a finite-index RAAG system? 
\end{qn}
If $(G, E(\L))$ is indeed a RAAG system, then $G$ is called a \textit{visual RAAG subgroup} of $W_\G$.  
LaForge obtained some necessary conditions for such subgroups to be visual RAAGs. 

We say that $W_\G$ is \textit{2-dimensional}
 if $\G$ is triangle-free.
Our first main theorem gives an exact characterization of the finite-index visual RAAG subgroups of $2$-dimensional RACGs
in terms of 
 graph theoretic conditions:

\begin{introtheorem} \label{intro_thm:finite-index}
	Let $W_\Gamma$ be a
	$2$-dimensional RACG.
	Let $\L$ be a 
	subgraph of $\G^c$ with no isolated vertices, and let $G$ be the subgroup generated by $E(\L)$. Then the following are equivalent.
	\begin{enumerate}
		\item 
		$(G, E(\L))$ is a RAAG system and $G$ is finite index in $W_\G$.
		\item 
		 $(G, E(\L))$ is a RAAG system and $G$ has index either two or four in $W_\G$ (and exactly four if $W_\G$ is not virtually free).
		
		\item  $\L$ has at most 
		two 
		components and satisfies conditions $\mathcal R_1$--$\mathcal R_4$, $\mathcal F_1$ and $\mathcal F_2$. 
	\end{enumerate}
\end{introtheorem}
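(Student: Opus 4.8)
The plan is to decouple the two properties bundled together in each of (1)--(3): being a RAAG system on the one hand, and being finite index on the other. The implication $(2)\Rightarrow(1)$ is immediate, so the real content is the equivalence $(1)\Leftrightarrow(3)$, which I would obtain by proving two essentially independent characterizations and then combining them with an exact index computation. First I would establish a purely combinatorial criterion for $(G,E(\Lambda))$ to be a RAAG system: namely that the canonical map $\phi\colon A_\Delta\to G$ is an isomorphism precisely when $\Lambda$ satisfies $\mathcal R_1$--$\mathcal R_4$. Second, assuming $(G,E(\Lambda))$ is already a RAAG system, I would characterize finite index in terms of $\mathcal F_1$, $\mathcal F_2$ and the bound on the number of components, and simultaneously pin down the index as $2$ or $4$.

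For the RAAG-system criterion, surjectivity of $\phi$ is automatic since $E(\Lambda)$ generates $G$ by definition, so the whole issue is injectivity, i.e.\ the absence of unexpected relations among the words $\{st : (s,t)\in E(\Lambda)\}$. Because $W_\Gamma$ is $2$-dimensional, its Davis complex $\Sigma_\Gamma$ is a $2$-dimensional CAT(0) cube complex, and I would analyze products of the generators $st$ by representing a reduced word of $A_\Delta$ lying in the kernel of $\phi$ as a disk diagram (van Kampen diagram) over $W_\Gamma$. The conditions $\mathcal R_1$--$\mathcal R_4$ should be exactly the local combinatorial obstructions in $\Gamma$ that (i) force the commuting graph $\Delta$ to record all and only the genuine commutations among the $st$, and (ii) prevent any nontrivial reduced word of $A_\Delta$ from closing up into such a diagram. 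The main technical lemma here would be that failure of injectivity produces a minimal-area disk diagram whose boundary and internal square structure force a configuration in $\Gamma$ violating one of the $\mathcal R_i$.

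For the finite-index half I would work with the abelianization $\alpha\colon W_\Gamma\to\mathbb F_2^{V(\Gamma)}$, under which each generator $st$ maps to $e_s+e_t$. Since $\alpha(st)$ lies in the even-weight subspace, $G$ is always contained in $\ker(\text{total parity})$, giving $[W_\Gamma:G]\ge 2$. More precisely, $\alpha(G)$ is the direct sum over the components $C$ of $\Lambda$ of the even-weight subspace on $V(C)$, so $\mathrm{codim}\,\alpha(G)=(\#\text{vertices of }\Gamma\text{ not covered by }\Lambda)+(\#\text{components of }\Lambda)$, whence $[W_\Gamma:G]\ge 2^{\mathrm{codim}\,\alpha(G)}$. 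This already shows that finiteness together with a small index forces few components and no stray uncovered vertices; conditions $\mathcal F_1$ and $\mathcal F_2$ (covering of $V(\Gamma)$ up to the permitted defect) and the hypothesis of at most two components are exactly what keeps this codimension at most $2$. To obtain finite index and its exact value I would show, using cocompactness of the $G$-action on $\Sigma_\Gamma$, that under $\mathcal R$ and $\mathcal F$ this lower bound is attained, i.e.\ $[W_\Gamma:G]=2^{\mathrm{codim}\,\alpha(G)}\in\{2,4\}$. The virtually-free dichotomy should then fall out of the count: index $2$ requires codimension $1$, i.e.\ a single connected spanning $\Lambda$, which I expect to be possible only when $\Gamma^c$ is connected and $\Sigma_\Gamma$ is tree-like (so $W_\Gamma$ is virtually free); otherwise the irreducible, genuinely $2$-dimensional part of $W_\Gamma$ forces a second independent parity constraint and hence index $4$.

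The main obstacle is the sufficiency direction of the RAAG-system criterion: showing that $\mathcal R_1$--$\mathcal R_4$ are not merely necessary but actually guarantee injectivity of $\phi$. Necessity of each $\mathcal R_i$ is proved by exhibiting an explicit unexpected relation when it fails, but sufficiency requires ruling out all possible disk diagrams simultaneously, and this is where the geometry of $\Sigma_\Gamma$ and a careful induction on area (equivalently, on the number of squares) are essential. A secondary difficulty is proving that the abelianization lower bound on the index is sharp: one must verify that, once $\mathcal R$ and $\mathcal F$ hold, there are no further obstructions beyond the $\mathbb F_2$-homological ones, so that $[W_\Gamma:G]$ equals $2^{\mathrm{codim}\,\alpha(G)}$ exactly and the clean ``two or four'' dichotomy---together with its virtually-free refinement---emerges. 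The elegance of the final statement rests on the fact that the same combinatorial data on $\Lambda$ simultaneously controls both RAAG-ness and the index.
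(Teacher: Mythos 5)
Your overall decomposition (RAAG-system criterion via $\mathcal R_1$--$\mathcal R_4$, index criterion via $\mathcal F_1$, $\mathcal F_2$) matches the paper's architecture, and your disk-diagram strategy for the sufficiency of $\mathcal R_1$--$\mathcal R_4$ is in the right spirit. However, there are two genuine gaps in the finite-index half. The first and most serious: in the direction $(1)\Rightarrow(3)$ you must \emph{derive} that $\Lambda$ has at most two components from the mere assumption that $G$ has finite index, and your abelianization argument cannot do this. The map $\alpha\colon W_\Gamma\to\mathbb F_2^{V(\Gamma)}$ gives only the lower bound $[W_\Gamma:G]\ge 2^{\#\{\text{uncovered vertices}\}+\#\{\text{components}\}}$, which is perfectly consistent with $\Lambda$ having, say, five components and $G$ having index $32$. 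The paper closes this gap with a separate and substantive argument (its Lemmas~\ref{lem:R5} and~\ref{lem:triangle} and Lemma~\ref{lemma_two_comps}): it introduces a further necessary condition $\mathcal R_5$ for RAAG systems with three or more $\Lambda$-components, and shows that three components together with the $2$-component paths supplied by $\mathcal F_2$ force a triangle in $\Gamma$, contradicting $2$-dimensionality. Nothing in your proposal plays this role, and without it the ``two or four'' dichotomy in (2) and the component bound in (3) are unproved.

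The second gap is that the abelianization gives only a lower bound on the index, so it can prove neither that $G$ has finite index under $\mathcal F_1$, $\mathcal F_2$ (the direction $(3)\Rightarrow(1)$) nor that the index equals $2^{\mathrm{codim}\,\alpha(G)}$ rather than a proper multiple of it. Your appeal to ``cocompactness of the $G$-action on $\Sigma_\Gamma$'' is circular here, since for a subgroup of a cocompact lattice cocompactness is equivalent to finite index, which is what you are trying to establish. The paper instead runs a Stallings-type completion: it builds a $\Gamma$-labeled complex $\Omega$ from the generators $E(\Lambda)$ by folds, square attachments and identifications, and invokes a criterion from~\cite{DL} saying that $G$ has finite index if and only if $\Omega$ is finite with every vertex of full valence, in which case the index equals the number of vertices of $\Omega$; conditions $\mathcal F_1$ and $\mathcal F_2$ are exactly what make $\Omega$ collapse to $2^k$ vertices for $k\le 2$ components. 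Some replacement for this upper-bound mechanism (explicit coset representatives, or the completion machinery) is needed; the $\mathbb F_2$-homological count alone does not suffice.
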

The conditions $\mathcal R_1$--$\mathcal R_4$, $\mathcal F_1$ and $\mathcal F_2$ in the above theorem are algorithmically checkable graph theoretic conditions on $\G$ and $\L$. 
See Section~\ref{sec:visual} for precise definitions of these conditions. 

In Section~\ref{sec:applicaitons} we provide several applications to concrete families of RACGs. In particular we prove:
\begin{introtheorem} \label{intro_thm_planar}
	Let $W_\G$ be a $2$-dimensional, one-ended RACG with planar defining graph.
	Then $W_\G$ is quasi-isometric to a RAAG if and only if it 
	contains an index $4$ subgroup isomorphic to a RAAG.
\end{introtheorem}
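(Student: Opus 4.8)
The plan is to prove the two implications separately, using Theorem~\ref{intro_thm:finite-index} together with the quasi-isometric classification of Nguyen--Tran, which applies precisely because $\G$ is planar and triangle-free. The backward implication is immediate: if $W_\G$ contains an index-$4$ subgroup $H$ isomorphic to a RAAG, then $H$ is commensurable to $W_\G$, and commensurable groups are quasi-isometric; hence $W_\G$ is quasi-isometric to a RAAG. All the real work is in the forward implication.

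For the forward implication I would argue through the following chain, so that only one genuinely new step remains. Suppose $W_\G$ is quasi-isometric to a RAAG. Since $\G$ is planar and triangle-free and $W_\G$ is one-ended, Nguyen--Tran's characterization \cite[Theorem~1.2]{nguyen-tran} shows that $\G$ satisfies their explicit graph-theoretic condition; call it $(\ast)$. I would then establish the key combinatorial statement: condition $(\ast)$ on $\G$ guarantees the existence of a subgraph $\L \subseteq \G^c$ with no isolated vertices, having at most two components and satisfying $\mathcal R_1$--$\mathcal R_4$, $\mathcal F_1$ and $\mathcal F_2$. Granting this, Theorem~\ref{intro_thm:finite-index} applies: the subgroup $G = \langle E(\L)\rangle$ is a finite-index RAAG system, and because a one-ended group is never virtually free, part~(2) of that theorem forces $[W_\G : G] = 4$. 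This produces the required index-$4$ RAAG subgroup and closes the equivalence, since the reverse graph-theoretic implication follows automatically by chaining back (the existence of $\L$ yields the index-$4$ RAAG subgroup, hence quasi-isometry, hence $(\ast)$).

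The construction of $\L$ from $(\ast)$, and the verification of the six conditions, is the heart of the matter and the main obstacle. Concretely, I would take $\L$ to be a spanning subgraph of $\G^c$ adapted to the structure that $(\ast)$ imposes on $\G$ (I expect this to be a coloring or bipartite-type decomposition of $V(\G)$, with the two components of $\L$ matching the index being $4$ rather than $2$), and then check the finiteness conditions $\mathcal F_1,\mathcal F_2$ (which should amount to $\L$ covering $V(\G)$ in the right way) and the relation conditions $\mathcal R_1$--$\mathcal R_4$ (which rule out hidden relations among the generators $st$) one at a time, by a case analysis along the decomposition supplied by $(\ast)$. The delicate point is to choose $\L$ so that all six conditions hold \emph{simultaneously}: enlarging $\L$ helps $\mathcal F_1,\mathcal F_2$ but threatens $\mathcal R_1$--$\mathcal R_4$, so the argument must pin down exactly the right spanning subgraph and confirm that planarity and triangle-freeness leave no obstruction to doing so.
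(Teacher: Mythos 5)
Your overall strategy is the same as the paper's: the backward direction via commensurability, and the forward direction by extracting Nguyen--Tran's graph-theoretic condition and then producing a two-component $\L \subseteq \G^c$ satisfying $\mathcal R_1$--$\mathcal R_4$, $\mathcal F_1$, $\mathcal F_2$ so that Theorem~\ref{thm_fi_visual_raags} applies. However, the step you yourself flag as ``the heart of the matter'' --- actually constructing $\L$ and verifying the six conditions --- is precisely the content of the paper's proof of Theorem~\ref{thm_planar}, and your proposal does not supply it. Asserting that $(\ast)$ ``guarantees the existence'' of such a $\L$ is exactly the statement that needs proof; as written, the argument is an outline with its central lemma missing. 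Your guess that $\L$ should be a single globally-chosen spanning subgraph coming from a bipartite-type colouring is also not how the construction goes: the $2$-colouring is a \emph{consequence} of the existence of such a $\L$ (Corollary~\ref{cor_bipartite}), not a recipe for building one, and there is no one-shot choice that makes the verification tractable.

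What the paper actually does is an induction over the Nguyen--Tran visual decomposition tree $T$, whose vertices are the maximal suspensions $\S_k(a,b)$ of $\G$ and whose edges are separating $4$-cycles. One builds $\L$ suspension by suspension along a nested exhaustion $T_1\subset\cdots\subset T_N=T$: for each new suspension $\{a_1,a_2\}\star\{b_1,\dots,b_k\}$ (with a cyclic labelling), one component of $\L$ receives the path $(b_1,b_2),\dots,(b_{k-1},b_k)$ \emph{broken at one position $j$}, and the other component connects the suspension vertices. The Nguyen--Tran valence condition (valence of the tree vertex strictly less than $k$) is used at exactly this point: it guarantees a gap $\{b_j,a_1,b_{j+1},a_2\}$ that is \emph{not} a tree edge, so the break does not destroy condition (1) of the induction, and it keeps $\L$ acyclic ($\mathcal R_1$) with exactly two components. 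Planarity is then used essentially in the inductive verification of $\mathcal R_2$--$\mathcal R_4$: the $4$-cycle $E$ joining consecutive suspensions separates $\mathbb S^2$, which forces every $2$-component square, and (after a surgery replacing a subpath through the new suspension by a path through $E$) every $2$-component cycle, to localize either in the previously-handled subgraph $\G_{n-1}$ or in the new suspension, where the conditions are checked directly. Without this tree-guided inductive construction --- or some equally concrete substitute --- the forward implication is not established.
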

A complete description of which RACGs considered in Theorem~\ref{intro_thm_planar}
are quasi-isometric to RAAGs is given by Nguyen-Tran \cite[Theorem 1.2]{nguyen-tran}.
Theorem~\ref{intro_thm_planar} shows these are actually commensurable to RAAGs.

We also give two families of RACGs defined by non-planar graphs 
which contain finite-index RAAG subgroups (see Corollaries~\ref{cor:example} and~\ref{cor:non_planar_example}).  These cannot be obtained by applying the Davis--Januszkiewicz constructions to the defining graphs of the RAAGs they are commensurable to. 
For the family in Corollary~\ref{cor:example}, 
we use work of Bestvina--Kleiner--Sageev on RAAGs \cite{BKS}, to conclude the RACGs are quasi-isometrically distinct.
We believe that the methods from this article may be used to further study commensurability
of RACGs.

The proof of Theorem~\ref{intro_thm:finite-index} consists of two main parts. 
One part involves obtaining
an understanding of when $G$ is of finite index, leading to  conditions $\mathcal F_1$ and $\mathcal F_2$.  
 To obtain these, we use \emph{completions of subgroups},
introduced
in \cite{DL}.
 The other aspect consists of obtaining criteria
to recognize when $(G, E(\L))$ is a RAAG system. To do so, we prove the following 
theorem by careful analysis of disk diagrams:
\begin{introtheorem} \label{intro_thm:RAAG-system}
Let $W_{\G}$ be a RACG. 
Let $\L$ be a 
	subgraph of $\G^c$ with no isolated vertices
	and at most two components. 
	Then the subgroup 
	 $(G, E(\L) ) < W_\G$ 
	 is a RAAG system if and only if $\mathcal R_1$--$\mathcal R_4$ are satisfied.
\end{introtheorem}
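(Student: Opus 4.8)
The plan is to prove the two directions separately, reducing everything to an analysis of van Kampen (disk) diagrams over the standard presentation of $W_\G$. Since $E(\L)$ generates $G$ by definition, the canonical homomorphism $\phi\colon A_\D \to G$ is automatically surjective, where $\D$ is the commuting graph on $E(\L)$; the entire content of the theorem is the injectivity of $\phi$ together with the identification of its precise obstructions. Throughout, I would write each generator as $g_e = ab$ for an edge $e = \{a,b\}$ of $\L$ (so $a,b$ are non-adjacent in $\G$ and $g_e$ has infinite order), with $g_e^{-1} = ba$. Expanding a word $w = g_{e_1}^{\epsilon_1}\cdots g_{e_n}^{\epsilon_n}$ then produces a word $\widehat w$ of length $2n$ in the alphabet $V(\G)$, and $\phi(w) = 1$ in $W_\G$ if and only if $\widehat w = 1$ in $W_\G$. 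The basic tool is the standard theory of reduced (hyperplane/dual-curve) disk diagrams for RACGs: a word in $V(\G)$ is trivial in $W_\G$ if and only if it bounds a reduced disk diagram, in which the dual curves pair the boundary edges into pairs carrying the same label, two dual curves can cross only if their labels are adjacent in $\G$ (and then at most once), and no dual curve crosses itself.

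For the reverse implication, that the RAAG system property forces $\mathcal R_1$--$\mathcal R_4$, I would argue contrapositively: assuming some condition $\mathcal R_i$ fails, I would read off from the offending subconfiguration of $\G$ and $\L$ an explicit nonempty word $w$ that is reduced in $A_\D$ yet satisfies $\phi(w) = 1$, so that $\phi$ is not injective. These configurations are the ``logical'' obstructions: for example, two edges of $\L$ sharing a single vertex $a$, say $e = \{a,b\}$ and $f = \{a,d\}$ with $b$ and $d$ adjacent in $\G$, yield $g_e^{-1}g_f = bd$, an element of order two, and hence the torsion relation $(g_e^{-1}g_f)^2 = 1$, which can never hold in a RAAG. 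Each condition rules out a corresponding family of such relations, and exhibiting the witnessing word is a short, direct computation in each case.

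The substance of the theorem is the forward implication. Assuming $\mathcal R_1$--$\mathcal R_4$ hold, I would take a nonempty word $w$ that is reduced in $A_\D$ and show $\phi(w) \neq 1$. Suppose instead $\phi(w) = 1$ and fix a reduced disk diagram $D$ with boundary label $\widehat w$. The key structural input is that the two letters $a_i, b_i$ arising from a single syllable $g_{e_i}^{\pm 1}$ carry non-adjacent labels in $\G$, so the dual curves emanating from their two consecutive boundary edges cannot cross each other. Using this, together with the no-bigon property of $D$, I would analyze how dual curves pair letters across syllables and locate an innermost interaction. The aim is to show that, under $\mathcal R_1$--$\mathcal R_4$, the only pairings consistent with the crossing rules are those in which two syllables $g_{e_i}^{\epsilon}$ and $g_{e_j}^{-\epsilon}$ with $e_i = e_j$ are brought into adjacency through a sequence of commutations, each an edge of $\D$ witnessed by a crossing region of $D$, and then cancelled; this is exactly a reduction move in $A_\D$, contradicting that $w$ is reduced. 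Any pairing that is \emph{not} of this form would force an adjacency or incidence pattern among $a_i, b_i, a_j, b_j$ that is one of the configurations prohibited by $\mathcal R_1$--$\mathcal R_4$, again a contradiction. The hypothesis that $\L$ has at most two components enters here to control the global crossing graph of the dual curves, ruling out the complicated long-range interactions of three or more syllables that these conditions are not designed to detect.

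The main obstacle I anticipate is precisely this classification step: showing that $\mathcal R_1$--$\mathcal R_4$ genuinely exhaust every way a reduced word of length-two syllables can collapse in $W_\G$. The difficulty is that cancellation in $W_\G$ is nonlocal, since the two letters of one generator may be paired with letters of syllables arbitrarily far apart in $w$, so the argument cannot be purely local and must exploit the no-bigon condition and the non-crossing of within-syllable curves to propagate constraints across the whole diagram. A second delicate point is the bookkeeping identifying the ``commutation'' cancellations with edges of $\D$: one must verify not only that every instance in which two of our generators commute in $W_\G$ is already recorded as an edge of $\D$ (immediate from its definition), but, more substantively, that each such commutation is realized in $D$ by a band of mutually crossing dual curves that can be excised to strictly simplify the diagram, so that the induction terminates. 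Establishing this correspondence between diagrammatic excisions and generator-level reductions in $A_\D$ is where the geometric analysis of disk diagrams does its real work.
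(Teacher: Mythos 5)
Your framework (expand each $\L$-edge generator into a length-two word over $V(\G)$, pass to a bigon-free disk diagram over $W_\G$, and exploit the fact that the two dual curves emanating from a single syllable have non-adjacent labels and hence cannot cross) matches the paper's setup, and your necessity direction is essentially right in outline --- though for $\mathcal R_3$ and $\mathcal R_4$ the paper does not exhibit a reduced kernel element directly; it runs the transfer the other way, using injectivity of $\phi$ to push a relation that visibly holds in $W_\G$ back to a disk diagram over $A_\Delta$ and reading off the required joins from intersecting hyperplanes there (and the $\mathcal R_4$ case is a nontrivial double induction, not a short computation). The real problem is the sufficiency direction, where your proposal stops at exactly the point where the proof has to begin. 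Your mechanism for deriving a contradiction is to bring two syllables $g_{e}^{\epsilon}$ and $g_{e}^{-\epsilon}$ into adjacency by commutations and cancel them, contradicting reducedness in $A_\Delta$. But a product of $\L$-edges can be trivial in $W_\G$ without any whole syllable cancelling against an inverse syllable: the two dual curves leaving one syllable generically terminate on two \emph{different} syllables, so the boundary edges are organized not into a matching of syllables but into closed chains of hyperplanes (the $a$-curve of syllable $i$ ends on syllable $j$, whose other curve ends on syllable $k$, and so on, closing up). The hexagon example in the paper (where $\mathcal R_1$--$\mathcal R_3$ hold but $\mathcal R_4$ fails) is exactly a word that dies in $W_\G$ through such a diagonal closed-chain cancellation with no inverse pair of syllables anywhere; so any argument that only hunts for cancelling syllable pairs cannot succeed, and the sentence ``any pairing not of this form would force a configuration prohibited by $\mathcal R_1$--$\mathcal R_4$'' is precisely the theorem, asserted rather than proved.

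Concretely, the missing ingredients are: (i) the reduction to a word of minimal length in its conjugacy class and the partition of all hyperplanes into closed chains, each inheriting a color from its $\L$-component; (ii) the selection of an extremal closed chain $\mathcal H$ with a distinguished hyperplane $H_0$ such that every other chain either is separated from $\mathcal H$ by $H_0$ or crosses $H_0$ (a nesting argument); (iii) a normal form in which the $\L$-edges of $\mathcal H$ are pushed as far right as possible by swaps, so that the eventual contradiction is not a syllable cancellation but the existence of one more commuting adjacent pair of distinct $\L$-edges, violating rightmost-ness; (iv) the passage from the two-colored polygon formed by $\mathcal H$ and the first chain $\mathcal K$ crossing it to a genuine $2$-component \emph{cycle} in $\Theta$ (the loop one gets a priori can repeat vertices, and ruling that out uses $\mathcal R_1$ together with reducedness of the word); and (v) the chained application of $\mathcal R_4$ followed by $\mathcal R_3$ (twice) to manufacture the forbidden commutation. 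The two-component hypothesis enters through the observation that $\mathcal R_2$ forbids same-colored hyperplanes from crossing, so every chain meeting $\mathcal H$ has the opposite color --- close to your intuition, but again only usable once the closed-chain structure is in place. Without items (i)--(v) the induction you propose has no well-defined complexity to decrease and no mechanism to terminate, so as written the sufficiency direction is a genuine gap rather than a sketch of the paper's argument.
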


Conditions $\mathcal R_1$, $\mathcal R_2$, and a 
	condition more or less equivalent to 
 $\mathcal R_3$ were known to be necessary conditions for $(G, E(\L))$ to be a RAAG system by work of LaForge \cite{laforge}.  
 We show in Example~\ref{ex:R4_necessary} that they are not sufficient. We
 introduce a fourth graph-theoretic condition $\mathcal R_4$
to obtain a complete characterization of all visual RAAG subgroups defined by subgraphs of 
$\G^c$ with at most two components.  The bulk of the proof of Theorem~\ref{intro_thm:RAAG-system} consists of showing that the conditions $\mathcal R_1$--$\mathcal R_4$ are sufficient.

Note that, unlike in Theorem~\ref{intro_thm:finite-index}, 
 there is no assumption on the dimension of the RACGs 
in Theorem~\ref{intro_thm:RAAG-system}. On the other hand, there is an additional assumption in Theorem~\ref{intro_thm:RAAG-system}, namely that the subgraph $\L$ of $\G$ can have at most two components.

When $\L$ contains more than two components, the situation becomes much more complex. We show that additional graph-theoretic conditions are necessary to generalize the Theorem~\ref{intro_thm:RAAG-system}  to this setting (see Lemma~\ref{lem:R5} 
 and Lemma~\ref{lem:triangle}). 
Remarkably, a consequence of these conditions is that if $\G$ is triangle-free and $(G, E(\L) )$ is a finite-index RAAG system, then $\L$ can have at most two components. This fact is crucial to the proof of Theorem~\ref{intro_thm:finite-index}, which does not have any assumption on the number of components of $\L$.
Additionally, 
we 
 are aware that even more conditions are necessary than those in this article, but 
 we 
 do not have a complete conjectural list of conditions that would be sufficient to characterize visual RAAGs.

We next turn our attention to RAAG subgroups of RAAGs. 
A classical theorem on Coxeter groups, proven independently by Deodhar \cite{Deodhar} and Dyer \cite{Dyer}, states that reflection subgroups of Coxeter groups (i.e., those generated by conjugates of generators) are themselves Coxeter groups. 
In fact, Dyer proves an analogous result for the class of groups defined by  \emph{reflection systems} (see~\cite{Dyer} for the definition), which includes Coxeter groups as well as RAAGs.  Specifically, he shows that 
 subgroups generated by conjugates of standard generators are themselves in this class.  As RAAGs are the only torsion-free groups in this class, one obtains the following result.  Here, we
define a \textit{generalized RAAG reflection} to be an element of a RAAG $A_\D$ that is conjugate to a generator in $V(\D)$.
\begin{introtheorem}[\cite{Dyer}] \label{intro_thm:generalized_reflections}
	Let $\mathcal{T}$ be a finite set of generalized RAAG reflections in the RAAG $A_\G$. Then the subgroup $G < A_\G$ generated by $\mathcal{T}$ is a RAAG.
\end{introtheorem}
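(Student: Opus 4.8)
The plan is to prove that a subgroup $G < A_\Gamma$ generated by a finite set $\mathcal{T}$ of generalized RAAG reflections is a RAAG, using the geometric framework of disk diagrams (Salvetti complexes / CAT(0) cube complexes) that the authors advertise in the abstract. The overall strategy mirrors the Deodhar--Dyer philosophy for Coxeter groups: I would try to produce a \emph{canonical} generating set for $G$, built from $\mathcal{T}$ by a reflection-group-style closure operation, and then verify that with respect to this set $G$ is a RAAG system in the sense of Definition~\ref{def:RAAG_system}. Write each $t_i \in \mathcal{T}$ as $t_i = g_i v_i g_i^{-1}$ with $v_i \in V(\Gamma)$ and $g_i \in A_\Gamma$. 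The first step is to reduce to a normalized situation: each conjugating element $g_i$ acts on the Cayley complex / universal cover of the Salvetti complex, and the reflection $t_i$ stabilizes (and translates within) a combinatorial hyperplane-like subcomplex labeled by the commuting star of $v_i$. I would set up the action of $G$ on the CAT(0) cube complex $X$ dual to $A_\Gamma$ and track how the generalized reflections act on hyperplanes.

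First I would establish the local structure: two generators $v, w \in V(\Gamma)$ either commute or generate a free group $F_2$, and correspondingly their conjugates $t_i, t_j$ either commute in $A_\Gamma$ or the subgroup $\langle t_i, t_j \rangle$ is free of rank two (this dichotomy is exactly the RAAG dichotomy, and it should follow from analyzing geodesic representatives and the ``no annular diagram'' phenomenon for noncommuting conjugates). The commuting graph $\Delta$ on a suitable generating set of $G$ is then forced, and the canonical homomorphism $\phi : A_\Delta \to G$ of Definition~\ref{def:RAAG_system} is surjective by construction. The heart of the matter is injectivity of $\phi$: I must show that no nontrivial reduced word in the $t_i$ (reduced in the RAAG $A_\Delta$, i.e.\ after applying all allowed commutations and cancellations of the form $t_i t_i^{-1}$) maps to the identity in $A_\Gamma$.

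To prove injectivity I would argue by contradiction with a minimal disk diagram. Suppose $w = t_{i_1}^{\epsilon_1} \cdots t_{i_n}^{\epsilon_n}$ is a shortest word, reduced in $A_\Delta$, representing $1$ in $A_\Gamma$. Substituting the expressions $g_{i_k} v_{i_k} g_{i_k}^{-1}$ gives a word over $V(\Gamma)$ bounding a disk diagram $D$ over the Salvetti presentation. I would analyze dual curves (the tracks/hyperplane carriers) in $D$: each syllable $t_{i_k}^{\epsilon_k}$ contributes a dual curve crossing the letter $v_{i_k}$, and a cancellation or commutation in $A_\Delta$ should correspond precisely to two such dual curves that either bound a bigon (forcing a cancellation $t_{i_k} t_{i_k}^{-1}$ in $A_\Delta$, contradicting reducedness) or cross transversally carrying commuting labels (forcing a commutation, again reducible). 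The key combinatorial input is that the conjugating prefixes $g_{i_k}$, once we pass to the minimal diagram, can be pushed to cancel in pairs, so the only dual curves that survive to the boundary are the ``spine'' curves carrying the $v_{i_k}$; a van Kampen/diagram surgery shows any such innermost configuration is reducible.

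The hard part will be controlling the interaction of the conjugators $g_{i_k}$ in the disk diagram — that is, ruling out the possibility that a genuinely reduced-in-$A_\Delta$ word becomes trivial in $A_\Gamma$ through cancellations that \emph{mix} the conjugating parts $g_i$ with the core reflection letters $v_i$, rather than through clean cancellations between whole syllables. Concretely, I expect the subtlety to lie in showing that a minimal-area diagram forces a pair of parallel/nested dual curves whose removal corresponds to an $A_\Delta$-reduction; establishing this likely requires a careful innermost-annulus or innermost-bigon argument and a monotonicity statement (e.g.\ that area or the number of dual curves strictly decreases under the corresponding reduction), analogous to how one proves the ping-pong/normal-form results for graph products. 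The authors emphasize that their approach is geometric and diagram-based rather than following the algebraic Deodhar--Dyer machinery, so I anticipate the proof hinges on this disk-diagram reduction lemma, with the RAAG dichotomy for pairs and the surjectivity of $\phi$ being comparatively routine.
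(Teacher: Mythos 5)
Your proposal follows essentially the same route as the paper: replace $\mathcal{T}$ by a normalized (``trimmed'') generating set, build a disk diagram over $A_\Gamma$ from a word in the conjugate expressions $g_i v_i g_i^{-1}$, and show that the hyperplanes dual to the core letters $v_i$ must either pair up with one another (forcing a cancellation of two whole syllables) or cross (forcing a commutation) --- the paper merely packages the conclusion via Basarab's deletion-condition characterization of RAAG systems rather than as direct injectivity of $\phi: A_\Delta \to G$. The difficulty you correctly flag (core hyperplanes escaping into the conjugator portions of the boundary) is exactly what the paper's central lemmas rule out, using the trimmed condition together with an induction on ``pathological'' diagrams, so your sketch locates the crux in the right place.
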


We thank Luis Paris for informing us that this result is contained in~\cite{Dyer}, and the explanation in the preceding paragraph.  We include our proof of Theorem~\ref{intro_thm:generalized_reflections}, as our geometric approach is very different from  from that of Dyer, which is algebraic and uses cocycles.  
Our proof uses 
a characterization of RAAG systems in terms of  the deletion 
condition, given by Basarab \cite{basarab}. 
We use disk diagrams to show that subgroups generated by generalized RAAG reflections satisfy  the criteria in Basarab's characterization. 

We note that, although $G$ (from Theorem~\ref{intro_thm:generalized_reflections}) is a RAAG, $(G, \mathcal{T})$ is not necessarily a RAAG \textit{system} and in general $G$ is not isomorphic to the RAAG $A_\Delta$ where $\Delta$ is the commuting graph corresponding to $\mathcal{T}$.
Kim-Koberda show that there exists a subgroup of $G$ (generated by sufficiently high powers of the elements of $\mathcal T$) which is isomorphic to $A_\Delta$ \cite{Kim-Koberda}.

 Genevois, as well as an anonymous referee, pointed out to us that a proof of Theorem~\ref{intro_thm:generalized_reflections} may be possible using~\cite[Theorem 10.54]{Genevois1} (see also~\cite[Theorem~3.24]{Genevois2}).

\subsection*{Acknowledgements}
	The authors would like to thank Jingyin Huang for suggesting the question that led to our proof of Theorem~\ref{intro_thm:generalized_reflections},
 Luis Paris for informing us that Theorem~\ref{intro_thm:generalized_reflections} is a result of Dyer,
	 Kevin Schreve for a comment that led to 
	Corollary~\ref{cor_bipartite}, and Hung Tran for encouraging us to look at the examples considered in Theorem~\ref{intro_thm_planar}.  Finally, we would like to thank Jason Behrstock,
	Anthony Genevois,  Garret LaForge, Kim Ruane and the referees  for helpful comments and conversations. 
\section{Background}

\subsection{Basic terminology and notation}
	Let $G$ be a group with generating set $S$. We say that $w = s_1 \dots s_n$, with $s_i \in \big( S \cup S^{-1} \big)$ for $1 \le i \le n$, is a \textit{word over $S$} or a \textit{word in $G$}. 
	If the words $w$ and $w'$ represent the same element of $G$, then we say that $w'$ is an \textit{expression} for $w$ and write \textit{$w' \gequal w$}.
	We say the word $w = s_1 \dots s_n$ is \textit{reduced} (or \textit{reduced over $S$} for emphasis) if given $w' = t_1 \dots t_m \gequal w$, it follows that $n \le m$.

\subsection{Right-angled Coxeter and Artin groups} \label{subsec_racgs_and_raags}
Coxeter groups can be characterized as those groups which are generated by involutions and which satisfy the deletion condition, see Definition~\ref{def:deletion} below (for a proof of this fact, see \cite[Theorem 3.3.4]{Davis}). By  work of Basarab \cite{basarab}, RAAGs can be characterized in a similar manner (see Theorem~\ref{thm_raag_char} below). This characterization will be utilized in Section~\ref{sec:reflections}.

\begin{definition}[Deletion Condition]\label{def:deletion}
	Let $G$ be a group generated by $S$. We say that $(G, S)$ satisfies the \textit{deletion condition} if, given any word $w$	over $S$, either $w$ is reduced or $w = s_1 \dots s_k$ and there exist $1 \le i < j \le k$ such that $s_1 \dots \hat{s}_i \dots \hat{s}_j \dots s_k$ is an expression for $w$.
\end{definition}

The result below directly follows from a result of Basarab.
\begin{thm}[\cite{basarab}] \label{thm_raag_char}
	Let $G$ be a group generated by $S$ such that $S \cap S^{-1} = \emptyset$
	 and $1 \notin S$. Then $(G, S)$ is a RAAG system if and only if each of the following holds:
	\begin{enumerate}
		\item Every $s$ in $S$ has infinite order. 
		\item $(G, S)$ satisfies the deletion condition.
	\end{enumerate}
\end{thm}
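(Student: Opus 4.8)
The plan is to prove the two implications separately, keeping the canonical homomorphism $\phi\colon A_\Delta\to G$ of Definition~\ref{def:RAAG_system} in view throughout, where $\Delta$ is the commuting graph of $S$. Surjectivity of $\phi$ is immediate since $S$ generates $G$, so the entire theorem reduces to relating injectivity of $\phi$ to conditions (1) and (2).

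\textbf{RAAG system $\Rightarrow$ (1) and (2).} Here I would use the standard solution to the word problem in a RAAG. For (1), a standard generator maps to $1$ under the homomorphism $A_\Delta\to\mathbb{Z}$ sending every generator to $1$, so all of its powers are nontrivial and it has infinite order; transporting through $\phi$ gives (1). For (2), I would invoke the shuffle characterization of geodesics: a word over $S$ fails to be reduced in $A_\Delta$ if and only if it contains a \emph{cancelling pair}, i.e.\ indices $i<j$ with $s_j=s_i^{-1}$ such that $s_i$ commutes in $A_\Delta$ with each of $s_{i+1},\dots,s_{j-1}$, in which case one shuffles $s_j$ next to $s_i$ and deletes both. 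Since $\phi$ is now assumed to be an isomorphism, being reduced in $(G,S)$ coincides with being reduced in $A_\Delta$, and such a cancelling pair furnishes exactly the deletion required by Definition~\ref{def:deletion}, establishing (2).

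\textbf{(1) and (2) $\Rightarrow$ RAAG system.} The content is injectivity of $\phi$, and I would reduce this to the single assertion that every word over $S$ which is reduced in $A_\Delta$ is also reduced in $(G,S)$. Granting this, injectivity follows: if $v$ is a nonempty $A_\Delta$-geodesic word with $\phi(v)=1$, then $v$ is $G$-reduced, so its $G$-length equals $|v|$; but $\phi(v)=1$ forces this length to be $0$, so $v$ is empty, whence $\ker\phi$ is trivial. The reverse length comparison is the trivial direction: any $A_\Delta$-expression maps under $\phi$ to a $G$-expression of the same length, and the defining relations of $A_\Delta$ genuinely hold in $G$ because, by construction of $\Delta$, adjacent generators are exactly the commuting pairs of $G$.

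\textbf{Proving the claim — the main obstacle.} This is where both hypotheses must be used and is the crux of the theorem. Suppose for contradiction that $w=s_1\cdots s_k$ is reduced in $A_\Delta$ but not in $(G,S)$. By the deletion condition there are $i<j$ with $s_1\cdots\hat{s}_i\cdots\hat{s}_j\cdots s_k\gequal w$, which rearranges to the relation $s_i\,u\,s_j=u$ in $G$, where $u=s_{i+1}\cdots s_{j-1}$; equivalently $s_j^{-1}=u^{-1}s_i\,u$, so $s_i$ is conjugate in $G$ to $s_j^{-1}$. The difficulty is that this is a priori only a relation in $G$, whereas I need to produce a genuine RAAG cancelling pair in $w$, contradicting its reducedness in $A_\Delta$. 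Closing this gap is precisely the step of ruling out ``exotic'' deletions that do not arise from shuffling commuting generators, and it is where the infinite-order hypothesis (1) is essential: it excludes the collapses available in the Coxeter setting and forces $s_j$ to be $s_i^{-1}$. I expect to carry this out by an induction on word length, organised so that the extracted relation $s_i\,u\,s_j\,u^{-1}=1$ feeds back into the inductive hypothesis, together with an analysis of how commuting letters can be transposed. This combinatorial heart is exactly what is controlled by Basarab's characterization of partially commutative groups \cite{basarab}, so the cleanest route is to verify that the hypotheses $S\cap S^{-1}=\emptyset$, $1\notin S$, (1) and (2) match Basarab's conditions and to quote his theorem, from which the claim, and hence the result, follows directly.
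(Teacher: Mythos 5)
Your proposal is correct and ultimately takes the same route as the paper: the paper's proof is essentially a citation of Basarab's Corollary 1.4.2 for the deletion condition in both directions (plus torsion-freeness of RAAGs for (1)), and your argument, after fleshing out the elementary reductions, likewise defers the combinatorial crux — ruling out ``exotic'' deletions in the converse direction — to Basarab's characterization. The minor variations (deriving (1) from the homomorphism to $\mathbb{Z}$ rather than from torsion-freeness, and phrasing (2) via the shuffle characterization of geodesics) are harmless and equivalent to what the paper cites.
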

\begin{proof}
	If $(G, S)$ is a RAAG system,  then $G$ is torsion-free (see~\cite{charney}), so (1) holds.   Furthermore, $(G, S)$ satisfies (2) by \cite[Corollary 1.4.2]{basarab} (see also \cite[pg.~31, ex.~17]{bahls} for a simpler proof in this setting). The converse also follows from a direct application of \cite[Corollary 1.4.2]{basarab}.
\end{proof}

We now define certain moves which can be performed on a word that produce another expression for it. These moves provide a solution to the word problem for RAAGs and RACGs (see Theorem~\ref{thm_tits_solution} below).

\begin{definition}[Tits moves]
	Let $G$ be a group generated by $S$. Let $w = s_1 \dots s_n$ be a word over $S$. If $s_i$ and $s_{i+1}$ commute for some $1 \le i < n$, then the word $s_1 \dots s_{i-1}s_{i+1}s_i s_{i+2} \dots s_n$ is an expression for $w$ obtained by a \textit{swap operation} performed to $w$, which \textit{swaps} $s_i$ and $s_{i+1}$. If $s_i = s_{i+1}^{-1}$ for some $1 \le i < n$, then  $s_1 \dots s_{i-1}s_{i+2} \dots s_n$ is an expression for $w$ is obtained by a \textit{deletion operation} performed to $w$. A \textit{Tits move} is either a swap operation or a deletion operation. We say a word is \textit{Tits reduced} if no sequence of Tits moves can be performed to the word to obtain an expression with fewer generators.
\end{definition}

Theorem~\ref{thm_tits_solution} below shows that RAAGs and RACGs admit a nice solution to the word problem. This solution to the word problem for RACGs is a well known result of Tits \cite{Tits}, a version of which holds more generally for all Coxeter groups. The result below in the setting of RAAGs follows from 
a  theorem of Basarab \cite[Theorem 1.4.1]{basarab} which generalizes 
 Tits' result (see also \cite[Theorem 3.9]{Green}).

\begin{thm}[\cite{Tits, basarab}] \label{thm_tits_solution}
	Let $A_\G$ be either a RAAG or a RACG. Then the following hold:
	\begin{enumerate}
		\item If $w_1$ and $w_2$ are reduced words over $V(\G)$ representing the same element of $G$, then $w_2$ can be obtained from $w_1$ by Tits swap 
		moves.
		\item Given any word $w$ over $V(\G)$, a reduced expression for $w$ can be obtained by applying Tits moves to $w$.
	\end{enumerate}
\end{thm}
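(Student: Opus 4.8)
The plan is to reduce both statements to a single combinatorial engine, which I will call the \emph{Exchange Lemma}: if a word $w = s_1 \cdots s_n$ over $V(\G)$ is \emph{not} reduced, then some sequence of swap operations transforms $w$ into a word in which two cancelling letters are adjacent, so that a deletion operation applies. Here ``cancelling'' means a pair $s_i, s_j$ with $s_i = s_j$ (the RACG case) or $s_i = s_j^{-1}$ (the RAAG case). Granting this lemma, both parts of Theorem~\ref{thm_tits_solution} fall out with little extra work, so the real content is concentrated there.

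For part (2), note that neither kind of Tits move increases the number of generators, while a deletion strictly decreases it. Every expression for $w$ obtainable from $w$ by Tits moves is a word of length at most $|w|$ over the finite alphabet $V(\G) \cup V(\G)^{-1}$, so there are only finitely many such expressions; choose one, $w'$, with the fewest generators. Since anything reachable from $w'$ by Tits moves is also reachable from $w$, no Tits move shortens $w'$, i.e.\ $w'$ is Tits reduced. If $w'$ were not reduced, the Exchange Lemma would, after finitely many swaps, expose an adjacent cancelling pair in $w'$, and the swaps together with the ensuing deletion would be Tits moves producing a shorter expression $\gequal w$ — a contradiction. Hence $w'$ is reduced, proving (2).

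For part (1), I would induct on the common length $n$ of the two reduced words $w_1 = s_1 \cdots s_n$ and $w_2 = t_1 \cdots t_n$, the case $n = 0$ being trivial. The key sub-step is a \emph{Shuffle Lemma}: if $u$ is reduced and the element it represents admits some reduced expression beginning with the letter $x$, then $u$ can be carried by swap operations alone to a word beginning with $x$. This is of the same nature as the Exchange Lemma: it says the first occurrence of $x$ in a suitable swap-translate of $u$ can be slid to the front past letters that all commute with it. Applying it with $u = w_1$ and $x = t_1$ (a legitimate initial letter, as witnessed by $w_2$), I obtain $w_1 \gequal t_1 u'$ with $u'$ reduced of length $n-1$ and reachable by swaps. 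Cancelling $t_1$ on the left (multiplying by $t_1$ in the RACG case, by $t_1^{-1}$ in the RAAG case) shows that $u'$ and $t_2 \cdots t_n$ are reduced words of length $n-1$ representing the same element, so the inductive hypothesis links them by swaps; prepending $t_1$ and recalling the initial swaps links $w_1$ to $w_2$. Since the two words have equal, hence minimal, length, no deletion is ever required, as the conclusion demands.

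The main obstacle is therefore the Exchange (and Shuffle) Lemma, i.e.\ establishing that the \emph{only} way a word can fail to be reduced is through a cancellation hidden behind commuting letters, with no interference from the defining relations. I would prove this geometrically, in the spirit of the disk-diagram arguments used later in the paper. A non-reduced $w$ satisfies $w u^{-1} = 1$ in $A_\G$ for some shorter $u$, so there is a reduced van Kampen diagram $D$ whose boundary reads $w u^{-1}$; since every $2$-cell is a commutation square (together with the degenerate cells coming from the relations $s^2$ in the Coxeter case, which only shorten the analysis), $D$ is organized into \emph{dual curves} each carrying a single generator. Reducedness of $D$ forces each such curve to be embedded and to terminate at a boundary edge of the same generator; because $w$ is longer than $u$, a counting argument across the boundary forces some dual curve to have \emph{both} endpoints on the $w$-portion, yielding a cancelling pair $s_i, s_j$, while the squares the curve crosses record precisely the commutations certifying that the intervening letters can be swapped away. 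Verifying that such a two-sided curve can always be located, and handling the Coxeter bigon cells, is where the work lies; alternatively one can check confluence of the natural length-reducing-and-sorting rewriting system, but the dual-curve analysis is cleaner and better aligned with the methods of this article.
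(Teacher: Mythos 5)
The paper does not prove this theorem at all: it is quoted from the literature, with the RACG case attributed to Tits and the RAAG case to Basarab's Theorem~1.4.1, so there is no internal proof to compare against. Your proposal is a correct, self-contained route, and its skeleton is the standard one: everything reduces to an Exchange Lemma, part (2) follows by minimality among Tits-reachable expressions, and part (1) follows by induction on length via the Shuffle Lemma. Your disk-diagram proof of the Exchange Lemma is also sound in outline and is well aligned with the hyperplane arguments used elsewhere in this paper (cf.\ Lemma~\ref{lem_remove_pathologies} and Observation~\ref{obs:2intersections}): the count $2a+c=|w|$, $2b+c=|u|$ over hyperplanes with both ends on the $w$-side, both on the $u$-side, or one on each does force $a>b\ge 0$, hence a hyperplane with both endpoints in the $w$-portion. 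Two details deserve to be made explicit. First, you should record the Exchange Lemma in the stronger positional form --- there exist $i<j$ with $s_i$ cancelling $s_j$ and $s_i$ commuting with every letter strictly between them --- obtained by choosing an \emph{innermost} two-sided hyperplane, so that every hyperplane emanating from the intervening edges must cross it; the weak form (``some swaps expose some adjacent cancelling pair'') is not quite enough to run your Shuffle Lemma, since there you need to know that the cancelling pair for $x^{-1}u$ (resp.\ $xu$) involves the prepended letter rather than two letters of the reduced word $u$, and that is exactly what the positional form gives. Second, in part (1) the fact that only swaps occur is automatic (Tits moves never lengthen a word and both words already have minimal length), though the theorem as stated does not require it. With those points filled in, the argument is complete.
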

We will often not refer directly to the above theorem, and we will instead simply say that a given RAAG or RACG  \textit{admits a Tits solution to the word problem.}

The next two lemmas are well known and will often be implicitly assumed.
\begin{lemma}
	Let $A_\G$ either be a RAAG or RACG. Then $s, t \in V(\G)$ commute as elements of $A_\G$ if and only if $(s,t)$ is an edge of $\G$. 
\end{lemma}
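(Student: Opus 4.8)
The plan is to prove both implications, using nothing beyond the defining presentation for one direction and the Tits solution to the word problem (Theorem~\ref{thm_tits_solution}) for the other. The forward implication is immediate: if $(s,t) \in E(\G)$, then $st = ts$ is one of the defining relators of $A_\G$ in both the RAAG and RACG presentations, so $s$ and $t$ commute.

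For the converse I would argue by contrapositive, assuming $(s,t) \notin E(\G)$ and showing $st \neq ts$ as elements of $A_\G$. The first step is to observe that the two length-two words $st$ and $ts$ are reduced. Indeed, since $s$ and $t$ are distinct generators we have $s \neq t^{-1}$ (in the RAAG case $t^{-1}$ is not a positive generator, and in the RACG case $t^{-1} = t \neq s$), so no deletion operation applies; and since $(s,t) \notin E(\G)$, no swap operation applies either. Thus no Tits move can be performed on $st$ or on $ts$, so each is Tits reduced and hence, by Theorem~\ref{thm_tits_solution}(2), reduced. I would then suppose for contradiction that $st$ and $ts$ represent the same element. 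By Theorem~\ref{thm_tits_solution}(1), $ts$ is obtained from $st$ by a sequence of Tits moves. Since a swap preserves word length and a deletion strictly decreases it, no Tits move ever increases length; hence any sequence connecting two words of length two must consist entirely of swaps. But the only swap available on the two-letter word $st$ interchanges $s$ and $t$, and such a swap is legal precisely when $s$ and $t$ commute, i.e.\ when $(s,t) \in E(\G)$, contradicting our assumption. Therefore $st \neq ts$.

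The argument is uniform across the two cases, since Theorem~\ref{thm_tits_solution} applies verbatim to both RAAGs and RACGs. The only point requiring genuine care is the bookkeeping on word length under Tits moves: one must rule out the transformation routing through longer intermediate words, which is exactly what forces every move in the connecting sequence to be a commutation swap and thereby produces the required edge. I do not expect this to present any real obstacle, as all the content is carried by the Tits solution to the word problem already established above.
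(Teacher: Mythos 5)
Your proof is correct in outline but takes a genuinely different route from the paper, and there is one point you must nail down to avoid circularity. The paper handles the converse by citing \cite[Prop 4.1.2]{BB} in the RACG case and, in the RAAG case, by taking a disk diagram for $sts^{-1}t^{-1}$ and observing that its two hyperplanes, labeled $s$ and $t$, would have to intersect, which is impossible for non-adjacent labels. You instead run a uniform word-length argument through Theorem~\ref{thm_tits_solution}: both $st$ and $ts$ are reduced, deletions strictly shorten, so any Tits sequence joining them consists only of swaps, and the unique available swap forces an edge. That bookkeeping is sound, and it is arguably more elementary and more uniform than the paper's case split. The caveat is the meaning of ``commute'' in the definition of a swap operation. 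As written in the paper, a swap is permitted when $s_i$ and $s_{i+1}$ commute \emph{as group elements}; with that reading your final step only recovers the hypothesis that $s$ and $t$ commute, not the desired edge, and the argument is circular. The theorems of Tits and Basarab that underlie Theorem~\ref{thm_tits_solution} are in fact proved with swaps licensed only by the defining relations, i.e.\ by adjacency in $\G$, and with that (standard) formulation your proof closes correctly. You should state explicitly that you are invoking that version; the paper's own proof is structured precisely to avoid leaning on this distinction.
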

\begin{proof}
	One direction of the claim follows from 
 the definitions of a RAAG and a RACG.
	 If $A_\G$ is a RACG, then the other direction follows from \cite[Prop 4.1.2]{BB}.
	
Now suppose that $A_\G$ is a RAAG, and let $s, t \in V(\G)$ 
	be non-adjacent vertices. Suppose, for a contradiction that $w = sts^{-1}t^{-1} \gequal 1$. Let $D$ be a disk diagram with boundary $w$ (see Section~\ref{subsec:disk} for a reference for disk diagrams). This disk diagram contains exactly two intersecting hyperplanes: one labeled by $s$ and one labeled by $t$. However, this is a contradiction as a pair of hyperplanes whose labels are non-adjacent vertices of $\G$ cannot intersect.
\end{proof}

\begin{lemma} \label{lem:commuting}
	Let $W_\G$ be a RACG, and let
	 $s, t, q, r \in V(\Gamma)$ be such that $s$ and $t$ do not commute, and $r$ and $q$ do not commute. We have that $(st)(qr) \gequal (qr)(st)$ 
	 if and only if one of the following holds:
	\begin{enumerate}
		\item There is a square in $\Gamma$ formed by $s, q, t, r$.
		\item $t = q$ and $s = r$.
		\item $t = r$ and $s = q$.
	\end{enumerate}
\end{lemma}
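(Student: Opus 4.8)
The plan is to prove the two implications separately, using only the structure of rank-two special subgroups and the retraction homomorphisms onto them.

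For the ``if'' direction I would verify each case by hand. If $t=q$ and $s=r$, then $qr \gequal ts \gequal (st)^{-1}$, so $st$ and $qr$ are mutually inverse and hence commute; if $t=r$ and $s=q$, then $qr \gequal st$, so the two elements are literally equal. If $\{s,q,t,r\}$ spans a square with diagonals $\{s,t\}$ and $\{q,r\}$, then $s$ and $t$ each commute with both $q$ and $r$, and a short sequence of swaps (Theorem~\ref{thm_tits_solution}) turns $stqr$ into $qrst$, giving $(st)(qr) \gequal (qr)(st)$.

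For the ``only if'' direction I would start from the hypothesis, which (since the generators are involutions) is equivalent to $stqr \gequal qrst$. The main tool is the retraction $\rho_T \colon W_\Gamma \to W_{\Gamma_T}$ onto a special subgroup, sending $v \mapsto v$ for $v \in T$ and $v \mapsto 1$ otherwise; this is a well-defined homomorphism because the induced subgraph $\Gamma_T$ carries exactly the edges of $\Gamma$ among vertices of $T$. Applying $\rho_{\{s,t\}}$ and recalling that $W_{\{s,t\}}$ is infinite dihedral (as $s,t$ do not commute, so $(s,t)\notin E(\Gamma)$), I find that $\rho_{\{s,t\}}(qr)$ must lie in the centralizer of the infinite-order element $st$, which in the infinite dihedral group is exactly the cyclic group $\langle st\rangle$. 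Since $\rho_{\{s,t\}}(qr)$ is a product of at most two letters from $\{s,t\}$, comparing with the elements of $\langle st\rangle$ of length at most two shows it must be one of $1$, $st$, or $ts$.

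These three possibilities match exactly the mutually exclusive cases $|\{q,r\}\cap\{s,t\}| = 0,2,1$. If the image is $st$ then $q=s,\,r=t$, which is conclusion (3); if it is $ts$ then $q=t,\,r=s$, which is conclusion (2). If exactly one of $q,r$ lies in $\{s,t\}$, then $\rho_{\{s,t\}}(qr)$ is a single reflection, which is never a power of $st$, a contradiction, so this case does not occur. The remaining case is $s,t,q,r$ all distinct, and here I would recover the square by projecting onto each of the four ``cross'' pairs: for example $\rho_{\{s,q\}}(stqr)=sq$ and $\rho_{\{s,q\}}(qrst)=qs$, so $sq=qs$ holds in $W_{\{s,q\}}$, which forces $(s,q)\in E(\Gamma)$; running the same argument for $(s,r)$, $(t,q)$, $(t,r)$ produces all four edges, and together with the non-edges $(s,t)$ and $(q,r)$ this is precisely a square on $\{s,q,t,r\}$, giving conclusion (1).

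I expect the only genuine subtlety to be the all-distinct case: one must extract four independent commuting relations from the single relation $stqr \gequal qrst$, and the cross-pair retractions are what make this clean, while the centralizer computation in the infinite dihedral group and the exhaustiveness of the three coincidence cases are the points needing the most care. This last case can alternatively be handled by a disk-diagram argument in the spirit of the rest of the paper: the relation bounds a reduced diagram whose four dual curves (one per generator) have endpoints on the boundary octagon that interleave exactly for the four cross pairs, and each resulting crossing records an edge of $\Gamma$.
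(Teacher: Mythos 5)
Your proof is correct, but it takes a genuinely different route from the paper's. The paper handles the coincidence cases by direct cancellation (e.g.\ if $t=q$ then $stqr \gequal sr$, and comparing with $qrst$ forces $s=r$), and then, when $s,t,q,r$ are all distinct, it invokes Tits' solution to the word problem (Theorem~\ref{thm_tits_solution}) to conclude that the reduced words $stqr$ and $qrst$ differ by swaps, which forces the four cross pairs to commute. You instead use the standard retraction homomorphisms onto special subgroups, a tool the paper never introduces: the projection to $W_{\{s,t\}} \cong D_\infty$, together with the computation of the centralizer of $st$ in the infinite dihedral group, packages all the coincidence cases into the single trichotomy $\rho_{\{s,t\}}(qr) \in \{1, st, ts\}$ (and correctly rules out the case $|\{q,r\}\cap\{s,t\}|=1$), while the cross-pair projections $\rho_{\{s,q\}}$, $\rho_{\{s,r\}}$, $\rho_{\{t,q\}}$, $\rho_{\{t,r\}}$ extract the four commutations one at a time in the all-distinct case. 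Both arguments are sound. The paper's is shorter given that Tits' theorem is already part of its standing machinery; yours is self-contained modulo the standard facts that special subgroups are retracts and that the rank-two special subgroup on a non-edge is infinite dihedral, and it is arguably cleaner in the distinct case, where the paper's one-line appeal to Tits' theorem leaves implicit how the four separate edges are extracted from the single relation $stqr \gequal qrst$.
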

\begin{proof}
	Clearly each of (1), (2)  and (3) implies that $(st)(qr) \gequal (qr)(st)$. 
	
	To prove the converse, suppose that $(st)(qr) \gequal (qr)(st)$. 
	Suppose first that $t = q$,  
	and consequently $stqr \gequal sr$. 
	As $s$ and $t$ do not commute and $q$ and $r$ do not commute, this is only possible if $r = t$. Thus, (2) holds.
	
	If $s=q$, as $qrts \gequal tsqr$, we  apply the same argument to conclude that $t = r$, showing (3) holds. By similar arguments, if $s = r$ then $t = q$, and if $t = r$ then $s = q$. Thus, we may assume that $s$, $t$, $q$ and $r$ are all distinct vertices of $\G$. In this case we again conclude by Tits' solution to the word problem, that as $stqr \gequal qrst$ then $s$, $q$, $t$ and $r$ form a square in~$\G$.
\end{proof}

\subsection{Disk Diagrams}\label{subsec:disk}
	
We give a brief background on disk diagrams as they are used in our setting, and we refer the reader to \cite{Sageev} and \cite{wise-qc-hierarchy} 
for the general theory of disk diagrams over cube complexes. We then give some preliminary lemmas that are needed in later sections.

Let $A_\D$ be a RAAG, and let $w = s_1 \dots s_n$, with $s_i \in V(\D)$, be a word equal in $A_\Delta$ to the identity, i.e.~$w  \gequal 1$. There exists a van Kampen diagram $D$ with boundary label 
$w$, and we call this planar $2$-complex a \textit{disk diagram in $A_\D$ with boundary label $w$}. We now describe some additional properties of $D$ in our setting. The edges of $D$ are oriented and labeled by generators in $V(\D)$. A \textit{path in $D$} is a path $\gamma$ in the $1$-skeleton of $D$, traversing edges $e_1, \dots, e_m$, and the label of $\gamma$ is the word $a_1 \dots a_m$ where, for each $1 \le i \le m$, $a_i$ is the label of $e_i$ if $e_i$ is traversed along its orientation, and $a_i^{-1}$ is the label of $e_i$ if $e_i$ is traversed opposite to its orientation. 
Every cell in $D$ is a square that has a boundary path with label $aba^{-1}b^{-1}$ for some commuting generators $a$ and $b$ in
 $V(\D) \cup V(\D)^{-1}$.

There is a base vertex $p\in \partial D$ and an orientation on $D$, 
such that the smallest closed path $\delta$ which traverses the boundary of $D$  in the clockwise orientation starting at $p$ and 
 traversing 
 every edge outside the interior of $D$ has label $w$. We call $\delta$ the \textit{boundary path} of $D$. Note that if $D$ contains an edge $e$ not contained in a square, then necessarily $\delta$ traverses $e$ exactly twice.
	
If $W_\G$ is a RACG and $w$ is a word over $V(\G)$ equal in $W_\G$ to the identity, then we define a disk diagram $D$ in $W_\G$ with boundary $w$ similarly. However, as each generator in $V(\G)$ is an involution, we do not need to orient the edges of $D$.

Let $D$ be a disk diagram and $q = [0,1] \times [0,1]$ be a square in $D$. The subset $\{\frac{1}{2} \} \times [0, 1] \subset q$ (similarly, $[0,1] \times \{\frac{1}{2} \} \subset q$) is a \textit{midcube}. 
The midpoint of an edge in $D$ is also defined to be a \textit{midcube}.
 A \textit{hyperplane} in $D$ is a minimal non-empty collection $H$ of midcubes in $D$ 
with the property that given any midcube $m \in H$ and a midcube $m'$ in $D$ such that $m \cap m'$ is contained in an edge of $D$, it follows that $m' \in H$.  We say that $H$ is dual to an edge $e$ if the midpoint of $e$ is in $H$. 

Since opposite edges in every square in $D$ have the same label, it follows that every edge intersecting a fixed hyperplane $H$ has the same label.  We call this the \emph{label of the 
 hyperplane}. Since adjacent sides of a square have distinct labels which commute, 
 it follows that no hyperplane self-intersects, and if two hyperplanes intersect, then their labels correspond to distinct, commuting generators.
 	(See Figure~\ref{fig:disk_diagram-ex} for an example of a disk diagram and some of its hyperplanes.)

\begin{figure}[h!]
	\medskip\begin{overpic}[scale=0.2]
		{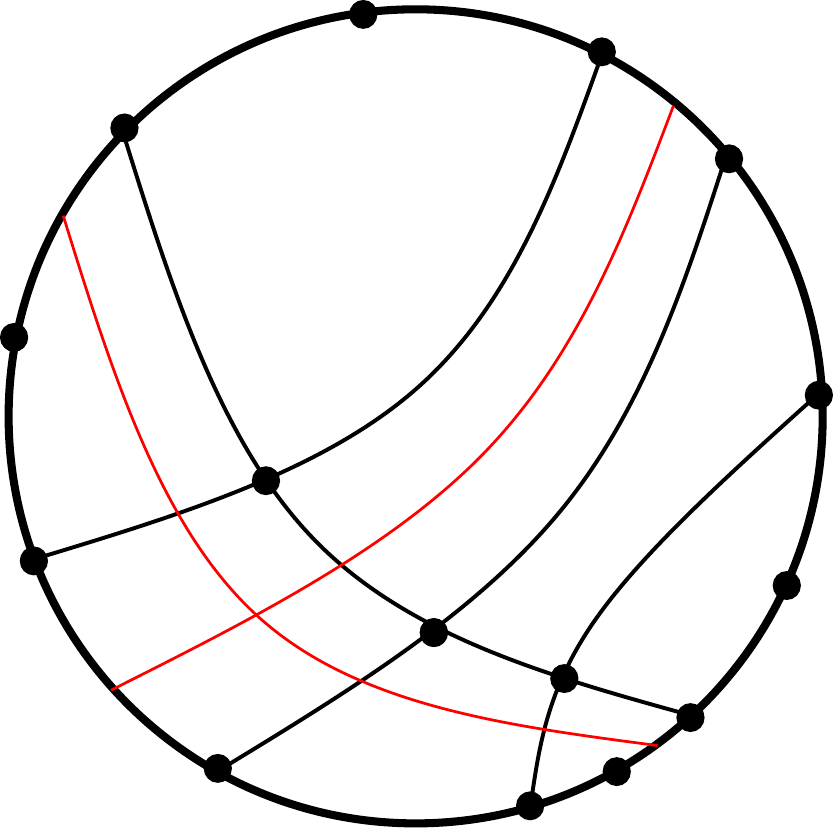}
		\put(42, -7){\tiny $s_1$}
		\put(4, 14){\tiny $s_2$}
		\put(-9, 43){\tiny $s_3$}
		\put(-4, 73){\tiny $s_4$}
		\put(20, 96){\tiny $s_5$}
		\put(50, 100){\tiny $s_3$}
		\put(77, 91){\tiny $s_2$}
		\put(96, 67){\tiny $s_1$}
		\put(100, 40){\tiny $s_6$}
		\put(92, 19){\tiny $s_5$}
		\put(78, 5){\tiny $s_4$}
		\put(68, 0){\tiny $s_6$}
		\put(24, 0){\tiny $p$}

	\end{overpic}
	\caption{
		 A disk diagram in a RACG with boundary the word $s_2 s_3 s_4 s_5 s_3 s_2 s_1 s_6 s_5 s_4 s_6 s_1$ and base vertex $p$. Two hyperplanes are shown in red. As these hyperplanes intersect, it must be that $s_4$ commutes with $s_2$.
	}
	\label{fig:disk_diagram-ex}
\end{figure}

\begin{definition}[Maps preserving boundary combinatorics] \label{def_preserve_bdry_comb}
Let $D$ and $D'$ be disk diagrams, and let $\delta$ and $\delta'$ 
respectively be their boundary paths. Let $E = \{e_1, \dots, e_m\}$ (resp. $E' = \{e_1', \dots, e_n'\}$) be the edges traversed by $\delta$ (resp. $\delta'$).  More precisely, $e_i$ (resp. $e_i'$) is the $i$th edge traversed by $\delta$ (resp. $\delta'$) for each $i$.
Observe that every hyperplane of $D$ is dual to two edges $e_j, e_k \subset E$ for some $j \neq k$.
(It could be that $e_j = e_k$ thought of as edges of $D$.) A similar statement holds for $D'$.

Let $F \subset E$ and $F' \subset E'$, 
and let $\psi: F \to F'$ be a bijection.  We say that $\psi$ \emph{preserves boundary combinatorics} if for every pair   of edges $e, f \in F$ which are dual to the same hyperplane of $D$, their images $\psi(e)$ and $\psi(f)$ are dual to the same hyperplane of $D'$. 

Note that if $\Psi$ preserves boundary combinatorics, then $\Psi^{-1}$ does as well.
\end{definition}

A pair of hyperplanes $H$ and $H'$ in a disk diagram $D$ form a \textit{bigon} if they intersect in at least two distinct points. 
The following lemma, first proven in \cite[Theorem 4.3]{Sageev},  guarantees that  we can always choose a disk diagram without 
 bigons. The boundary combinatorics statement below is guaranteed by the proof of this fact in \cite[Lemma 2.3, Corollary 2.4]{wise-qc-hierarchy}.

\begin{lemma}[\cite{Sageev} \cite{wise-qc-hierarchy}] \label{lem_remove_pathologies}
	Given a disk diagram $D$ with boundary label $w$, there exists a disk diagram $D'$ also with boundary label $w$ such that $D'$ does not contain any bigons.
	 Moreover, the natural bijection between the 
 edges traversed by the boundary paths of $D$ and $D'$  induced by the label $w$ preserves boundary combinatorics.	
\end{lemma}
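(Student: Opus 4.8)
The plan is to prove the existence of $D'$ by minimizing area, and to obtain the boundary-combinatorics statement by realizing the passage from $D$ to $D'$ as a finite chain of local surgeries, each of which strictly reduces the number of squares while fixing the boundary path. Since the collection of disk diagrams with boundary label $w$ is nonempty (it contains $D$) and the number of squares is a nonnegative integer, it suffices to show that any such diagram which is \emph{not} already bigon-free admits a surgery that keeps the boundary label $w$ and strictly decreases the square count; iterating this then terminates at the desired $D'$.

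For the surgery, suppose two distinct hyperplanes $H$ and $H'$ meet in two distinct points $x$ and $y$ (recall that, by the labelling of edges, no hyperplane can self-intersect, so a bigon genuinely involves $H \neq H'$). Each of $x,y$ is the center of a square, one of whose two midcubes lies on $H$ and the other on $H'$. The two subarcs of $H$ together with the two subarcs of $H'$ joining $x$ to $y$ cobound an embedded subdisk $B \subseteq D$; passing to a bigon $B$ enclosing the fewest squares, I may assume that no hyperplane crosses both of the two arcs bounding $B$. Under this innermost hypothesis one performs the standard hyperplane move of Sageev: the dual-curve arrangement inside $B$ is cleared by local square swaps, after which the two crossing squares at $x$ and $y$ appear as a cancelling pair and are removed, yielding a disk diagram with the same boundary path but strictly fewer squares. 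The entire modification is supported in the interior of $B$, so it changes no boundary edge and, in particular, leaves unchanged the (exactly two, by Definition~\ref{def_preserve_bdry_comb}) boundary edges dual to each hyperplane.

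With this in hand the boundary-combinatorics assertion is immediate. The natural bijection induced by the common label $w$ matches the $i$-th boundary edge of $D$ with the $i$-th boundary edge of $D'$. Each surgery above preserves, for every hyperplane, the unordered pair of boundary-edge positions to which it is dual; hence the composite bijection has the property that boundary edges dual to a common hyperplane of $D$ are sent to boundary edges dual to a common hyperplane of $D'$. Since the argument is symmetric, the same holds for the inverse bijection, so boundary combinatorics is preserved.

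The main obstacle is the surgery step itself: one must check that clearing an innermost bigon can always be arranged by moves that strictly lower the square count, and that each intermediate complex is again a bona fide disk diagram over the cube complex (every new $2$-cell being a legitimate square with boundary label $aba^{-1}b^{-1}$ for commuting generators $a,b$). Handling the degenerate configurations --- $H$ and $H'$ sharing a square, or nested bigons (dispatched by the minimal-area choice of $B$) --- is the technical heart, and is precisely what is established in \cite[Theorem 4.3]{Sageev} and \cite[Lemma 2.3, Corollary 2.4]{wise-qc-hierarchy}, which we invoke.
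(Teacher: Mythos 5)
Your proposal is correct and matches the paper's treatment: the paper offers no independent proof of this lemma, instead citing \cite[Theorem 4.3]{Sageev} for existence and \cite[Lemma 2.3, Corollary 2.4]{wise-qc-hierarchy} for the boundary-combinatorics refinement, which is exactly where your argument also bottoms out. Your added sketch (area induction, innermost bigon, local square swaps supported in an interior subdisk, hence boundary edges and their duality pattern untouched) is the standard argument carried out in those references and is sound.
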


\begin{remark} \label{rem:no-bigons}
	In light of Lemma~\ref{lem_remove_pathologies}, for the rest of this paper we will always assume that any disk diagrams we consider do not have bigons.
\end{remark}

\begin{remark}
	Let $\alpha$ be a path with label $s_1 \dots s_n$ in some disk diagram. The ``edge of $\alpha$ with label $s_i$'' is understood to be the $i$th edge $\alpha$ traverses (even though there may be several edges of $\alpha$ with the same label as this edge.
	 A similar statement holds when we refer to subpaths of $\alpha$.
\end{remark}

Given a disk diagram  with boundary label $w$, we will often want to 
produce a new disk diagram with boundary label $w'$, where $w'$ is obtained from $w$ by a Tits move, and such that boundary combinatorics are preserved on appropriate subsets of the boundary paths. The following lemma exactly describes how we can perform these operations.

\begin{lemma} \label{lem:new_diagrams}
	Let $D$ be a disk diagram over the group $W$, where $W$ is either a RACG or a RAAG.  Suppose the boundary path of $D$ traverses the edges $e_1, \dots e_n$ and has label $w = s_1 \dots s_n$.  
	\begin{enumerate}
		\item If $s_r$ and $s_{r+1}$ (taken modulo $n$) are distinct and commute for some $1 \le r \le n$, then there is a disk diagram $D'$ whose boundary path traverses the edges $e_1', \dots, e_n'$ and has label $s_1 \dots s_{i+1}s_i\dots s_n$.  Furthermore, the map $\psi$ preserves boundary combinatorics, where $\psi$ is defined by $\psi(e_r) = e_{r+1}'$, $\phi(e_{r+1}) = e_{r}'$, and $\psi(e_j)= e_j'$ for $j \neq r, r+1$. 
		\item  If 
		$s_r = s_{r+1}^{-1}$ 
		(taken modulo $n$) for some $1 \le r \le n$, then there is a disk diagram $D'$ with boundary label $s_1 \dots s_{r-1}s_{r+2}\dots s_n$. Moreover, the natural map from edges traversed by the boundary path of $D'$ to edges traversed by the boundary path of $D$ preserves boundary combinatorics. 
		\item Given any generator (or inverse of a generator) $s$ and any $r$, with $1 \le r \le n$, it follows that there exists a disk diagram $D'$ with boundary label $s_1 \dots s_r (s s^{-1}) s_{r+1} \dots s_n$. Moreover, the natural map from edges traversed by the boundary path of $D$ to the edges traversed by the boundary path of $D'$ preserves boundary combinatorics.
	\end{enumerate} 
\end{lemma}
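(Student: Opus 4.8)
The plan is to prove all three parts by performing an explicit elementary surgery on $D$, reading off the new boundary label, and then checking that the induced correspondence of boundary edges preserves boundary combinatorics in the sense of Definition~\ref{def_preserve_bdry_comb}. Throughout I would invoke the no-bigon convention of Remark~\ref{rem:no-bigons}, using Lemma~\ref{lem_remove_pathologies} to clear away any bigons that a surgery creates (this cleanup itself preserves boundary combinatorics). The guiding principle is that each surgery alters only the hyperplanes dual to the one or two boundary edges it touches, so the pairing of the \emph{remaining} boundary edges into hyperplanes is left undisturbed; verifying precisely this yields the boundary-combinatorics assertions.

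I would dispatch the constructive cases (3) and (1) first. For (3), attach a single free edge (a spike) labeled $s$ at the vertex of $\partial D$ lying between $e_r$ and $e_{r+1}$; since this edge lies on no square, the boundary path traverses it twice, reading $s$ then $s^{-1}$, which inserts the subword $s s^{-1}$ at the prescribed spot. The spike carries its own hyperplane, dual exactly to the two new boundary positions, while every pre-existing edge and its dual hyperplane are untouched, so the natural inclusion of the old boundary edges preserves boundary combinatorics. For (1), since $s_r$ and $s_{r+1}$ are distinct commuting generators there is a square $2$-cell with boundary label $s_r s_{r+1} s_r^{-1} s_{r+1}^{-1}$; glue it to $D$ along the length-two subpath $e_r e_{r+1}$. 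The two outer edges of this square become new boundary edges $e_r'$ and $e_{r+1}'$ labeled $s_{r+1}$ and $s_r$, so the boundary label is the swap of $w$. The hyperplane dual to $e_r$ extends across the new square to the parallel edge $e_{r+1}'=\psi(e_r)$, that dual to $e_{r+1}$ extends to $e_r'=\psi(e_{r+1})$, and all other hyperplanes are unchanged; tracing this through shows $\psi$ preserves boundary combinatorics.

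The deletion case (2) is the delicate one, and I expect it to be the main obstacle. Here $e_r$ and $e_{r+1}$ are consecutive boundary edges with inverse labels meeting at a vertex $v$, with common underlying generator $a$. I would fold the boundary arc $e_r e_{r+1}$ onto itself, identifying $e_r$ with $e_{r+1}$ by the label-respecting map fixing $v$. Because two edges with the same underlying label can never be adjacent sides of a square, no $2$-cell is crushed, and the identification merely zips up a boundary pocket, producing a genuine disk diagram whose boundary label is $s_1 \cdots s_{r-1} s_{r+2} \cdots s_n$; applying Lemma~\ref{lem_remove_pathologies} then yields the desired bigon-free $D'$. The crux is the hyperplane bookkeeping: the fold merges the (possibly distinct) hyperplanes dual to $e_r$ and to $e_{r+1}$ into a single hyperplane of $D'$, but it does not alter the hyperplane incidences of any retained boundary edge whose partner also survives. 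Consequently every pair of retained edges dual to a common hyperplane of $D$ stays dual to a common hyperplane of $D'$, which is exactly the boundary-combinatorics claim.

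The real difficulty is confined to part (2): one must verify that the fold genuinely produces a planar disk diagram rather than a singular or higher-genus complex, and one must carefully track how the two hyperplanes dual to the deleted edges combine, so as to be certain that no pairing among the surviving boundary edges is disturbed. Once this local hyperplane analysis is carried out — using the no-bigon hypothesis to control the arcs these hyperplanes trace through $D$ — the boundary-combinatorics conclusion follows, and parts (1) and (3) are routine by comparison.
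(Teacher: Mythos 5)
Your construction is the same as the paper's in all three parts: part (1) is proved by gluing a square labeled $s_r s_{r+1} s_r^{-1} s_{r+1}^{-1}$ along the subpath $e_re_{r+1}$, part (3) by inserting a spur edge at the vertex between $e_r$ and $e_{r+1}$, and part (2) by folding $e_r$ onto $e_{r+1}$; the hyperplane bookkeeping you describe is exactly what the paper leaves as ``readily checked.''

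The one genuine omission is in part (2): you assume $e_r$ and $e_{r+1}$ are distinct edges of $D$ meeting at a vertex $v$, but the hypothesis $s_r = s_{r+1}^{-1}$ is also realized when $e_r = e_{r+1}$ as edges of $D$, namely when this edge is a spur (an edge not contained in any square, with a valence-one endpoint), which the boundary path by definition traverses twice consecutively, reading $s$ then $s^{-1}$. In that situation there is nothing to fold --- identifying an edge with itself by the label-respecting map is the identity --- so your surgery does not produce the required $D'$. The paper treats this case separately by simply deleting the spur, which trivially preserves the hyperplane pairing on the remaining boundary edges since the spur's hyperplane is a single midpoint dual to no other boundary edge. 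Adding this two-line case to your argument closes the gap; the rest of your analysis of (2), including the observation that the fold merges the two (necessarily distinct, by bigon-freeness) hyperplanes dual to $e_r$ and $e_{r+1}$ without disturbing the pairing among surviving boundary edges, is sound.
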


\begin{proof}
	We first prove (1).	Let $q$ be a square whose edges are labeled consecutively by $s_r$, $s_{r+1}$, $s_r^{-1}$, $s_{r+1}^{-1}$. We form the disk diagram $D'$ by identifying consecutive edges of $q$ labeled by $s_r$ and $s_{r+1}$ to the edges of $\partial D$ labeled by $s_r$ and $s_{r+1}$ (these edges must be distinct as $s_r \neq s_{r+1}$). The claim is readily checked. 
	
	We next prove (2). 	Let $e$ and $f$ be the edges of $\partial D$ labeled respectively by $s_r$ and $s_{r+1}$. Suppose first that $e$ and $f$ are distinct. In this case, form the disk diagram $D'$ by identifying $e$ and $f$,
	i.e. ``fold'' these edges together. On the other hand, if $e = f$, then as $D$ has boundary label $w$, it must follow that $e$ is a spur, i.e. an edge attached to $D$ that is not contained in any square and which contains a vertex of valence $1$. In this case we can remove the edge $e$ from $D$ to obtain $D'$. In either case, the claim is readily checked.
	
	To show (3), form $D'$ by inserting a spur edge with label $s$ to the vertex traversed by the boundary path of $D$ between $s_r$ and $s_{r+1}$.
\end{proof}

\section{Visual RAAG subgroups of right-angled Coxeter groups}
\label{sec:visual}
 In this and the next section we study visual RAAG subgroups of RACGs, as described in the introduction.  We begin by describing some notation that will be used throughout these sections.

Let $\G$ be a 
graph, and let $W_\G$ be the corresponding RACG. 
Let $\G^c$ denote the complement of $\G$,
 that is, the graph with the same vertex set as $\G$, which has an edge between two (distinct) vertices if and only if the corresponding vertices are not adjacent in $\G$.
Let $\Lambda$ be a subgraph of $\G^c$ 
with no isolated vertices, i.e., one in which every vertex of $\L$ is contained in some edge.

We form a new graph $\Theta = \Theta(\G, \L)$ which we think of as a graph containing the edges of both $\G$ and $\L$. 
More formally, $V(\Theta) = V(\G)$ and $E(\Theta) = E(\G)\cup E(\L)$.
Note that as $E(\L) \subset \G^c$, it follows that $\Theta$ is simplicial. We refer to edges of $\Theta$ that correspond to edges of $\G$ (resp.~$\L$) as  $\G$-edges (resp.~$\L$-edges). 

A $\L$-edge between vertices $a$ and $b$ corresponds to an inverse pair of infinite order elements of $W_\G$, namely $ab$ and $ba$.  By a slight abuse of terminology, we 
 will use the term $\L$-edge to refer to one of these elements and vice versa.
 We identify $E(\L)$ with a subset of $W_\G$ by arbitrarily choosing one of the two infinite order elements corresponding to each $\L$-edge, and we define 
 $G^\Theta$ to be the subgroup of $W_\G$ generated by $E(\L)$. 
As we are dealing with subgroups generated by $E(\L)$, there is no loss in generality in assuming that $\L$ has no isolated vertices.
 The goal of this section is to study when $(G^\Theta, E(\L))$ is a RAAG system.

Let $\Delta$ be the commuting graph corresponding to $E(\L$) (as defined in the introduction),
and let $A_\D$ be the corresponding RAAG. 
Recall that, by definition, $(G^\Theta, E(\L))$ is a RAAG system if and only if the natural homomorphism $\phi: A_\D \to G^\Theta$ extending the bijection between $V(\D)$ and $E(\L)$ is an isomorphism. 
As $\phi$ is always surjective, we would like to understand when $\phi$ is injective.

For the remainder of this section, we fix $\Gamma, \L, \Theta, A_\Delta$, and $\phi$ as above.
Furthermore, we will use the following terminology.   The path $\gamma$ in $\Theta$  \emph{visiting vertices $x_1, x_2,   \dots, x_n$}  is defined to be the path which starts at $x_1$, passes through the remaining vertices in the order listed, and ends at $x_n$.  We say that $\gamma$ is simple if $x_i \neq x_j$ for $i \neq j$, and that $\gamma$ is a loop if $x_1 = x_n$.  Finally, $\gamma$ is a cycle if it is a loop with $n\ge 3$, such that $x_i \neq x_j$ unless $\{i, j\} = \{1, n\}$.
We call a path 
(resp.~cycle) in $\Theta$ consisting only of $\G$-edges a $\G$-path 
(resp.~$\G$-cycle). 
We define $\L$-paths and $\L$-cycles similarly.

We begin by describing some graph theoretic conditions 
on $\Theta$ 
 which are consequences of 
 either $G^\Theta$ being a RAAG or of $(G^\Theta, E(\L))$ being a RAAG system.
 
 Conditions $\mathcal R_1$ and $\mathcal R_2$, defined below, when combined, are equivalent to LaForge's star-cycle condition. LaForge proves that  $\mathcal R_1$ and $\mathcal R_2$ are necessary conditions for $(G^\Theta, E(\L))$ to be a RAAG system \cite[Lemma 8.2.1]{laforge}. We include proofs here for completeness.

\begin{definition}[Condition $\mathcal R_1$]\label{def:R1}
We say that  $\Theta$ satisfies \emph{condition $\mathcal R_1$} if it does not contain a $\L$-cycle.
\end{definition}

\begin{lemma}[\cite{laforge}] \label{lem:nocycle}
 If $(G^\Theta, E(\L))$ is a RAAG system, then $\Theta$ satisfies $\mathcal R_1$.
\end{lemma}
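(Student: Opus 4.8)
The plan is to prove the contrapositive: assuming $\Theta$ contains an $\L$-cycle, I would exhibit a nontrivial element in $\ker\phi$, so that $\phi$ fails to be injective and hence $(G^\Theta, E(\L))$ is not a RAAG system. Concretely, suppose $\Theta$ has an $\L$-cycle visiting distinct vertices $x_1, x_2, \dots, x_k$ with $k \ge 3$, consisting of the $\L$-edges joining $x_i$ to $x_{i+1}$ for $1 \le i \le k$ (with the convention $x_{k+1} = x_1$). Since the $x_i$ are distinct and $k \ge 3$, these are $k$ distinct edges of $\L$, and so correspond to $k$ distinct generators. For each $i$ let $g_i \in E(\L)$ be the chosen generator for the edge $\{x_i, x_{i+1}\}$; as an element of $W_\G$ it equals either $x_i x_{i+1}$ or $x_{i+1} x_i$. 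Let $v_i \in V(\D)$ be the vertex of the commuting graph with $\phi(v_i) = g_i$, so that the $v_i$ are pairwise distinct.

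The key computation is the following telescoping identity. Choose signs $\epsilon_i \in \{+1, -1\}$ so that $g_i^{\epsilon_i} = x_i x_{i+1}$ in $W_\G$, and set $w = v_1^{\epsilon_1} v_2^{\epsilon_2} \cdots v_k^{\epsilon_k} \in A_\D$. Applying $\phi$ and repeatedly using $x_i^2 = 1$ gives
\[
\phi(w) = (x_1 x_2)(x_2 x_3)\cdots(x_{k-1}x_k)(x_k x_1) \gequal 1,
\]
since every interior pair $x_{i+1}x_{i+1}$ cancels, leaving $x_1 \cdots x_1 \gequal 1$.

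It then remains to check that $w \neq 1$ in $A_\D$, which I would do via the abelianization $A_\D^{\mathrm{ab}} \cong \mathbb{Z}^{V(\D)}$. The image of $w$ there is $\sum_{i=1}^{k} \epsilon_i\,[v_i]$, a nonzero vector because the $[v_i]$ are distinct standard basis elements and each coefficient is $\pm 1$. Hence $w \neq 1$ while $\phi(w) \gequal 1$, contradicting injectivity of $\phi$. Therefore no $\L$-cycle can exist, i.e.\ $\Theta$ satisfies $\mathcal R_1$.

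There is no serious analytic difficulty here; the argument is short and entirely combinatorial. The only points requiring care are bookkeeping. First, one must verify that the $k$ generators $v_i$ are genuinely distinct — this is exactly where the hypotheses $k \ge 3$ and distinctness of the cycle vertices are used, and it is what guarantees the abelianized image cannot vanish. Second, one must track the orientation choices $\epsilon_i$ correctly so that the product telescopes cleanly to the identity rather than to some nontrivial element.
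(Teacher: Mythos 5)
Your proof is correct and follows essentially the same route as the paper: both exhibit the telescoping product of the cycle's $\L$-edges as a nontrivial element of $\ker\phi$. The only difference is in certifying nontriviality of that element in $A_\D$ --- the paper invokes the deletion condition together with the observation that $g_i \neq g_j^{-1}$, whereas you pass to the abelianization; your version is equally valid and arguably more elementary.
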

\begin{proof}
 Suppose $\Theta$ does not satisfy $\mathcal R_1$.  Then it contains a $\L$-cycle, say with vertices $a_1, \dots, a_k$, where $k\ge 3$, 
such that 
for each $i$ (mod k),  $a_i$ is connected to $a_{i+1}$ by a $\L$-edge. 
Let $g_i$
be the generator of $A_\Delta$ (or its inverse) corresponding to the (oriented) $\L$-edge $a_ia_{i+1}$. 
As the $a_i$'s are along a cycle, no $\L$-edge is repeated, and we have that    $g_i \neq g_j^{-1}$ for all $i \neq j$.
 This, together with the fact that RAAGs satisfy the deletion condition 
 (see Theorem~\ref{thm_raag_char}), 
 implies that 
$g=g_1g_2\dots g_{k}$ is a non-trivial element of $A_\Delta$.
 Moreover, 
$\phi(g)= (a_1a_2) (a_2a_3)\dots (a_ka_1) =1$, so $g$ is in the kernel of $\phi$, and therefore $\phi$ is not injective.
\end{proof}

\begin{definition}[Condition $\mathcal R_2$]\label{def:R2}
We say that  $\Theta$ satisfies \emph{condition $\mathcal R_2$} if each component of 
$\L \subset \Theta$ (with the natural inclusion)
is an induced subgraph of~$\Theta$.
\end{definition}

\begin{lemma} [\cite{laforge}] \label{lem:no-almost-cycle} 

If $G^\Theta$ is a RAAG, then $\Theta$ satisfies $\mathcal R_2$.
\end{lemma}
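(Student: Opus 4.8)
The plan is to prove the contrapositive, exploiting the fact that every RAAG is torsion-free (as already invoked in the proof of Theorem~\ref{thm_raag_char}). So I would assume that $\Theta$ fails $\mathcal R_2$ and produce a nontrivial torsion element in $G^\Theta$.

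First I would translate the failure of $\mathcal R_2$ into usable combinatorial data. A connected component $C$ of $\L$ already contains every $\L$-edge joining two of its vertices, and since $E(\L)\subset E(\G^c)$ no pair of vertices is joined by both a $\G$-edge and an $\L$-edge. Hence $C$ fails to be an induced subgraph of $\Theta$ exactly when some $\G$-edge joins two vertices of $C$. Negating $\mathcal R_2$ therefore yields distinct vertices $u,v$ lying in a common component of $\L$ and joined by a $\G$-edge; in particular $u$ and $v$ commute in $W_\G$.

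Next I would build the torsion element. Choosing an $\L$-path $u=x_0,x_1,\dots,x_m=v$ in that component, each product $x_{i-1}x_i$ is, up to inverse, an element of $E(\L)$, and since $G^\Theta$ is a group it contains $x_{i-1}x_i$ regardless of the orientation chosen for the $\L$-edge. The product $(x_0x_1)(x_1x_2)\cdots(x_{m-1}x_m)$ then telescopes: every interior factor $x_ix_i$ collapses because $x_i^2=1$ in $W_\G$, leaving $x_0x_m=uv$. Thus $uv\in G^\Theta$. Since $u$ and $v$ are distinct commuting involutions, $(uv)^2=u^2v^2=1$ while $uv\neq 1$, so $uv$ has order two. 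As a RAAG is torsion-free, $G^\Theta$ cannot be a RAAG, which is the desired contrapositive.

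I do not expect a genuine obstacle here; the proof is short once the right invariant is identified. The one point deserving care, and the feature that distinguishes this argument from that of Lemma~\ref{lem:nocycle}, is the hypothesis: there one assumes $(G^\Theta,E(\L))$ is a RAAG \emph{system} and may invoke the deletion condition on $E(\L)$, whereas here one only knows that $G^\Theta$ is \emph{abstractly} isomorphic to some RAAG. This forces the argument onto an isomorphism invariant, and torsion-freeness is precisely the feature that the offending $\G$-edge destroys. The only bookkeeping to verify is the first step—that failure of $\mathcal R_2$ means a $\G$-edge inside an $\L$-component—together with the observation that the telescoped element is simultaneously nontrivial and of order two.
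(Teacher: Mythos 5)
Your proof is correct and follows essentially the same route as the paper's: both extract a $\G$-edge $uv$ inside an $\L$-component, telescope an $\L$-path from $u$ to $v$ to show $uv\in G^\Theta$, and conclude from $(uv)^2=1$, $uv\neq 1$ that $G^\Theta$ has torsion, contradicting torsion-freeness of RAAGs. The only cosmetic difference is that the paper first lifts the telescoping word to a nontrivial element of $A_\Delta$ before pushing it into $W_\G$, whereas you work directly in $W_\G$; this changes nothing essential.
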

\begin{proof}
 Suppose $\Theta$ does not satisfy $\mathcal R_2$, and let $u$ and $v$ be a pair of vertices in 
a component of $\L$, such that $u$ and $v$ are adjacent in $\Theta$. It follows that $u$ and $v$ are connected by a $\G$-edge, and therefore, they commute.
Since $u$ and $v$ are in the same component of $\L$, there is a simple
 $\L$-path from $u$ to $v$ whose vertices (in order) are  $u=a_1, \dots, a_k=v$.
Note that $k \ge 3$, since  $\Theta$ is a simplicial graph. 
For $1 \le i\le k-1$, 
let 
 $g_i$ 
be the generator of $A_\Delta$ (or its inverse) corresponding to the $\L$-edge $a_ia_{i+1}$, and let $g = g_1g_2\dots g_{k-1}$. 
The
element $g$  
is a non-trivial element of $A_\D$, 
as RAAGs satisfy the deletion condition by Theorem~\ref{thm_raag_char}.

We now have that $\phi(g)^2 =  \big((a_1a_2) (a_2a_3)\dots (a_{k-1}a_k) \big)^2 =(a_1a_k)^2 =(uv)^2 =1$, 
since $u$ and $v$ commute. This implies that $G^\Theta$ has torsion. Thus, $G^\Theta$ cannot be a RAAG as RAAGs are torsion-free~(see \cite{charney}). 
\end{proof}

Our next condition, $\mathcal R_3$, is motivated by the following example. 
\begin{example}
Let $\Theta$ be the graph in Figure~\ref{fig:C3-ex}, where the $\G$ edges are black and the $\Lambda$ edges are colored.  Since $u$ and $v$ each commute with $x$ and $z$, the commutator 
$[uv, xz] $ represents the trivial element in $W_\G$.  Now observe that $ [uv, xz]  \gequal (uv) (xy)(yz) (vu)(zy)(yz)$, which is a product of $\L$-edges, and therefore represents an element $g$ of $G^\Theta$. Now we can see that $(G^\Theta, E(\L))$ is not a RAAG system: if it were, then it would be possible to show that $g$ is trivial in $G^\Theta$ using only swap and deletion moves involving RAAG generators.  However, since 
$y$ does not commute with $u$ and $v$, no such moves are possible (see Lemma~\ref{lem:commuting}).  On the other hand, if there had been $\G$ edges, from $y$ to both $u$ and $v$, then there would be no contradiction.

\begin{figure}[h!]
\medskip\begin{overpic}[scale=0.6]
{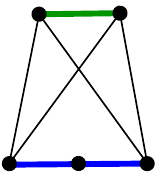}
\put(17, 99){\tiny $u$}
\put(71, 99){\tiny $v$}
\put(0, -7){\tiny $x$}
\put(42, -7){\tiny $y$}
\put(85, -7){\tiny $z$}
\end{overpic}
\caption{
} 
\label{fig:C3-ex}
\end{figure}
\end{example}

A $\Lambda$-edge word similar to the one in the above example can be constructed whenever $\G$ has a square whose vertices alternate between two components of $\Lambda$.   The example suggests that for such a $\Lambda$ to define a RAAG, the ``intermediate'' vertices in $\Lambda$  between the endpoints of the square  must all mutually commute.   This is made precise in 
the definition of $\mathcal R_3$ (Definition~\ref{def:R3}) and Lemma~\ref{lem:4cycle} below.  Before stating these, we introduce some terminology, which will be used throughout this section.

\begin{definition}\label{def:path-2-cpt} 
(2-component paths and cycles).
We say the $\G$-path $\gamma$ in $\Theta$ is a \emph{2-component path} if $\gamma$ visits vertices (in order) $c_1, d_1, c_2, d_2 \dots, c_{n}, d_{n}$ for some $n\ge 1$ (where $d_n$ could be 
omitted 
 if $n>1$) such that  the $c_i$'s all lie in a single component $\L_c$ of $\L$, and the $d_i$'s all lie in a single component $\L_d \neq \L_c$ of $\L$.   
If it is important to emphasize the components visited by $\gamma$, we will call it a $\L_c\L_d$-path.  

A \emph{2-component loop} 
is a 2-component path visiting $c_1, d_1, \dots, c_n, d_n, c_{n+1}$ such that $c_1=c_{n+1}$.  A \emph{2-component cycle} is a 2-component loop which is a 
 $\G$-cycle. A 2-component cycle of length four will be called a \emph{2-component square}.
\end{definition}

\begin{definition}\label{def:conv-hull} ($\L$-convex hull)
We define the \emph{$\L$-convex hull} of a set $X \subset V(\Theta)$ to be the convex hull of $X$ in $\L$. 
\end{definition}

\begin{definition}[Condition $\mathcal R_3$]\label{def:R3}
We say that  $\Theta$ satisfies \emph{condition $\mathcal R_3$} if the following holds for every 2-component square in $\Theta$.
Consider a $2$-component square in $\Theta$ visiting 
 vertices $c_1, d_1, c_2, d_2$, where $c_1, c_2 \in \L_c$,  $d_1, d_2 \in \L_d$, 
and  $\L_c, \L_d$ are distinct components of $\L$.  Then the graph $\G$ contains the join of $V(T_c)$ and $V(T_d)$, where $T_c$ and $T_d$ are the $\L$-convex hulls of $\{c_1, c_2\}$ and $\{d_1, d_2\}$ respectively. 
(See Figure~\ref{fig:C3}.)
\end{definition}

\begin{figure}[h!]
\medskip\begin{overpic}[scale=0.6]
{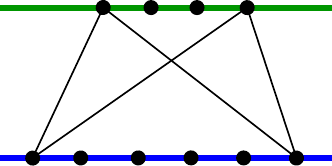}
\put(22, 52){\tiny $c_1$}
\put(76, 52){\tiny $c_2$}
\put(1, -6){\tiny $d_1$}
\put(92, -6){\tiny $d_2$}
\put(104, 45){\tiny $\L_c$}
\put(104, 0){\tiny $\L_d$}
\put(30, 52){\tiny $\overbrace{\hspace{0.6in}}^{T_c} $}
\put(10, -2){\tiny $\underbrace{\hspace{1.05in}}_{T_d} $}
\end{overpic}
\medskip
\caption{
In the figure, 
the colored parts consist of $\L$-edges, and the black parts consist of $\G$-edges.  The condition $\mathcal R_3$ says that if $\Theta$ contains a black square as shown, then every vertex of $T_c$ is joined by a $\G$-edge to every vertex of $T_d$.
} 
\label{fig:C3}
\end{figure}

We will often need to utilize an expression for a word in $W_\G$ which is the product of $\L$-edges. This construction is the content of the following definition.
\begin{definition}[$\L$-edge words] \label{def:edge_words}
	Suppose $\Theta$ satisfies condition $\mathcal{R}_1$, and let 
 $w$ be a word in $W_\G$ such that $w = (a_1a_1')(a_2 a_2')\dots (a_na_n')$, where 
	 $a_i$ and $a_i'$ are in the same $\L$-component of $\Theta$ for each $1 \le i \le n$. 
	 As $\Theta$ satisfies $\mathcal{R}_1$, there is a unique simple 
	$\L$-path from $a_i$ to $a_i'$. Let $a_i = a_1^i, \dots, a_{m_i}^i = a_i'$ be the vertices visited by this path. Form the word:
	\[w' = \Big((a_1^1 a_2^1)(a_2^1 a_3^1)\dots (a_{(m_1 - 1)}^1 a_{m_1}^1) \Big) \dots \Big( (a_1^n a_2^n)(a_2^n a_3^n) \dots (a_{(m_n - 1)}^n a_{m_n}^n) \Big)     \]
	We call $w'$ the \textit{$\L$-edge word} associated to $w$. Note that $w'$ is well-defined, 
		as long as $\Theta$ satisfies $\mathcal{R}_1$. In particular if $(G^\Theta, E(\L))$  is a RAAG system, then $w'$ is well-defined by Lemma~\ref{lem:nocycle}.
	Also note that $w \gequal w'$ and $w'$ is a product of $\L$-edges. 
\end{definition}

\begin{remark}\label{rmk:lambda_edge_words}
	Suppose that $\Theta$ satisfies $\mathcal R_1$ and that $a, a' \in \Theta$ are two vertices in the same $\Lambda$-component. Let $w' = (a_1 a_2)(a_2 a_3) \dots (a_{n-1}a_n)$ be the $\Lambda$-edge word associated to $w = aa'$ (in particular $a = a_1, a_2, \dots, a_n = a'$ is the unique simple $\Lambda$-path from $a$ to $a'$). We remark that given a $\Lambda$-edge $xy$ of $\Theta$, there is at most one occurrence of one of $xy$ or $yx$ in $w'$. This fact will be relevant in the proofs of the next two lemmas.
\end{remark}

	Before diving into the next lemma, we briefly discuss some of the ideas used in its proof, and the proof of Lemma~\ref{lem:cycles}.
	In each case, we will have a word $w$ over the RACG $W_\Gamma$ representing the identity element.
	We then find a $\Lambda$-edge word $w'$ associated to $w$ as in Definition~\ref{def:edge_words}. The word $w'$ has a natural decomposition into $\Lambda$-edges, $w' = (s_1s_1')\dots(s_{n} s_n')$. Moreover, there is a RAAG generator $g_i \in \Delta$ associated to each $s_i s_i' = \phi(g_i)$.
	By a slight abuse of notation, we also think of $w' = g_1 \dots g_n$ as a word over the RAAG $A_\Delta$. 
	Doing so, we consider a disk diagram $D$ \emph{in the RAAG} $A_\Delta$ with boundary $g_1 \dots g_n$.
	The edges of $D$ are labeled by the $g_i$'s. To simplify things, by another abuse of notation we also think of these edges as labeled by the $\Lambda$-edges $s_is_i'$.
	We use the intersection patterns of hyperplanes in $D$ to deduce commuting relations between the generators of the RAAG. Consequently, this gives us commuting relations between the $\Lambda$-edges and for generators in the RACG $W_\Gamma$.

\begin{lemma}\label{lem:4cycle}
	If $(G^\Theta, E(\L))$ 
	is a RAAG system, then $\Theta$ satisfies $\mathcal R_3$.
\end{lemma}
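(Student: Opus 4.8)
The plan is to prove the statement directly: assuming $(G^\Theta, E(\L))$ is a RAAG system, I would produce, for each $2$-component square, an element of $A_\D$ whose image under $\phi$ is trivial, and then use injectivity of $\phi$ to extract the required join. Since $(G^\Theta,E(\L))$ is a RAAG system, $\phi\colon A_\D\to G^\Theta$ is an isomorphism and, by Lemma~\ref{lem:nocycle}, $\Theta$ satisfies $\mathcal R_1$; hence each component of $\L$ is a forest, and the $\L$-convex hulls $T_c,T_d$ of $\{c_1,c_2\}$ and $\{d_1,d_2\}$ are the unique simple $\L$-paths joining these pairs. Write these paths as $c_1=x_0,x_1,\dots,x_p=c_2$ and $d_1=y_0,y_1,\dots,y_q=d_2$.

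Next I would pass to the associated $\L$-edge words (Definition~\ref{def:edge_words}). Let $U\in A_\D$ be the product of the generators (suitably inverted) corresponding to the consecutive $\L$-edges $x_{i-1}x_i$ of $T_c$, and $V\in A_\D$ the analogous product for the edges $y_{j-1}y_j$ of $T_d$, chosen so that $\phi(U)=c_1c_2$ and $\phi(V)=d_1d_2$ in $W_\G$ (the intermediate letters telescope). The key point is that the four $\G$-edges of the square force $c_1$ and $c_2$ to each commute with $d_1$ and with $d_2$ in $W_\G$, whence $\phi(U)=c_1c_2$ and $\phi(V)=d_1d_2$ commute. Therefore $\phi\big([U,V]\big)=[\phi(U),\phi(V)]=1$, and injectivity of $\phi$ gives $[U,V]=1$, i.e.\ $U$ and $V$ commute in $A_\D$.

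Finally I would unwind what commutativity of $U$ and $V$ in the RAAG $A_\D$ says about the defining graph. Since $T_c$ and $T_d$ lie in distinct components of $\L$, the generators appearing in $U$ are disjoint from those in $V$; moreover each of $U,V$ is reduced, since a simple path in a forest traverses no edge twice, so no letter is the inverse of another, and $UV$, $VU$ have no cancellation because the supports are disjoint. Thus $UV$ and $VU$ are reduced words representing the same element, so by the Tits solution to the word problem (Theorem~\ref{thm_tits_solution}) one is obtained from the other by swaps; moving every $V$-letter past every $U$-letter forces each generator of $U$ to commute in $A_\D$ with each generator of $V$. For a fixed pair of edges $x_{i-1}x_i$ and $y_{j-1}y_j$, this means $(x_{i-1}x_i)(y_{j-1}y_j)\gequal(y_{j-1}y_j)(x_{i-1}x_i)$ in $W_\G$; applying Lemma~\ref{lem:commuting} and discarding the degenerate cases $t=q,s=r$ and $t=r,s=q$ (impossible here, as they would identify a vertex of $\L_c$ with one of $\L_d$), I conclude that $x_{i-1},y_{j-1},x_i,y_j$ span a square in $\G$. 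Letting $i$ and $j$ range over all edges of the two paths yields every $x_a$ adjacent to every $y_b$, i.e.\ $\G$ contains the join of $V(T_c)$ and $V(T_d)$, which is exactly $\mathcal R_3$. The main obstacle is this last translation step: one must carry the orientation choices through the $\L$-edge words and handle the degenerate alternatives in Lemma~\ref{lem:commuting}, while justifying that commutativity of $U,V$ in $A_\D$ is equivalent to letterwise commutativity --- the one genuinely RAAG-specific input, supplied by the Tits solution.
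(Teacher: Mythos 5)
Your proposal is correct and follows essentially the same route as the paper: form the commutator $[c_1c_2,d_1d_2]$, pass to $\L$-edge words, use injectivity of $\phi$ to get $[U,V]=1$ in $A_\D$, extract letterwise commutation of the $\L$-edges, and finish with Lemma~\ref{lem:commuting} (the degenerate cases there being excluded because $\L_c\neq\L_d$). The only difference is cosmetic: where you invoke the Tits solution to the word problem on the reduced words $UV\gequal VU$ to force each generator of $U$ past each generator of $V$, the paper reads off the same letterwise commutation from intersecting hyperplanes in a disk diagram over $A_\D$ with boundary label the commutator.
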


\begin{remark}(Comparison of Lemma~\ref{lem:4cycle} with Laforge's chain-chord condition.) 
In~\cite[Lemma 8.2.3]{laforge}, LaForge introduced a necessary condition called the chain-chord condition which, if interpreted in the language of joins and 2-component cycles, is close to our condition $\mathcal R_3$.  We note that there are  errors in the statement and proof 
of~\cite[Lemma 8.2.3]{laforge}.  
\end{remark}

\begin{proof}[Proof of Lemma~\ref{lem:4cycle}]
Suppose there is a 2-component square $\gamma$ in 
 $\Theta$ 
visiting vertices $c_1, d_1,$ $c_2, d_2$ as in condition $\mathcal{R}_3$. Let $\L_c$ and $\L_d$ be the components of $\L$ respectively containing $\{c_1, c_2\}$ and $\{d_1, d_2\}$. Let $T_c$ and $T_d$ be the $\L$ convex hulls respectively of $\{c_1, c_2\}$ and $\{d_1, d_2\}$. 
By Lemma~\ref{lem:nocycle}, 
 there is a unique simple $\L$-path from $c_1$ to $c_2$ (resp.~$d_1$ to $d_2$) and this path is equal to 
$T_c$ (resp.~$T_d$).

Let $w$ denote the commutator $[c_1c_2, d_1d_2]$. 
 The existence of $\gamma$ tells us that $c_1$ and $c_2$ both commute with $d_1$ and $d_2$, so $w$ represents the identity in $W_\G$. 

Let $w_1$, $w_2$ and $w'$ be the $\L$-edge words associated to respectively $c_1c_2$, $d_1d_2$ and $w$.
As $\phi$ is injective $w'$ represents the trivial element of $A_\Delta$, and there is a disk diagram $D$ over $A_\Delta$ with boundary label $w'$.  
We warn that the edges of $D$ are labelled by $\L$-edges, i.e., generators of $A_\Delta$.
We will analyze hyperplanes of this diagram.  

Let $p_{w_1}$, $p_{w_2}$, $p_{w_1^{-1}}$ and $p_{w_2^{-1}}$ be the paths in $\partial D$ with labels $w_1$, $w_2$, $w_1^{-1}$ and $w_2^{-1}$ respectively.
For $i \in \{1, 2\}$, the word $w_i$ (thought of as a word over $V(\Delta) = E(\L)$) does not contain any repeated letters 
	(or their inverses) 
	in $V(\Delta)$ 
	by Remark~\ref{rmk:lambda_edge_words}.  
Consequently, a hyperplane is dual to at most one edge of $p_{w_1}$ (resp. $p_{w_2}$, $p_{w_1^{-1}}$ and $p_{w_2^{-1}}$). Furthermore, $w_1$ and $w_2$ are words over respectively $E(T_c)$ and $E(T_d)$. As $\L_c$ and $\L_d$ are distinct components of $\L$, a hyperplane dual to an edge of $p_{w_1}$ must be dual to an edge of $p_{w_1^{-1}}$ and vice versa. A similar statement holds for hyperplanes dual to $p_{w_2}$ and $p_{w_2^{-1}}$.

It follows that every hyperplane dual to $p_{w_1}$ intersects every hyperplane dual to $p_{w_2}$. Consequently, every $\L$-edge in the word $w_1$ commutes with every $\L$-edge in the word $w_2$.  Since $\phi$ is a homomorphism, the Coxeter group elements corresponding to these $\L$-edges must commute as well.  
By Lemma~\ref{lem:commuting} each vertex of $T_c$ commutes with each vertex of $T_d$. 
The result~follows.
\end{proof}

The next example shows that the conditions obtained so far are not sufficient for $(G^\Theta, E(\L))$ to be a RAAG system.

\begin{example}\label{ex:R4_necessary}
Let $\G$ be a hexagon, and let $\L$ be the graph with two components shown on the left side in Figure~\ref{fig:hexagon}.  It is clear that $\mathcal R_1, \mathcal R_2$, and $\mathcal R_3$ are satisfied.  
However, by considering the word $w=
(c_1c_2)(d_1d_2)(c_2c_3)(d_2d_3)(c_{3}c_{1})(d_{3}d_1)$
we can see that $(G^\Theta, E(\L))$ is not a RAAG system.  Specifically, the commutation relations specified by $\G$-edges show that $w\gequal 1$ in $W_\G$.  Moreover $w$ can be expressed as a product of $\L$-edges using the $\L$-edge words corresponding to each parenthetical element.  However, it is not possible to reduce this word to the empty word using just swap and deletion moves involving the $\Lambda$-edges, and as a result, $(G^\Theta, E(\L))$ cannot be a RAAG system.  A rigorous proof of this fact follows from Lemma~\ref{lem:cycles} below. 
\end{example}

\begin{figure}
\begin{overpic}[scale=1.1]
{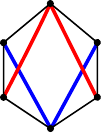}
\put(31, 105){\tiny $c_1$}
\put(78, 27){\tiny $c_2$}
\put(-12 , 27){\tiny $c_3$}
\put(78, 68){\tiny $d_1$}
\put(31, -7){\tiny $d_2$}
\put(-12, 68){\tiny $d_3$}
\put(13, 46){\tiny $x$}
\put(58, 46){\tiny $y$}
\end{overpic}
\hspace{1.3in}
\begin{overpic}[scale=1.1]
{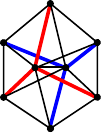}
\put(31, 105){\tiny $c_1$}
\put(78, 27){\tiny $c_2$}
\put(-12 , 27){\tiny $c_3$}
\put(78, 68){\tiny $d_1$}
\put(31, -7){\tiny $d_2$}
\put(-12, 68){\tiny $d_3$}
\put(13, 46){\tiny $x$}
\put(58, 46){\tiny $y$}
\end{overpic}
\caption{The graph on the left concerns Example~\ref{ex:R4_necessary} and the graph of the right concerns Example~\ref{ex:R4_complicated}.
}
\label{fig:hexagon}
\end{figure}

Example~\ref{ex:R4_necessary} shows that at least one additional condition is needed in order to obtain a characterization of visual RAAGs, 
and suggests that this condition may be a generalization of $\mathcal R_3$ involving longer $2$-component cycles instead of squares.
It is tempting to conjecture that, given any $\L_c\L_d$-cycle with corresponding 
$\L$-convex hulls $T_c$ and $T_d$, 
the graph $\G$ 
contains the join of $V(T_c)$ and $V(T_d)$ (as is the case when the cycle has length four, by Lemma~\ref{lem:4cycle} above). 
However, the following example shows this is not necessarily true for longer cycles. 

\begin{example}\label{ex:R4_complicated}
In Figure~\ref{fig:hexagon}, let $\Theta$ be the graph on the right where $\G$-edges are black and $\L$ edges are colored. Observe that $\L$ has two components, colored red and blue.  
Consider the 2-component cycle visiting vertices $c_1, d_1, c_2, d_2, c_3, d_3, c_1$.  
 Then $T_c$ is the entire red tree and $T_d$ is the entire blue tree.  However, $\G$ does not contain the join of $V(T_c)$ and $V(T_d)$.  (For example, 
  there is no edge in $\G$ connecting $c_1$ and $d_2$.) 
On the other hand, 
 $(G^\Theta, E(\L))$ is a RAAG system in this case.  (See 
Corollary~\ref{cor:example} for a proof.)
\end{example}

Despite the fact that $\mathcal R_3$ does not generalize to a necessary condition on longer cycles in the obvious way, the following weaker statement does turn out to be necessary to guarantee that $(G^\Theta, E(\L))$ is a RAAG system and is missing from~\cite{laforge}.

\begin{definition}[Condition $\mathcal R_4$]\label{def:R4}
	We say that  $\Theta$ satisfies \emph{condition $\mathcal R_4$} if the following holds.  
	Let $\gamma$ be   any
	$\L_c\L_d$-cycle in $\Theta$ visiting vertices 
	$c_1, d_1, c_2, d_2 \dots, c_{n}, d_{n}, c_1$ for some $n\ge 2$. 
	Let $T_c$ and $T_d$ be the $\L$-convex hulls of $\{c_1, \dots, c_n\}$ and $\{d_1, \dots, d_n\}$  respectively.
	Then every edge of $\gamma$ is contained in a 
	2-component square of $\Theta$ 
	with two vertices in $T_c$ and two vertices in $T_d$. 
	 (See Figure~\ref{fig:C4}.)
\end{definition}

\medskip\begin{figure}[h!]
	\begin{overpic}[scale=0.6]
		{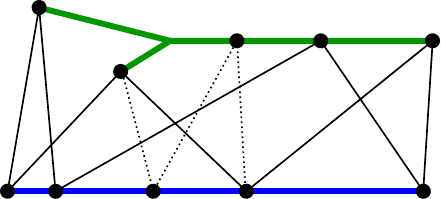}
		\put(7, 47){\tiny $c_1$}
		\put(70, 40){\tiny $c_2$}
		\put(94, 40){\tiny $c_3$}
		\put(20, 31){\tiny $c_4$}
		\put(10, -5){\tiny $d_1$}
		\put(97, -5){\tiny $d_2$}
		\put(54, -5){\tiny $d_3$}
		\put(-1, -5){\tiny $d_4$}
	\end{overpic}
	\medskip
	\caption{This figure illustrates condition $\mathcal{R}_4$. The green subgraph is $T_c$ and the blue subgraph is $T_d$.  The condition says that any edge in the 2-component cycle (shown in solid black edges) is part of a square of $\G$ with two vertices in $T_c$ and two in $T_d$.  This is illustrated for the edge from $d_3$ to $c_4$. The dotted lines are $\G$-edges which are not necessarily in the 2-component cycle.} 
	\label{fig:C4}
\end{figure}

The next lemma shows $\mathcal{R}_4$ is necessary for $(G^\Theta, E(\L))$ to be a RAAG system.

\begin{lemma}\label{lem:cycles}
If $(G^\Theta, E(\L))$ is a RAAG system, then $\Theta$ satisfies $\mathcal{R}_4$.
\end{lemma}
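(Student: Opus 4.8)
The plan is to mimic the proof of Lemma~\ref{lem:4cycle}, but to run the hyperplane argument along a longer $\L_c\L_d$-cycle and extract information about each individual edge rather than about the whole cycle at once. Suppose $\Theta$ does not satisfy $\mathcal{R}_4$; then there is an $\L_c\L_d$-cycle $\gamma$ visiting $c_1, d_1, c_2, d_2, \dots, c_n, d_n, c_1$ (with $n \ge 2$) and some $\G$-edge $e$ of $\gamma$ that is \emph{not} contained in a $2$-component square with two vertices in $T_c$ and two in $T_d$. By symmetry I may assume $e$ joins some $c_i$ to some $d_j$ (an adjacent pair along $\gamma$). Since $\gamma$ is a $\G$-cycle, the product $g = (c_1 d_1)(d_1 c_2)(c_2 d_2)\cdots$ traversing the edges of $\gamma$ represents the identity in $W_\G$: each consecutive pair of vertices along the cycle is joined by a $\G$-edge and hence commutes, so the alternating word formed from the cycle's vertices collapses to the identity. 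The goal is to build a disk diagram from this relation and use a hyperplane that is forced to self-intersect or to cross an edge it should not, contradicting that $D$ has no bigons.

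First I would set up the relation carefully. Because $\gamma$ alternates between the two components, the element read around $\gamma$ can be written as a product of $\L$-edges using Definition~\ref{def:edge_words}: replace each step $c_i \to d_j$ (a $\G$-edge, so the two vertices commute) together with the $\L$-path segments inside $T_c$ and $T_d$ to produce a word $w'$ over $E(\L) = V(\D)$. The assumption that $\phi$ is an isomorphism (i.e.\ $(G^\Theta, E(\L))$ is a RAAG system) tells me $w'$ represents the identity in $A_\D$, so there is a bigon-free disk diagram $D$ over $A_\D$ with boundary label $w'$, whose edges are labelled by generators of $A_\D$ (that is, by $\L$-edges). Then I would partition $\partial D$ into the arcs corresponding to the $T_c$-portions and the $T_d$-portions of the cycle, exactly as in Lemma~\ref{lem:4cycle}, and track which hyperplanes are dual to which arcs. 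As before, since $\L_c$ and $\L_d$ are distinct components, a hyperplane dual to a $T_c$-arc must pair with another $T_c$-arc, and similarly for $T_d$; a hyperplane cannot cross between the two.

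The crux is to locate the failing edge $e$ inside this hyperplane picture. The edge $e$ of $\gamma$ lies at a junction between a $T_c$-arc and a $T_d$-arc on $\partial D$, and the two hyperplanes of $D$ crossing there are labelled by the $\L$-edges incident to $c_i$ and to $d_j$ along $\gamma$. By following these two hyperplanes across $D$ and using that hyperplanes dual to $T_c$-arcs must cross hyperplanes dual to $T_d$-arcs (the commuting forced by Lemma~\ref{lem:commuting}), I expect to recover a $2$-component square of $\Theta$ built precisely from $e$ together with two $\L$-hull vertices on each side — which is exactly what $\mathcal{R}_4$ asserts and what I assumed fails. Concretely, two hyperplanes that cross in $D$ have labels that are distinct commuting generators of $A_\D$; translating through $\phi$ and Lemma~\ref{lem:commuting}, the corresponding $\L$-edges of $T_c$ and $T_d$ produce either a square in $\G$ or a coincidence of endpoints, and in either case I can read off a $2$-component square through $e$ with its required two-and-two vertex distribution, contradicting the choice of $e$.

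The main obstacle I anticipate is the bookkeeping that pins down \emph{which} hyperplanes cross at the corner corresponding to $e$, and why their crossing yields a square containing $e$ rather than some other edge. In the length-four case of Lemma~\ref{lem:4cycle}, every $T_c$-hyperplane automatically crosses every $T_d$-hyperplane, so one gets the full join with no need to localize; here, for longer cycles, that full join genuinely fails (Example~\ref{ex:R4_complicated}), so I cannot argue that all pairs cross. Instead I must argue only about the \emph{adjacent} hyperplanes at the $e$-corner and show they are forced to cross because the boundary word around that corner, after Tits reduction (Theorem~\ref{thm_tits_solution}) and removal of bigons (Lemma~\ref{lem_remove_pathologies}), leaves no room for them to be separated. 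I would handle this by examining the dual edges of these two hyperplanes on $\partial D$ using the boundary-combinatorics-preserving machinery of Lemma~\ref{lem:new_diagrams}, tracing how the two hyperplanes emanating from $e$ must re-meet the boundary, and checking that their reentry points, interpreted back in $\Theta$ via $\phi$ and Lemma~\ref{lem:commuting}, furnish the two $T_c$ and two $T_d$ vertices of the desired square. Getting this localization argument clean — especially ruling out degenerate cases where hyperplanes returning to the boundary coincide with vertices of $\gamma$ in an unexpected way — is where the real work lies.
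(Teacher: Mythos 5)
Your overall strategy --- turn the cycle into a relation among $\L$-edges, build a disk diagram over $A_\Delta$, and extract squares from crossing hyperplanes --- is the right family of ideas, but there are two genuine gaps.

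First, the relation itself. You propose to read the word around $\gamma$ as $(c_1d_1)(d_1c_2)\cdots$ and then convert it to a word over $E(\L)$ via Definition~\ref{def:edge_words}. But each factor $(c_id_j)$ is a $\G$-edge whose endpoints lie in \emph{different} components of $\L$, so it has no $\L$-edge word: Definition~\ref{def:edge_words} applies only to factors whose two letters lie in a common $\L$-component, and there is no $\L$-path from $c_i$ to $d_j$. The word that actually works pairs consecutive vertices of the \emph{same} color, namely $w=(d_{n}d_1)(c_1c_2)(d_1d_2)(c_2c_3)\cdots(d_{n-1}d_n)(c_nc_1)$; each factor is a same-component pair, the word collapses to the identity using the commutations supplied by the $\G$-edges of $\gamma$, and its $\L$-edge word is a legitimate element of $\ker\phi$. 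Without this (or an equivalent) construction you have no boundary word for $D$.

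Second, and more seriously, the localization step you yourself flag as ``where the real work lies'' is the missing idea, and its direct form is false. The two hyperplanes dual to the boundary edges flanking the corner corresponding to $e$ are not forced to cross: whether two hyperplanes in a bigon-free diagram intersect is determined by whether their endpoint pairs are linked on $\partial D$, and the far endpoints of these two hyperplanes can land so that they are nested rather than linked. Example~\ref{ex:R4_complicated} confirms that no ``all pairs cross'' conclusion is available, so a purely local corner argument cannot succeed. The argument that does work is a global double induction on $(n,|E(T_c)|)$: choose a \emph{leaf} $c_j$ of the tree $T_c$, observe that the leaf edge $xc_j$ occurs exactly twice in the boundary word, flanking the subword expressing $d_{j-1}d_j$, so the unique hyperplane labeled $xc_j$ is crossed by \emph{every} hyperplane dual to that subword; this forces $x$ to commute with $d_{j-1}$ and $d_j$. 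One then either finds a diagonal of $\gamma$ (when $x=c_t$ for some $t$) and cuts $\gamma$ into two shorter $2$-component cycles, or replaces $c_j$ by $x$ to strictly shrink $T_c$, and the induction delivers the required square for every edge of $\gamma$ at once. Your proof-by-contradiction framing, which fixes one bad edge and tries to treat it in isolation, does not interact with this reduction and cannot be completed as stated.
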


\begin{proof}
Let $\gamma$ be a $\L_c\L_d$-cycle visiting vertices $c_1, d_1, \dots, c_n, d_n, c_1$, and let $T_c$ and $T_d$ be as in Definition~\ref{def:R4}.
Let $w$ be the word below.
\begin{equation}\label{eq:word-g}
w= (c_1c_2)(d_1d_2)(c_2c_3)(d_2d_3)\cdots (d_{n-1}d_{n})(c_{n}c_{1})(d_{n}d_1)
\end{equation}
Then $w \gequal 1$ in $W_\G$.  To see this, note that for each $i$, we know that 
$c_i$ commutes with $d_{i-1}$ and $d_{i}$ (where $i$ is taken mod $n$).  Using this we can cancel the $c_i$ for $i>1 $ in pairs to get
$$w \gequal c_1 d_1d_2 d_2d_3\cdots  d_{n-1}d_{n}c_1 d_{n} d_1 \gequal 
 c_1 d_{1}c_{1}d_{1} \gequal 1
$$ 
Let $w'$ be the $\L$-edge word associated to $w$.
Let $D$ be a disk diagram over $A_\Delta$ with boundary label $w'$. 
As in 
 the proof of 
Lemma~\ref{lem:4cycle}, edges of $D$ are labeled by $\L$-edges, which are thought of as generators of $A_\Delta$.

Color the part of the boundary of $D$ and the hyperplanes coming out of it green if they correspond to $\L$-edges from $\L_c$ and blue if they correspond to $\L$-edges from $\L_d$.  Now we see from the structure of $w'$, that $\partial D$ alternates between green and blue stretches, and a stretch of a given color corresponds to a simple path in the corresponding component of $\L$. It follows 
from Remark~\ref{rmk:lambda_edge_words}
that a hyperplane of a given color must start and end in different stretches of that color.

Let $L=|E(T_c)|$ denote the number of $\L$-edges in $T_c$.  
We will prove that condition $\mathcal{R}_4$ holds for $\gamma$ by induction on $(n, L)$.  
The conclusion  of the lemma is obvious for $\gamma$ corresponding to $(2, L)$ for any $L$, since 
the cycle itself is a square.  
This includes the base case, when $n=2$ (i.e.~$\gamma$ is a square) and $T_c$ is an edge.  
Now let $n>2$, and assume the claim is true for all $(n', L')$ such that 
either $n'<n$ or $n' = n$ and $L'<L$.

By Lemma~\ref{lem:nocycle}, $T_c$ and $T_d$ are trees.  Now suppose $c_j$ is a leaf of $T_c$, and let $xc_j$ be the $\L$-edge incident to $c_j$ in $T_c$.  Since the $c_i \neq c_j$ for all $i \neq j$ (by the definition of a 2-component cycle), we know that $xc_j$ occurs exactly 
	once in $w'$ (as part of the subword of $w'$ representing $c_{j-1}c_j$) and $c_jx$ occurs exactly once in $w'$ (as part of the subword representing $c_jc_{j+1}$).
It follows there is a unique hyperplane $H$ labeled $xc_j$ which is dual to both the path whose label is an expression for $c_jc_{j+1}$ and the path whose label is an expression for $c_{j-1}c_j$.
Moreover, the subword $w''$ of $w'$ between these two subwords is the product of $\L$-edges which is an expression for $d_{j-1}d_j$.
It follows that every hyperplane dual to the path in $\partial D$ labeled $w''$ must intersect the hyperplane $H$.  
By Lemma~\ref{lem:commuting}, both $x$ and $c_j$ commute (in $W_\G$) with each letter of $V(\G)$ used  in the word $w''$. 
 In particular, $d_{j-1}$ and $d_{j}$ each commute with $x$.

Now there are two possibilities. Suppose first that $x=c_t$ for some $t\neq j$.  Since $t \neq j$ and $n>2$ (which implies that $\gamma$ has more than $4$ edges), it follows that 
 either $c_td_{j-1}$ or $c_td_j$ is a diagonal of $\gamma$. 
We can use this diagonal to cut $\gamma$ into two 2-component cycles $\gamma_1$ and $\gamma_2$ as follows.  Assume $c_td_j$ is a diagonal $\delta$ of $\gamma$ (the other case is analogous), and let $\beta_1$ and $\beta_2$ be the two components of $\gamma$ obtained by removing the vertices labeled $c_t$ and $d_j$.  Set $\gamma_1 = \beta_1 \cup \delta$ and $\gamma_2 = \beta_2 \cup \delta$. Note $\gamma_1$ and $\gamma_2$ each have strictly fewer vertices than $\gamma$. For $i=1,2$ let $T_c^i$ and $T_d^i$ be the components of the $\L$-convex hull of $\gamma_i$ contained respectively in $\L_c$ and $\L_d$.
By the induction hypothesis, we see that every edge in $\gamma_i$ is part of a square in $\G$ with two vertices in $T^i_c \subset T_c$ and two in $T^i_d \subset T_d$. Since each edge of $\gamma$ is either an edge of $\gamma_1$ or of $\gamma_2$, the claim follows for this case.

On the other hand, suppose that $x \neq c_i$ for any $1 \le i \le n$. Consider the new 2-component cycle $\gamma'$ obtained from $\gamma$ by replacing the edges $d_{j-1}c_j$ and $c_jd_j$ with $d_{j-1}x$ and $xd_j$.
As $x \neq c_i$ for any $1 \le i \le n$, this does not violate the requirement that $2$-component cycles do not repeat vertices.
 Let $T'_c$ and $T'_d$ be the components of the $\L$-convex hull of $\gamma'$ contained respectively in $\L_c$ and $\L_d$.
Since $c_j$ is a leaf of $T_c$, it follows that $|E(T'_c)|<  |E(T_c)|$, and we also have that $|V(\gamma')| = |V(\gamma)| = n$. We now apply the induction hypothesis to conclude that 
each edge of $\gamma'$ is part of a square of $\G$ with two vertices in $T_d'=T_d$ and two vertices in $T_c' \subset T_c$. 
This means that this property holds automatically for all edges of $\gamma$, except possibly
$d_{j-1}c_j$ and $c_jd_j$.  However, these edges are part of the square in $T_c$ with vertices $x, c_j, d_{j-1}$ and $d_j$.  Thus, the claim follows for this case as well.
\end{proof}

The following proposition summarizes Lemma~\ref{lem:nocycle}, Lemma~\ref{lem:no-almost-cycle}, Lemma~\ref{lem:4cycle} and Lemma~\ref{lem:cycles}.

\begin{prop} \label{prop_necessary}
	If $(G^\Theta, E(\L))$ is a RAAG system, then $\Theta$ satisfies $\mathcal{R}_1 - \mathcal{R}_4$.
\end{prop}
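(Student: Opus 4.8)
The plan is to assemble the four necessity lemmas that immediately precede this proposition. Each of the conditions $\mathcal{R}_1$--$\mathcal{R}_4$ has already been shown to follow from the RAAG-system hypothesis (or from a hypothesis implied by it), so the entire content of the proposition reduces to correctly matching up these hypotheses with their conclusions.

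First I would dispose of $\mathcal{R}_1$, $\mathcal{R}_3$, and $\mathcal{R}_4$ directly. Lemma~\ref{lem:nocycle}, Lemma~\ref{lem:4cycle}, and Lemma~\ref{lem:cycles} each take as their hypothesis precisely that $(G^\Theta, E(\L))$ is a RAAG system, and conclude respectively that $\Theta$ satisfies $\mathcal{R}_1$, $\mathcal{R}_3$, and $\mathcal{R}_4$. Thus these three follow verbatim from the assumption.

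The only point requiring a moment's care is $\mathcal{R}_2$, since Lemma~\ref{lem:no-almost-cycle} is stated under the ostensibly weaker hypothesis that $G^\Theta$ is a RAAG rather than that $(G^\Theta, E(\L))$ is a RAAG system. Here I would observe that the RAAG-system hypothesis is in fact strictly stronger: by Definition~\ref{def:RAAG_system}, if $(G^\Theta, E(\L))$ is a RAAG system then the canonical homomorphism $\phi: A_\Delta \to G^\Theta$ is an isomorphism, so in particular $G^\Theta \cong A_\Delta$ is itself a RAAG. Lemma~\ref{lem:no-almost-cycle} then applies and yields $\mathcal{R}_2$. There is no genuine obstacle at this stage; all the substantive work already resides in the four lemmas (most notably the inductive hyperplane analysis of Lemma~\ref{lem:cycles}), and the only thing to get right is this hypothesis-matching for $\mathcal{R}_2$.
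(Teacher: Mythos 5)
Your proposal is correct and matches the paper exactly: the proposition is stated there as a summary of Lemmas~\ref{lem:nocycle}, \ref{lem:no-almost-cycle}, \ref{lem:4cycle}, and \ref{lem:cycles}, with no further argument. Your observation that the RAAG-system hypothesis implies $G^\Theta$ is a RAAG (so that Lemma~\ref{lem:no-almost-cycle} applies to give $\mathcal{R}_2$) is the right way to handle the one hypothesis mismatch.
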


If $\L$ has at most two components, then it turns out that 
there are no additional
obstructions to $(G^\Theta, E(\L))$ being a RAAG system.
More precisely:

\begin{thm}\label{thm:2cpt} 
	Suppose $\L$ has at most two components. Then $(G^\Theta, E(\L))$ is a RAAG system if and only if $\mathcal R_1$--$\mathcal R_4$ are satisfied.
\end{thm}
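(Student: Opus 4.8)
Since the necessity of $\mathcal R_1$--$\mathcal R_4$ is exactly Proposition~\ref{prop_necessary}, the content is sufficiency: assuming $\mathcal R_1$--$\mathcal R_4$ and that $\L$ has at most two components $\L_c, \L_d$, I would show that $(G^\Theta, E(\L))$ is a RAAG system. The plan is to verify the hypotheses of Basarab's characterization, Theorem~\ref{thm_raag_char}. The order condition is immediate: each $\L$-edge $ab$ has non-adjacent endpoints in $\G$, so $\langle a,b\rangle$ is infinite dihedral and $ab$ has infinite order; this also gives $1\notin E(\L)$, and distinct $\L$-edges yield non-inverse elements (by a Tits-word comparison), so $E(\L)\cap E(\L)^{-1}=\emptyset$. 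The whole difficulty is therefore the deletion condition, equivalently the injectivity of the canonical surjection $\phi\colon A_\Delta\to G^\Theta$. So I would suppose, for contradiction, that $\phi$ is not injective, pick $g\in\ker\phi\setminus\{1\}$ represented by a word $W=g_1\cdots g_m$ over $V(\Delta)=E(\L)$ of \emph{minimal} length $m\ge 1$; minimality forces $W$ to be reduced in $A_\Delta$. Let $w$ be the associated length-$2m$ word over $V(\G)$, so $w\gequal 1$ in $W_\G$, and fix a bigon-free disk diagram $D$ over $W_\G$ with $\partial D$ labelled $w$ (Remark~\ref{rem:no-bigons}). Crucially, unlike in the necessity arguments of Lemmas~\ref{lem:4cycle} and~\ref{lem:cycles}, $D$ is taken over $W_\G$, since injectivity of $\phi$ is what we are trying to prove.

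Next I would record the combinatorial structure of $D$. Partition $\partial D$ into $m$ consecutive length-two blocks $B_1,\dots,B_m$, one per generator $g_i$, and colour $B_i$ (and the boundary edges and hyperplanes meeting it) green if $g_i\in\L_c$ and blue if $g_i\in\L_d$; note each block is monochromatic. Two observations drive the argument. First, every hyperplane $H$ is dual to exactly two boundary edges sharing a $\G$-label $v$; since the two labels inside a block are the non-adjacent endpoints of an $\L$-edge, these two edges lie in \emph{distinct} blocks $B_i,B_j$, whence $v$ is a common endpoint of the $\L$-edges $g_i,g_j$, so $g_i,g_j$ are adjacent (hence same-coloured) in $\L$. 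Second, and this is where $\mathcal R_2$ enters, no two vertices of the same component of $\L$ are joined by a $\G$-edge (such an edge would make them $\Theta$-adjacent outside $\L$, contradicting that the component is induced). Consequently two hyperplanes of the same colour have non-commuting labels and therefore \emph{cannot cross}; only a green and a blue hyperplane may cross, and then, by the crossing criterion, their labels span a $\G$-edge.

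With the non-crossing of monochromatic hyperplanes in hand, I would analyse an innermost same-colour hyperplane $H$ (say green), i.e.\ one bounding a boundary arc $\alpha$ of $D$ containing a minimal amount of green. The green hyperplanes meeting $\alpha$ are forced, by non-crossing, into a nested pattern, while the only hyperplanes crossing the region cut off by $H$ are blue; each such crossing, via Lemma~\ref{lem:commuting}, records a $\G$-edge between a vertex of $\L_c$ and a vertex of $\L_d$. Reading these $\G$-edges off the boundary produces a $2$-component configuration in $\Theta$, to which $\mathcal R_1$ (so that $\L_c,\L_d$ are forests and $\L$-convex hulls are unique simple paths) and then $\mathcal R_3$/$\mathcal R_4$ apply: they guarantee the $2$-component squares needed to certify that all the generators strictly between the two occurrences produced by $H$ commute, in $A_\Delta$, with the green generator $H$ picks out. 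Since the two occurrences are the same $\L$-edge in opposite orientations, this is precisely a permissible deletion in $A_\Delta$, contradicting that $W$ is reduced. The hypothesis that $\L$ has at most two components is used exactly here: it ensures the cycles encountered in $D$ are $2$-component cycles, the setting in which $\mathcal R_3$ and $\mathcal R_4$ are stated; with three or more components one instead needs the further conditions alluded to in Lemmas~\ref{lem:R5} and~\ref{lem:triangle}.

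The step I expect to be the main obstacle is the last one: converting the purely diagrammatic, nested hyperplane picture of $D$ into an \emph{honest} reduction of $W$ in $A_\Delta$. Establishing (i) that an innermost green hyperplane really does pair a single $\L$-edge against its reverse, and (ii) that \emph{every} intervening generator---green as well as blue---commutes with it, requires care, because same-component (green–green) intervening generators must also be shown to commute in $W_\G$, and this is where the full strength of $\mathcal R_4$ (squares along the entire $2$-component cycle, not merely for $4$-cycles as in $\mathcal R_3$) is indispensable, as Examples~\ref{ex:R4_necessary} and~\ref{ex:R4_complicated} make clear. I would expect to carry this out by an induction on $m$ together with the number of hyperplanes crossing the innermost region, mirroring the double induction $(n,L)$ used in the proof of Lemma~\ref{lem:cycles}.
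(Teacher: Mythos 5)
Your setup matches the paper's: necessity is Proposition~\ref{prop_necessary}, and for sufficiency you take a reduced word $W$ in $\ker\phi$, build a bigon-free disk diagram over $W_\G$, colour hyperplanes by $\L$-component, and use $\mathcal R_2$ to rule out monochromatic crossings (Observation~\ref{obs:C2}). But the engine you propose for the contradiction has a genuine gap. A single hyperplane $H$ is dual to two boundary edges carrying the same $\G$-label $v$, and these lie in blocks corresponding to two $\L$-edges that merely \emph{share the endpoint} $v$; they need not be a generator and its inverse. So an ``innermost green hyperplane'' does not, by itself, ``pair a single $\L$-edge against its reverse,'' and there is no reason the deletion you want to perform in $A_\Delta$ exists at that hyperplane --- you flag this as the main obstacle, and it is in fact where the argument as proposed breaks. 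The object that actually carries the paper's proof is a \emph{closed chain} of hyperplanes (Definition~\ref{def:hyperplane-chain}): consecutive hyperplanes dual to the two halves of a common $\L$-edge, closing up into a loop in $\L$. The paper partitions all hyperplanes into closed chains (Observation~\ref{obs:partition}), chooses a special chain $\mathcal H$ with distinguished hyperplane $H_0$ such that every other chain either is separated from $\mathcal H\setminus\{H_0\}$ by $H_0$ or crosses $H_0$ (Lemma~\ref{lem:special-chain}), and puts the boundary word into a normal form in which the $\L$-edges of $\mathcal H$ appear as far right as possible (Claim~\ref{claim:lambda-edge-right}). The contradiction is then \emph{not} a deletion in $A_\Delta$ but a violation of this normal form: if no chain meets $\mathcal H$, then $\mathcal H$ gives a $\L$-loop which Claim~\ref{claim:simple_path} (using $\mathcal R_1$ and reducedness) forces to be a simple path, impossible; otherwise the $2$-coloured polygon formed by $\mathcal H$ and the first crossing chain $\mathcal K$ contains a genuine $2$-component cycle (Claim~\ref{claim:cycle}), and $\mathcal R_4$ followed by $\mathcal R_3$, applied at a leaf of the relevant $\L$-convex hull, produces adjacent commuting $\L$-edges $a_{t-1}a_t$ and $b_{t'}b_{t'+1}$ in the boundary word that should already have been swapped.

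Two smaller but real omissions: you take $W$ only of minimal length, whereas the argument works with cyclic subwords of the boundary, so one needs $g$ minimal in its \emph{conjugacy class} (so that cyclic conjugates of $W$ remain reduced); and the sufficiency direction in the paper uses no induction --- the double induction on $(n,L)$ you cite belongs to the proof that $\mathcal R_4$ is necessary (Lemma~\ref{lem:cycles}), not to this direction.
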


\subsubsection*{Proof outline} 
Proposition~\ref{prop_necessary} constitutes one direction of the theorem. The following strategy will be used to prove that $\mathcal R_1$--$\mathcal R_4$ imply that $(G^\Theta, E(\L))$ is a RAAG system. 
We wish to show that the image of every non-trivial element of $A_\Delta$ under $\phi$  is non-trivial in $W_\G$.

Towards a contradiction, we assume that there exists some non-trivial $g\in A_\Delta$ such that $\phi(g) =1$. Then there is a disk diagram $D$ whose  boundary label is a word in $\Lambda$-edges which represents $\phi(g)$.  We will put this word in a certain normal form which will be defined in terms of the configuration of hyperplanes in $D$.   

To define the normal form, we first show that the set of all hyperplanes can be partitioned into subsets that we call ``closed chains of hyperplanes'' (see Definition~\ref{def:hyperplane-chain} and Figure~\ref{fig:closed-chain}).  Properties of  hyperplanes and closed chains can be translated into information about the graph $\Theta$ and vice versa (see Observations~\ref{obs:C2},~\ref{obs:lambda_paths}, and~\ref{obs:intersecting_chains}).  Next, we prove, in Lemma~\ref{lem:special-chain}, that we can fix a particular closed chain $\mathcal H$ which is ``maximally nested'' in a certain sense.  Specifically, $\mathcal H$ has a distinguished hyperplane $H_0$, such that every other closed chain either intersects $H_0$ or is separated from the rest of $\mathcal H$ by $H_0$ (see Figure~\ref{fig:special-chain}).  

Our normal form is defined in terms of the fixed closed chain $\mathcal H$.  We first choose a basepoint $p$ on $\partial D$ which is the endpoint of an edge of $\partial D$ dual to $H_0$. (This has the effect of possibly replacing our original element $g\in A_\Delta$ with a conjugate.)   Let $w$ be the label of $\partial D$ read clockwise starting at $p$.  We show in Claim~\ref{claim:lambda-edge-right}  that $w$, $D$ and $\mathcal H$ may be replaced by an equivalent word $\tilde w$ and corresponding disk diagram $\tilde D$ and maximally nested closed chain $\widetilde{\mathcal H}$, 
with the property that the $\Lambda$-edges in $\tilde w$ coming from 
$\widetilde{\mathcal H}$ are 
``as far right as possible'', i.e.~it is not possible to swap one of these $\Lambda$-edges with a $\Lambda$-edge to its right by a commutation relation.  We consider $\tilde w$ to be a word in normal form representing $\phi(g)$.  

Finally, to complete the proof of Theorem~\ref{thm:2cpt}, we will show (by analyzing interactions between closed chains in $\tilde D$) that if $\mathcal R_1$--$\mathcal R_4$ are satisfied, then the normal form is violated.

\bigskip

Before we embark on the proof, we need to develop some preliminaries on disk diagrams, and on transferring information from the disk diagram $D$ to the graph $\Theta$.  In what follows, we assume that $D$ is a disk diagram whose boundary is a word 
 $w$ in the RACG $W_\Gamma$.
Unlike in the proofs of Lemmas~\ref{lem:4cycle} and~\ref{lem:cycles}, 
 we are now working in $W_\G$ rather than $A_\Delta$, so the edges and hyperplanes of $D$ are labeled by generators of $W_\G$ rather than elements of $A_\Delta$ corresponding to $\L$-edges.
As the words $w$ we consider are the images of elements of $A_\Delta$ under $\phi$, they have 
  a natural decomposition into $\L$-edges.

We associate 
	a color (red and green)
to each component of $\L$.  Each hyperplane 
	of $D$
then  inherits the color corresponding to the component of $\L$ in which its label lies. 
Thus, two edges of $\partial D$ 
contained in the same $\L$-edge 
are dual to hyperplanes of the same color.

\begin{obs}\label{obs:C2}
If $\Theta$ satisfies $\mathcal R_2$, then no two hyperplanes of the same color 
 intersect.
This is because if two hyperplanes intersect, then their labels are distinct  and commute, and so are connected by a $\G$-edge.  Thus they cannot be in the same component of $\L$, since each component of $\L$ is an induced subgraph of $\Theta$, by $\mathcal R_2$.
\end{obs}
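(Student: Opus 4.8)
The plan is to argue by contradiction, leveraging two facts already recorded in the disk diagram background together with the definition of condition $\mathcal{R}_2$. Recall that in the present setting $D$ is a disk diagram over $W_\G$, so its hyperplanes are labeled by generators of $W_\G$, that is, by vertices of $\G$, and the color assigned to a hyperplane records the component of $\L$ in which that vertex lies. Suppose, for contradiction, that two hyperplanes $H$ and $H'$ of the same color intersect in $D$.

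First I would invoke the structural fact about intersecting hyperplanes noted earlier: since adjacent sides of every square carry distinct labels corresponding to commuting generators, whenever two hyperplanes of a disk diagram intersect their labels are \emph{distinct} generators that \emph{commute} in $W_\G$. Writing $s$ and $s'$ for the labels of $H$ and $H'$, this yields $s \neq s'$ with $s$ and $s'$ commuting, and hence $(s,s')$ is a $\G$-edge of $\Theta$.

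Next I would use that $H$ and $H'$ have the same color, which by definition means $s$ and $s'$ lie in a common component of $\L \subset \Theta$. Condition $\mathcal{R}_2$ asserts that every component of $\L$ is an induced subgraph of $\Theta$; thus any edge of $\Theta$ joining two vertices of this component must itself belong to the component, i.e.\ must be a $\L$-edge. But $s$ and $s'$ are joined by a $\G$-edge, and $\G$-edges are disjoint from $\L$-edges, since the edge $(s,s')$ lies in $\G$ whereas $E(\L) \subset E(\G^c)$. This contradiction completes the argument, showing that no two hyperplanes of the same color can intersect.

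Since every step is a direct appeal to a definition or to a previously recorded property of disk diagrams, I do not anticipate any genuine obstacle. The only point requiring care is bookkeeping: one must remember that here the hyperplanes are labeled by generators of $W_\G$ (vertices of $\G$) rather than by $\L$-edges --- in contrast to the diagrams appearing in the proofs of Lemmas~\ref{lem:4cycle} and~\ref{lem:cycles} --- and that it is the \emph{color}, not the label, that encodes membership in a component of $\L$. With that distinction in place, the incompatibility between a $\G$-edge and an $\L$-edge inside a single induced $\L$-component yields the claim at once.
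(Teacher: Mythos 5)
Your proof is correct and follows essentially the same route as the paper's: intersecting hyperplanes have distinct commuting labels, hence a $\G$-edge between them, which is incompatible with both labels lying in one component of $\L$ because $\mathcal R_2$ makes that component an induced subgraph of $\Theta$ whose edges all lie in $\G^c$. Your version merely spells out the final contradiction (a $\G$-edge cannot be a $\L$-edge) that the paper leaves implicit, and your bookkeeping remark about labels versus colors is accurate.
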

The hyperplanes of $D$ can be partitioned 
into ``closed chains of hyperplanes,'' as described in Definition~\ref{def:hyperplane-chain}
below.  Although the proof of Theorem~\ref{thm:2cpt} only uses  disk diagrams whose boundary labels are words in $\L$-edges, the definition below applies to slightly more general disk diagrams, as this will be needed in Section~\ref{sec:reflections}.

\begin{figure}
\begin{overpic}[scale=1.3]
{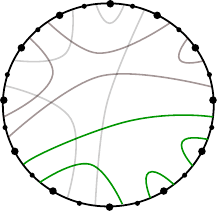}
\put(65, 26){\small $\mathcal H$}
\put(50,34){\scriptsize $H_0$}
\put(87,35){\scriptsize  $H_1$}
\put(74, 18){\scriptsize  $H_2$}
\put(49, 18){\scriptsize  $H_3$}
\put(4.5, 18){\scriptsize $e_0$}
\put(100, 43){\scriptsize $f_0$}
\put(98, 32){\scriptsize $e_1$}
\put(90, 18){\scriptsize $f_1$}
\put(81, 9){\scriptsize $e_2$}
\put(68, 0.5){\scriptsize $f_2$}
\put(55, -2.5){\scriptsize $e_3$}
\put(14, 7){\scriptsize $f_3$}
\end{overpic}
\hspace{1.5cm}
\begin{overpic}[scale=1.3]
{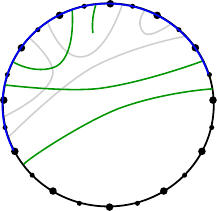}
\put(50,40){\scriptsize $H_0$}
\put(4.5, 18){\scriptsize $e_0$}
\put(100, 54){\scriptsize $f_0$}
\put(97, 70){\scriptsize $e_1$}
\put(15, 7){\scriptsize $f$}
\put(15, 88){\small \color{blue} $\mu$}
\end{overpic}
\vspace{0.5cm}
\caption{The figure on the left shows a disk diagram $D$ such that the label of $\partial D$ 
 has a natural decomposition into $\L$-edges, delineated by large black dots.   The green hyperplanes form a closed chain of hyperplanes $\mathcal H$, as defined in 
Definition~\ref{def:hyperplane-chain} (here we can take $\eta =  \partial D$).  Two other closed chains of hyperplanes are shown in grey.  The figure on the right shows an impossible configuration pertaining to the proof of Lemma~\ref{lem:partition}. The path $\mu$ from the lemma is colored blue. 
 }
\label{fig:closed-chain} 
\end{figure}

\begin{definition}(Chains of hyperplanes)\label{def:hyperplane-chain}
	Let $D$ be a disk diagram whose boundary $\partial D$ contains a connected subpath $\eta$ (possibly all of $\partial D$), 
such that the label of $\eta$  is a word in $\L$-edges. 
Let $H_0, \dots, H_n$ be a sequence of distinct hyperplanes in $D$. Let $e_i$ and $f_i$ be the edges on $\partial D$ that are dual to $H_i$. 
(See Figure~\ref{fig:closed-chain} for an illustration when $n=3$.)
	We say that $\{H_0, \dots, H_n\}$ is a \textit{chain} in $D$, if for all $0 \le i < n$, the edges $f_i$ and $e_{i+1}$ 
	are contained in $\eta$ and are dual to the same $\L$-edge of $\eta$. Note that $e_0$ and $f_n$ can be dual to edges not contained in $\eta$. 
	
	Additionally, if $e_0$ and $f_n$ are contained in the same $\L$-edge of $\eta$, we say that $\{H_0, \dots, H_n\}$ is a \textit{closed chain}.
(Figure~\ref{fig:closed-chain} shows three closed chains.)
\end{definition}

Since the two hyperplanes dual to a $\L$-edge have the same color, each chain also inherits a well-defined color.

\begin{lemma} \label{lem:partition}
If the label of $\partial D$ is 
 a word in $\L$-edges, then every hyperplane of $D$ is contained in a unique closed chain. 
  Thus, there is a partition of the hyperplanes of a given color into closed chains.  
\end{lemma}
\begin{proof}
Let $H_0$ be a hyperplane of $D$ dual to edges $e_0$ and $f_0$ of $\partial D$.  Assume $H_0$ is green.  Let $f$ and $e_1$ be edges of $\partial D$ which pair with $e_0$ and $f_0$ respectively to form $\L$-edges. We claim that $f$ and $e_1$ are in the same component of $D \setminus H_0$.  
If not, there would be an odd number of edges in a part $\mu$ of $\partial D$ between $e_0$ and $f_0$ (see the  right side of Figure~\ref{fig:closed-chain}).    
Since the  hyperplanes dual to the two edges of a $\L$-edge have the same color, an odd number of these edges would be dual to green hyperplanes.  This is a contradiction, since no green hyperplanes can cross $H_0$ by Observation~\ref{obs:C2}, so there must be an even number of edges in $\mu$ dual to green hyperplanes.

The hyperplane $H_1$ dual to $e_1$ is green, and cannot cross $H_0$.  Let $f_1$ be the other edge dual to $H_1$.  If $f_1=f$ we have a closed chain.  Otherwise, there is an edge $e_2$ which pairs with $f_1$ to form a $\Lambda$-edge.  By the same argument as before, $e_2$ 	is in the same component of $D \setminus H_1$ as $f_0$ and there is a green hyperplane $H_2$ dual $e_2$ and another edge $f_2$, such that $H_2$ does not cross $H_0$ or $H_1$ (see the left side of Figure~\ref{fig:closed-chain}).    Continuing this process we obtain a sequence of hyperplanes as in Definition~\ref{def:hyperplane-chain}. Since the number of possibilities for $f_i$ reduces each time, eventually the process stops, with $f_n=f$ for some $n$, and $H_0, \dots H_n$ form a closed chain. 
The lemma follows. 
\end{proof}

We say that a chain $\mathcal K$ intersects a hyperplane $H$, if some $K\in \mathcal K$ intersects $H$.   We say that chains $\mathcal H$ and $\mathcal K$ intersect if 
$\mathcal K$ intersects some $H \in \mathcal H$.   We will need the following observation:

\begin{obs}\label{obs:2intersections}
If a hyperplane $H$ intersects a closed chain $\mathcal K$, then it intersects $\mathcal K$ in exactly two distinct hyperplanes.  To see this, note that given a hyperplane $K\in \mathcal K$, the hyperplanes in $\mathcal K \setminus \{K\}$ all lie in a single component of $D \setminus K$.  It follows that if $H$ intersects $\mathcal K$ more than twice, it must intersect some hyperplane of $\mathcal K$ twice.  This contradicts the fact that $D$ has no bigons (see Remark~\ref{rem:no-bigons}).
\end{obs}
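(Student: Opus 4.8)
The plan is to encode the closed chain $\mathcal K = \{H_0, \dots, H_n\}$ as a closed curve in $\bar D$ and then bound its intersections with $H$ from above and below separately. Concretely, I would traverse $H_0$, pass along $\partial D$ through the $\L$-edge linking $H_0$ to $H_1$, traverse $H_1$, and so on, finally returning to $H_0$ through the closing $\L$-edge; call the resulting closed curve $C$. Since consecutive hyperplanes of a chain are joined by a single $\L$-edge whose two endpoints lie in one component of $\L$, the whole chain is monochromatic, so by Observation~\ref{obs:C2} its hyperplanes are pairwise disjoint and $C$ is a simple closed curve. Because $H \notin \mathcal K$, the two boundary edges dual to $H$ are distinct from the edges making up the connecting $\L$-edges, so $H$ meets $C$ only at transverse interior crossings with the $H_i$, and (as $D$ has no bigons) at most once at each $H_i$. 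Thus the number of hyperplanes of $\mathcal K$ that $H$ meets equals $|H \cap C|$, and it suffices to show this number is both at most two and at least two.

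For the upper bound I would first establish the separation statement in the sketch: for any $K \in \mathcal K$, the hyperplanes of $\mathcal K \setminus \{K\}$ all lie in one component of $\bar D \setminus K$. Indeed, deleting $K$ from the cyclic chain leaves the linear chain of the remaining $H_i$ together with the intermediate $\L$-edges joining them, which is connected; it is disjoint from $K$ because the other chain hyperplanes do not meet $K$ (monochromaticity and Observation~\ref{obs:C2}) and because the only boundary edges of $K$ lie on the two $\L$-edges adjacent to $K$, which have been removed. A connected set disjoint from the separating arc $K$ lies in a single component of $\bar D \setminus K$. Now suppose $H$ met three distinct hyperplanes of $\mathcal K$; ordering the crossings along $H$, let $K_b$ be the middle one and $K_a, K_c$ the outer two. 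Then the crossing points of $H$ with $K_a$ and with $K_c$ lie on opposite sides of the separating arc $K_b$; but $K_a$ and $K_c$ lie in a single component of $\bar D \setminus K_b$ by the separation statement, so both crossing points lie in that one component — a contradiction (equivalently, $H$ would cross $K_b$ a second time, producing a bigon). Hence $H$ meets at most two hyperplanes of $\mathcal K$.

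For the lower bound it suffices to show $|H \cap C|$ is even, since it is at least one by hypothesis. Here I would cap $\bar D$ with a disk to form a sphere $S^2$ and close $H$ into a loop $\widehat H$ by joining its two endpoints with an arc through the interior of the cap; this arc avoids $C \subset \bar D$, so $\widehat H$ meets $C$ exactly at the points of $H \cap C$. Since any two closed curves in $S^2$ have even mod-two intersection number, $|H \cap C|$ is even, and combined with $1 \le |H \cap C| \le 2$ this forces $|H \cap C| = 2$, as desired.

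The main obstacle is the separation statement, i.e.\ the first sentence of the sketch: the argument rests on treating each hyperplane as an embedded arc that meets $\partial D$ in exactly its two dual edges and separates $\bar D$ into two disks, on the pairwise disjointness of the monochromatic chain (which is exactly where Observation~\ref{obs:C2}, and hence $\mathcal R_2$, enters), and on checking that the residual chain stays connected and clear of $K$. Once this is in place, the upper bound is a one-line separation argument and the lower bound is a routine parity count, so the bulk of the care goes into making the planar topology of the hyperplanes precise.
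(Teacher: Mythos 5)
Your proof is correct and its core — the separation statement that $\mathcal K\setminus\{K\}$ lies in one component of $D\setminus K$ (using monochromaticity and Observation~\ref{obs:C2}), combined with the no-bigon hypothesis to rule out a third crossing — is exactly the paper's argument. The only addition is that you make the ``at least two'' direction explicit via the mod-two intersection count of the closed chain curve with the capped-off hyperplane, a routine parity step the paper leaves implicit.
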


The following two observations enable us to transfer information from the disk diagram $D$ to the graph $\Theta$. 

\begin{figure}
\begin{overpic}[scale=1.3]
{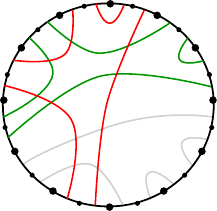}
\put(45, -12){$ D$}
\put(36, 42){\small $\mathcal H$}
\put(70, 70){\small $\mathcal K$}
\put(4, 76){\small $p$}
\put(25.5, 62){\scriptsize $a$}
\put(44, 74){\scriptsize $b$}
\put(42, 56){\scriptsize $b$}
\put(79, 67){\scriptsize $c$}
\put(31, 68.5){\scriptsize $x$}
\put(22, 52.5){\scriptsize $y$}
\put(51, 66){\scriptsize $z$}
\put(53.5, 85){\scriptsize $y$}
\end{overpic}
\hspace{1.5cm}
\begin{overpic}[scale=1]
{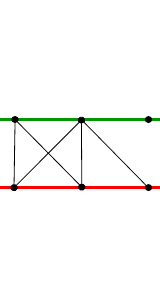}
\put(25, 10){$\Theta$}
\put(5, 62){\scriptsize $a$}
\put(28, 62){\scriptsize $b$}
\put(50, 62){\scriptsize $c$}
\put(5, 28){\scriptsize $x$}
\put(28, 28){\scriptsize $y$}
\put(50, 28){\scriptsize $z$}
\end{overpic}

\vspace{0.75cm}
\caption{The figure illustrates Observations~\ref{obs:lambda_paths} and \ref{obs:intersecting_chains}.
On the left is a disk diagram $D$ with red and green closed chains called $\mathcal H$ and $\mathcal K$ respectively.   The graph on the right is a part of $\Theta$.  The labels of the hyperplanes in $D$ correspond to  vertices of $\Theta$ (in particular of $\L$).   Starting at the basepoint $p$ and going around $\partial D$ clockwise, the closed chain $\mathcal H$ defines a $\Lambda$-loop in $\Theta$ visiting vertices $x, y, z, y, x$ and the closed chain $\mathcal K$ defines a $\Lambda$-loop visiting vertices $a, b, c, b, a$.  The polygon coming from the intersection of $\mathcal H$ and $\mathcal K$ defines a $2$-component loop in $\Theta$ visiting vertices $a, x, b, z, b, y, a$.  Observe that this $2$-component loop is not a cycle. 
 }
\label{fig:closed-chain} 
\end{figure}

\begin{obs}(Chains in $D$ give $\L$-paths in $\Theta$.)\label{obs:lambda_paths}
Let $\mathcal K = \{K_0, \dots, K_l\}$ be a chain in $D$, and for $0\le i \le l$, let $k_i$ be the label of $K_i$.  Then by the definition of a chain, 
$K_i$ and $K_{i+1}$ are dual to the same $\L$-edge in $\partial D$  for each $i$, so there is an edge in $\L$ between 
$k_i$ and $k_{i+1}$.  It follows that $\mathcal K$ naturally defines a $\L$-path in $\Theta$ visiting vertices $k_0, k_1, \dots, k_l$.  Moreover, if $\mathcal K$ is a closed chain, then the corresponding $\L$-path is a loop.  
See Figure~\ref{fig:closed-chain}. 
\end{obs}

\begin{obs}
\label{obs:intersecting_chains}
(Pairs of intersecting closed chains give 2-component loops in $\Theta$.)
Consider two closed chains which intersect, say a red chain $\mathcal H $ and a green chain  $\mathcal H$. Let $H_1\in \mathcal H$ and $K_1 \in \mathcal K$ be intersecting hyperplanes.  
By Observation~\ref{obs:2intersections}, the hyperplane $K_1$ intersects $\mathcal H$ in a second hyperplane $H_2 \neq H_1$.  Similarly, $H_2$ intersects $\mathcal K$ in a second hyperplane $K_2$.  Proceeding in this way, we obtain a polygon with at least four sides, with sides alternating between red and green hyperplanes.  
See the left side of Figure~\ref{fig:closed-chain}.

Since an intersecting pair of hyperplanes corresponds to an edge of $\G$,  
a 2-colored polygon of the type we just constructed defines a 2-component loop
 in $\Theta$
  (where each edge of the 2-component loop comes from a corner of the 2-colored polygon).
See Figure~\ref{fig:closed-chain}. 
 We warn that  the 2-component loop obtained from a 2-colored polygon in $D$ may not be a 2-component cycle. (Note that a 2-component cycle is a 2-component loop in which all of the vertices are distinct, and there are at least two vertices in each component.)
\end{obs}

In order to define a normal form for the word $u$ from the proof outline, we will need to choose a closed chain in $D$ with some special properties:

\begin{lemma}\label{lem:special-chain}
Let $u$ and $D$ be as in the proof outline.
There exists a closed chain $\mathcal H$ 
of $D$, containing a distinguished hyperplane $H_0$,
 such that given any closed chain $\mathcal K \neq \mathcal H$, one of the following holds:

\begin{enumerate}
\item $\mathcal K$ and $\mathcal H \setminus \{H_0\}$
lie in different components of $D\setminus H_0$.
\item $\mathcal K$ intersects $H_0$.
\end{enumerate}
\end{lemma}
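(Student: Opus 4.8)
The plan is to choose $H_0$ so that the component of $D\setminus H_0$ containing the rest of its own chain is as small as possible, and then to rule out any competing chain inside that component by a minimality argument.

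\emph{Step 1 (locating the rest of a chain).} First I would show that for \emph{any} hyperplane $H_0$ lying in a closed chain $\mathcal H=\{H_0,\dots,H_n\}$, the hyperplanes $\mathcal H\setminus\{H_0\}$ all lie in a single component of $D\setminus H_0$; I will call this component the \emph{inner side} $D^+(H_0)$ and the other the \emph{outer side}. Since $\mathcal H$ is monochromatic, $\mathcal R_2$ together with Observation~\ref{obs:C2} guarantees that none of $H_1,\dots,H_n$ crosses $H_0$, so each lies entirely in one component of $D\setminus H_0$. The two boundary edges dual to $H_0$ cut $\partial D$ into two arcs; following the chain, consecutive hyperplanes are dual to adjacent edges of $\partial D$ (the two edges of a single $\L$-edge, as in Definition~\ref{def:hyperplane-chain}), so once $H_1$ is placed on one arc, every subsequent $H_i$ is forced onto that same arc, and hence into the same component.

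\emph{Step 2 (minimization).} Among all hyperplanes $H$ of $D$, I would then pick $H_0$ so that the number of squares in its inner side $D^+(H_0)$ is as small as possible, and let $\mathcal H$ be the chain containing $H_0$. It suffices to prove that no closed chain $\mathcal K\neq\mathcal H$ is contained in $D^+(H_0)$: a chain that does not cross $H_0$ and does not lie in $D^+(H_0)$ must lie in the outer side, which gives alternative~(1), while a chain meeting $H_0$ gives alternative~(2).

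\emph{Step 3 (the crux).} Suppose for contradiction that a closed chain $\mathcal K\neq\mathcal H$ satisfies $\mathcal K\subset D^+(H_0)$. Then $\mathcal K$ does not cross $H_0$, so all the $\L$-edges (``nodes'') meeting $\mathcal K$ lie on the boundary arc $\beta$ of $D^+(H_0)$, and, because $\mathcal K$ is monochromatic, its hyperplanes are pairwise non-crossing chords with feet on $\beta$ (again by $\mathcal R_2$, Observation~\ref{obs:C2}, and the no-bigon convention of Remark~\ref{rem:no-bigons}). I would take the chord $K\in\mathcal K$ whose subtended subarc $[a,b]\subset\beta$ is largest. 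Non-crossing forces the subarcs of distinct chords to be nested or disjoint, and since $\mathcal K$ is a single closed chain, a node outside $[a,b]$ could only be joined to other nodes outside $[a,b]$, disconnecting the cycle; hence every node of $\mathcal K$ lies in $[a,b]$. As the feet of $\mathcal K$ are interior to $\beta$ (the endpoints of $\beta$ are dual to $H_0$, which belongs to a different chain), the endpoints of $\beta$---and therefore $H_0$---lie on the side of $K$ opposite to $[a,b]$. Consequently the inner side $D^+(K)$ is the $[a,b]$-side, which is strictly contained in $D^+(H_0)$ and so has strictly fewer squares, contradicting the minimality of Step 2.

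The main obstacle is Step 3: one must extract from the purely combinatorial chain a hyperplane $K$ whose inner side sits strictly inside $D^+(H_0)$. The subtlety is that a closed chain can wind around $H_0$, so a priori the rest of $\mathcal K$ might sit on the side of a given hyperplane \emph{towards} $H_0$ rather than away from it; it is precisely the non-crossing (monochromatic) structure forced by $\mathcal R_2$ and the absence of bigons that lets me select the outermost chord and guarantee that the inner side genuinely shrinks.
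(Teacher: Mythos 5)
Your strategy is sound and proves the lemma, but it packages the argument differently from the paper. The paper proceeds iteratively: starting from an arbitrary closed chain with an arbitrary distinguished hyperplane, it repeatedly replaces a failing pair $(\mathcal H^j, H_0^j)$ by a chain $\mathcal H^{j+1}$ lying in the offending component and a distinguished hyperplane $H_0^{j+1}$ separating $H_0^j$ from the rest of $\mathcal H^{j+1}$; the nesting of components $C_1\supset C_2\supset\cdots$ forces the $H_0^i$ to be distinct, so finiteness of $D$ terminates the process. Your extremal formulation (minimize the ``inner side'' over all hyperplanes) is the well-ordering version of the same descent, and your Step~3 in fact supplies a detail the paper asserts without proof, namely that a closed chain contained in a component admits a unique ``outermost'' hyperplane whose inner side points away from $H_0$: the non-crossing chord analysis (cyclic order of the nodes along the chain must match their linear order along $\beta$) is exactly what justifies this. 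Note that both arguments tacitly use $\mathcal R_2$ via Observation~\ref{obs:C2} to know that a chain minus one hyperplane sits in a single component, so your reliance on it is not a defect.

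Two points need tightening. First, your complexity measure ``number of squares in $D^+(H_0)$'' need not strictly decrease: if the outermost chord $K$ of $\mathcal K$ is dual to a spur edge (crossing no square) and no square lies between $K$ and $H_0$, the counts coincide. Counting instead the hyperplanes (or all cells) contained in $D^+(\cdot)$ repairs this, since $K\subset D^+(H_0)$ but $K\not\subset D^+(K)$. Second, the sentence ``a node outside $[a,b]$ could only be joined to other nodes outside $[a,b]$, disconnecting the cycle'' compresses two separate facts: (i) a chord with one foot outside $[a,b]$ has \emph{both} feet outside, but it could a priori have one foot before $a$ and one after $b$ --- this is excluded only by the maximality of $K$'s subtended arc, not by non-crossing alone; and (ii) once every other chord has both feet on a single side of $[a,b]$, the chain still must travel from the node adjacent to $b$ (which would lie after $b$) to the node adjacent to $a$ (before $a$), which is impossible because no node of $\mathcal K$ straddles the gap. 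With these two repairs the argument is complete.
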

\begin{proof}
We 	iteratively
 construct a sequence of closed chains 
$\mathcal H^1, \mathcal H^2, \dots$ with distinguished hyperplanes 
$  H_0^1, H_0^2,  \dots$, such that for all $i>1$, we have:
\begin{enumerate}
\item[(i)]$\mathcal H^i$ and $\mathcal H^{i-1} \setminus \{H_0^{i-1}\}$ lie in the same component of $D \setminus H_0^{i-1}$, and 
\item[(ii)] $H_0^{i-1}$ and $\mathcal H^i \setminus \{H_0^i\}$ lie in different components of 
$D \setminus H_0^i$.
\end{enumerate}
\begin{figure}
\begin{overpic}
{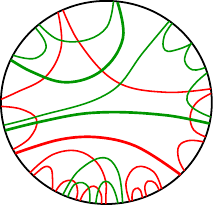}
\put(72, 26){\tiny $\mathcal H^1$}
\put(75, 60){\tiny $\mathcal H^2$}
\put(47, 75){\tiny $\mathcal H^3$}
\put(40,34.5){\tiny $H_0^1$}
\put(70,45){\tiny $H_0^2$}
\put(29, 61){\tiny $H_0^3$}
\end{overpic}
\caption{The figure illustrates the procedure for finding $\mathcal H$ and $H_0$ in Lemma~\ref{lem:special-chain}.  Each closed chain in the sequence is labeled in its interior.  The hyperplanes $H_0^i$ are shown in bold. In this example $\mathcal H = \mathcal H^3$ has the desired property.}
\label{fig:special-chain} 
\end{figure}

Let $\mathcal H^1$ and $ H_0^1$ be arbitrary.  Now for any $j$, if  
$\mathcal H^j$ and $H_0^j$ do not satisfy the conclusion of the lemma, then there must exist another closed chain $\mathcal H^{j+1}$ 
which lies entirely 
in $C_j$, where $C_j$ is 
 the component  of $D \setminus H_0^j$ containing $
\mathcal H \setminus \{H_0^j\}$.
 (Figure~\ref{fig:special-chain} illustrates this for $j=1, 2$.)
 There is a unique hyperplane
  in $\mathcal H^{j+1}$ satisfying condition (ii) above with $i=j+1$, and we set this equal to $H_0^{j+1}$. 
Thus, we can produce a longer sequence of closed chains 
 with properties (i) and (ii).

By construction, there is a nesting of components $C_1 \supset C_2 \supset C_3 \dots$, and it follows that $H_0^1, H_0^{2}, \dots $ are distinct hyperplanes in $D$.
As $D$  has finitely many hyperplanes, this process can only be repeated finitely many times. Thus, $\mathcal H ^j$ satisfies the claim for some $j$.
\end{proof}

We are now ready to prove the theorem.  

\begin{proof}[Proof of Theorem~\ref{thm:2cpt}]
As discussed, 
 we need to show that if $\mathcal R_1$--$\mathcal R_4$ are satisfied, then  
the map  
 $\phi: A_\Delta \to G^\Theta$ is injective.
Let $g\in A_\Delta$ be a non-trivial element. 
Let $v=v_1v_2\cdots v_n$ be a reduced word over the set of the generators of $A_\Delta$, which represents~$g$.  By the definition of $A_\Delta$, we have that $\phi(v_i)$ is a $\Lambda$-edge of $\Theta$, for $1 \le i \le n$.  Then $u = \phi(v_1)\phi(v_2)\cdots    \phi(v_n)$ is a concatenation of $\Lambda$-edges which represents $\phi(g)$.  
Towards a contradiction, we assume that $u$ represents the identity element of $W_\G$. 
Then there is a disk diagram $D$ whose  boundary label (read clockwise  starting from some basepoint)  is $u$. 
By Lemma~\ref{lem_remove_pathologies} we may assume that $D$ has no bigons. 

An element  has 
trivial
image under $\phi$ if and if  every element of its conjugacy class does. Thus, 
we  may assume that 
$g$ is of minimal length in its conjugacy class, where the length of an element is defined to be the minimal length of a word representing it.  

We partition the hyperplanes of $D$ into  closed chains.  (See Lemma~\ref{lem:partition}.)
By Lemma~\ref{lem:special-chain}, we can choose a chain $\mathcal H$, with distinguished hyperplane $H_0$, such that given any other chain $\mathcal K$, either $H_0$ separates $\mathcal K$ from $\mathcal H\setminus \{H_0\}$, or $\mathcal K$ intersects $H_0$.  
Let $a_0, a_1, \dots, a_{s}$ be the labels of the hyperplanes of $\mathcal H$, starting from $H_0$, and proceeding in order in the clockwise direction around $\partial D$.  
Then the $\L$-edges $a_0a_1, a_1a_2, \dots, a_{s-1}a_s, a_sa_0$ appear in $\partial D$ in that order, possibly interspersed with some other $\L$-edges.

Let $p$ denote the vertex on $\partial D$ which is the endpoint of the $\L$-edge from $\mathcal H$ labeled $a_sa_0$, read clockwise.  (See Figure~\ref{fig:HKpoly}.) Let $w$ be the word labeling $\partial D$ clockwise, starting from $p$.  Then $w$ is a cyclic conjugate of $u$.  Let $x$ be the corresponding cyclic conjugate of $v$.  Since $v$ was chosen to be reduced, and since $g$  (the element of $A_\Delta$ represented by $v$) 
is  of minimal length in its conjugacy class by assumption, it follows that $x$ is reduced.

\begin{figure}
\begin{overpic}
{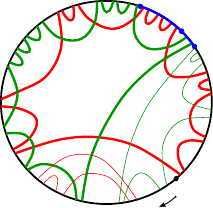}
\put(30, 34){\tiny $c_0$}
\put(18, 40){\tiny $c_1$}
\put(24, 59){\tiny $c_2$}
\put(50, 78){\tiny $c_3$}
\put(68, 68){\tiny $c_4$}
\put(17, 32){\tiny $b_0$}
\put(10, 47){\tiny $b_1$}
\put(37, 72){\tiny $b_2$}
\put(60, 73){\tiny $b_3$}
\put(48, 49){\tiny $b_4$}
\put(-0.5, 20){\tiny $a_0$}
\put(-3, 25){\tiny $a_1$}
\put(-5, 30){\tiny $b_0$}
\put(-7, 35){\tiny $b_1$}
\put(28, 96 ){\tiny $\mathcal H$}
\put(5, 80){\tiny $\mathcal K$}
\put(82, 8){\tiny $ p$}
\put(85, 11){\tiny $ a_0$}
\put(88, 15){\tiny $ a_s$}
\put(81, -1){\tiny $ w$}
\end{overpic}
\caption{This example illustrates the proof of Theorem~\ref{thm:2cpt}. The chain ${\mathcal H}$ satisfying Claim~\ref{claim:lambda-edge-right}  is shown in thick red lines.  
In particular, no $\L$-edge from ${\mathcal H}$ (except possibly the last one) commutes with the $\L$-edge appearing after it in $\partial D$.  The chain $\mathcal K$, which contributes the first $\L$-edge not in ${\mathcal H}$ (after $a_0a_1$),
is shown in thick green lines.  The polygon formed by the intersection of ${\mathcal H}$ and $\mathcal K$ induces a 2-component loop which visits (in this example) $c_0, b_0, c_1, b_1, c_2, b_2, c_3, b_3, c_4, b_4, c_0$.  The blue subpaths of $\partial D$ are the subpaths defined in Claim~\ref{clm:dual_hyperplanes}, with $i=4$. }
\label{fig:HKpoly}
\end{figure}

We now show that we can modify $D$ in such a way 
that the resultant boundary label is a word 
representing
 $w = \phi(x)$ 
 which is in a certain normal form:

\begin{claim}\label{claim:lambda-edge-right}
There exists a disk diagram $\tilde D$ 
such that the following hold. 
\begin{enumerate}
\item  There is a closed chain $\widetilde {\mathcal H}$ in $\tilde D$ which has a 
distinguished hyperplane $\tilde H_0$ satisfying the criterion in Lemma~\ref{lem:special-chain}.   The labels of the hyperplanes of $\widetilde {\mathcal H}$ starting from $\tilde H_0$ and proceeding clockwise, are $a_0, \dots, a_s$ (i.e.~they are the same labels as the labels of the hyperplanes in $\mathcal H$).  
\item 
Let  $\tilde p$ be the endpoint of the $\L$-edge $a_sa_0$ from $\widetilde{\mathcal H}$, 
 and let $\tilde w$ be the word  labeling $\partial \tilde D$ in the clockwise direction starting from $\tilde p$.  Then 
$\tilde w = \phi(\tilde x)$, where $\tilde x$ is a reduced word in $A_\D$ obtained from $x$ by Tits swap moves. 
 
\item  The $\L$-edges from $\widetilde {\mathcal H}$  appear as far right as possible in $\tilde w$. More formally, the word $\tilde w$ has no  subword of the form $a_{i}a_{i+1} bb'$
such that
$a_{i}a_{i+1}$ is one of the $\L$-edges coming from $\widetilde{\mathcal H}$ with $0 \le i \le s$ (with indices mod $s$), $a_{i}a_{i+1}\neq bb'$, 
and $a_{i}a_{i+1}$ commutes with $bb'$.  
\end{enumerate}

\end{claim}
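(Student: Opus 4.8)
The plan is to prove the claim by an extremal argument over disk diagrams. I would consider the collection $\mathcal C$ of all triples $(\tilde D, \widetilde{\mathcal H}, \tilde x)$, where $\tilde x$ is a reduced word in $A_\D$ obtained from $x$ by Tits swap moves, $\tilde D$ is a bigon-free disk diagram whose boundary label, read clockwise from the basepoint $\tilde p$ that ends the $\L$-edge $a_sa_0$, is $\tilde w = \phi(\tilde x)$, and $\widetilde{\mathcal H}$ is the closed chain of $\tilde D$ whose hyperplanes carry the labels $a_0, \dots, a_s$ in clockwise order, with distinguished hyperplane $\tilde H_0$ (the one labeled $a_0$) satisfying the criterion of Lemma~\ref{lem:special-chain}. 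The original data $(D, \mathcal H, x)$ lies in $\mathcal C$, so it is nonempty. To each member I assign the complexity $c(\tilde D)$ equal to the number of ordered pairs $(E, F)$ of $\L$-edges of $\tilde w$ with $E$ an edge of $\widetilde{\mathcal H}$, $F$ not an edge of $\widetilde{\mathcal H}$, and $F$ to the right of $E$ in $\tilde w$. I would choose a minimizer of $c$ and show it satisfies condition~(3); conditions~(1) and~(2) hold automatically by membership in $\mathcal C$.

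Suppose toward a contradiction the minimizer has a subword $a_ia_{i+1}bb'$ with $a_ia_{i+1}$ an $\L$-edge of $\widetilde{\mathcal H}$, $a_ia_{i+1} \neq bb'$, and $a_ia_{i+1}$ commuting with $bb'$ in $W_\G$. Since $\tilde x$ is reduced, $bb'$ is not the inverse $\L$-edge of $a_ia_{i+1}$, so by Lemma~\ref{lem:commuting} the vertices $a_i, b, a_{i+1}, b'$ span a square of $\G$; in particular each of $b, b'$ commutes with each of $a_i, a_{i+1}$, and the four vertices are distinct. Moreover $bb'$ has a different color than $a_ia_{i+1}$: if they lay in the same component of $\L$, then the $\G$-edge $a_ib$ would join two vertices of that component, contradicting $\mathcal R_2$, which forces each component to be induced in $\Theta$. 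Hence $bb'$ lies in a closed chain distinct from $\widetilde{\mathcal H}$.

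The corresponding move on $\tilde x$ is the single Tits swap transposing the commuting generators $a_ia_{i+1}$ and $bb'$ of $A_\D$, which keeps the word reduced, giving $\tilde x'$ and preserving~(2). To realize it on diagrams I would apply Lemma~\ref{lem:new_diagrams}(1) four times, successively swapping the commuting pairs so as to pass $a_ia_{i+1}$ to the right of $bb'$ (namely $a_ia_{i+1}bb' \to a_iba_{i+1}b' \to ba_ia_{i+1}b' \to ba_ib'a_{i+1} \to bb'a_ia_{i+1}$); each swap is of distinct commuting generators, and each application preserves boundary combinatorics. Consequently the hyperplane pairing, and hence the partition into closed chains and the label sequence $a_0, \dots, a_s$ of the chain $\widetilde{\mathcal H}'$ corresponding to $\widetilde{\mathcal H}$, is unchanged, yielding a bigon-free diagram $\tilde D'$ with boundary $\tilde w'=\phi(\tilde x')$ obtained from $\tilde w$ by transposing the adjacent blocks $a_ia_{i+1}$ and $bb'$.

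It remains to check that $(\tilde D', \widetilde{\mathcal H}', \tilde x') \in \mathcal C$ with smaller complexity. Because $\tilde D'$ has no bigons, the separation data of $\tilde H_0'$ is governed by the cyclic order of the boundary $\L$-edges together with the hyperplane pairing, and the transposition alters this order only by moving $bb'$ across $a_ia_{i+1}$; the edges anchoring $\tilde H_0$ come from $a_0a_1$ and $a_sa_0$, and $a_sa_0$ (which is last in $\tilde w$) is never moved. A short case analysis, according to whether $i=0$ (so that the move relocates one endpoint of $\tilde H_0$) or $i \neq 0$, shows the criterion of Lemma~\ref{lem:special-chain} persists: the only chain whose side relative to $\tilde H_0$ can change is the one containing $bb'$, and it then either crosses $\tilde H_0$ or lands in the component opposite $\widetilde{\mathcal H}' \setminus \{\tilde H_0'\}$. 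The transposition converts exactly the pair $(a_ia_{i+1}, bb')$ from counted to uncounted and affects no other pair, so $c(\tilde D') = c(\tilde D) - 1$, contradicting minimality. The hardest step is this final verification that the distinguished hyperplane retains its separation property; everything else is bookkeeping resting on Lemmas~\ref{lem:new_diagrams} and~\ref{lem:commuting} and condition~$\mathcal R_2$.
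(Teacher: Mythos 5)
Your proposal is correct and takes essentially the same approach as the paper: the same adjacent swap of commuting $\L$-edges realized by four applications of Lemma~\ref{lem:new_diagrams}(1), the same case analysis (hinging on whether the edge $a_0a_1$ is involved) to see that the distinguished hyperplane retains the separation property of Lemma~\ref{lem:special-chain}, and the same termination reasoning. Your inversion-counting complexity is just a cleaner packaging of the paper's informal ``each iteration moves one $\L$-edge of $\mathcal H$ to the right, so the process stops.''
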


\begin{proof}
We construct $\tilde D$ iteratively, starting with $D$.  If $\mathcal H, p, w,$ and $x$ are as defined above, then the first two conditions in the claim are satisfied.  If (3) is not satisfied, then  
$w$  has a subword $a_{i}a_{i+1} bb'$ as in (3).    
Since $a_sa_0$ is the last $\L$-edge of $w$, 
we conclude that $a_{i}a_{i+1} \neq a_sa_0$.  

Note that $a_{i}a_{i+1}\neq bb'$ (by condition (3)) and $ a_ia_{i+1}\neq (bb')^{-1}$ (since $x$ is reduced and $w = \phi(x)$).  Then it follows from 
 Lemma~\ref{lem:commuting}, 
that each of $a_i$ and $a_{i+1}$ commutes with each of $b$ and $b'$. 
 By applying Lemma~\ref{lem:new_diagrams}(1) four times, we obtain a new disk diagram $D'$ such that 
the label of $\partial D'$ is obtained from the label of $\partial D$ by swapping the 
$\L$-edges $a_{i}a_{i+1}$ and $bb'$.  Moreover, 
the natural map $\psi$ from the edges of $\partial D$ to the edges of $\partial D'$ (defined in 
Lemma~\ref{lem:new_diagrams}(1)) preserves boundary combinatorics.  By applying Lemma~\ref{lem_remove_pathologies} if necessary, we may assume that $D'$ has no bigons, so hyperplanes in $D'$ intersect at most once.  

Since boundary combinatorics are preserved, $\psi$ induces a bijection between the hyperplanes dual to $\partial D$ and  those dual to $\partial D'$.
Since the transition from $D$ to $D'$ involves swapping a pair of $\L$-edges, the label of $\partial D'$ is still a product of $\L$-edges, and so the hyperplanes of $D'$ can be partitioned into closed chains of hyperplanes.  Moreover, $\psi$ induces a bijection between the closed chains of hyperplanes in $D$ and~$D'$.  

If $\mathcal H'$ and $H_0'$ denote the images of $\mathcal H$ and $H_0$ respectively under $\psi$, it is clear that the labels of the hyperplanes of ${\mathcal H'}$, starting from $ H_0'$ and proceeding clockwise, are $a_0, \dots a_s$. We now prove that $\mathcal H'$ together with $H_0'$ 
still satisfies the criterion in Lemma~\ref{lem:special-chain} 
 required in~(1).

Let $\mathcal K'$ be a closed chain in $D'$, and let $\mathcal K$ be its preimage in $D$.  Our choice of $\mathcal H$ implies that either $H_0$ separates $\mathcal K$ from $\mathcal H \setminus \{H_0\}$, or $\mathcal K$  intersects $H_0$.  
In the former case, $H_0'$ still separates $\mathcal K'$ from  $\mathcal H' \setminus \{H_0'\}$.  This is because the swap performed does not involve any hyperplanes from chains which do not intersect $H_0$, since (as noted above) $a_ia_{i+1} \neq a_sa_0$.

On the other hand, suppose that $\mathcal K$ intersects $H_0$.  By Observation~\ref{obs:2intersections}, there are exactly two hyperplanes 
$K_1$ and $K_2$ in $\mathcal K$ which intersect $H_0$. 
 If $K_j$, for $j = 1,2$, is not dual to the $\L$-edge labeled by $bb'$, then the image of $K_j$ intersects $H_0'$. 
 Moreover, if $i \neq 0$, then it follows that the images of $K_1$ and $K_2$ in $D'$ intersect the hyperplane $H_0'$.
 Thus, we only need to consider the case where the $\L$-edge $a_0a_1$ is swapped, and (up to relabeling) $K_1$ is dual to $b$ and $K_2$ is dual to $b'$. In this case, $K_1$ and $K_2$ are dual to the same $\L$-edge. It follows that no hyperplane in $\mathcal K \setminus \{K_1, K_2\}$ is contained in the same component of $D \setminus H_0$ as $\mathcal H \setminus 
\{H_0\}$. 
Thus, in $D'$, no hyperplane of $\mathcal K'$ is contained in the same component of  $D' \setminus H'_0$ as $\mathcal H' \setminus \{H'_0\}$. We have shown that $\mathcal H'$, with distinguished hyperplane $H_0'$, satisfies the conclusion of Lemma~\ref{lem:special-chain}.

Let $p'$ be the vertex on $\partial D'$ which is the endpoint of the $\L$-edge from $\mathcal H'$ labeled $a_sa_0$. Since the swap performed did not involve $a_sa_0$, the label $w'$ of $\partial D'$, read clockwise from $p'$, is obtained from $w$ by swapping a single pair of  $\L$-edges, and its preimage in $x' $ in $A_\Delta$ is obtained from $x$ by swapping one pair of generators. 
 This shows (2).

We have established that  $D'$, together with $\mathcal H'$,  satisfies (1) and (2) of Claim~\ref{claim:lambda-edge-right}.
 If (3) still fails, we may repeat the process above.  
 Since each individual iteration involves moving one $\L$-edge from the 
 image of $\mathcal H$  to the right, this process eventually stops. 
 After finitely many iterations, we arrive at 
a disk diagram $\tilde D$
such that all three conditions hold. 
\end{proof}

For the rest of the proof we assume, without loss of generality, that $D, \mathcal H, p, w,$ and $x$ satisfy the conclusion of Claim~\ref{claim:lambda-edge-right}.  

We now analyze closed chains which intersect ${\mathcal H}$.  First consider the case that there are no such chains.  This includes the case when $\L$ has a single component.
Since $\mathcal H$ is a closed chain, it defines a loop in $\L$. (See Observation~\ref{obs:lambda_paths}.)
On the other hand, since no chains intersect $\mathcal H$,  the union of the edges of $\partial D$ dual to the hyperplanes of $\mathcal H$ is a continuous subpath (with label $(a_0a_1)(a_1a_2) \cdots (a_sa_0)$).  Applying the following claim to this subpath, we conclude that the $\L$-loop defined by $\mathcal H$ is a cycle.  This contradicts $\mathcal R_1$. 
(The claim will be used again later in this proof.)

\begin{claim}\label{claim:simple_path}
Let $\nu$ be a subpath of $\partial D$ labeled by a product of $\L$-edges.  Suppose there exists a closed chain $\mathcal X$, such that each edge of $\nu$ is dual to a hyperplane in $\mathcal X$.  
 It follows that the label of $\nu$ is 
$(x_1x_2)\cdots(x_{n-1}x_n)$, where $x_1, x_2,\dots, x_n$ are the labels of the hyperplanes of
$\mathcal X$ dual to $\nu$, in order.  Furthermore, the $\L$-path through vertices $x_1, \dots x_n$ is simple.

\end{claim}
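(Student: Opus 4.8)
The plan is to reconstruct $\nu$ from the chain $\mathcal X$, first pinning down which hyperplanes of $\mathcal X$ it crosses and in what order, and then promoting this to simplicity. First I would note that, by hypothesis, $\nu$ is a concatenation of $\L$-edges and every edge of $\nu$ is dual to a hyperplane of $\mathcal X$. By the definition of a chain together with Observation~\ref{obs:partition}, each $\L$-edge of $\partial D$ is a transition of a unique closed chain, linking two consecutive hyperplanes of that chain; since both halves of each $\L$-edge of $\nu$ are dual to $\mathcal X$, every $\L$-edge of $\nu$ is a transition of $\mathcal X$. The heart of this first step is to show that two consecutive $\L$-edges of $\nu$ share a hyperplane of $\mathcal X$, so that the dual hyperplanes telescope. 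At the vertex of $\partial D$ separating two such $\L$-edges, the two incident edges are dual to hyperplanes $X, X'$ of $\mathcal X$; I would show $X = X'$. If $X \neq X'$ and the corner between the edges is filled by a square, then $X$ and $X'$ cross inside it, contradicting Observation~\ref{obs:C2} (as $\mathcal X$ is monochromatic). To rule out the remaining configurations I would use that, again by Observation~\ref{obs:C2} and the absence of bigons (Remark~\ref{rem:no-bigons}), the hyperplanes of the closed chain $\mathcal X$ are pairwise disjoint arcs; hence their dual boundary edges, and so the transitions of $\mathcal X$, occur along $\partial D$ in the cyclic order prescribed by the chain. Since $\nu$ is connected and all its edges are dual to $\mathcal X$, it then traverses a consecutive block of transitions with nothing in between, so the hyperplanes of $\mathcal X$ dual to $\nu$, in order, are a consecutive sub-arc $X_p, \dots, X_q$ of the chain. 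Reading their labels $x_i$ off in order yields the label of $\nu$ in the telescoping form $(x_1 x_2)(x_2 x_3)\cdots(x_{n-1}x_n)$.

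Next I would show the $\L$-path $x_1, \dots, x_n$ is simple. Consecutive $x_i$ are joined by $\L$-edges (Observation~\ref{obs:lambda_paths}), so a failure of simplicity can be located at a minimal repetition: pick $a < b$ with $x_a = x_b$ and $x_a, \dots, x_{b-1}$ distinct. If $b - a \geq 3$, then $x_a, x_{a+1}, \dots, x_{b-1}, x_a$ is an $\L$-cycle in $\Theta$, contradicting $\mathcal R_1$ (Definition~\ref{def:R1}); the case $b-a = 1$ cannot occur since $\Theta$ is simplicial. In the remaining case $b - a = 2$ we have a backtrack $x_a, x_{a+1}, x_a$, so the two corresponding $\L$-edges of $\nu$ are the same unoriented $\L$-edge traversed in opposite directions. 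Thus the label of $\nu$, which is a subword of $w = \phi(x)$ whose $\L$-edge grouping matches the generators of $x$, contains an $\L$-edge immediately followed by its inverse; this forces a subword $vv^{-1}$ in $x$, contradicting the choice of $x$ as a reduced word in $A_\D$. Hence no repetition occurs and the path is simple.

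The main obstacle I anticipate is the planar bookkeeping in the first step: ruling out that $\nu$ passes between non-consecutive hyperplanes of $\mathcal X$ at a junction between two $\L$-edges. The subtle point is that each interior hyperplane of the sub-arc $X_p, \dots, X_q$ has its two dual boundary edges adjacent on $\partial D$ and carrying the same label, which (via the no-bigon hypothesis) forces that hyperplane to be dual to a single spur edge; carrying out the disjoint-arc argument so that it also covers these degenerate spur configurations, while still producing the honest in-order list of hyperplane labels, is where the care lies. By comparison, once the telescoping form and the reducedness of $x$ are available, the simplicity argument is a short dichotomy.
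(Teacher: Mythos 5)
Your proof is correct and takes essentially the same approach as the paper: the paper treats the telescoping form of the label as immediate (your planarity elaboration is a fair unpacking of this), and its simplicity argument likewise combines $\mathcal R_1$ with the reducedness of $x$, merely locating the forced backtrack $(x_{r-1}x_r)(x_rx_{r-1})$ via a leaf of the tree that is the image of the offending loop rather than via your minimal-repetition case split. Both routes reduce to the same two contradictions, so the difference is purely organizational.
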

\begin{proof}
The claim about the label of $\nu$ is immediate.  If the path through vertices $x_1, \dots, x_n$ is not simple, then there is a $\L$-loop through vertices $x_i, x_{i+1}, \dots,  x_{i+j}=x_i$ for some $i, j$.  By $\mathcal R_1$, the image of this loop in $\L$ is a tree.  Let $x_r$ be a leaf of this tree, with $i<r<j$.
It follows that $x_{r-1} = x_{r+1}$.  Consequently, the label of $\nu$ (and therefore of the word  $w$) has a subword 
$(x_{r-1}x_r )(x_rx_{r-1})$.  This is a contradiction, as it implies that the preimage $x$ of $w$ in $A_\Delta$ is not reduced. 
\end{proof}

Thus, we may assume that there is at least one chain intersecting $\mathcal H$.  In particular, 
$\L$ has two components: say a red component $\L_a$ 
which contains the labels of $\mathcal H$, and a green component $\L_b$.
By Claim~\ref{claim:lambda-edge-right}, each chain intersecting $\mathcal H$ intersects $H_0$. 
Let $\mathcal K$ be the ``first''  such chain, in the sense that the first $\L$-edge from a chain other than $\mathcal H$ appearing in $w$ to the right of $a_0a_1$ is from $\mathcal K$. 
 (See Figure~\ref{fig:HKpoly}.)
By $\mathcal R_2$ and Observation~\ref{obs:C2}, we conclude that $\mathcal K$ is green.
Let $b_0, \dots b_{s'}$ be the labels of the hyperplanes of $\mathcal K$, where $b_0b_1$ is the label of the first $\L$-edge from $\mathcal K$ appearing in $w$ to the right of $a_0a_1$.

Now consider the 2-colored polygon in $D$ whose sides alternate between hyperplanes in ${\mathcal H}$ and $\mathcal K$, as described in Observation~\ref{obs:intersecting_chains}.  Let $c_0, d_0, \dots c_k, d_k$ be the labels of these sides, where $c_0=a_0$, 
$d_0=b_0$,
and $c_0, \dots c_k$ 
(resp.~$d_0, \dots, d_k$)
is a subsequence of $a_0, \dots a_s$
(resp.~of $b_0, \dots, b_{s'}$).
 (See Figure~\ref{fig:HKpoly}.)

The following technical claim about the hyperplanes dual to certain subpaths of $\partial D$  associated to this $2$-colored polygon will be needed in what follows:

\begin{claim}\label{clm:dual_hyperplanes}
For $0\le i \le {k}$,  let $e_i$ and $f_i$ (resp.~$e_i'$ and $f_i'$) be the edges dual to the hyperplane of $\mathcal H$ labeled $c_i$ (resp.~the hyperplane of $\mathcal K$ labeled $d_i$), where $e_i$ 
(resp.~$e_i'$) appears before $f_i$ (resp.~$f_i'$) reading clockwise from $p$.  

For $i>0$, let  $\eta_i$ be the subpath of $\partial D$ from (and including) $f_{i-1}$ to (and including) $e_{i}$, and let  $\mu_i$ be the subpath of $\partial D$ from the endpoint of $\eta_i$ to  (and including) $e_{i}'$. (Figure~\ref{fig:dual_hyperplanes}.)
Then every edge of $\eta_i$ (resp.~$\mu_i$) is dual to a hyperplane in $\mathcal H$ (resp.~$\mathcal K$).   
\end{claim}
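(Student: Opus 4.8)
The plan is to first read off, from the two chains and the no‑bigon hypothesis, the cyclic order in which the edges $e_i,f_i,e_i',f_i'$ occur along $\partial D$, and then to use planarity to forbid every hyperplane that is not ``expected'' on the arcs $\eta_i$ and $\mu_i$. For the ordering step I would use that $\mathcal H$ and $\mathcal K$ are closed chains: reading $\partial D$ clockwise from $p$, the dual edges of $\mathcal H$ (resp.\ $\mathcal K$) are emitted in the order prescribed by the $\L$-edges $a_0a_1,a_1a_2,\dots,a_sa_0$ (resp.\ $b_0b_1,\dots,b_{s'}b_0$), by the definition of a chain. Since consecutive sides of the $2$-coloured polygon intersect, their dual edges must interleave on the circle $\partial D$ (otherwise two hyperplanes would be forced to meet twice, contradicting Remark~\ref{rem:no-bigons}); feeding this into the chain order, I would verify that $f_{i-1}$, $e_i$, $e_i'$ occur in that cyclic order and that the arcs $\eta_1,\mu_1,\eta_2,\mu_2,\dots$ are disjoint and partition the relevant stretch of $\partial D$. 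This bookkeeping is already subtle, because the polygon only sees a \emph{subsequence} $c_0,\dots,c_k$ (resp.\ $d_0,\dots,d_k$) of the hyperplanes of $\mathcal H$ (resp.\ $\mathcal K$): the hyperplanes of $\mathcal H$ strictly between $c_{i-1}$ and $c_i$ are exactly the ones whose dual edges ought to fill $\eta_i$.

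With the ordering in hand, I would prove the two halves of the claim together, by induction on $i$. Fix $i$ and let $M$ be a hyperplane dual to some edge of $\eta_i$ (the argument for $\mu_i$ is the mirror image, with the colours interchanged). The arc $\eta_i$, together with subarcs of $c_{i-1}$, $c_i$ and their common green neighbour $d_{i-1}$ in the polygon, bounds a subdisk $R$ of $D$, and $M$ must either be dual to a second edge of $\eta_i$ or leave $R$ by crossing one of $c_{i-1}$, $c_i$, $d_{i-1}$. If $M$ is dual only to edges of $\eta_i$, then reasoning as in Claim~\ref{claim:simple_path} (using $\mathcal R_1$, the no‑bigon condition, and the reducedness of $x$) forces a repeated $\L$-edge, a contradiction. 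If instead $M$ escapes $R$, I would use Observation~\ref{obs:C2} to rule out a same‑colour crossing, Observation~\ref{obs:2intersections} to control how $M$'s chain meets $\mathcal H$ and $\mathcal K$, and the fact (from Lemma~\ref{lem:special-chain} together with Claim~\ref{claim:lambda-edge-right}) that every chain meeting $\mathcal H$ must meet $H_0$, in order to conclude that the chain of $M$ would contribute a polygon side occurring at a position incompatible with the cyclic order established above. For the base case the choice of $\mathcal K$ as the \emph{first} chain whose $\L$-edge appears to the right of $a_0a_1$ pins down the initial arc.

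The main obstacle is precisely this second step: systematically excluding ``intruder'' hyperplanes, both wrong‑coloured hyperplanes and hyperplanes belonging to a third chain of the correct colour. Unlike the removal of a monochromatic $\L$-loop, which is immediate from $\mathcal R_1$, these configurations are not ruled out by any single hypothesis; they must be eliminated by combining the no‑bigon condition, the two‑colour structure of Observation~\ref{obs:C2}, the ``meets $H_0$'' property, and the normal form of Claim~\ref{claim:lambda-edge-right}, all while keeping careful track of the planar positions of the relevant dual edges. Once these intruders are excluded, the labels along $\eta_i$ (resp.\ $\mu_i$) are exactly those of the hyperplanes of $\mathcal H$ (resp.\ $\mathcal K$) dual to it, which is the assertion of the claim.
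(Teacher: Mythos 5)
You have assembled the right ingredients --- the ``meets $H_0$'' property of $\mathcal H$ from Lemma~\ref{lem:special-chain}, the two-colouring and Observation~\ref{obs:C2}, and the choice of $\mathcal K$ as the first intersecting chain --- and your overall strategy (exclude every hyperplane on $\eta_i$, resp.\ $\mu_i$, whose chain is not $\mathcal H$, resp.\ $\mathcal K$) is the same as the paper's. But the proof is not actually there: the step you yourself flag as ``the main obstacle'' is left as a list of tools to be combined rather than an argument. The paper's mechanism is short and concrete, and you never articulate its decisive half. Namely: (i) since an edge $e$ of $\eta_i$ lies on the same side of $H_0$ as $\mathcal H\setminus\{H_0\}$, Lemma~\ref{lem:special-chain} forces the chain $\mathcal L$ of any intruder $L$ to intersect $H_0$, so $\mathcal L$ is green; (ii) the single hyperplane $K\in\mathcal K$ labeled $d_{i-1}$, dual to $e_{i-1}'$ and $f_{i-1}'$, \emph{separates} $e$ both from $\mathcal K\setminus\{K\}$ (so $\mathcal L\neq\mathcal K$) and, when $K$ does not meet $H_0$, from $H_0$ itself; a green chain $\mathcal L$ that must reach $H_0$ would then have to cross the green chain $\mathcal K$, contradicting Observation~\ref{obs:C2}; (iii) the remaining case, $K$ meeting $H_0$ (i.e.\ $d_{i-1}=b_0$), is exactly where the choice of $\mathcal K$ as ``first'' enters; and for $\mu_i$ one replaces $K$ by the subchain of $\mathcal K$ dual to all but the last edge of $\mu_i$. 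Your proposal gestures at ``a polygon side at an incompatible position,'' but the separating role of $K$ is the whole point, and without it nothing is excluded.

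Two further problems. Your dichotomy on where the second end of $M$ lies is both unnecessary and, in the case ``both ends on $\eta_i$,'' wrongly resolved: Claim~\ref{claim:simple_path} assumes every edge of the path is dual to a single closed chain, which is precisely the conclusion you are trying to prove, and a hyperplane with both dual edges on $\eta_i$ does not by itself produce a repeated $\L$-edge or contradict reducedness of $x$. The paper needs no such case split because its argument constrains the intruder's entire chain, not the position of its two dual edges. Finally, the induction on $i$ is vestigial (your inductive step never uses the hypothesis for $i-1$), and the cyclic-order bookkeeping you propose as a first stage is already contained in Observations~\ref{obs:2intersections} and~\ref{obs:intersecting_chains}; it is not where the content of the claim lies.
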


\begin{figure}
\medskip
\begin{overpic}[scale=0.7]{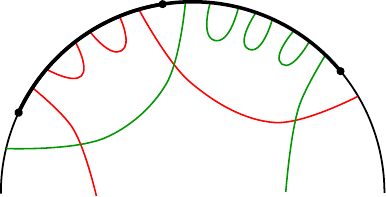}
\put(12, 45){\tiny $ \eta_i$}
\put(70, 50){\tiny $ \mu_i$}
\put(25, 5){\tiny $c_{i-1} $}
\put(36, 14){\tiny $d_{i-1} $}
\put(58, 17){\tiny $c_i $}
\put(67, 5){\tiny $d_i $}
\put(32, 52){\tiny $e_i $}
\put(95, 26){\tiny $f_i $}
\put(-5, 28){\tiny $f_{i-1} $}
\put(-11, 13){\tiny $e'_{i-1} $}
\put(44, 54){\tiny $ f'_{i-1}$}
\put(85, 38){\tiny $e'_{i} $}
\end{overpic}
\caption{The paths $\eta_i$ and $\mu_i$ from Lemma~\ref{clm:dual_hyperplanes} are shown in bold, delineated by dots. We remark that if $i=k$, then there could be additional hyperplanes not in $\mathcal K$ or $\mathcal H$ between the endpoint of $\mu_k$ and the start of the edge $f_i$.  }
\label{fig:dual_hyperplanes}
\end{figure}

\begin{proof}
Suppose there is some hyperplane $L$ dual to an edge $e$ of $\eta_i$ such that the closed chain $\mathcal{L}$ containing $L$ is not equal to $\mathcal H$.
From the definition of $\eta_i$, we conclude that $e$
is on the same side of $H_0$ as $\mathcal H\setminus \{H_0\}$ in $D$, and by our choice of 
 $\mathcal H$ (and Lemma~\ref{lem:special-chain}), it follows that $\mathcal L$ intersects $H_0$. 
Therefore, $L$ is green by Observation~\ref{obs:C2}. 

Let $K$ denote the hyperplane of $\mathcal K$ labeled $d_{i-1}$.   Then 
$K$ separates $e$ from $\mathcal K\setminus \{K \}$, so $L \notin \mathcal K$, i.e.~$\mathcal L \neq \mathcal K$.  If $d_{i-1} =  b_0$, i.e.~if $K$ does intersect $H_0$, then
our choice of $\mathcal K$ implies that $K$ is the first 
 hyperplane not in $\mathcal H$ dual to $\partial D$ after the $\L$-edge $a_0a_1$, so such an $L$ cannot exist.  On the other hand, 
if $K$ does not intersect $H_0$, then  $K$ separates $e$ from $H_0$. So, in order to intersect $H_0$, the chain $\mathcal L$ must also intersect $\mathcal K$, which is a contradiction, since $\mathcal L$ and $\mathcal K$ are both green.

Now suppose $L \in \mathcal L \neq \mathcal K$ is dual to an edge $e$ of $\mu_i$.  
Since $\mu_i$ is only defined for $i>0$, it is on the same side of $H_0$ as $ \mathcal H\setminus \{H_0\}$, and consequently, the same holds for $e$. Therefore, we conclude as before that $L$ is green.  

Additionally, we conclude as before
that the hyperplane 
$K\in \mathcal K$ labeled $d_{i-1}$ does not intersect $H_0$.  Now consider the subchain of $\mathcal K'$ of $\mathcal K$ consisting of the hyperplanes dual to all but the last edge $e_i'$ of $\mu_i$.  
Since $K$ does not intersect $H_0$, it follows that $e$ is separated from $H_0$ by some hyperplane in $\mathcal K'$.  Thus, in order to intersect $H_0$, 
$\mathcal L$ must intersect $\mathcal K$, which is again a contradiction.
\end{proof}

The 2-colored polygon 
obtained above 
gives a 2-component loop in $\Theta$, as described in Observation~\ref{obs:intersecting_chains}.
A priori this loop may not be a 2-component cycle, i.e.,~it is possible that $c_i=c_j$ or $d_i=d_j$ for some $i,j$.   However, we now show that  it contains a cycle.  We will then be able to apply 
$\mathcal R_4$ to this cycle to make progress towards obtaining a contradiction to the normal form in Claim~\ref{claim:lambda-edge-right}.

\begin{claim} \label{claim:cycle}
Consider the 2-component loop in $\Theta$ visiting $c_0, d_0, \dots c_k, d_k, c_{k+1}=c_0$ defined above.  There exist $0 \le l \le k-1$ and $m\ge 2$, 
such that one of the two following subsequences of vertices   (with indices taken mod $k+1$)  defines a 2-component cycle in $\Theta$:
\begin{enumerate}
\item 
$c_l, d_l, c_{l+1},  \dots, d_{l+m-1}, c_{l+m} = c_l$
\item  $d_l, c_{l+1}, d_{l+1}\dots, c_{l+m}, d_{l+m} = d_l$
\end{enumerate}

\end{claim}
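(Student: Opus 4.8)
The plan is to treat this as an essentially combinatorial extraction of a genuine cycle from the 2-component closed walk $c_0, d_0, \dots, c_k, d_k, c_{k+1}=c_0$, invoking the disk diagram only at one point. The key structural input is that $c_0, \dots, c_k$ lie in the red component $\L_a$ and $d_0, \dots, d_k$ lie in the green component $\L_b$; since $\L_a$ and $\L_b$ are distinct components of $\L$, they are disjoint, so $c_i \neq d_j$ for all $i,j$. Consequently any repetition of a vertex along the loop is \emph{monochromatic}, i.e.~of the form $c_l = c_{l'}$ or $d_l = d_{l'}$ with $l < l'$.

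First I would dispose of the easy case: if the $c_i$ are pairwise distinct and the $d_i$ are pairwise distinct, then the whole loop is already a 2-component cycle, with $l=0$ and $m=k+1\ge 2$ (the bound $k\ge 1$ comes from the fact that the polygon of Observation~\ref{obs:intersecting_chains} has at least four sides). Otherwise, among all monochromatic repetitions I would choose one minimizing $l'-l$; say it is $c_l = c_{l'}$, the case $d_l = d_{l'}$ being symmetric. Consider the subsequence $S$ running $c_l, d_l, c_{l+1}, \dots, c_{l'} = c_l$. Minimality forces $S$ to have no internal repetition: a repeat $c_p=c_q$ with $l\le p<q\le l'$ and $(p,q)\neq(l,l')$ would have $q-p<l'-l$, and a repeat $d_p=d_q$ with $l\le p<q\le l'-1$ would have $q-p\le (l'-l)-1<l'-l$; both contradict minimality. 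Hence all vertices of $S$ are distinct apart from its endpoints, so $S$ is a 2-component cycle with $m=l'-l$, yielding subsequence (1) when the minimizing repeat is on the $c$-side and subsequence (2) when it is on the $d$-side.

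It remains to rule out $m=1$, which is the only obstruction to $m\ge 2$; this is the main difficulty and the sole place the disk diagram is used. A value $m=1$ means $c_l = c_{l+1}$ (or symmetrically $d_l = d_{l+1}$). By the construction of Observation~\ref{obs:intersecting_chains}, the two hyperplanes of $\mathcal H$ labelled $c_l$ and $c_{l+1}$ are the two \emph{distinct} hyperplanes of $\mathcal H$ crossed by the single hyperplane of $\mathcal K$ labelled $d_l$. I would apply Claim~\ref{clm:dual_hyperplanes} to the subpath $\eta_{l+1}$ of $\partial D$ running from the edge $f_l$ (dual to the hyperplane labelled $c_l$) to the edge $e_{l+1}$ (dual to the hyperplane labelled $c_{l+1}$): every edge of $\eta_{l+1}$ is dual to $\mathcal H$. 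Then Claim~\ref{claim:simple_path} tells us that the labels of the hyperplanes of $\mathcal H$ dual to $\eta_{l+1}$, read in order, trace a \emph{simple} $\L$-path. But this path starts at $c_l$ and ends at $c_{l+1}$, and since $c_l = c_{l+1}$ while the corresponding hyperplanes are distinct, the path has length at least one and returns to its initial vertex, contradicting simplicity. Therefore $m\ge 2$.

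I expect the delicate point to be exactly this last paragraph: identifying, for the degeneracy $c_l=c_{l+1}$, the correct monochromatic boundary subpath supplied by Claim~\ref{clm:dual_hyperplanes} so that Claim~\ref{claim:simple_path} applies, and carrying out the symmetric treatment of $d_l = d_{l+1}$ using the green subpaths $\mu$ in place of the red subpaths $\eta$. Everything else is elementary finite combinatorics on the cyclic sequence of vertices, depending only on the disjointness of the two components of $\L$ and on condition $\mathcal R_1$ (entering through Claim~\ref{claim:simple_path}).
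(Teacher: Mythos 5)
Your proposal is correct and follows essentially the same route as the paper: both arguments take a minimal monochromatic repetition (your minimization of $l'-l$ is the paper's minimal $j$ with $c_i=c_{i+j}$ or $d_i=d_{i+j}$), use that minimality to get distinctness of the interior vertices, and rule out the degenerate case $m=1$ by applying Claim~\ref{clm:dual_hyperplanes} and Claim~\ref{claim:simple_path} to the subpath $\eta_{i+1}$ (resp.\ $\mu_{i+1}$) to produce a simple $\L$-path from $c_i$ to $c_{i+1}$, contradicting $c_i=c_{i+1}$. The only cosmetic differences are that you treat the "all vertices distinct" case separately, whereas the paper absorbs it into the minimization via the repetition $c_0=c_{k+1}$.
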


\begin{proof}
Observe that since $c_{k+1} = c_0$, the following set is non-empty:
$$
 \{ j \;|\; c_i = c_{i+j}  \text{ or } d_i = d_{i+j} \text{ for some } 0 \le i< k-1 \text{ and } 1 \le j\le k+1\}
$$
Let $m$ denote its minimum value.  We first show that $m \ge 2$, or equivalently that, for each $0 \le i \le k-1$, 
both $c_i \neq c_{i+1}$ and  $d_i \neq d_{i+1}$ are true.  
Suppose $c_i = c_{i+1}$ for some $i$. 
Consider the path $\eta_{i+1}$ from Claim~\ref{clm:dual_hyperplanes}. 
It is labeled by $\L$-edges, and every edge in it is dual to a hyperplane from $\mathcal H$.  
Then by Claim~\ref{claim:simple_path}, it follows that $\eta_{i+1}$ defines a simple $\L$-path from 
the vertex $c_i$ to the vertex $c_{i+1}$. 
However, this  contradicts the assumption that $c_i=c_{i+1}$. This proves that for all $0 \le i \le k-1$, we have $c_i \neq c_{i+1}$.  The proof that  $d_i \neq d_{i+1}$ is similar. 

Now if  $l$ is such that $c_l =c_{l+m}$ (the case when $d_l = d_{l+m}$ is similar), then 
it readily follows from the minimality of $m$ that the vertices  $c_l, d_l, c_{l+1},$ $\dots c_{l+m-1},   d_{l+m-1}$ are distinct, 
and therefore define the desired cycle.  
\end{proof}

Continuing the proof of the theorem, we can now 
 assume $\Theta$ has a 2-component cycle $\gamma$ as in (1) from Claim~\ref{claim:cycle}.  (The case in which $\Theta$ has a  2-component cycle as in (2) is similar.)
Let $T_c$ and $T_d$ be the $\L$-convex hulls of $\{c_l,\dots, c_{l+m-1}\}$ and $\{d_l, \dots d_{l+m-1} \}$ respectively.  
Then  $T_c$ and $T_d$ are trees by $\mathcal R_1$.
Let $c_j$
 be a leaf of $T_c$ with $c_j\neq c_0$.  
 Then $c_j$ labels a hyperplane $H_t \in \mathcal H$ for some $t \neq 0$, so that $a_t=c_j$.  Similarly, 
 $c_{j-1}$ labels a hyperplane $H_{t-r}$ of $\mathcal H$, while 
 $d_{j-1}$ and $d_j$ label hyperplanes $K_{t'}$ and $K_{t'+r'}$ respectively of $\mathcal K$, where 
 $d_{j-1}=b_{t'}$ and~$d_j = b_{t'+r'}$

 Consider the paths $\eta_j$ and  $\mu_j$
 defined in Claim~\ref{clm:dual_hyperplanes}.  The last $\L$-edge of $\eta_j$ is $a_{t-1}a_t$.  
By Claim~\ref{clm:dual_hyperplanes}, the first edge of $\mu_j$ is dual to a hyperplane in $\mathcal K$.  It follows that this must be $K_{t'}$, with label $b_{t'}$, for otherwise $K_{t'}$ would separate this edge from $\mathcal K\setminus K_{t'}$.  It follows that the first $\L$-edge of 
	$\mu_j$
 is 
$b_{t'}b_{t'+1}$, and that the word $w$ has a subword $a_{t-1}a_tb_{t'} b_{t'+1}$.

To complete the proof, we will show that the presence of this subword violates the 
 normal form established in Claim~\ref{claim:lambda-edge-right}(3).  Since the labels of $\mathcal H$ and $\mathcal K$ are from different components of $\L$, it is immediate that 
$a_{t-1}a_t \neq b_{t'} b_{t'+1}$.   We now show that $a_{t-1}a_t$ and $b_{t'}b_{t'+1}$ commute.

The 2-component cycle $\gamma$ in $\Theta$ contains an edge with endpoints $a_t$ and $b_{t'}$.  Applying 
 $\mathcal R_4$ to this edge, we conclude that there is a 2-component square visiting $a_t, b_{t'}, a,$ and $b$, where $a\in T_c$ and $b \in T_d$.  Next, applying  $\mathcal R_3$ to this 2-component square, we see that $b_{t'}$ commutes with the vertices of the $\L$-convex hull of $\{a_t, a\}$. 
Claim~\ref{clm:dual_hyperplanes}  and Claim~\ref{claim:simple_path} together imply that the path $\eta_j$ induces a simple $\L$-path visiting vertices $a_{t-r}, a_{t-r+1}, \dots, a_t$. 
 Consequently, the vertices along this path, and in particular $a_{t-1}$, are in $T_c$. 
Moreover,  $a_{t-1}$ is the unique vertex of 
$T_c$ adjacent to $a_t$, since $a_t = c_j$ is a leaf of $T_c$. 
It follows that $a_{t-1}$ is contained in the $\L$-convex hull (which is the same as the $T_c$-convex hull) of $\{a_t, a\}$.
 Thus, $a_{t-1}$ and $b_{t'}$ commute.  The same reasoning, applied to the edge of $\gamma$ with endpoints $a_t$ and $b_{t+r'}$, implies that $a_{t-1}$ and $b_{t+r'}$ commute.

Using the $\G$-edges whose existence is implied by these two additional commutation relations, we obtain a 2-component square visiting $a_t, b_{t'}, a_{t-1}, b_{t'+r'}$.  Applying $\mathcal R_3$ to this square, we conclude that $a_t$ and $a_{t-1}$ commute with each vertex in the 
$\L$-convex hull of $\{b_{t'}, b_{t'+r'}\}$.  By Claim~\ref{claim:simple_path}, we see that the path $\mu_j$ from Claim~\ref{clm:dual_hyperplanes} defines a simple $\L$-path visiting  $b_{t'}, b_{t'+1}, \dots b_{t'+r'}$.  It follows that $b_{t'+1}$ is in the convex hull of $\{b_{t'}, b_{t'+r'}\}$, and 
consequently, $a_t$ and $a_{t-1}$ commute with $b_{t'+1}$.

Putting together the commutation relations established in the previous paragraphs, we conclude that $a_{t-1}a_t$ commutes with $b_{t'}b_{t'+1}$.
  This contradicts the fact that we have chosen $D$ so that it satisfies 
(3) of Claim~\ref{claim:lambda-edge-right}. 
\end{proof}

\subsection{Three or more $\L$-components}
In the case that $\L$ contains at most two components, Theorem~\ref{thm:2cpt} shows that $\mathcal{R}_1 - \mathcal{R}_4$ are necessary and sufficient conditions that guarantee $(G^\Theta, E(\L))$ is a RAAG system. In this subsection, we do not place any restriction on the number of components of $\L$. We give an additional necessary Condition $\mathcal{R}_5$ for $(G^\Theta, E(\L))$ to be a RAAG system, and Example~\ref{ex:R5 necessary} shows this condition is independent of conditions $\mathcal{R}_1 - \mathcal{R}_4$. The authors are aware that \textit{even more} conditions are required in order to generalize Theorem~\ref{thm:2cpt} to this setting. These extra conditions are not included here, as they are complicated and the authors do not believe to yet possess the complete list of the necessary and sufficient conditions for this generalization.

We further show in this subsection that if $\Theta$ contains certain subgraphs and $(G^\Theta, E(\L))$ is a RAAG system, then $\G$ must necessarily contain a triangle. These results are needed in the next section.

\begin{definition}[Condition $\mathcal{R}_5$] \label{def:R5}
	We say that $\Theta$ satisfies \textit{condition $\mathcal{R}_5$} if the following holds. Let $\L_a, \L_c$ and $\L_d$ be distinct components of $\L$. Suppose we have vertices $a, a' \in \L_a$, $c, c' \in \L_c$ and $d, d' \in \L_d$, such that  $\Theta$ contains a 2-component square visiting $c$, $d$, $c'$ and $d'$.
	Furthermore, suppose that $c$ and $c'$ are each adjacent to $a$ in $\G$ and that $d$ and $d'$ are each adjacent to $a'$ in $\G$. (See Figure~\ref{fig:R5}.) Let $T_a, T_c$ and $T_d$ be the $\L$-convex hulls of  $\{a, a'\}$, $\{c, c'\}$ and $\{d, d'\}$ respectively. Then given any $\L$-edge $xx'$ of $T_a$,  
	the graph $\G$ contains either the join of $\{x, x'\}$ with $V(T_c)$ or  the join of $\{x, x'\}$ with $V(T_d)$.
\end{definition}

\begin{figure}[h!]
	\begin{overpic}[scale=0.6]
		{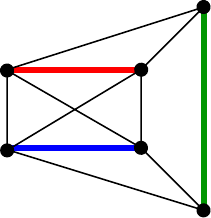}
		\put(-9, 69){\tiny $c$}
		\put(72, 69){\tiny $c'$}
		\put(-10, 29){\tiny $d$}
		\put(72, 29){\tiny $d'$}
		\put(99, 95){\tiny $a$}
		\put(99, 0){\tiny $a'$}	
	\end{overpic}
	
	\caption{This figure illustrates condition $\mathcal{R}_5$. The red, blue and green segments are respectively $T_c$, $T_d$ and $T_a$. Condition $\mathcal{R}_5$ states that any $\L$-edge contained in the green segment must either commute with every $\L$-edge in the red segment or must commute with every $\L$-edge in the blue segment.}
	\label{fig:R5}
\end{figure}

The following is a concrete example showing that when $\L$ has more than two components, 
the conditions $\mathcal{R}_1 - \mathcal{R}_4$ are not sufficient to guarantee that 
$(G^\Theta, E(\L) )$ is a RAAG system.

\begin{example} \label{ex:R5 necessary}
	Let $\G$ be the graph whose vertex set is $\{a, a', c, c', d, d'\}$ and whose edge set is the set of black edges in Figure~\ref{fig:R5}. Let $\Lambda \subset \G^c$ consist of exactly three $\L$-edges: $aa'$, $cc'$, $dd'$. Then $\Theta = \Theta(\G, \L)$ satisfies conditions $\mathcal{R}_1 - \mathcal{R}_4$ and does not satisfy condition $\mathcal{R}_5$. By Lemma~\ref{lem:R5} below, $(G^\Theta, E(\L) )$ is not a RAAG system.
\end{example}

We now show that condition $\mathcal{R}_5$ is necessary.

\begin{lemma}\label{lem:R5}
	If $(G^\Theta, E(\L))$ is a RAAG system, then $\Theta$ satisfies condition $\mathcal{R}_5$.
\end{lemma}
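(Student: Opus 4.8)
The plan is to follow the template of Lemmas~\ref{lem:4cycle} and~\ref{lem:cycles}. Assuming $(G^\Theta, E(\L))$ is a RAAG system, $\phi$ is injective and, by Proposition~\ref{prop_necessary}, conditions $\mathcal{R}_1$--$\mathcal{R}_4$ already hold; these will be used freely. Before touching disk diagrams I would record a reformulation of the conclusion via Lemma~\ref{lem:commuting}: since $\L_a, \L_c, \L_d$ are distinct components, an $\L$-edge $xx'$ of $T_a$ and an $\L$-edge $yy'$ of $T_c$ share no vertices, so they commute (in $W_\G$, equivalently in $A_\Delta$) if and only if $\{x,x'\}$ is joined to $\{y,y'\}$ in $\G$. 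As the vertices of the tree $T_c$ are exactly the endpoints of its $\L$-edges, $\{x,x'\}$ is joined to all of $V(T_c)$ precisely when $xx'$ commutes with every $\L$-edge of $T_c$, and similarly for $T_d$. Thus $\mathcal{R}_5$ is equivalent to the statement that every $\L$-edge of $T_a$ commutes with all $\L$-edges of $T_c$ or with all $\L$-edges of $T_d$. I would also immediately apply $\mathcal{R}_3$ to the given $2$-component square on $c,d,c',d'$ to conclude that $\G$ contains the join $V(T_c) * V(T_d)$, so the $\L_c$- and $\L_d$-edges commute pairwise.

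Next I would produce a word $w$ over $V(\G)$ representing the identity of $W_\G$ whose $\L$-edge word $w'$ records the interaction of all three convex hulls with the $\L_a$-path. Using that $a=a_1$ is adjacent to $c,c'$ and $a'=a_p$ is adjacent to $d,d'$, the join $V(T_c)*V(T_d)$, and the involution relations of $W_\G$, I would take $w$ to be a commutator-type word that conjugates the $\L_d$-element $dd'$ and the $\L_c$-element $cc'$ by the $\L_a$-element $aa'$ and exhibits their cancellation; the goal is for $\partial D$ to contain a subpath labelled by the $\L$-edge word of $T_a$ (read as $aa'$ and its inverse) flanked by the $\L$-edge words of $T_c$ and $T_d$. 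By injectivity, $w'$ represents the identity of $A_\Delta$, so there is a bigon-free disk diagram $D$ over $A_\Delta$ with boundary label $w'$ (Lemma~\ref{lem_remove_pathologies}, Remark~\ref{rem:no-bigons}), whose edges are labelled by $\L$-edges, i.e.\ generators of $A_\Delta$.

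I would then color each hyperplane of $D$ by the component of $\L$ carrying its label, say red for $\L_a$, green for $\L_c$, blue for $\L_d$. By $\mathcal{R}_2$ and Observation~\ref{obs:C2} no two hyperplanes of one color cross, so by Observation~\ref{obs:partition} the hyperplanes partition into monochromatic closed chains, which by Observation~\ref{obs:lambda_paths} trace $\L$-paths in $T_a$, $T_c$, $T_d$; by the join $V(T_c)*V(T_d)$ the green and blue chains cross completely. Fixing an $\L$-edge $a_ia_{i+1}$ of $T_a$ with dual red hyperplane $H$, I would analyze which green and blue hyperplanes cross $H$. By Observation~\ref{obs:2intersections} each chain meeting $H$ meets it in exactly two hyperplanes, and an intersecting pair of chains yields, via Observation~\ref{obs:intersecting_chains}, a $2$-component square (or loop) through $a_i, a_{i+1}$; applying $\mathcal{R}_3$ (and, where the loop is not immediately a cycle, $\mathcal{R}_4$ as in Lemma~\ref{lem:cycles}) and translating back through Lemma~\ref{lem:commuting} shows that $a_ia_{i+1}$ is joined to the $\L$-convex hull of the crossing labels. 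The key step is to show that the crossing labels on the relevant color are exactly the endpoints $c,c'$ (resp.\ $d,d'$), so that the convex hull is all of $T_c$ (resp.\ $T_d$), giving the full join required by $\mathcal{R}_5$.

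The main obstacle I anticipate is the \emph{dichotomy}: showing that a single red hyperplane $H$ cannot split its crossings between the two completely interleaved green and blue grids, i.e.\ that $H$ must cross all the green edges of $T_c$ or all the blue edges of $T_d$ rather than some of each. This is precisely where the planarity of $D$ and the no-bigon hypothesis must be exploited, in the spirit of the separation and nesting argument of Lemma~\ref{lem:special-chain}, to force the crossings of $H$ onto a single color. A secondary difficulty, which feeds into the first, is engineering the trivial word $w$ so that the red chain arising from $T_a$ is genuinely forced to interleave with the green and blue chains: if $w$ is chosen so that $aa'$ appears to commute letter-by-letter with $cc'$ and $dd'$, then the diagram degenerates into a product of commuting squares and carries no information, so $w$ must be built to reflect the full internal path structure of $T_a$, $T_c$ and $T_d$.
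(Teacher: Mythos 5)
Your setup matches the paper's: reduce $\mathcal{R}_5$ via Lemma~\ref{lem:commuting} to a commutation statement about $\L$-edges, pass to the $\L$-edge word of a suitable trivial commutator-type word, take a disk diagram over $A_\Delta$, and read off commutations from hyperplane crossings. But the proof has a genuine gap exactly where you flag it: you never establish the dichotomy that a given $\L_a$-hyperplane must cross \emph{all} the $T_c$-hyperplanes or \emph{all} the $T_d$-hyperplanes, and you leave the boundary word itself unspecified (``a commutator-type word that conjugates\dots''). These two gaps are really one: the missing idea is the precise choice of word. The paper takes $z_a = a'a$, $z_c = cc'$, $z_d = dd'$ and $z = [z_a z_c z_a^{-1}, z_d]$, which is trivial in $W_\G$ using only the hypothesized adjacencies and the square on $c,d,c',d'$. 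Its $\L$-edge word has boundary form $w_a\, w_c\, w_a^{-1}\, w_d\, w_a\, w_c^{-1}\, w_a^{-1}\, w_d^{-1}$, where $w_a$ is a product of \emph{distinct} $\L$-edges (the simple path in the tree $T_a$, using $\mathcal{R}_1$). Hence the hyperplane $H$ labelled $xx'$ dual to the first $w_a$-segment is dual to exactly one edge there and must terminate on one of the two $w_a^{-1}$-segments. If it terminates on the segment following $w_c$, it separates $\gamma_c$ from $\zeta_c$, so every $T_c$-hyperplane crosses it; if on the final one, it separates $\gamma_d$ from $\zeta_d$, so every $T_d$-hyperplane crosses it. The dichotomy is thus an immediate consequence of the combinatorics of the boundary word — no nesting argument in the style of Lemma~\ref{lem:special-chain}, no appeal to $\mathcal{R}_3$/$\mathcal{R}_4$ on auxiliary $2$-component cycles, and no need to worry about the diagram ``degenerating'': even if everything commutes, the conclusion of $\mathcal{R}_5$ is exactly what the separation argument delivers.

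A secondary, smaller inaccuracy: your worry that the diagram ``carries no information'' if $aa'$ commutes letter-by-letter with $cc'$ and $dd'$ is misplaced — in that case $\mathcal{R}_5$ holds trivially for every edge of $T_a$, which is the desired conclusion, not an obstruction. As written, though, the proposal identifies the right machinery but does not complete the one step on which the lemma actually turns, so it does not constitute a proof.
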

\begin{proof}
	By Theorem~\ref{thm:2cpt}, we may assume that $\Theta$ satisfies conditions $\mathcal{R}_1 - \mathcal{R}_4$. Let $a, a' \in \L_a$, $c, c' \in \L_c$ and $d, d' \in \L_d$ be as in Definition~\ref{def:R5}.
	Define the words $z_a = a'a$, $z_c = cc'$, $z_d= dd'$ and $z = [z_az_cz_a^{-1}, z_d]$. 
	By the commuting relations imposed in Definition~\ref{def:R5}, it follows that $z \gequal 1$ in $W_\G$.
	Let $w_a$, $w_c$, $w_d$ and $w$ be the $\L$-edge words corresponding respectively to $z_a$, $z_c$, $z_d$ and $z$. Let $D$ be a disk diagram over $A_\D$ with boundary label $w$.
	
	Let $\gamma_c$, $\zeta_c$, $\gamma_d$ and $\zeta_d$ be the paths in $\partial D$ labeled respectively by $w_c$, $w_c^{-1}$, $w_d$ and $w_d^{-1}$. Note that no hyperplane is dual to two distinct edges of $\gamma_c$ (resp. $\zeta_c$, $\gamma_d$ and $\zeta_d$). This follows as $z_c$ is a word in unique $\L$-edges.
	Thus, every hyperplane dual to $\gamma_c$ (resp. $\gamma_d$) is also dual to $\zeta_c$ (resp. $\gamma_d$).
	
	Let $\alpha$ be a path in $\partial D$ between $\gamma_c$ and $\gamma_d$ (which is labeled by $w_a$). Again, no hyperplane is dual to two distinct edges of $\alpha$. Let $xx'$ be a $\L$-edge of $T_a$, and let $H$ be the unique hyperplane dual to $\alpha$ with label $xx'$. Note that either $H$ intersects every hyperplane dual to $\gamma_c$ or $H$ intersects every hyperplane dual to $\gamma_d$. Furthermore, every $\L$-edge of $T_c$ (resp. $T_d$) is the label of a hyperplane dual to $\gamma_c$ (resp. $\gamma_d$). 
	The claim now follows from Lemma~\ref{lem:commuting}, and the fact that intersecting  hyperplanes correspond to commuting generators of $A_\Delta$.
\end{proof}

The following corollary shows that if $\Theta$ contains a configuration like that in the hypothesis of condition $\mathcal{R}_5$, then $\G$ must contain a triangle. This corollary is a warm-up to the more complicated Lemma~\ref{lem:triangle}.

\begin{cor} \label{cor:triangle}
	Suppose $(G^\Theta, E(\L))$ is a RAAG system and $\Theta$ contains a set of vertices $\{a, a', b, b', c, c'\}$ satisfying the hypothesis of  $\mathcal{R}_5$. Then $\G$ contains a triangle.
\end{cor}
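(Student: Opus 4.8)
The plan is to reduce the statement to condition $\mathcal{R}_5$ and then run a connectivity argument along the $\L$-convex hull of $\{a,a'\}$. Since $(G^\Theta, E(\L))$ is a RAAG system, Lemma~\ref{lem:R5} guarantees that $\Theta$ satisfies $\mathcal{R}_5$, and Proposition~\ref{prop_necessary} supplies $\mathcal{R}_1$--$\mathcal{R}_4$ as well. Matching the corollary's notation to that of Definition~\ref{def:R5}, the vertices $b, b'$ play the role of $d, d'$ (so the third component is $\L_b = \L_d$). First I would record the $\G$-edges the hypothesis supplies for free: the 2-component square visiting $c, b, c', b'$ makes $b$ and $c$ be $\G$-adjacent (call this edge $e$), while the hypothesis makes each of $c, c'$ be $\G$-adjacent to $a$ and each of $b, b'$ be $\G$-adjacent to $a'$. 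Let $T_a, T_b, T_c$ be the $\L$-convex hulls of $\{a,a'\}, \{b,b'\}, \{c,c'\}$; by $\mathcal{R}_1$ these are trees, and $T_a$ is the unique simple $\L$-path $a = x_0, x_1, \dots, x_l = a'$ whose edges are all $\L$-edges, so $\mathcal{R}_5$ applies to each of them.

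The key device is to track, for each vertex $x_i$ of $T_a$, two properties: $P(i)$, that $x_i$ is $\G$-adjacent to both $c$ and $c'$; and $Q(i)$, that $x_i$ is $\G$-adjacent to both $b$ and $b'$. The free edges give $P(0)$ and $Q(l)$ (using $c, c' \in V(T_c)$ and $b, b' \in V(T_b)$). Applying $\mathcal{R}_5$ to the edge $x_i x_{i+1}$ yields that $\G$ contains the join of $\{x_i, x_{i+1}\}$ with $V(T_c)$ or with $V(T_b)$; since $c, c' \in V(T_c)$ and $b, b' \in V(T_b)$, this forces either $P(i)\wedge P(i+1)$ or $Q(i)\wedge Q(i+1)$. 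The payoff is that if some $x_i$ satisfies both $P(i)$ and $Q(i)$, then $x_i$ is $\G$-adjacent to both $b$ and $c$, so together with the edge $e$ the set $\{x_i, b, c\}$ is a triangle in $\G$.

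It then remains to produce such an $x_i$, which I would do by contradiction. If no vertex of $T_a$ has both properties, then $P(0)$ forces $\neg Q(0)$, so the edge $x_0 x_1$ cannot realize the $T_b$-option and must realize the $T_c$-option, giving $P(1)$; iterating along the path, $P(i)$ holds for every $i$, and in particular $P(l)$. But $Q(l)$ holds while no vertex satisfies both properties, so $\neg P(l)$, a contradiction. Hence some $x_i$ enjoys both $P$ and $Q$, which completes the argument. The degenerate case $a = a'$ is absorbed automatically, since then $x_0 = x_l$ satisfies $P(0)$ and $Q(0)$ simultaneously.

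I expect the main obstacle to be bookkeeping rather than any deep step: one must be careful that the ``join'' conclusions of $\mathcal{R}_5$ are taken with the \emph{full} vertex sets $V(T_c)$ and $V(T_b)$, so that adjacency to the specific endpoints $c, c', b, b'$ is genuinely guaranteed, and that the final triangle is assembled from adjacency to $b$, adjacency to $c$, and the pre-existing square edge $bc$. The inductive propagation of $P$ along $T_a$ is the conceptual core, and the only real care needed is in the base and degenerate cases.
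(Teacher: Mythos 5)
Your proposal is correct and follows essentially the same route as the paper: the paper also reduces to $\mathcal{R}_5$ via Lemma~\ref{lem:R5} and walks along the edges of $T_a$, organizing the step as an induction on $|V(T_a)|$ (replacing $a$ by the next vertex of the path, or collapsing to the degenerate $a=a'$ case), which is just a repackaging of your propagation-of-$P$ argument; the base case and the final triangle $\{x_i, c, b\}$ are identical. The only cosmetic difference is that the paper phrases the "crossing point" as the base case of the induction rather than as a discrete intermediate-value contradiction.
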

\begin{proof}
	Let $P = \{a, a', c, c', d, d'\}$ be a subset of vertices of $\Theta$ satisfying the hypothesis of $\mathcal{R}_5$. We call such a $P$ a \emph{configuration} in $\Theta$. 
	Keeping the same notation as in Definition~\ref{def:R5},  we call the number of vertices of $T_a$ the \textit{complexity} of $P$, and we prove the claim by induction on complexity.
	Note that $a = a'$ is possible in the hypothesis of $\mathcal R_5$, so the lowest possible complexity is $N=1$. 
	The corollary follows in this case, 
	as $\G$ then contains a triangle spanned by the vertices $a = a'$, $c$ and $d$. 
	
	Now let $N>1$ and suppose the claim is true for all configurations $P$ of smaller complexity. As $N>1$,
	 there is a vertex $y$ such that $ay$ is a $\L$-edge of $T_a$.
	By Lemma~\ref{lem:R5}, either $y$ is adjacent in $\G$ to both $c$ and $c'$, or $y$ is adjacent in $\G$ to both $d$ and $d'$. In either case, we see that $\Theta$ contains a configuration of smaller complexity.
\end{proof}

The next lemma shows that if $\Theta$ contains certain subgraphs which generalize the configurations in the hypothesis of $\mathcal R_5$, then $\G$ must contain a triangle.

\begin{lemma}\label{lem:triangle}
	Let $\L_a$, $\L_c$ and $\L_d$ be distinct components of $\L$. 
	Suppose  $\Theta$ 
	has a $\L_a\L_c$-path visiting $c_1, a_1, c_2, \dots, a_{n-1}, c_n$, and a $\L_a\L_d$-path visiting $d_1, a_1', d_2, \dots,$ $a_{m-1}', d_m$,
	where $c_i \in \L_c$, $d_i\in \L_d$, and $a_i, a_i'\in \L_a$ for all appropriate $i$.  Further suppose that $\Theta$ contains a 2-component square visiting $c_1, d_1, c_n$ and $d_n$.
	(See Figure~\ref{fig:C6}).  If $(G^\Theta, E(\L))$ is a RAAG system, then $\G$ has a triangle.
\end{lemma}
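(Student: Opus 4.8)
The plan is to combine condition $\mathcal R_3$, the disk-diagram technology developed for Theorem~\ref{thm:2cpt}, and an induction that bottoms out at Corollary~\ref{cor:triangle}. First I would record the consequences of the hypothesis that come for free. Since $(G^\Theta, E(\L))$ is a RAAG system, Lemma~\ref{lem:4cycle} gives that $\Theta$ satisfies $\mathcal R_3$; applying $\mathcal R_3$ to the $2$-component square visiting $c_1, d_1, c_n, d_m$ shows that $\G$ contains the join of $V(T_c)$ and $V(T_d)$, where $T_c$ and $T_d$ are the $\L$-convex hulls of $\{c_1, c_n\}$ and $\{d_1, d_m\}$. In particular every vertex of $T_c$ commutes with every vertex of $T_d$. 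This reduces the task to producing a single vertex (which I expect to find in $\L_a$) adjacent in $\G$ both to a vertex $c^\ast\in T_c$ and to a vertex $d^\ast\in T_d$: together with the edge $c^\ast d^\ast$ supplied by the join, such a vertex yields a triangle of $\G$. The path endpoints already give partial progress, since $a_1, a_{n-1}$ are adjacent to $c_1, c_n\in T_c$ and $a_1', a_{m-1}'$ are adjacent to $d_1, d_m\in T_d$; the real content is to \emph{bridge} the $T_c$-adjacent and $T_d$-adjacent vertices of $\L_a$.

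Next I would set up the induction, on $n+m$ (note $n,m\ge 2$, since the square forces $c_1\neq c_n$ and $d_1\neq d_m$). In the base case $n=m=2$ the two alternating paths provide a vertex $a_1\in\L_a$ adjacent to both $c_1,c_2$ and a vertex $a_1'\in\L_a$ adjacent to both $d_1,d_2$, and the square visits $c_1,d_1,c_2,d_2$; this is precisely the hypothesis of condition $\mathcal R_5$, so Corollary~\ref{cor:triangle} produces a triangle and we are done.

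For the inductive step (say $n\ge 3$) the plan is to run a disk-diagram argument in the spirit of Lemma~\ref{lem:R5} and of the proof of Theorem~\ref{thm:2cpt}. Using the given $\G$-adjacencies along the two paths together with the square, I would write down a word $z$ in $W_\G$ representing the identity that genuinely involves all three components --- morally a three-colored analogue of the commutator $[z_az_cz_a^{-1},z_d]$ from Lemma~\ref{lem:R5}, built so that its associated $\L$-edge word $w$ uses $\L_a$-, $\L_c$-, and $\L_d$-edges. Since $\phi$ is injective, $w$ represents the identity in $A_\D$, so there is a disk diagram $D$ over $A_\D$ with boundary label $w$ and no bigons (Lemma~\ref{lem_remove_pathologies}); its edges are labeled by $\L$-edges, and I would color each hyperplane by the component of $\L$ containing its label and partition the hyperplanes into colored closed chains (Observation~\ref{obs:partition}). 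The key observation is that a red, a green, and a blue hyperplane that pairwise intersect produce, via Lemma~\ref{lem:commuting}, three $\L$-edges in $\L_a,\L_c,\L_d$ that pairwise commute; since these edges lie in distinct components, the degenerate cases of Lemma~\ref{lem:commuting} are excluded, so choosing one endpoint from each edge gives three pairwise-adjacent vertices of $\G$, i.e.\ a triangle. I would therefore analyze the forced intersections among the three colored chains (using Observations~\ref{obs:2intersections} and~\ref{obs:intersecting_chains} and Lemma~\ref{lem:special-chain}) to show that either such a pairwise-crossing triple of distinct-color hyperplanes exists --- giving the triangle directly --- or else the diagram yields a $2$-component cycle in $\Theta$ to which $\mathcal R_5$ and $\mathcal R_3$ apply, producing a configuration of the same shape with strictly smaller $n+m$, to which the inductive hypothesis applies.

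The hard part will be this chain analysis, for exactly the reason that complicates the proof of Theorem~\ref{thm:2cpt}: the intermediate vertices $c_i$ (resp.\ $d_j$) of the two alternating paths need not lie in the convex hulls $T_c$ (resp.\ $T_d$), so the $2$-component loops read off from pairs of intersecting chains need not be honest $2$-component cycles. I would handle this as in Claim~\ref{claim:cycle}, extracting a genuine $2$-component cycle from each such loop before invoking $\mathcal R_5$, and I would need to verify that the resulting reduction strictly decreases $n+m$ (e.g.\ by arranging the cycle to omit at least one of the path vertices $a_i$ or $c_i$). Designing the trivial word $z$ correctly --- so that its disk diagram is forced to exhibit the three-colored interaction, rather than collapsing to the purely $\L_c$--$\L_d$ information already captured by $\mathcal R_3$ --- is the delicate choice on which the whole argument turns.
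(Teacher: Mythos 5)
Your outline matches the paper's strategy at a high level (induction with base case Corollary~\ref{cor:triangle}, a disk diagram over a commutator-like trivial word, and a colored-hyperplane analysis), and your observation that three pairwise-commuting $\L$-edges from three distinct components force a triangle via Lemma~\ref{lem:commuting} is correct. But two essential pieces are missing or wrong. First, your induction measure $n+m$ does not work: the reductions that the argument actually forces do not shrink $n$ or $m$. Concretely, the paper's disk-diagram analysis isolates a leaf $c_j$ of $T_c$ with incident $\L$-edge $yc_j$, and in the ``bad'' case concludes that $y$ commutes with $a_{j-1}$ and $a_j$; the reduction is then to replace $c_j$ by $y$ in the $\L_a\L_c$-path, which leaves the number of path vertices unchanged and only shrinks a convex hull. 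For this reason the paper inducts on the refined complexity $|C|+|D|+|T_a|_E+|T_c|_E+|T_d|_E$. You flag the termination issue as something ``to verify,'' but as stated it is where your induction fails. (There is also a preliminary reduction you omit: using $\mathcal R_3$ to arrange that no interior $c_i$ lies on the $\L_c$-geodesic from $c_1$ to $c_n$, which is what guarantees a leaf $c_j$ with $1<j<n$ exists.)

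Second, the entire middle of the argument --- the choice of the trivial word and the hyperplane analysis --- is left as a black box, and you yourself identify it as ``the delicate choice on which the whole argument turns.'' The paper takes $z=[z_c,z_d]$, where $z_c$ is the alternating product of $\L$-edges read off the $\L_a\L_c$-path (conjugated so as to start and end at $a_1'$) and $z_d$ is the analogous word for the $\L_a\L_d$-path; triviality follows from the square together with the path adjacencies. The triangle is then not produced by a pairwise-crossing triple of three differently colored hyperplanes, but by a single $\L_d$-colored hyperplane (dual to both $\gamma_d$ and $\zeta_d$) that must cross both the $\L_c$-hyperplane labeled $yc_j$ and the $\L_a$-hyperplane labeled $a_{j-1}x_1$; since $c_j$ and $a_{j-1}$ are already adjacent in $\G$ (consecutive on the path), this gives the triangle $\{d_{j'},c_j,a_{j-1}\}$. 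Your proposed dichotomy (pairwise-crossing triple, or else a $2$-component cycle to which $\mathcal R_5$ applies and which strictly decreases $n+m$) is not established and, per the first point, would not yield a terminating induction even if the dichotomy held. As it stands the proposal is a reasonable plan rather than a proof.
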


\begin{figure}[h!]
	\begin{overpic}[scale=0.6]
		{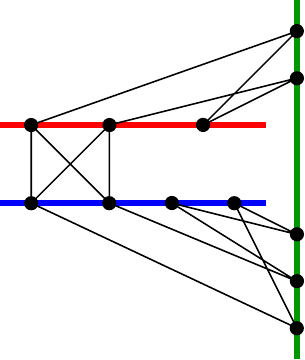}
		\put(5,69){\tiny $c_1$}
		\put(86,90){\tiny $a_1$}
		\put(86,78){\tiny $a_2$}
		\put(54,60){\tiny $c_2$}
		\put(32,60){\tiny $c_3$}
		\put(5,37){\tiny $d_1$}
		\put(86,8){\tiny $a'_1$}
		\put(62,47){\tiny $d_2$}
		\put(86,34){\tiny $a'_2$}
		\put(46,47){\tiny $d_3$}
		\put(86,21){\tiny $a'_3$}
		\put(32,47){\tiny $d_4$}
	\end{overpic}
	\caption{This figure illustrates the configuration described in Lemma~\ref{lem:triangle} in the case $n=3$ and $m=4$. The black edges are edges of $\G$.  The red, green and blue 
		parts consist of $\L$-edges, and are all contained in $\L$.  The different colors indicate that they are in three distinct components of $\L$.}
	\label{fig:C6}
\end{figure}

\begin{proof}  
	By Theorem~\ref{thm:2cpt} and Lemma~\ref{lem:R5}, we may assume that $\Theta$ satisfies conditions $\mathcal{R}_1 - \mathcal{R}_5$. 
	Let $A = \{a_1, \dots, a_{n-1}, a_1', \dots, a_{m-1}'\}$, $C =  \{c_1, \dots, c_n\}$ and $D = \{d_1, \dots, d_m\}$ be vertices of $\Theta$ as in the statement of the lemma. We call such a triple $(A, B, C)$ a configuration of $\Theta$. 
	Let $T_a$, $T_c$ and $T_d$ be the $\L$-convex hulls of $A$, $C$ and $D$ respectively. 
	We define the complexity of $(A, C, D)$ to be the integer $N= |C| + |D| + |T_a|_E + |T_c|_E+|T_d|_E$, where $|X|_E$ denotes the number of edges in a graph $X$. 
	The proof will be by induction on complexity of configurations.
	
	By hypothesis, we have that $n, m \ge 2$ and $|T_c|_E, |T_d|_E \ge 1$.
	If $n = m = 2$ then $\G$ contains a triangle by Corollary~\ref{cor:triangle}. In particular, the base case follows.

	We now fix a configuration $A = \{a_1, \dots, a_{n-1}, a_1', \dots, a_{m-1}'\}$, $C =  \{c_1, \dots, c_n\}$ and $D = \{d_1, \dots, d_m\}$ as above of complexity $N$, and we assume 
	that the result holds for configurations of smaller complexity. By the previous paragraph, we may also assume (up to relabeling) that $n >2$. We prove the lemma by showing that either $\G$ contains a triangle or $\Theta$ contains a configuration of smaller complexity.
	
	Define $\alpha_{ac}$ and $\alpha_{ad}$ to respectively be the hypothesized $\L_a\L_c$-path and $\L_a \L_d$-path.  
	We may assume that $\alpha_{ac}$ and $\alpha_{ad}$ are simple paths, for if not, we would be able to excise a loop to obtain a configuration of smaller complexity. 
	
	We claim that for all $1 < i < n$, we may assume that $c_i$ does not lies on the simple path 
	in $\L_c$ from $c_1$ to $c_n$.  For suppose there exists such a vertex $c_i$. By  $\mathcal{R}_3$, it follows that $c_i$ commutes with both $d_1$ and $d_m$.  There then exists a $\L_a\L_c$ path visiting $c_1, a_1, \dots, a_{i-1}, c_i$, and it follows that $\Theta$ contains a configuration of smaller complexity (obtained by replacing $\alpha_{ac}$ with this new path).
	Thus  we  may make this assumption without loss of generality. Furthermore, as $n \ge 3$, there exists an integer $j$ such that $c_j$ is a leaf vertex of $T_c$ and such that $1 < j < m$. We fix such a vertex $c_j$.
	
	Define the word $z_c$ to be:
	\[
		z_c=(a_1'a_1)(c_1c_2)(a_1a_2)(c_2c_3)(a_2a_3)\cdots (a_{n-2}a_{n-1})(c_{n-1}c_{n})(a_{n-1}a_1')
	\]
	and define the word $z_d$, depending on the value of $m$, to be:
	\[
		\begin{cases} 
			z_d = d_1d_2 & \text{ if } m = 2 \\
			z_d=(d_1d_2)(a'_1a'_2)(d_2d_3)(a'_2a'_3)\cdots (a'_{m-2}a'_{m-1})(d_{m-1}d_{m})(a_{m-1}'a_1') & \text{ if } m > 2
		\end{cases}
	\]
	In $W_\G$ we have that $z_c \gequal a_1' c_1c_{n}a_1'$ and $z_d \gequal a_1' d_1d_{m}a_1'$. 
	Let $z=[z_c, z_d]$. Note that as $c_1$ and $c_{n}$ commute with $d_1$ and $d_{m}$ in $W_\G$ we have:
	\[
	z \gequal [a_1' c_1c_{n}a_1', a_1' d_1d_{m}a_1' ] \gequal a_1'[ c_1c_{n},  d_1d_{m}]a_1' \gequal 1
	\]
	Let $w_c, w_d$ and $w$ be the $\L$-edge words associated to $z_c, z_d$ and $z$ respectively. Let $D$ be a disk diagram over $A_\D$ with boundary label $w$. 	Let $\gamma_c$, $\zeta_c$, $\gamma_d$ and $\zeta_d$ be the subpaths of $\partial D$ labeled respectively by $w_c$, $w_c^{-1}$, $w_d$ and $w_d^{-1}$.  
	
	Let $yc_j$ be the $\L$-edge of $T_c$ incident to $c_j$.  Since $\alpha_{ac}$ does not repeat vertices and since $c_j$ is a leaf of $T_c$, it follows that $w_c$ contains exactly two occurrences of the letter $y$ contained in the subword labeled by $(yc_j )(a_{j-1}x_1) (x_1 x_2) \cdots (x_la_{j}) (c_j y)$, where the $x_i$'s are vertices in $\L_a$. In particular, there are exactly four edges of $\partial D$ (two on $\gamma_c$ and two on $\zeta_c$) labeled by either $yc_j$ or $c_jy$. Correspondingly, there are exactly two hyperplanes, $H$ and $H'$ in $D$ labeled $yc_j$.  
	
	We  claim that we  may assume that $H$ is dual to both $\gamma_c$ and $\zeta_c$, and the same is true for $H'$. For suppose otherwise, and suppose that $H$ is dual to two edges of $\gamma_c$. 
	(The case of $H'$ is similar.)
	It follows that any hyperplane dual to the subpath of $\gamma_c$ labeled by
	$(a_{j-1}x_1)(x_1 x_2) \dots (x_la_{j})$ (which lies between the endpoints of $H$) must intersect $H$. Thus, in particular, $(a_{j-1}x_1)$ and $(x_la_{j})$ commute with $yc_j$, and applying Lemma~\ref{lem:commuting}, we conclude that $y$ commutes with both $a_{j-1}$ and $a_{j}$.  
	We now show that we can replace $\alpha_{ac}$ with a new $\L_a\L_c$-path from $c_1$ to $c_n$ such that $|T_a|_E$ is reduced, and thus $\Theta$ contains a smaller complexity configuration. 
	If $y$ is not equal to any $c_k$ for any $1 \le k \le m$, then we obtain this path by simply replacing $c_j$ with $y$ in $\alpha_{ac}$.  On the other hand,  if $y = c_k$ for some $k$, then we replace $\alpha_{ac}$ with the $\L_a \L_c$ path visiting $c_1, a_1, \dots, c_k, a_j, c_{j+1}, a_{j+1} \dots a_{n-1}, c_n$ if $k< j$ and perform a similar replacement if $k >j$.  
	In either case, we have produced a configuration of smaller complexity. Thus, we now assume that each of $H$ and $H'$ is dual to both $\gamma_c$ and~$\zeta_c$.
	
	Let $Q$ and $Q'$ be the hyperplanes in $D$ dual to the edges of $\gamma_c$ labeled  by $a_{j-1}x_1$ and $x_l a_j$ respectively. If both $Q$ and $Q'$ intersect $H \cup H'$, then we can conclude, as above, that $y$ commutes with both $a_{j-1}$ and $a_j$. We can then find a smaller complexity configuration as in the previous paragraph. Thus, we can assume that either $Q$ or $Q'$ is dual to both $\gamma_c$ and $\zeta_c$. We assume that $Q$ has this property (the case of $Q'$ is similar). 
	
	We now examine hyperplanes dual to $\gamma_d$ and $\zeta_d$. If $m=2$, then the unique hyperplane whose label contains $d_1$ is dual to both $\gamma_d$ and $\zeta_d$, and this hyperplane intersects both $H$ and $Q$. Thus, $d_1$ commutes with both $c_j$ and $a_{j-1}$. Since $c_j$ commutes with $a_{j-1}$, it follows that $\G$ contains a triangle. On the other hand, if $m>2$ by the same reasoning as before, we can assume there is  a leaf vertex $d_{j'}$ of $T_d$ and a hyperplane with label $y'd_{j'}$ that intersects both $\gamma_d$ and $\zeta_d$.  This then implies that $d_{j'}$ commutes with both $c_j$ and $a_{j-1}$ and consequently, $\G$ contains a triangle. The lemma now follows.
\end{proof}

\section{Finite index visual RAAGs}

As in the previous section, given  a simplicial graph $\G$ and a subgraph $\L$ of $\G^c$ with no isolated vertices, we set 
 $\Theta=\Theta(\G, \L)$, and let $G^\Theta$ be the subgroup generated by $E(\L)$.  
Our goal is to characterize graphs $\L \subset \G^c$ such that $(G^\Theta, E(\L))$ is a RAAG system and $G^\Theta$ has finite index in $W_\G$.

Suppose the graph $\G$ contains a vertex $s$ which is $\G$-adjacent to every other vertex of $\Gamma$. We say that $s$ is a \textit{cone vertex}. In this case, it easily follows that $W_{\G \setminus s}$ has index 2 in $W_\G$ and that $s$ cannot be contained in any $\L$-edge. 

We now recall a construction from~\cite{DL} which will help us compute the index of $G^\Theta$. 
	The construction is general, but for simplicity, and as it is all that we 
	use,
	we choose to only describe it in the context where $\G$ is triangle-free.
 We refer the reader to~\cite{DL} for full~details. 

Let $\G$ be a triangle-free graph. 
We say a cell complex is $\G$-labeled if every edge of the complex is labeled by a vertex of $\G$.
Let $X$ be a $\G$-labeled complex. Suppose two edges of $X$ have the same label and a common endpoint. A \textit{fold operation} produces a new complex from $X$ by naturally identifying these two edges. 

Suppose now that $f_1$ and $f_2$ are edges of $X$ which share a common vertex $u$ and whose labels $s_1, s_2 \in V(\G)$ have an edge between them in $\G$. Let $c$ be a $2$-cube with edges $c_1, c_2, c_3$ and $c_4$ such that $c_i \cap c_{i+1}$ is a vertex of $c$ for each $i \mod 4$. We label $c_1$ and $c_3$ by $s_1$, and $c_2$ and $c_4$ by $s_2$. A \textit{square attachment operation} produces a new complex from $X$ by attaching $c$ to $X$ by identifying $c_1$ to $f_1$ and $c_2$ to $f_2$. Note that, unlike in~\cite{DL}, we do not need to define cube attachments for higher dimensional cubes, as we are in the case that $\G$ is triangle-free. 

Finally, given a collection of $2$-cubes in $X$ with common boundary, we can produce a new complex from $X$ by naturally identifying every $2$-cube in this collection to a single 
2-cube.
In this case, we say a \textit{cube identification operation} was performed to $X$.

We define a $\G$-labeled complex $\Omega_0$ associated to $G^\Theta$ as follows. First, we enumerate the $\L$-edges as
 $s_1t_1, \dots, s_nt_n$, where $s_i$ and $t_i$ are the two endpoints of the $i$th $\L$-edge. 
We set $\Omega_0$ to be a bouquet of $n$ circles, each of which is subdivided into two edges, such that the $i$th circle has label $s_it_i$.

Next, we describe a series of complexes built iteratively from $\Omega_0$. These are
\[\Omega_0 \to \Omega_1 \to \Omega_2 \to \dots \]
For each $i > 0$, the complex $\Omega_i$ is obtained by either a fold, square attachment or square identification operation performed to $\Omega_{i-1}$. 
Furthermore, we assume that the order of operations is as follows: first all possible fold and square identifications are performed, then all possible square attachment operations are applied to the resulting complex, and these processes are alternated (see~\cite{DL} for details). 

 Let $\Omega$ be the direct limit of such a sequence. We call $\Omega$ a \textit{completion} of $G^\Theta$. In~\cite{DL} we show that properties of $\Omega$ reflect those of the subgroup $G^\Theta$. 

The index of $G^\Theta$ can be determined by properties of $\Omega$.
We say that a vertex $u$ of a $\G$-labeled complex has \textit{full valence} if for any vertex $s \in \G$, there is an edge incident to $u$ with label $s$.  
Below we present a version of~\cite[Theorem~6.9]{DL} together with~\cite[Lemma~6.8]{DL} under the hypotheses which we will need:

\begin{thm} \label{thm_omega_characterization}
	Let $\G$ be a triangle-free graph with no cone vertex. 
	A subgroup $G < W_\G$ has finite index in $W_\G$ if and only if $\Omega$ is finite and every vertex of $\Omega$ has full valence. Furthermore, if $G$ is indeed of finite index, then its index is exactly the number of vertices of $\Omega$.
\end{thm}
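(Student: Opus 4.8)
The plan is to realize the completion $\Omega$ as a combinatorial model of the coset space $G^\Theta \backslash W_\G$, and to read off both the finiteness criterion and the index from this identification. The target model is the \emph{Schreier coset complex} $\mathcal{S}$: its vertices are the cosets $G^\Theta w$ with $w \in W_\G$; for each $s \in V(\G)$ there is an $s$-labeled edge joining $G^\Theta w$ to $G^\Theta ws$; and for each edge $(s,t) \in E(\G)$ and each coset $G^\Theta w$ a square is glued into the corner spanned by the $s$- and $t$-edges, which closes up because $G^\Theta wst \gequal G^\Theta wts$. Since $s^2 \gequal 1$, each $s$-edge is an involution, so in $\mathcal{S}$ every vertex meets exactly one edge of each label; thus $\mathcal{S}$ is folded, has full valence, and has every commuting corner filled. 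By construction $[W_\G : G^\Theta]$ is the number of vertices of $\mathcal{S}$, and $G^\Theta$ has finite index exactly when $\mathcal{S}$ is finite. The whole theorem therefore follows once we show that, under the stated hypotheses, $\Omega$ carries a canonical label-preserving map to $\mathcal{S}$ which is an isomorphism precisely when $\Omega$ is finite with full valence.

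For the backward direction, suppose $\Omega$ is finite and every vertex has full valence. First I would define a right $W_\G$-action on the vertex set $V(\Omega)$ by declaring $v \cdot s$ to be the opposite endpoint of the unique $s$-edge at $v$; full valence guarantees existence and the folded condition guarantees uniqueness and that each $s$-edge is an involution, so $v \cdot s \cdot s = v$. Square-completeness gives $v \cdot s \cdot t = v \cdot t \cdot s$ whenever $(s,t) \in E(\G)$. These are exactly the defining relations of $W_\G$; because $\G$ is triangle-free the Davis complex is two-dimensional and there are no higher-dimensional relators to verify, so the free-monoid action on $V(\Omega)$ descends to a genuine $W_\G$-action. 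As $\Omega_0$, and hence $\Omega$, is connected, this action is transitive, so $V(\Omega) \cong G' \backslash W_\G$ where $G'$ is the stabilizer of the basepoint $v_0$; in particular $|V(\Omega)| = [W_\G : G']$. It then remains to identify $G' = G^\Theta$, after which finiteness of $\Omega$ yields finite index together with the index formula.

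The identification $G' = G^\Theta$ is the heart of the argument, and I expect it to be the main obstacle. The inclusion $G^\Theta \subseteq G'$ is straightforward: each generator $s_it_i$ already reads a loop at $v_0$ in $\Omega_0$, and all three completion operations preserve these loops, so every element of $G^\Theta$ fixes $v_0$. The reverse inclusion $G' \subseteq G^\Theta$ amounts to \emph{soundness} of the completion: every fold, square attachment, and cube identification is realizable in $\mathcal{S}$ (two $s$-edges out of a coset do land on the same coset; a commuting corner is genuinely filled by a square; and $\mathcal{S}$ has a unique square in each corner). By the universal property of the direct limit defining $\Omega$, these checks assemble into a label-preserving map $\Omega \to \mathcal{S}$ sending $v_0$ to $G^\Theta$. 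A word $w$ fixing $v_0$ then reads a loop whose image is a loop at $G^\Theta$, forcing $G^\Theta w = G^\Theta$, i.e. $w \in G^\Theta$; hence $G' \subseteq G^\Theta$. This soundness, together with the well-definedness (confluence) of the direct limit, is precisely the content carried out in~\cite{DL}.

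For the forward direction, suppose $[W_\G : G^\Theta] < \infty$, so that $\mathcal{S}$ is finite and complete. The soundness map $\Omega \to \mathcal{S}$ again exists, and since $\mathcal{S}$ admits no further nontrivial folds, square attachments, or identifications, the completion building $\Omega$ from $\Omega_0$ can be matched against $\mathcal{S}$ and, being bounded by the finite $\mathcal{S}$, must terminate with $\Omega \cong \mathcal{S}$; thus $\Omega$ is finite with full valence. Here the no-cone-vertex hypothesis is exactly what forces this full valence in the finite-index case: a cone vertex $s$ would lie in no $\L$-edge, so no $s$-edge could ever appear in $\Omega$, and yet $G^\Theta$ could still have finite index, which would break the implication. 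The one genuinely delicate point throughout — and the reason this is extracted from~\cite{DL} rather than reproved from scratch — is establishing the well-definedness of the direct limit and the fact that it reaches all of $\mathcal{S}$ precisely in the finite-index case; everything else is orbit--stabilizer bookkeeping for the $W_\G$-action on $V(\Omega)$.
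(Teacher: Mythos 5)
First, a point of comparison: the paper does not prove this statement at all --- it is imported from \cite{DL} (Theorem~6.9 together with Lemma~6.8 there), so there is no in-paper argument to measure your proposal against. On its own terms, your Schreier-coset-complex framework is the right one, and your backward direction is essentially complete as an outline: foldedness plus full valence yields a well-defined action of the free monoid on $V(\Omega)$, the relations $s^2=1$ and $(st)^2=1$ are checked edge-by-edge and square-by-square so the action descends to $W_\G$, connectivity gives transitivity, and the soundness map $\Omega \to \mathcal{S}$ (built by verifying each fold, square attachment, and identification against $\mathcal{S}$ and passing to the direct limit) pins the basepoint stabilizer down to exactly $G^\Theta$, giving the index formula by orbit--stabilizer.

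The forward direction, however, has a genuine gap. You claim that the completion, ``being bounded by the finite $\mathcal{S}$, must terminate with $\Omega \cong \mathcal{S}$,'' but the soundness map $\Omega \to \mathcal{S}$ is a priori only a label-preserving immersion, and an immersion into a finite complex can have an infinite, non-fully-valent domain --- that is exactly what happens for infinite-index subgroups, and nothing in your argument invokes finite index to exclude it here. Two separate facts need proof: (i) injectivity of $\Omega \to \mathcal{S}$ (needed even to conclude $\Omega$ is finite), which amounts to the nontrivial completeness statement that every element of $G^\Theta$ is readable as a loop at the basepoint of $\Omega$; and (ii) surjectivity, i.e.\ that finite index forces full valence at \emph{every} vertex of $\Omega$ --- your cone-vertex remark explains why that hypothesis is necessary for this implication, but supplies no argument for why full valence actually propagates from the basepoint through the square attachments. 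Both points are precisely the content of \cite{DL} that the theorem cites; deferring them is reasonable expositionally, but as written the forward implication is asserted rather than proved.
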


We introduce two new properties below which will help us characterize when $G^\Theta$ has finite index in $W_\G$.
\begin{definition}[Conditions $\mathcal F_1$ and $\mathcal F_2$] \label{def_finite_index_conditions}
    We say that	$\Theta=\Theta(\G, \L)$ satisfies \textit{condition $\mathcal F_1$} if given any 
    $s \in V(\Theta)$ 
which is not a cone vertex of $\G$, it follows that
     $s$ is 
     the endpoint of 
    some $\L$-edge.
	We say that $\Theta$ satisfies \textit{condition $\mathcal F_2$} if given any distinct components $\L_s, \L_t$ of $\L$, and vertices $s $ of $ \L_s$ and $t $ of $ \L_t$, there is a $\L_s\L_t$-path in $\Theta$ 	 from $s$ to $t$.
\end{definition}

\begin{remark} \label{rmk_f2_simplification}
	Suppose $\G$ is connected, $\L$ contains exactly two components and that $\Theta = \Theta(\G, \L)$ satisfies $\mathcal{R}_2$ and $\mathcal{F}_1$. Then $\Theta$ satisfies $\mathcal{F}_2$. For given any two vertices contained in different components of $\L$, as $\G$ is connected, there is a $\G$-path between them. Furthermore, this has to be a $2$-component path as $\Theta$ satisfies $\mathcal{R}_2$, and the two $\L$-components this path visits have to be the ones containing the chosen vertices (as there are only two $\L$ components). 
	This remark will prove to be useful when verifying whether certain graphs satisfy $\mathcal{F}_2$.
\end{remark}

\begin{remark} \label{rmk:f2_gives_2_component_paths}
	Suppose $\Theta=\Theta(\G, \L)$ satisfies $\mathcal F_2$, and let $\L_1$ and $\L_2$ be distinct $\L$-components. Then there exists an $\L_1 \L_2$-path between any two distinct vertices of $\L_1$. To see this, let $s$ and $s'$ be distinct vertices of $\L_1$, and let $t$ be vertex of $\L_2$.  By $\mathcal F_2$ there is a $\L_1 \L_2$-path from $s$ to $t$, and similarly there is an $\L_1 \L_2$-path from $t$ to $s'$. Combining these two paths gives an $\L_1 \L_2$-path form $s$ to $s'$.
\end{remark}

\begin{lemma} \label{lemma_finite_index1}
Let $\G$ be a triangle-free graph with no cone vertex, and let $\L$ be a subgraph of $\G^c$
	with no isolated vertices,
 such that $(G^\Theta, E(\L))$ is a RAAG system. 
	If $\L$ has at most $k \le 2$ components and $\Theta$ satisfies $\mathcal F_1$ and $\mathcal F_2$, then $G^\Theta$ is of index $2^k$ in $W_\G$.
\end{lemma}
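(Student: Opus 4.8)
The plan is to apply Theorem~\ref{thm_omega_characterization}: since $\G$ is triangle-free with no cone vertex, it suffices to build a completion $\Omega$ of $G^\Theta$, verify that $\Omega$ is finite and that every vertex has full valence, and then count the vertices of $\Omega$, as the index of $G^\Theta$ will be exactly this count. I would aim to show that $\Omega$ has exactly $2^k$ vertices.

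The organizing device is a parity homomorphism. By $\mathcal F_1$ and the absence of cone vertices, every vertex of $\G$ is an endpoint of some $\L$-edge, so $V(\G)$ is partitioned by the vertex sets of the components $\L_1, \dots, \L_k$ of $\L$. Define $\chi \colon W_\G \to (\mathbb{Z}/2)^k$ on generators by $\chi(s) = e_i$ when $s \in V(\L_i)$; this is a well-defined homomorphism since $\chi(s^2) = 0$ and $\chi$ respects the commuting relations. Each $\L$-edge has both endpoints in a single component, so $\chi$ vanishes on $E(\L)$ and hence $G^\Theta \le \ker \chi$; as $\chi$ is surjective this already gives $[W_\G : G^\Theta] \ge 2^k$. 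Tracking $\chi$ through the construction assigns to every vertex of $\Omega$ a label in $(\mathbb{Z}/2)^k$ (the basepoint of $\Omega_0$ getting $0$, with an $s$-edge changing the label by $\chi(s)$); this is consistent because every defining $\L$-edge loop is $\chi$-trivial and the fold, square-attachment and cube-identification operations all preserve labels. Thus the number of vertices of $\Omega$ is at least the number of labels realized, and the whole problem reduces to showing that $\Omega$ realizes each label exactly once.

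Full valence is the easy half: given a vertex $u$ of $\Omega$ and a generator $s \in V(\G)$, condition $\mathcal F_1$ gives an $\L$-edge containing $s$, and I would use this (together with the fact that the completion performs all available folds and square attachments) to produce an $s$-labeled edge at $u$; since $\Omega$ is connected and $\chi$ is surjective, all $2^k$ labels are realized. The substantive point is the reverse inequality: that two vertices of $\Omega$ carrying the same $\chi$-label must coincide. This is exactly equivalent to $\ker \chi \subseteq G^\Theta$, and it is where $\mathcal F_2$ is essential. Given two same-label vertices, I would use the $\L_s\L_t$-paths guaranteed by $\mathcal F_2$ (and Remark~\ref{rmk:f2_gives_2_component_paths}) to produce the chain of commuting pairs --- the consecutive vertices of such a path are $\G$-adjacent --- needed to attach squares in the completion identifying the two vertices. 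The restriction to $k \le 2$ keeps this analysis tractable: for $k = 1$ the labeling alone forces two vertices and $\mathcal F_2$ is vacuous, while for $k = 2$ there are only the two interacting colors to route between.

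The main obstacle is precisely this identification of equal-label vertices, equivalently the inclusion $\ker \chi \subseteq G^\Theta$. Concretely, same-component products $pq$ (with $p, q \in V(\L_i)$) all lie in $G^\Theta$, but when same-component generators are separated in a word by non-commuting generators of the other component, expressing the element in terms of $E(\L)$ requires conjugating by elements outside $G^\Theta$; the $\mathcal F_2$-paths supply exactly the commutation relations that make these manipulations legal, and verifying that they suffice --- and that the resulting completion is finite --- is the heart of the argument. Once this is established, $\Omega$ is finite with $2^k$ vertices, each of full valence, and Theorem~\ref{thm_omega_characterization} yields $[W_\G : G^\Theta] = 2^k$.
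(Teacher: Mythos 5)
Your proposal is correct and follows essentially the same route as the paper's proof: both apply Theorem~\ref{thm_omega_characterization} to the completion $\Omega$, use $\mathcal F_1$ to get full valence, and use the $\L_s\L_t$-paths supplied by $\mathcal F_2$ to drive the square attachments and folds that collapse the extra vertices, so that $\Omega$ ends up with exactly $2^k$ vertices. Your parity homomorphism $\chi$ is a clean bookkeeping overlay (it gives the lower bound $[W_\G:G^\Theta]\ge 2^k$ for free, and the paper records essentially this map later in Corollary~\ref{cor_bipartite}), but the step you defer as the ``heart'' --- verifying that the $\mathcal F_2$-chains identify all same-label vertices and that $\mathcal R_2$ (which holds by Proposition~\ref{prop_necessary} since $(G^\Theta,E(\L))$ is a RAAG system) forbids any further square attachments so the completion stabilizes --- is precisely the explicit computation that constitutes the paper's proof.
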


 We remark that this proof readily generalizes to the case of arbitrary $k$.  However, we only need the case $k \le 2$.

\begin{proof}
	Let $\Omega_0$ be the $\G$-labeled complex defined above, and let $\Omega'$ be the complex  obtained from $\Omega_0$ by all possible fold operations. 

	Suppose first that $\L$ has one component. 
	As $\L$ is connected, it is easily seen that $\Omega'$ consists of two vertices with an edge labeled by $s$ between them for $s \in V(\L)$.  
As $\L$ satisfies $\mathcal R_2$ by Proposition~\ref{prop_necessary}, 
	no two vertices of $\L$ have an edge between them in $\G$. Thus,  no square attachments can be performed to $\Omega'$, and it follows that $\Omega = \Omega'$. Hence, $\Omega$ is finite and has exactly two vertices.

	Note that by the description of $\Omega = \Omega'$ above, every vertex of $\Omega$ is adjacent to every edge of $\Omega$. Also note that by condition $\mathcal F_1$, for every vertex $s \in \G$ there is some edge in $\Omega$ labeled by $s$. From these two facts we deduce that every vertex of $\Omega$ has full valence. Thus, $G^\Theta$ has index $2$ in $W_\G$ by Theorem~\ref{thm_omega_characterization}.

	Now suppose that $\L$ has two components $\L_1$ and $\L_2$.  In this case, $\Omega'$ is readily seen to be a complex consisting of three vertices, $u, v_1, v_2$, with an edge from $u$ to $v_i$ labeled $s$ corresponding to each vertex $s$ of $\L_i$,  for $i=1,2.$ By condition $\mathcal F_1$, the vertex  $u$ has full valence. Furthermore, by $\mathcal R_2$, for each $i \in \{1, 2\}$, no two edges of $\Omega'$ that are each adjacent to both $v_i$ and $u$ 
	 have labels which are adjacent in $\G$.

	Let $\Omega''$ be the complex obtained from $\Omega'$ by performing all possible square attachment operations to $\Omega'$, and let $\Omega'''$ be the complex obtained from $\Omega''$ by all possible fold and square identification operations.
	In particular, 
	$\Omega'' = \Omega_l$ and $\Omega''' = \Omega_k$ for some $0 \le l \le k$. 
	Let $s,s'$ be distinct vertices of $\L_1$, and let $t$ be any vertex of $\L_2$. By condition $\mathcal F_2$, there is a $\L_1\L_2$-path whose vertices are $s, t_1, s_1, t_2, s_2, \dots, t_m, s_m, t$ where $s_i \in \L_1$ and $t_i \in \L_2$ for all $1 \le i \le m$. Similarly, there is a $\L_1\L_2$-path whose vertices are $s', t'_1, s'_1, t'_2, s'_2, \dots, t'_n, s'_n, t$ where $s'_i \in \L_1$ and $t'_i \in \L_2$ for all $1 \le i \le n$. Thus, $\Omega''$ must contain length two paths,
	which do not intersect $u$,
	 from $v_1$ to $v_2$ with each of the following labels 
	\[t_1s, t_1s_1, t_2s_1, t_2s_2, \dots, t_ms_{m-1}, t_ms_m, ts_m, \]
	and similarly length two paths,
 which do not intersect $u$,
	 from $v_1$ to $v_2$ with each of the following labels 
	\[t'_1s', t'_1s'_1, t'_2s'_1, t'_2s'_2, \dots, t'_ns'_{m-1}, t'_ms'_m, t's_m\]
	It follows that the middle vertices of all these paths get folded to a single vertex $v_3$ in $\Omega'''$. This analysis can be done for any $s, s' \in \L_1$. Similar paths can also be produced for any $t, t' \in \L_2$. It then follows that $\Omega'''$ consists of exactly 4 vertices: $u, v_1, v_2$ and $v_3$. Furthermore, there is an edge with label $s$ between $v_1$ and $v_3$ for each $s \in \L_1$, and there is an edge with label $t$ between $v_2$ and $v_3$ for each $t \in \L_2$. Thus, every vertex of $\Omega'''$ can be seen to have full valence. Additionally, by condition $\mathcal R_2$, no additional square attachment operations can be performed to~$\Omega'''$. Hence, $\Omega = \Omega'''$. It follows that $G^\Theta$ has index exactly four in $W_\G$.
\end{proof}

The next lemma shows that $\mathcal{F}_1$ and $\mathcal{F}_2$ are necessary conditions for $G^\Theta$ to have finite index.

\begin{lemma} \label{lemma_finite_index2}
	Let $\G$ be a triangle-free graph with no cone vertex, and let $\L$ be a subgraph of $\G^c$ 
	with no isolated vertices,
	 such that $(G^\Theta, E(\L))$ is a RAAG system. 
	If $G = G^\Theta$ is of finite index in $W_\G$, then $\Theta$ satisfies $\mathcal F_1$ and $\mathcal F_2$.
\end{lemma}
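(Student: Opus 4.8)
The plan is to prove the contrapositive of each implication, using the completion $\Omega$ of $G^\Theta$ together with the characterization in Theorem~\ref{thm_omega_characterization}: assuming $\neg\mathcal F_1$ or $\neg \mathcal F_2$, I will exhibit a vertex of $\Omega$ that fails to have full valence, so that $G^\Theta$ has infinite index. Throughout, since $(G^\Theta, E(\L))$ is a RAAG system, Proposition~\ref{prop_necessary} gives that $\Theta$ satisfies $\mathcal R_2$; in particular no two vertices lying in a common component of $\L$ are joined by a $\G$-edge. Exactly as in the proof of Lemma~\ref{lemma_finite_index1}, this implies that the complex $\Omega'$ obtained from $\Omega_0$ by performing all folds has a basepoint $u$ and one further vertex $v_i$ for each component $\L_i$ of $\L$, with $v_i$ joined to $u$ by an edge labeled $r$ for each $r \in V(\L_i)$, and that no square attachment is ever available between two edges whose labels lie in a single component.

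First suppose $\mathcal F_1$ fails, so some $s \in V(\G)$ is neither a cone vertex nor an endpoint of any $\L$-edge. Then $s$ is not the label of any edge of $\Omega_0$. A fold, square attachment, or cube identification never creates an edge whose label is absent from the current complex, so $s$ labels no edge of $\Omega$. Consequently no vertex of $\Omega$ has full valence, and Theorem~\ref{thm_omega_characterization} shows $G^\Theta$ has infinite index. Hence $\mathcal F_1$ holds.

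Now suppose $\mathcal F_2$ fails: there are distinct components $\L_s, \L_t$ and vertices $s \in \L_s$, $t \in \L_t$ with no $\L_s\L_t$-path from $s$ to $t$; by the previous paragraph we may assume $\mathcal F_1$ holds. I will show that the vertex $v_s$ never acquires an edge labeled $t$ during the completion, so that it lacks full valence. The mechanism is that a square attachment can only give $v_s$ a new label $q$ when $q$ lies in a component $\L_j$ possessing a vertex joined in $\G$ to some label already present at $v_s$; iterating, the labels that ever appear at $v_s$ are precisely those vertices of $\Theta$ reachable from $V(\L_s)$ along $\L$-edges and $\G$-edges in the manner recorded by the square-attachment and fold process. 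The crux is to show that this propagation, when it delivers a label of $\L_t$, does so along a route confined to the two components $\L_s$ and $\L_t$, thereby producing an honest $\L_s\L_t$-path to that label. A route that instead passed through a third component $\L_k$ would yield an $\L_k\L_s$-path and an $\L_k\L_t$-path together with a 2-component square, i.e.\ exactly the configuration of Lemma~\ref{lem:triangle} with $(\L_a, \L_c, \L_d) = (\L_k, \L_s, \L_t)$; since $\G$ is triangle-free this is impossible. Hence the only way a label of $\L_t$ reaches $v_s$ is via an $\L_s\L_t$-path, and by the merging behavior recorded in Remark~\ref{rmk:f2_gives_2_component_paths} such a label is exactly a $t' \in \L_t$ admitting an $\L_s\L_t$-path from $s$. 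By assumption $t$ admits none, so $t$ never labels an edge at $v_s$; thus $v_s$ fails full valence and $G^\Theta$ has infinite index by Theorem~\ref{thm_omega_characterization}. Therefore $\mathcal F_2$ holds.

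The main obstacle is the middle claim of the last paragraph: making precise the ``labels reaching $v_s$'' invariant and verifying it is preserved by the interleaved folds, square attachments, and identifications of the completion (in particular, that a fold never merges $v_s$ with a vertex carrying spurious $\L_t$-labels), and then correctly matching a third-component propagation route to the hypotheses of Lemma~\ref{lem:triangle} so that triangle-freeness rules it out. This is the same two-component analysis that drives Lemma~\ref{lemma_finite_index1}, run in reverse, now crucially combined with the three-component results of Corollary~\ref{cor:triangle} and Lemma~\ref{lem:triangle}.
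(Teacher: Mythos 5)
Your argument for $\mathcal F_1$ is correct and is essentially the paper's: Theorem~\ref{thm_omega_characterization} forces every generator to label an edge of $\Omega$, and only endpoints of $\L$-edges can ever label edges of the completion. The problem is the $\mathcal F_2$ half. There you propose to track which labels can ever appear at the vertex $v_s$ of the completion, claim that a label from $\L_t$ can only ``propagate'' to $v_s$ along an $\L_s\L_t$-path, and rule out routes through a third component by invoking Lemma~\ref{lem:triangle}. This is exactly the step you flag as the main obstacle, and it is a genuine gap, not a technicality. First, the propagation of labels in the completion is not obviously encoded by paths in $\Theta$ at all: square attachments occur at arbitrary vertices of the evolving complex (not just at the basepoint $u$), folds can merge new vertices with old ones, and a single naive step of your propagation rule (``$v_s$ acquires $q$ whenever $q$ is $\G$-adjacent to a label already at $v_s$'') already allows a label to hop $\L_s \to \L_k \to \L_t$ along a length-two $\G$-path, which forces no triangle and is not the configuration of Lemma~\ref{lem:triangle}. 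Second, even granting that a third-component route produces an $\L_k\L_s$-path and an $\L_k\L_t$-path, Lemma~\ref{lem:triangle} additionally requires a 2-component square joining the endpoints of those two paths, and nothing in your setup supplies it. So the invariant you need is neither formulated nor verified, and the reduction to the three-component lemmas does not go through as stated.

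For comparison, the paper proves $\mathcal F_2$ by an entirely different and much more direct route that avoids the completion. Given non-commuting $s\in\L_s$, $t\in\L_t$, it takes the reduced word $h=(st)^{K+4}$, where $K$ bounds the lengths of a finite set of coset representatives of $G$, writes $h \gequal ww'$ with $w$ a product of $\L$-edges and $w'$ a short coset representative, and builds a disk diagram with boundary $hw'^{-1}w^{-1}$. A counting argument forces the hyperplanes dual to the first four letters of $h$ to cross over to the side labeled $w$; the two resulting chains of hyperplanes (in the sense of Definition~\ref{def:hyperplane-chain}) must intersect, and as in Observation~\ref{obs:intersecting_chains} an intersecting pair of chains yields precisely the desired $\L_s\L_t$-path from $s$ to $t$. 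If you want to keep a completion-based proof you would need to carry out the full induction on the stages $\Omega_0\to\Omega_1\to\cdots$ with a precise description of the labels at each vertex (a reverse-engineered version of the analysis in Lemma~\ref{lemma_finite_index1}); as written, the central claim is asserted rather than proved.
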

\begin{proof}
	We first check that condition $\mathcal F_1$ holds. Let $\Omega$ be a completion of $G:=G^\Theta$ as described in the beginning of this section. Theorem~\ref{thm_omega_characterization} implies in particular that given any vertex $s \in \G$ there is an edge of $\Omega$ with label $s$. This implies the vertex~$s$ is contained in some $\L$-edge. Thus, $\mathcal F_1$ must hold.
	
	We now check condition $\mathcal F_2$. Let $s \in \L_s$ and $t \in \L_t$ be as in the definition of condition $\mathcal F_2$ (Definition~\ref{def_finite_index_conditions}). If $s$ commutes with $t$, then there is an edge in $\G$ between $s$ and $t$, and we are done. So we may assume that $s$ and $t$ do not commute. 
	
	As $G$ is of finite index, it follows that there exist $g_1, \dots, g_n \in W_\G$ such that $W_{\Gamma} = Gg_1 \sqcup Gg_2 ... \sqcup Gg_n$. Let $w_1, \dots, w_n$ be reduced words representing $g_1, \dots, g_n$, and let $K = \max\{|w_1|, ..., |w_n|\}$. Define the word $h = s_1t_1s_2t_2...s_{K+4}t_{K+4}$ where $s_i = s$ and $t_i = t$ for all $1 \le i \le K+4$. It readily follows from Tits' solution to the word problem (see Theorem~\ref{thm_tits_solution})
	that $h$ is reduced.
	Furthermore, we can write $h \gequal ww'$, where
	$w$ and $w'$ are words in $W_\G$ such that $w' = w_i$ for some $1 \le i \le n$ and $w$ is a product of $\L$-edges representing an element of $G$.
	 We can form a disk diagram in $W_\G$ with boundary label $h w'^{-1} w^{-1}$. Let $\alpha_h, \alpha_w$ and $\alpha_{w'}$ respectively be the corresponding paths along the boundary of $D$ with labels respectively $h, w$ and $w^{-1}$.
	
	Note that as $h$ is reduced, no hyperplane intersects $\alpha_h$ twice. Also note that any pair of hyperplanes emanating from $\alpha_h$ cannot intersect as $s$ and $t$ do not commute. As $|h| > |w'| + 4$, it follows that the hyperplanes $H_{s_1}, H_{t_1}, H_{s_2}$ and $H_{t_2}$, dual respectively to the first four edges of $\alpha_h$ (namely those labeled by $s_1, t_1$, $s_2$ and $t_2$), must each intersect $\alpha_w$. It must now be the case that there exists a chain of hyperplanes (see Definition~\ref{def:hyperplane-chain}) 
	$H_{s_1} = H_0, H_1, \dots H_m = H_{s_2}$ and another chain of  hyperplanes  $H_{t_1} = H_0',H_1' \dots, H_n' = H_{t_1}$.  These two chains intersect, and by reasoning similar to that in 
	Observation~\ref{obs:intersecting_chains}, it follows that there is a $\L_s\L_t$-path from $s$ to $t$.  
\end{proof}

\begin{lemma} \label{lemma_two_comps}
Let $\G$ be a triangle-free graph. Let $\L$ be a subgraph of $\G^c$
	with no isolated vertices,
	 such that $(G^\Theta, E(\L))$ is a RAAG system and $G^\Theta$ has finite index in $W_\G$.
	If $\G$ contains a cone vertex, then $\L$ contains exactly one component. If $W_\G$ is not virtually free, then $\L$ contains exactly two components. Otherwise, $\L$ contains at most two components.
\end{lemma}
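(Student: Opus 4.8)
The plan is to dispose of the three regimes separately, in each case assuming for contradiction that $\L$ has too many components and playing off the conditions $\mathcal F_1,\mathcal F_2$ (which hold by Lemma~\ref{lemma_finite_index2}) against the triangle-forcing Lemma~\ref{lem:triangle}. Throughout I assume $\L$ has at least one edge, so at least one component (if $\L$ is empty then $G^\Theta$ is trivial and finite index forces $W_\G$ finite, a degenerate case). Two facts will be used repeatedly. First, each $\L$-component is a tree by $\mathcal R_1$ and is induced in $\Theta$ by $\mathcal R_2$; hence by Lemma~\ref{lem:commuting} two distinct commuting $\L$-edges must have their four endpoints spanning a $\G$-square, and this square cannot lie inside a single component (a $\G$-edge between two vertices of one component would violate $\mathcal R_2$). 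Thus, secondly, any pair of distinct commuting $\L$-edges produces a $2$-component square joining two \emph{distinct} components. I will also use the standard fact that a triangle-free right-angled Coxeter group is virtually free if and only if its defining graph is a forest, since an induced cycle $C_m$ with $m\ge 4$ gives a special subgroup $W_{C_m}$ that is not virtually free.

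For the cone-vertex case, note that if the triangle-free graph $\G$ has a cone vertex $s$, then no two other vertices are adjacent (else they would form a triangle with $s$), so $\G$ is a star and $s$ lies in no $\L$-edge. I would pass to $\G\setminus s$, which is edgeless and (for at least two leaves) has no cone vertex; since $G^\Theta\le W_{\G\setminus s}$ remains a finite-index RAAG system, Lemma~\ref{lemma_finite_index2} applies to $\Theta(\G\setminus s,\L)$ and yields $\mathcal F_2$. But $\G\setminus s$ has no edges at all, so no $2$-component path between two distinct components can exist; $\mathcal F_2$ therefore forces $\L$ to have exactly one component.

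For the virtually-free, no-cone case, $\mathcal F_2$ holds by Lemma~\ref{lemma_finite_index2}. If there were three distinct components $\L_a,\L_c,\L_d$, I would fix $a\in\L_a$ and apply $\mathcal F_2$ to join $a$ to $\L_c$ and to $\L_d$; the first edge of each resulting $2$-component path exhibits $\G$-neighbours $\gamma\in\L_c$ and $\delta\in\L_d$ of $a$. Triangle-freeness forces $\gamma\not\sim\delta$, so a simple $\L_c\L_d$-path from $\gamma$ to $\delta$ together with the edges $a\gamma$ and $a\delta$ is a simple cycle in $\G$ of length at least four. Then $\G$ is not a forest, contradicting virtual freeness; hence at most two components. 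For the not-virtually-free case, cone vertices are already excluded (a triangle-free cone makes $\G$ a star and $W_\G$ virtually free), so $\mathcal F_1,\mathcal F_2$ hold. Since $G^\Theta\cong A_\Delta$ has finite index in the non-virtually-free group $W_\G$, it cannot be free, so its commuting graph $\Delta$ has an edge, i.e.\ two distinct $\L$-edges commute; by the second fact above this gives a $2$-component square spanning two distinct components, so $\L$ has at least two. To rule out a third component $\L_a$, I would feed this square into Lemma~\ref{lem:triangle}, using Remark~\ref{rmk:f2_gives_2_component_paths} to supply the required $\L_a\L_c$- and $\L_a\L_d$-paths between the two corners in $\L_c$ and the two in $\L_d$; Lemma~\ref{lem:triangle} then forces a triangle in $\G$, a contradiction. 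Hence exactly two components.

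The main obstacle is producing the $2$-component square needed to invoke Lemma~\ref{lem:triangle} in the non-virtually-free case. The key realization is that such a square cannot be built by hand from $\mathcal F_2$-paths, since those only ever trace out trees and never close into a square; instead it must be extracted from the algebraic input that a finite-index subgroup of a group that is \emph{not} virtually free cannot itself be free, forcing an edge in $\Delta$. Recognizing, dually, that the virtually-free case supplies no such square — and must therefore be handled by the separate ``cycle in a forest'' argument — is precisely what makes the trichotomy in the statement unavoidable.
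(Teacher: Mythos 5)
Your proposal is correct and follows essentially the same route as the paper: the cone-vertex case by passing to the edgeless graph $\G\setminus s$ and invoking $\mathcal F_2$, the virtually free case by assembling a simple cycle of length at least four out of $2$-component paths and extracting an induced cycle, and the non-virtually-free case by producing a commuting pair of distinct $\L$-edges, hence a $2$-component square across two distinct components, and feeding it together with the paths from Remark~\ref{rmk:f2_gives_2_component_paths} into Lemma~\ref{lem:triangle}. The only cosmetic differences are that you build the cycle in the virtually free case around a single vertex of the third component rather than from two $2$-component paths sharing both endpoints as the paper does, and that you split cases by virtual freeness directly (deducing the commuting pair from the fact that a finite-index subgroup of a non-virtually-free group cannot be free) rather than by whether two distinct $\L$-edges commute; these are logically equivalent.
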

\begin{proof}
	Suppose first that $\G$ contains a cone vertex $s \in \G$. 
	We may assume that $\G$ does not consist of a single edge, as $\L$ would be empty in that case.
	As $\G$ is triangle-free
	 in addition,
	there can be at most one cone vertex. 
	Since $\G$ is triangle-free, it follows that $\G' = \G \setminus s$ is a graph with no edges and is therefore virtually free. Furthermore, every $\L$-edge is contained in $\G'$, and $G^\Theta$ is a finite-index subgroup of $W_{\G'}$.
	By Lemma~\ref{lemma_finite_index2}, we conclude that $\Theta' = \Theta(\G', \L)$ satisfies condition $\mathcal F_2$. In particular, there is an $\G'$-edge between any two $\L$ components. As $\G'$ does not have any edges, $\L$ has exactly one component and the claim follows in this case. 
	
	We now assume that $\G$ does not contain a cone vertex. Furthermore, by Lemma~\ref{lemma_finite_index2} we may assume that $\Theta = \Theta(\G, \L)$ satisfies $\mathcal F_1$ and $\mathcal F_2$, and that $\Theta$ satisfies $\mathcal{R}_1 - \mathcal{R}_4$ by Proposition~\ref{prop_necessary}.

	Suppose now that no two distinct $\L$-edges commute. It follows that $G^\Theta$  is isomorphic to a free group, and since $G^\Theta$ is of finite index, $W_{\G}$ is virtually free. 
	Suppose, for a contradiction, that $\L$ has three distinct components $\L_1$, $\L_2$ and $\L_3$. Let $s$ and $t$ be distinct vertices of $\L_1$. By Remark~\ref{rmk:f2_gives_2_component_paths} there is an $\L_1\L_2$-path $\alpha_1$ from $s$ to $t$ which we can assume does not repeat vertices. Similarly, there is an $\L_1\L_3$-path $\alpha_2$ from $s$ to $t$ which does not repeat vertices. 
	Observe that $s, t \in \alpha_1 \cap \alpha_2$.  Starting at $s$ and traveling along $\alpha_1$, let $x$ be the first vertex after $s$ such that $x\in  \alpha_1 \cap \alpha_2$.  Then the subpath $\alpha_1'$ of $\alpha$ between $s$ and $x$ contains exactly two vertices of $\alpha_1 \cap \alpha_2$.  
	Let $\alpha_2'$ be the subpath of $\alpha_2$ between $s$ and $x$.  Note  
	that $|\alpha_1'|, |\alpha_2'| \ge 2$, as every other vertex of $\alpha_1$ is in $\L_2$ and $\alpha_2 \cap \L_2 = \emptyset$. It follows that $c = \alpha_1' \cup \alpha_2'$ is a cycle in $\Gamma$.
	Let $c'$ be a sub-cycle of $c$ which is an induced subgraph of $\G$. If $c'$ has three vertices, then this contradicts $\G$ being triangle-free. On the other hand, if $c'$ has more than $3$ vertices, then this contradicts $W_\G$ being virtually free. Thus, $\L$ can have at most two components and the claim follows in this case.

	Suppose now there exist $\L$-edges  $a_1a_2$ and $b_1b_2$ which commute, with  $a_1a_2 \neq (b_1b_2)^{\pm 1}$.  These $\L$-edges must be in different components of $\L$ by condition $\mathcal R_2$ and 
	Lemma~\ref{lem:commuting}. 
	In this case, $W_\G$ is not virtually free as it contains a subgroup isomorphic to $\mathbb{Z}^2$. 
	Suppose, for a contradiction, that $\L$ contains at least three distinct $\L$-edge components $\L_1, \L_2$ and $\L_3$. 
	Without loss of generality, we may assume that $a_1b_1 \in \L_1$ and that $a_2b_2 \in \L_2$.
	We will obtain a contradiction by showing that $\G$ must contain a triangle.

	By Lemma~\ref{lem:commuting}, $a_1, a_2, b_1, b_2$ form a square in $\Gamma$. By Remark~\ref{rmk:f2_gives_2_component_paths}, there is an $\L_1\L_3$-path from $a_1$ to $a_2$. Similarly, there is a $\L_2\L_3$-path from $b_1$ to $b_2$. 
 	Thus, $\G$ contains the configuration described in the statement of Lemma~\ref{lem:triangle}.  That lemma then implies that $\G$ contains a triangle, a contradiction.
\end{proof}

\begin{thm} \label{thm_fi_visual_raags}
	Let $W_\Gamma$ be a
	 $2$-dimensional RACG. Let $\L$ be a subgraph of $\G^c$ 
	with no isolated vertices,
	and let $G^\Theta$  be the subgroup of $W_\G$ generated by the $\L$-edges. 
	Then the following are equivalent.
	\begin{enumerate}
	\item $(G^\Theta, E(\L))$ is a RAAG system and $G^\Theta$ has finite index in $W_\G$.
	\item 
 $(G^\Theta, E(\L))$ is a RAAG system and $G^\Theta$ has index either two or four in $W_\G$ (and exactly four if $W_\G$ is not virtually free).
	\item  $\L$ has at most two 
	components and  $\Theta$ satisfies conditions $\mathcal R_1$--$\mathcal R_4$, $\mathcal F_1$ and~$\mathcal F_2$. 
	\end{enumerate}
\end{thm}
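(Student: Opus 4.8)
The plan is to prove the cyclic chain of implications $(2) \Rightarrow (1) \Rightarrow (3) \Rightarrow (2)$, drawing on the characterization of RAAG systems from Theorem~\ref{thm:2cpt}, the necessity of $\mathcal R_1$--$\mathcal R_4$ from Proposition~\ref{prop_necessary}, and the index computations in Lemmas~\ref{lemma_finite_index1}, \ref{lemma_finite_index2}, and~\ref{lemma_two_comps}. The implication $(2) \Rightarrow (1)$ is immediate, since having index two or four is a special case of having finite index and the RAAG system hypothesis is identical in both statements.

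For $(1) \Rightarrow (3)$, I would first invoke Lemma~\ref{lemma_two_comps} --- using that $\G$ is triangle-free because $W_\G$ is $2$-dimensional --- to conclude that $\L$ has at most two components. Proposition~\ref{prop_necessary} then supplies conditions $\mathcal R_1$--$\mathcal R_4$. It remains to verify $\mathcal F_1$ and $\mathcal F_2$. When $\G$ has no cone vertex, these follow directly from Lemma~\ref{lemma_finite_index2}. When $\G$ has a cone vertex $s$, Lemma~\ref{lemma_two_comps} forces $\L$ to have a single component, so $\mathcal F_2$ is vacuous; for $\mathcal F_1$ I would pass to the graph $\G' = \G \setminus s$, which is edgeless (as $\G$ is triangle-free) and hence has no cone vertex, observe that $G^\Theta$ lies in and has finite index in $W_{\G'}$ (which has index two in $W_\G$), and apply Lemma~\ref{lemma_finite_index2} to $\Theta(\G', \L)$. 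Since the non-cone vertices of $\G$ are exactly the vertices of $\G'$, the condition $\mathcal F_1$ obtained for $\Theta'$ transfers back to $\Theta$.

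For $(3) \Rightarrow (2)$, Theorem~\ref{thm:2cpt} immediately yields that $(G^\Theta, E(\L))$ is a RAAG system, since $\L$ has at most two components and satisfies $\mathcal R_1$--$\mathcal R_4$. To compute the index, I would again split on cone vertices: if $\G$ has none, Lemma~\ref{lemma_finite_index1} (whose hypotheses $\mathcal F_1$, $\mathcal F_2$ are now available) gives index $2^k$, where $k \le 2$ is the number of components of $\L$, hence index two or four. If $\G$ has a cone vertex $s$, then $\G'=\G\setminus s$ is edgeless, so a $\L_p\L_q$-path between distinct components --- which must be a $\G$-path through $\L$-endpoints, none of which is $s$ --- cannot exist; thus $\mathcal F_2$ forces $\L$ to be connected, and applying Lemma~\ref{lemma_finite_index1} to $\G'$ gives index two in $W_{\G'}$, i.e.\ index four in $W_\G$. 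Finally, for the parenthetical claim, I would note that a cone vertex makes $W_\G$ virtually free (as $W_{\G'}$ is a free product of copies of $\mathbb Z/2$), so if $W_\G$ is not virtually free there is no cone vertex; Lemma~\ref{lemma_two_comps}, now applicable since the RAAG system and finite index properties are established, then forces $\L$ to have exactly two components, and Lemma~\ref{lemma_finite_index1} gives index exactly four.

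I expect the main obstacle to be the careful handling of the cone vertex case, since Lemmas~\ref{lemma_finite_index1} and~\ref{lemma_finite_index2} both explicitly exclude cone vertices. The key observation making this manageable is that $2$-dimensionality (triangle-freeness) forces $\G \setminus s$ to be edgeless whenever $s$ is a cone vertex; this simultaneously guarantees that the relevant conditions collapse ($\mathcal F_2$ becomes vacuous and $\L$ becomes connected) and reduces all index bookkeeping to the index-two subgroup $W_{\G \setminus s}$, where the cone-vertex-free lemmas apply. A minor point to dispatch is the degenerate case in which $\G$ is a single edge, where $\L$ is empty; this falls outside the standing assumption that $\L$ has no isolated vertices and may be excluded at the outset.
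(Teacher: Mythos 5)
Your proposal is correct and follows essentially the same route as the paper: the same case split on whether $\G$ has a cone vertex, the same use of Lemma~\ref{lemma_two_comps} to bound the number of components, Theorem~\ref{thm:2cpt} and Proposition~\ref{prop_necessary} for the $\mathcal R$ conditions, and Lemmas~\ref{lemma_finite_index1} and~\ref{lemma_finite_index2} for the index, with passage to the edgeless graph $\G\setminus s$ in the cone-vertex case. The only cosmetic difference is that you organize the argument as a single cyclic chain $(2)\Rightarrow(1)\Rightarrow(3)\Rightarrow(2)$, whereas the paper verifies the implications separately within each case.
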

\begin{proof}
	 Clearly (2) implies (1).
	 
	 To see the remaining implications, suppose 
	  first that $\G$ contains a cone vertex~$s$. Then $\G' = \G \setminus s$ is a graph with no edges,
	 and $W_{\G'}$ is an index two subgroup of $\G$. Suppose that (1) holds. By Lemma~\ref{lemma_two_comps}, $\L$ has exactly one component.  By Theorem~\ref{thm:2cpt} and 
	 Lemma~\ref{lemma_finite_index2},  $\Theta' = \Theta(\G', \L)$ satisfies conditions $\mathcal R_1$--$\mathcal R_4$, $\mathcal F_1$ and $\mathcal F_2$. Consequently, $\Theta$ satisfies these conditions as well. Thus (3) holds. By Lemma~\ref{lemma_finite_index1}, we know that  	($G^{\Theta'}, E(\L))$ is a RAAG system of index 2 in $W_{\G'}$, and thus ($G^\Theta, E(\L))$ is a RAAG system of index four in $W_{\G}$. Therefore (2) holds. Finally,  if (3) holds then (1) holds by Theorem~\ref{thm:2cpt} and Lemma~\ref{lemma_finite_index1}.
	 
	 Now suppose that $\G$ does not have a cone vertex. If (1) holds, then by 
	 Lemma~\ref{lemma_two_comps}, $\L$ has exactly two components if $W_\G$ is not virtually free and at most two components otherwise. Thus (2) holds by  Lemma~\ref{lemma_finite_index1}.
	 By Theorem~\ref{thm:2cpt} and  Lemma~\ref{lemma_finite_index2}, (3) holds.
	 Finally if (3) holds, then (1) follows by Theorem~\ref{thm:2cpt} and  Lemma~\ref{lemma_finite_index1}.
\end{proof}

\begin{cor} \label{cor_bipartite}
	Let $W_\Gamma$ be a $2$-dimensional RACG. Let $\L$ be a subgraph of $\G^c$ 
	with no isolated vertices such that the subgroup $(G, E(\L))$  is a finite index RAAG system. Then either:
	\begin{enumerate}
		\item The graph $\G$ does not contain any edges and $E(\L)$ is a spanning tree in $\G^c$. In particular, $W_\G$ is virtually free.
	
		\item The group $W_{\G}$ is not virtually free. Furthermore, the vertices of $\Gamma$ can be $2$-colored by red and blue (i.e., each edge of $\G$ connects a red vertex and a blue vertex) and 
		 $G$ is isomorphic to the kernel of the homomorphism $\Psi: W_\G \to \mathbb{Z}_2 \times \mathbb{Z}_2 = \langle r , b ~|~ r^2 = b^2 = 1 \rangle$ which maps red and blue generators of $V(\Gamma)$ to $r$ and $b$ respectively.
	\end{enumerate}
\end{cor}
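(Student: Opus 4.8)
The plan is to push the hypotheses through Theorem~\ref{thm_fi_visual_raags} and then read the structure of $\G$ and of $G$ directly off the combinatorial conditions it supplies. Since $(G,E(\L))$ is a finite-index RAAG system and $W_\G$ is $2$-dimensional, that theorem tells us $\L$ has at most two components and $\Theta$ satisfies $\mathcal R_1$--$\mathcal R_4$, $\mathcal F_1$ and $\mathcal F_2$. The engine of the argument is the interplay between $\mathcal R_2$ and $\mathcal F_1$: by $\mathcal R_2$ each component of $\L$ is an induced subgraph of $\Theta$, so no $\G$-edge has both endpoints in a single $\L$-component; by $\mathcal F_1$ every non-cone vertex lies on some $\L$-edge, hence in some $\L$-component. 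Thus, once $\G$ has no cone vertex, $V(\G)$ is partitioned into the (at most two) independent sets $V(\L_1)$ and possibly $V(\L_2)$.

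I would first dispose of the one-component case with no cone vertex. Here $V(\G)=V(\L_1)$, and since $\L_1$ is induced and independent, $\G$ has no edges at all. By $\mathcal R_1$ the graph $\L$ is a forest; being connected it is a tree, and by $\mathcal F_1$ it spans $V(\G)$. As $\G$ is edgeless, $\G^c$ is complete, so $E(\L)$ is a spanning tree of $\G^c$, and $W_\G$ is a free product of copies of $\mathbb Z_2$, hence virtually free. This is exactly conclusion (1).

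Next I would treat the two-component case with no cone vertex. Now $V(\G)=V(\L_1)\sqcup V(\L_2)$ with each part independent, so $\G$ is bipartite; coloring $V(\L_1)$ red and $V(\L_2)$ blue gives a proper $2$-coloring since, by $\mathcal R_2$, every $\G$-edge runs between the two parts. The map $\Psi\colon W_\G\to\mathbb Z_2\times\mathbb Z_2$ is a well-defined homomorphism for this assignment (the defining relations $s^2=1$ and $st=ts$ map to relations that hold in the abelian, exponent-two target), and it is surjective because both colors occur. Each $\L$-edge has both endpoints in one component, hence one color, so every generator in $E(\L)$ lies in $\ker\Psi$, giving $G\le\ker\Psi$. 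I then compare indices: Lemma~\ref{lemma_finite_index1} gives $[W_\G:G]=2^2=4$, while $[W_\G:\ker\Psi]=4$ by surjectivity, and since $G\le\ker\Psi$ this forces $G=\ker\Psi$, which is the structural content of conclusion (2). For the claim that $W_\G$ is not virtually free I would exhibit a commuting pair of distinct $\L$-edges and apply Lemma~\ref{lem:commuting} to produce a square in $\G$, hence a copy of $\mathbb Z^2$ in $W_\G$; this is precisely the dividing line recorded in the proof of Lemma~\ref{lemma_two_comps}.

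The cone-vertex case and the two parenthetical virtual-freeness statements are where I expect the real friction. If $\G$ has a cone vertex $s$, then triangle-freeness forces $\G$ to be a star, $\G\setminus s$ is edgeless, $\L$ has a single component by Lemma~\ref{lemma_two_comps}, and the natural $2$-coloring assigns $s$ one color and the leaves the other; the same index count ($[W_\G:G]=4=[W_\G:\ker\Psi]$) again identifies $G$ with $\ker\Psi$. The main obstacle is reconciling this with the virtual-freeness dichotomy, since a star — and more generally any square-free bipartite configuration — is virtually free while still possessing edges, so the clean alternative ``edgeless, hence virtually free'' versus ``has an edge, hence not virtually free'' is not literally exhaustive. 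I would therefore isolate the truly governing invariant, namely the existence of a commuting pair of $\L$-edges (equivalently, of a square in $\G$), use it to separate the genuinely non-virtually-free examples from the degenerate square-free boundary cases, and handle the cone-vertex/star configuration as the delicate exceptional case of the statement.
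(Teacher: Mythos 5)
Your main-line argument is exactly the paper's: Theorem~\ref{thm_fi_visual_raags} supplies at most two $\L$-components together with $\mathcal R_1$--$\mathcal R_4$, $\mathcal F_1$, $\mathcal F_2$; in the one-component case $\mathcal R_2$ plus $\mathcal F_1$ force $\G$ to be edgeless and $\mathcal R_1$ makes $E(\L)$ a spanning tree of the complete graph $\G^c$; in the two-component case $\mathcal R_2$ gives the $2$-coloring, every $\L$-edge lies in $\ker\Psi$, and the index count $[W_\G:G]=4=[W_\G:\ker\Psi]$ forces $G=\ker\Psi$. That part of your write-up is correct and matches the paper's proof step for step.

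The ``friction'' you flag at the end is genuine, and you should not try to argue it away: the dichotomy as stated is not exhaustive, and the paper's own proof glosses over exactly the two points you isolate. In the one-component case the paper asserts that $\mathcal R_1$, $\mathcal R_2$, $\mathcal F_1$ force $\G$ to have no edges; but $\mathcal F_1$ exempts cone vertices, so a star (center a cone vertex, $\L$ a spanning tree on the leaves) satisfies every hypothesis of the corollary while $\G$ has edges and $W_\G\cong \mathbb{Z}_2\times(\mathbb{Z}_2\ast\cdots\ast\mathbb{Z}_2)$ is virtually free, so neither alternative applies. In the two-component case the paper never actually proves that $W_\G$ is not virtually free, and this can fail: take $\G$ the path on vertices $a,b,c,d$ and $\L=\{ac\}\sqcup\{bd\}$; then $\mathcal R_1$--$\mathcal R_4$, $\mathcal F_1$, $\mathcal F_2$ all hold ($\mathcal R_3$ and $\mathcal R_4$ vacuously, since $\G$ contains no square), $G\cong F_2$ has index $4$, yet $W_\G$ is virtually free and $\G$ has edges. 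Your instinct that the governing invariant is the existence of a commuting pair of $\L$-edges (equivalently a square in $\G$, by Lemma~\ref{lem:commuting}) is the right one, but your proposal stops at a sketch there, and no completion is possible without amending the statement (for instance, dropping ``not virtually free'' from (2) and absorbing the cone-vertex star into a corrected version of (1)). In short: you have reproduced the paper's proof where it is sound, and correctly located where both the proof and the statement itself break down; what you have not done, because it cannot be done, is prove the corollary exactly as written.
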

\begin{proof}
	By Theorem \ref{thm_fi_visual_raags}, $\L$ has at most two components. Suppose first that $\L$ contains exactly one component. Again by Theorem \ref{thm_fi_visual_raags}, the graph $\Theta$ satisfies $\mathcal{R}_1$, $\mathcal{R}_2$, and $\mathcal F_1$. From these conditions, it follows that $\G$ cannot contain any edges and that $E(\L)$ is a spanning tree in $\G^c$.
	As $\G$ does not contain any edges, $W_{\G}$ is virtually free. 
	
	Suppose now that $\L$ has exactly two components. We color the vertices of one component red and the vertices of the other component blue. By $\mathcal R_2$, each edge of $\G$ connects a red vertex and a blue vertex, i.e., we have a 2-coloring of $\G$.
	Furthermore, by the definition of $\Psi$, every $\L$-edge (thought of as an element of $G$) is in the kernel of $\Psi$. As $G$ is generated by such elements, it follows that $G < \text{ker}(\Psi)$. By Theorem \ref{thm_fi_visual_raags}, $G$ has index $4$ in $W_\G$. As $\text{ker}(\Psi)$ has index $4$ as well, it follows that $G$ is isomorphic to $\text{ker}(\Psi)$.
\end{proof}

\section{Applications}\label{sec:applicaitons}

In this section we give concrete families of RACGs containing  finite-index RAAG subgroups.
These cannot be obtained by applying the Davis--Januszkiewicz constructions to the defining graphs of the RAAGs they are commensurable to.

\subsection{Non-planar RACGs commensurable to RAAGs} 
In this subsection, we construct two families of RACGs with non-planar defining graphs containing finite-index RAAG subgroups.  These will serve as a warm-up for Theorem~\ref{thm_planar}.

We begin by constructing a family of quasi-isometrically distinct RACGs defined by the sequence of graphs $\G_n$
(shown in Figure~\ref{fig:application}) which are commensurable to RAAGs whose defining graphs are cycles.

\begin{cor} [to Theorem~\ref{thm_fi_visual_raags}]\label{cor:example}
For $n \ge 3$, let $\G_n$ be the graph obtained by starting with a $2n$-gon whose vertices (in cycle order) are $c_1, d_1, c_2, d_2,\dots, c_n, d_n$ and adding two vertices 
$x$ and $y$, such that $y$ is adjacent to $c_i$ for each $i$, $x$ is adjacent to $d_i$ for each $i$, 
and $x$ is adjacent to $y$ (see Figure~\ref{fig:application}).
Then 
\begin{enumerate}
\item The RACG $W_{\G_n}$ has 
a subgroup  of index four that is isomorphic to (and hence is commensurable to) the RAAG $A_{C_{2n}}$, where $C_{2n}$ is a cycle of length $2n$.  

\item $W_{\G_n}$ is not quasi-isometric to $W_{\G_m}$ for $m \neq n$. 
\end{enumerate}

\end{cor}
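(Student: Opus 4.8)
The plan is to deduce statement (1) from the characterization in Theorem~\ref{thm_fi_visual_raags} by exhibiting an explicit subgraph $\L \subset \G_n^c$ whose commuting graph is $C_{2n}$, and then to deduce (2) from (1) together with the quasi-isometric rigidity of atomic right-angled Artin groups proved in \cite{BKS}.

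For (1), I would first record the relevant adjacencies in $\G_n$: with indices taken mod $n$, the vertex $c_i$ is $\G$-adjacent exactly to $d_{i-1}, d_i$ and $y$; the vertex $d_i$ exactly to $c_i, c_{i+1}$ and $x$; the vertex $x$ to $y$ and every $d_i$; and $y$ to $x$ and every $c_i$. From this one sees directly that $\G_n$ is triangle-free (so $W_{\G_n}$ is $2$-dimensional), that $\G_n$ has no cone vertex, and that $\G_n$ is bipartite with color classes $R = \{x, c_1, \dots, c_n\}$ and $B = \{y, d_1, \dots, d_n\}$. I then take $\L \subset \G_n^c$ to be the union of two stars: a ``red'' component consisting of the $\L$-edges $\{x, c_1\}, \dots, \{x, c_n\}$ and a ``blue'' component consisting of $\{y, d_1\}, \dots, \{y, d_n\}$. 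Each listed pair is indeed a non-edge of $\G_n$, so $\L$ is a subgraph of $\G_n^c$ with no isolated vertices and exactly two components.

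Next I would identify the commuting graph $\Delta$. Because each color class spans no $\G$-edge, condition $\mathcal R_2$ holds and, by Lemma~\ref{lem:commuting}, no two $\L$-edges of the same color commute; moreover a red $\L$-edge $\{x, c_i\}$ commutes with a blue $\L$-edge $\{y, d_j\}$ if and only if $x$ and $c_i$ are each $\G$-adjacent to $y$ and $d_j$, which by the adjacencies above happens precisely when $j \in \{i-1, i\}$. Thus $\{x, c_i\}$ commutes with exactly $\{y, d_{i-1}\}$ and $\{y, d_i\}$, so $\Delta$ is the $2n$-cycle $\{x,c_1\}, \{y,d_1\}, \{x,c_2\}, \{y,d_2\}, \dots, \{x,c_n\}, \{y,d_n\}$; that is, $\Delta \cong C_{2n}$. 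It then remains to verify $\mathcal R_1$--$\mathcal R_4$, $\mathcal F_1$ and $\mathcal F_2$, after which Theorem~\ref{thm_fi_visual_raags} yields that $(G^\Theta, E(\L))$ is a RAAG system with $G^\Theta$ of index four in $W_{\G_n}$ (index four rather than two, since the square $x, d_1, c_2, d_2$ shows $W_{\G_n}$ is not virtually free) and $G^\Theta \cong A_\Delta = A_{C_{2n}}$. Conditions $\mathcal R_1$ (each component is a tree) and $\mathcal R_2$ are immediate, $\mathcal F_1$ holds since every vertex is an endpoint of an $\L$-edge, and $\mathcal F_2$ then follows from Remark~\ref{rmk_f2_simplification} because $\G_n$ is connected with $\L$ having two components. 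The genuinely substantive checks are $\mathcal R_3$ and $\mathcal R_4$: here one observes that the $\L$-convex hulls $T_c$ and $T_d$ occurring in any $2$-component square or cycle are sub-stars through the hubs $x$ and $y$ respectively, and that every red--blue $\G$-edge that can appear lies in a $2$-component square of the form $x, y, c_i, d_j$ (with $j \in \{i-1,i\}$) or $x, d_{i-1}, c_i, d_i$, so tracing through the edges of a given cycle confirms the required joins and squares are present.

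For (2), statement (1) gives that $W_{\G_n}$ is commensurable, hence quasi-isometric, to $A_{C_{2n}}$. Since $2n \ge 6$, the cycle $C_{2n}$ is atomic in the sense of Bestvina--Kleiner--Sageev: it is connected, has girth $2n \ge 5$, has no valence-one vertex, and its closed stars do not separate it, as removing three consecutive vertices of a cycle of length at least six leaves a nonempty path. Their rigidity theorem then says that two atomic RAAGs are quasi-isometric if and only if their defining graphs are isomorphic, and since $C_{2n} \cong C_{2m}$ if and only if $n = m$, we conclude that $A_{C_{2n}}$ and $A_{C_{2m}}$, and therefore $W_{\G_n}$ and $W_{\G_m}$, are quasi-isometric if and only if $n = m$. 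I expect the main obstacle to be the verification of $\mathcal R_4$ for the longer $2$-component cycles of $\Theta$: although the hub structure makes every individual edge lie in an evident square, one must check that both endpoints of each color land in the correct convex hull for \emph{every} such cycle, which calls for a careful case analysis (or a short induction on cycle length). The quasi-isometry input for (2) is entirely supplied by \cite{BKS}, so the only care needed there is confirming that $C_{2n}$ meets the atomicity hypotheses.
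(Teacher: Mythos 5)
Your proposal is correct and follows essentially the same route as the paper: the same two-star subgraph $\L$ (edges $xc_i$ and $yd_i$), the same identification of the commuting graph with $C_{2n}$, the same appeal to Theorem~\ref{thm_fi_visual_raags} after checking $\mathcal R_1$--$\mathcal R_4$, $\mathcal F_1$, $\mathcal F_2$, and the same use of \cite{BKS} for part (2). The one step you defer --- the case analysis for $\mathcal R_4$ --- is exactly where the paper's proof spends its effort, and your observation that the hubs $x$ and $y$ always lie in the relevant $\L$-convex hulls (so that squares of the form $x, y, c_i, d_j$ supply the required 2-component squares for every edge of a 2-component cycle) is precisely how that analysis goes through.
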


\begin{figure}[h!]
	\begin{overpic}[scale=0.75] 
		{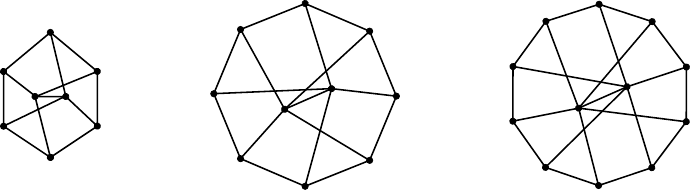}
		\put(6, 24){\tiny $c_1$}
		\put(43, 28.5){\tiny $c_1$}
		\put(86, 28.5){\tiny $c_1$}
		\put(15, 8){\tiny $c_2$}
		\put(58, 12){\tiny $c_2$}
		\put(100.5, 18){\tiny $c_2$}
		\put(-3, 8){\tiny $c_3$}
		\put(43, -2){\tiny $c_3$}
		\put(95, 2){\tiny $c_3$}
		\put(15, 18){\tiny $d_1$}
		\put(55, 23){\tiny $d_1$}
		\put(95, 26){\tiny $d_1$}
		\put(6, 2){\tiny $d_2$}
		\put(54, 3){\tiny $d_2$}
		\put(100.5, 9){\tiny $d_2$}
		\put(-3, 18){\tiny $d_3$}
		\put(33, 2){\tiny $d_3$}
		\put(86, -2){\tiny $d_3$}
		\put(28, 12){\tiny $c_4$}
		\put(76, 2){\tiny $c_4$}
		\put(31, 23){\tiny $d_4$}
		\put(71, 9){\tiny $d_4$}
		\put(71,18 ){\tiny $c_5$}
		\put(76, 26){\tiny $d_5$}
		\put(3, 12){\tiny $x$}
		\put(38, 10.5){\tiny $x$}
		\put(82, 10){\tiny $x$}
		\put(10.5, 13.5){\tiny $y$}
		\put(49, 16){\tiny $y$}
		\put(91.5, 13.5){\tiny $y$}
	\end{overpic}
	\caption{The figure illustrates the graphs $\G_n$ defined in  
		Corollary~\ref{cor:example} for $n = 3, 4, 5$.}
	\label{fig:application}
\end{figure}

\begin{proof}
	Fix $n \ge 3$, and  let $\G$ denote $\G_n$. We define a graph $\L \subset \G^c$ as follows.  Let $\L_x$ be the star graph consisting of the union of the edges of $\G^c$
	 from $x$ to $c_i$ for each $i$.  Let $\L_y$ be the star graph consisting of the edges of $\G^c$ from $y$ to $d_i$ for each $i$.  Let $\L = \L_x \cup \L_y$. 
	 (See Figure~\ref{fig:hexagon} for an illustration of $\L$ in the case $n=3$.)

	We show below that 
	$\Theta = \Theta(\G, \L)$
 satisfies $\mathcal R_1$--$\mathcal R_4$, $\mathcal F_1$ and $\mathcal F_2$.  
 Then it will  follow from Theorem~\ref{thm_fi_visual_raags}, that $(G^\Theta, E(\L)) $ is a RAAG system, and that $G^\Theta$ has index four in $W_\G$.  Moreover, it is easily checked that 
 the	commuting graph $\Delta$ associated to $\L$ (as defined in 
	Section~\ref{subsec_racgs_and_raags}) 
is isomorphic to $C_{2n}$.  Consequently, $G^\Theta$ is isomorphic to $A_{C_{2n}}$.  
Thus, this will show (1).

It is easy to verify $\mathcal F_1$, $\mathcal R_1$, and $\mathcal R_2$.  
Then by Remark~\ref{rmk_f2_simplification}, it follows that $\mathcal F_2$ holds as well.
We now check $\mathcal R_3$.  First note that there are exactly three squares 
 in $\G$ 
 containing the edge $c_1d_1$, and each of these satisfies the property in $\mathcal R_3$.  Now the fact that every square contains an edge of the 
$2n$-gon, together with the symmetry of the diagram, implies that $\mathcal R_3$ holds. 

To check $\mathcal R_4$, 
let $\gamma$ be a 
$\L_x \L_y$-cycle
 and let $e$ be an edge of $\gamma$.
By symmetry, 
we can assume that $e$ is either 
$c_1d_1, c_1y$ or $xy$.  
Suppose first that $e = c_1d_1$. Then $\gamma$ contains 
either 
$d_nc_1$
or $yc_1$.  In both cases, the 
 $\L$-convex hull of the vertices of $\gamma$ contains $y$.
Similarly, $\gamma$ contains either $d_1x$ or $d_1 c_2$, and in both cases 
the $\L$-convex hull of $\gamma$ contains $x$.
As 
$c_1d_1$ is
 contained in the square $c_1d_1 xy$, and $x,y$ are in the appropriate convex hulls, it follows that $\mathcal{R}_4$ holds for the $\G$-cycle $\gamma$ and edge $e$. 
 
Suppose now that $e = c_1y$. It follows that $\gamma$ contains either $yx$ or $yc_i$ for some $i >1$.  In each case, $x$ is in the $\L$-convex hull of $\gamma$.  Furthermore, $\gamma$ 
contains either $c _1d_1$ or $c_1d_n$.  In the former case, 
the square $c_1d_1xy$ contains $e$ and has vertices in the $\L$-convex hull of the vertices in $\gamma$. In the latter case the same argument applies to the square $c_1d_nxy$.

Finally, suppose that $e = xy$.  
By symmetry, we may assume that $\gamma$ contains 
$yc_1$.  Furthermore, $yc_1$ must be followed by either $c_1d_2$ or $c_1d_n$ in $\gamma$.  Then, as in the previous paragraph, either the square $c_1d_1xy$ or the square $c_1d_nxy$
contains $e$ and has vertices in the $\L$-convex hull of $\gamma$.    
Thus $\mathcal R_4$ is satisfied in all cases.

We have thus established that (1) holds, by showing that $\Theta$ satisfies $\mathcal R_1$--$\mathcal R_4$, $\mathcal F_1$ and $\mathcal F_2$. Consequently, for each $n$, we know that $W_{\G_n}$ is commensurable, and in particular quasi-isometric, to $A_{C_{2n}}$.  Claim (2) then follows from~\cite{BKS}. 
\end{proof}

	Next, we give a family of RACGs whose defining graphs are not planar and are commensurable to RAAGs which are not atomic (as defined in~\cite{BKS}).

	\begin{cor} \label{cor:non_planar_example}
		Given $n\ge 3$ and $k \ge 1$, let $\Delta_{nk}$ be the graph obtained by taking $k$ copies of $\G_n$ (defined in Figure \ref{fig:planar_examples}), and identifying them all along the subgraph induced by 
		$V(\G_n)\setminus \{a_0\}$.  Thus $\Delta_{nk}$ has vertices $a_1, a_2, \dots, a_n, b_1, \dots, b_n$ and also $a_{01}, \dots, a_{0k}$.  (The left side of Figure~\ref{fig:delta42} shows $\Delta_{42}$.)
		Then $W_{\Delta_{nk}}$ contains an index four subgroup isomorphic to a~RAAG.
	\end{cor}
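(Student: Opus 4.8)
The plan is to apply Theorem~\ref{thm_fi_visual_raags}, exactly as in the proof of Corollary~\ref{cor:example}. First I would record the basic features of $\G := \Delta_{nk}$: it is triangle-free, so $W_\G$ is $2$-dimensional; it has no cone vertex (a cone vertex in a triangle-free graph forces the graph to be a star, whereas $\Delta_{nk}$ contains cycles); and it is not virtually free, since it contains an induced $4$-cycle and hence a $\mathbb{Z}^2$ subgroup. Granting these, the whole statement reduces to exhibiting a subgraph $\L \subset \G^c$ with no isolated vertices, exactly two components, and satisfying $\mathcal R_1$–$\mathcal R_4$, $\mathcal F_1$ and $\mathcal F_2$. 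Theorem~\ref{thm_fi_visual_raags} will then furnish a RAAG system $(G^\Theta, E(\L))$ with $G^\Theta$ of index exactly four (the two-component case of Lemma~\ref{lemma_finite_index1}), which is the desired subgroup.

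To build $\L$, I would mimic the ``two stars'' construction of Corollary~\ref{cor:example}, using the natural $2$-coloring of the bipartite graph $\G$ to partition $V(\G)$ into two color classes. I would let $\L = \L_a \cup \L_b$ be the union of two trees, one spanning each class, with the $k$ copies $a_{01}, \dots, a_{0k}$ of $a_0$ all lying in the same component and attached as leaves. The trees must be chosen so that $\L$ spans every vertex, which gives $\mathcal F_1$, and so that each component is an induced subgraph of $\Theta$, which is $\mathcal R_2$; condition $\mathcal R_1$ is then automatic since each component is a tree. Because $\G$ is connected, $\L$ has two components, and $\mathcal R_2$ and $\mathcal F_1$ hold, condition $\mathcal F_2$ comes for free from Remark~\ref{rmk_f2_simplification}.

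The substance of the argument, and the step where I expect the main obstacle, is verifying $\mathcal R_3$ and $\mathcal R_4$. Since $\Delta_{nk}$ is assembled from $k$ copies of $\G_n$ glued along $V(\G_n)\setminus\{a_0\}$, every $2$-component square and every $2$-component cycle either lives inside a single copy or threads through several of the vertices $a_{0j}$. For the squares and cycles confined to one copy the verification is identical to the single-copy computation, and the symmetry among the $k$ copies means only finitely many representative edges need to be examined. The genuinely new objects are the cycles visiting two or more distinct copies $a_{0i}, a_{0j}$; for such a cycle I would argue edge-by-edge that each edge still lies in a $2$-component square whose vertices belong to the relevant $\L$-convex hulls $T_a$ and $T_b$. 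The delicate point is controlling these convex hulls across the gluing, i.e.\ checking that the shared spine $V(\G_n)\setminus\{a_0\}$ supplies the required squares no matter which copy of $a_0$ a given cycle passes through.

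Once $\mathcal R_1$–$\mathcal R_4$, $\mathcal F_1$ and $\mathcal F_2$ are established and $\L$ is seen to have exactly two components, Theorem~\ref{thm_fi_visual_raags} immediately yields that $(G^\Theta, E(\L))$ is a RAAG system with $G^\Theta$ of index four in $W_{\Delta_{nk}}$, completing the proof.
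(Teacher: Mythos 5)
Your overall strategy is exactly the paper's: exhibit a two-component subgraph $\L$ of the complement of $\Delta_{nk}$ satisfying $\mathcal R_1$--$\mathcal R_4$, $\mathcal F_1$ and $\mathcal F_2$, and invoke Theorem~\ref{thm_fi_visual_raags} (with the two-component case of Lemma~\ref{lemma_finite_index1} giving index exactly four). The gap is that you never actually pin down $\L$, and the constraints you do impose --- two trees in the complement spanning the two color classes of the bipartition, with the $a_{0j}$ attached as leaves on the $a$-side, each component induced --- are not sufficient to make $\mathcal R_3$ and $\mathcal R_4$ hold. The verification you defer to the end is not a routine check that works for any such choice; it depends sensitively on the shape of the trees. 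For instance, taking the $b$-side tree to be the star centered at $b_1$ satisfies everything you list, but then the $\L$-convex hull of $\{b_2, b_3\}$ contains $b_1$, and $\mathcal R_3$ applied to the $2$-component square $b_2a_3b_3a_1$ would force $a_3$ to be $\G$-adjacent to $b_1$, which is false (each $a_i$ with $i\ge 2$ is adjacent only to $b_{i-1}$ and $b_i$). Similarly, centering the $a$-side tree anywhere other than $a_1$ breaks $\mathcal R_3$ for squares of the form $b_ia_1b_{i'}a_{0j}$, since the convex hull of $\{a_1,a_{0j}\}$ then picks up a vertex not adjacent to all of $b_i,\dots,b_{i'}$.

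The paper's choice is: $\L_a$ is the star centered at $a_1$ with leaves $a_2,\dots,a_n,a_{01},\dots,a_{0k}$, and $\L_b$ is the path visiting $b_1,b_2,\dots,b_n$. This works precisely because $a_1$ and each $a_{0j}$ are $\G$-adjacent to every $b_i$, so the extra vertices appearing in the relevant convex hulls are always adjacent to the corners of the square in question. With this $\L$ in hand, $\mathcal R_3$ is a finite check over the four types of squares $b_ia_{i+1}b_{i+1}a_1$, $b_ia_{i+1}b_{i+1}a_{0j}$, $b_ia_1b_{i'}a_{0j}$ and $b_ia_{0j}b_{i'}a_{0j'}$, and $\mathcal R_4$ is verified edge-by-edge on an arbitrary $\L_a\L_b$-cycle, organized by the type of the edge rather than by how many copies of $a_0$ the cycle visits (your proposed organization is fine but cosmetically different). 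So the skeleton of your argument matches the paper; what is missing is the one load-bearing ingredient, namely the correct explicit $\L$ together with the square and cycle case analysis, and the constraints you state do not by themselves force a correct choice.
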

	\begin{figure}[h!]
	\begin{overpic}[scale=1.4]
		{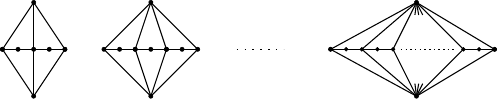}
		\put(-1,19){\tiny $\G_3$}
		\put(21,19){\tiny $\G_4$}
		\put(67,19){\tiny $\G_n$}
		\put(6,20.5){\tiny $a_1$}
		\put(30,20.5){\tiny $a_1$}
		\put(83,20.5){\tiny $a_1$}
		\put(6,-2){\tiny $a_0$}
		\put(30,-2){\tiny $a_0$}
		\put(83,-2){\tiny $a_0$}
		\put(-1,8){\tiny $b_1$}
		\put(19,8){\tiny $b_1$}
		\put(64,8){\tiny $b_1$}
		\put(7.5,8){\tiny $b_2$}
		\put(13,8){\tiny $b_3$}
		\put(72,8){\tiny $b_2$}
		\put(80.5,8){\tiny $b_3$}
		\put(101,8){\tiny $b_n$}
		\put(25.5,8){\tiny $b_2$}
		\put(33.5,8){\tiny $b_3$}
		\put(40,8){\tiny $b_4$}
		\put(3,11.5){\tiny $a_2$}
		\put(70,11){\tiny $a_2$}
		\put(24,11.5){\tiny $a_2$}
		\put(29,11.5){\tiny $a_3$}
		\put(35,11.5){\tiny $a_4$}
		\put(9,11.5){\tiny $a_3$}
		\put(75,11){\tiny $a_3$}
		\put(93.5,11){\tiny $a_n$}
	\end{overpic}
	\caption{The figure defines
	the family of graphs $\G_n$, for $n \ge 3$, used in Corollary~\ref{cor:non_planar_example}.}
	\label{fig:planar_examples}
\end{figure}

	\begin{proof}
		Fix $n \ge 3, k\ge 1$ and let $\Delta = \Delta_{nk}$.
		We define $\L$, a subgraph of $\Delta^c$ consisting of two components.
		The first component $\L_a$ is the union of the edges of $\Delta^c$ of the form $a_1a_i$, where $2\le i \le n$ and $a_1a_{0j}$ for $1\le j \le k$. 
		The second component $\L_b$ is the path in $\Delta^c$ visiting $b_1, b_2, \dots, b_n$. 
		(See the right side of Figure~\ref{fig:delta42} for an illustration of the case $n=4, k=2$).  

\begin{figure}[h!]
	\begin{overpic}[scale=1.8]
		{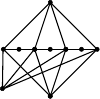}
			\put(-7, 43){\tiny $b_1$}
		\put(29.5, 38){\tiny $b_2$}
		\put(67.5 ,42.5){\tiny $b_3$}
		\put(100,43){\tiny $b_4$}
		\put(44, 103){\tiny $a_1$}
		\put(44,-7){\tiny $a_{02}$}
		\put(4, 3){\tiny $a_{01}$}
		\put(18, 55){\tiny $a_2$}
		\put(48,55){\tiny $a_3$}
		\put(76,55){\tiny $a_4$}
	
	\end{overpic}
	\hspace{1in}
	\begin{overpic}[scale=1.8]
		{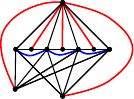}
		\put(5,32){\tiny $b_1$}
		\put(31.5, 29){\tiny $b_2$}
		\put(58.5,30.5){\tiny $b_3$}
		\put(84,32){\tiny $b_4$}
		\put(44, 77){\tiny $a_1$}
		\put(44,-5){\tiny $a_{02}$}
		\put(14, 3){\tiny $a_{01}$}
		\put(18, 41){\tiny $a_2$}
		\put(48,41){\tiny $a_3$}
		\put(69,41){\tiny $a_4$}
	\end{overpic}
	\caption{
	The figure shows $\Delta_{42}$ on the left, and 
	the two components of $\L$ for the graph $\Delta_{42}$ on the right. The component $\L_a$ is shown in red and the component $\L_b$ is shown in blue.}
	\label{fig:delta42}
\end{figure}

		Let $\Theta = \Theta(\Delta, \L)$ be as in the previous sections. We verify the properties $\mathcal R_1$--$\mathcal R_4$, $\mathcal F_1$ and $\mathcal F_2$. It will then follow from Theorem~\ref{thm_fi_visual_raags} that the subgroup generated by $E(\L)$ is an index four visual RAAG subgroup.

		The conditions $\mathcal R_1, \mathcal R_2$ and $\mathcal F_1$ are immediate.  
		Condition $\mathcal{F}_2$ holds by Remark~\ref{rmk_f2_simplification}.
		We now check $\mathcal R_3$.  Each square in $\Delta$ is of one of the following forms:
		\begin{enumerate}
			\item $b_ia_{i+1}b_{i+1}a_1$ for $1 \le i \le n-1$
			\item $b_ia_{i+1}b_{i+1}a_{0j}$ for $1 \le i \le n-1, 1 \le j \le k$
			\item $b_i a_1 b_{i'}a_{0j}$ for $1\le i<i' \le n, 1 \le j \le k$
			\item $b_i a_{0j} b_{i'}a_{0j'}$ for $1\le i<i' \le n, 1 \le j \le k$
		\end{enumerate}
		Condition $\mathcal R_3$ follows immediately for the first type of square, as the appropriate $\L$-convex hulls do not contain any additional vertices not included in the vertex set of the square.  For the second type of square, the convex hull in $\L_b$ does not contain any additional vertices, but the convex hull in $\L_a$ contains the additional vertex $a_1$, as this vertex lies on the $\L_a$-path between $a_{i+1}$ and $a_{0j}$.  Since $a_1$ is adjacent to $b_i$ and $b_{i+1}$, the condition $\mathcal R_3$ is verified for this type of 2-component square.  For the third
		type, the $\L$-convex hull of $\{a_1, a_{0j}\}$ does not contain any additional vertices of $\L$, and the $\L$-convex hull of $\{b_i, b_{i'}\}$ contains the additional vertices $b_{i+1}, \dots, b_{i'-1}$.  Since $a_1$ and $a_{0j}$ are adjacent to each of these, $\mathcal R_3$ is verified for this type of 2-component square as well. 
		Finally, for the last type,  the $\L$-convex hull of $\{a_{0j}, a_{0j'}\}$ contains the additional vertex $
		a_1$, and the $\L$-convex hull of $\{b_i, b_{i'}\}$ contains the additional vertices $b_{i+1}, \dots, b_{i'-1}$.  Once again, it is easily verified that $a_0, a_{1j}, a_{1j'}$ are each adjacent to each of 
		$b_i, \dots, b_{i'}$.  Thus $\mathcal R_3$ is verified. 
		
		Finally, we check $\mathcal R_4$.
		Let $\gamma$ be a 
		$\L_a \L_b$-cycle
		and let $e$ be an edge of $\gamma$.  First suppose $e$ is   of the form $a_i b_i$ for $2 \le i \le n$.
		In this  case, $\gamma$ necessarily passes through $b_{i-1}$ and 
		some $a$, where 
		either $a= a_1$ or $a= a_{0j}$,  for some $1 \le j \le k$.  Thus the $\L_a\L_b$-square $b_{i-1}a_ib_ia$ satisfies the criterion in $\mathcal R_4$, since it contains $e$, and the two vertices $a$ and $b_{i-1}$ are contained in the $\L$-convex hull of the vertices of $\gamma$. 
		The case where $e$ is of the form $a_i b_{i-1}$ for $2 \le i \le n$ is similar.  
		
		Now suppose 
		$e$ is of the form $a_1b_i$ for some  $1 \le i \le n$.  Then $\gamma$ necessarily passes through an edge of one of the following forms: $b_i a_{0j}$ for some $1 \le j \le k$, $b_i a_i$ or $b_ia_{i+1}$.  In the first of these cases, $\gamma$ necessarily also passes through a vertex $b_{i'}$ for some $i'\neq i$, and $a_1b_ia_{0j}b_{i'}$ is the desired square.  
		If the edge is of the form $b_i a_i$ (resp.~$b_ia_{i+1}$) then $\gamma$ must also pass through 
		$b_{i-1}$ (resp.~$b_{i+1}$), and the desired square is $a_1b_ia_ib_{i-1}$ (resp.~$a_1b_ia_{i+1}b_{i+1}$).  The case where $e$  is of the form $a_{0j}b_i$ for some  $1 \le i \le n$ and $1 \le j \le k$ is similar. 		This completes the verification of $\mathcal R_4$, and the corollary follows. 		
	\end{proof}

\begin{remark}
	We note that the RAAGs obtained in the above corollary do not 
	have a tree for defining graph when $k \ge 2$ and $n \ge 3$. 
	This is easy to check by computing the associated commuting graph. 
\end{remark}

\subsection{$2$-dimensional RACGs with planar defining graph}

In~\cite{nguyen-tran}, Nguyen--Tran characterize exactly which one-ended, 2-dimensional RACGs defined by 
planar 
non-join, CFS graphs are quasi-isometric to RAAGs.  In  this subsection, we use their work in conjunction with Theorem~\ref{thm_fi_visual_raags} to prove Theorem~\ref{intro_thm_planar} from the introduction.
Note that CFS is a graph-theoretic condition introduced in~\cite{Dani-Thomas} to characterize RACGs with at most quadratic divergence.  We omit the definition, as it is not needed here.  

\begin{remark}\label{rmk:cfs}
Any one-ended, $2$-dimensional RACG that is quasi-isometric to a RAAG must have CFS defining graph. This follows as one-ended RAAGs have either linear or quadratic divergence \cite{Behrstock-Charney}, and the defining graph of a $2$-dimensional RACG with linear or quadratic divergence is CFS \cite{Dani-Thomas}.
\end{remark}

Recall that a graph $\S$ is a \textit{suspension} if
$\S$ decomposes as a join $\S
=  \{a_1, a_2\} \star B$ where $a_1$ and $a_2$ are non-adjacent vertices.
We also say that $\S$ is the \textit{suspension} of the graph $B$.
We use the notation $\S_k(a, b)$ to denote the suspension graph $\{a_1, a_2\} \star \{b_1, \dots, 
b_{k} \}$, and we say that $a_1$ and $a_2$ are the \textit{suspension vertices}.

Let $\G$ be a graph which is connected, triangle-free, CFS
and planar. Suppose that a planar embedding from $\G$ into the sphere $\mathbb{S}^2$ is fixed.
In \cite{nguyen-tran}, Nguyen--Tran construct a tree $T$ (this is the \textit{visual
	decomposition tree} of Section 3 of that paper) associated to
$\Gamma$ with the following properties.  
The vertices of $T$ are in bijection with maximal suspension subgraphs of $\G$. 
As $\G$ is triangle-free, every maximal suspension
of $\G$ is of the form $\S_k(a, b)$, where both $\{a_1, a_2\}$ and $\{b_1,
\dots, b_k \}$ are each sets of disjoint
vertices of $\G$, and $k\ge 3$ if
$T$ contains at least two vertices.
Moreover, every vertex of $\G$ is contained in some suspension corresponding to
a vertex of $T$.
Two vertices of $T$ corresponding to suspensions $\S = \S_k(a,b)$ and $\S'
= \S_l(c,d)$ are connected
by an edge if $\S \cap \S'$ is a $4$-cycle $C$ which separates $\mathbb S^2$ into
two non-trivial components $B_1$ and $B_2$, such that $\S_1 \setminus C \subset
B_1$ and $\S_2 \setminus C \subset B_2$. 
Moreover, it must follow (by the
maximality of the suspensions) that $C
= \{a_1, c_1, a_2, c_2 \}$, i.e.~$C$ contains exactly the suspension vertices of $\S$ and $\S'$.

If $\G$ (with the above assumptions) is a join, then it readily follows that $\G$ is quasi-isometric to a RAAG whose defining graph is a tree of diameter at most $2$. 
Nguyen--Tran show that if $\G$ is not a join, then $W_\G$ is quasi-isometric to a RAAG if and only
if every vertex $v \in T$ has valence strictly less than $k$~\cite[Theorem~1.2]{nguyen-tran}, where $\Sigma_k(a,b)$ is the maximal suspension  in $\G$ corresponding to $v$. Moreover, they show that such RAAGs have defining graph a tree of diameter at least $3$.
Below, we prove such RACGs are in fact commensurable to RAAGs.

\begin{thm} \label{thm_planar}
	Let $W_\G$ be a $2$-dimensional, one-ended RACG with planar defining graph
	$\G$.
	Then $W_\G$ is quasi-isometric to a RAAG if and only if it 
	contains an index $4$ subgroup isomorphic to a RAAG.
\end{thm}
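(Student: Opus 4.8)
The backward implication is immediate: any finite-index subgroup of $W_\G$ is quasi-isometric to $W_\G$, so if $W_\G$ contains an index-four subgroup isomorphic to a RAAG, then $W_\G$ is quasi-isometric to a RAAG. The content lies in the forward direction, and my plan is to produce an explicit index-four visual RAAG subgroup and invoke Theorem~\ref{thm_fi_visual_raags}. So assume $W_\G$ is quasi-isometric to a RAAG. By Remark~\ref{rmk:cfs} the graph $\G$ is CFS, and together with the standing hypotheses $\G$ is connected, triangle-free, planar and CFS, which is exactly what is needed to apply the Nguyen--Tran theory \cite{nguyen-tran}. I would first dispose of the case where $\G$ is a join: being triangle-free, such a $\G$ must be a complete bipartite graph $K_{m,n}$ with $m,n \ge 2$ by one-endedness, and the construction below specializes cleanly. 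Otherwise $\G$ is not a join, and I would bring in the visual decomposition tree $T$, whose vertices correspond to the maximal suspensions $\S_k(a,b)$ of $\G$, adjacent suspensions meeting in a $4$-cycle made of their suspension vertices. Since $W_\G$ is quasi-isometric to a RAAG, the Nguyen--Tran criterion gives that every vertex $v\in T$ has valence strictly less than the corresponding $k$; this valence bound is the structural input I expect to exploit.

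The next step is to assemble the data $\L\subset\G^c$ demanded by Theorem~\ref{thm_fi_visual_raags}. I would first $2$-color $V(\G)$ red and blue, showing $\G$ is bipartite: within each maximal suspension the pole/equator partition is a $2$-coloring, and because adjacent suspensions overlap in a single $4$-cycle and $T$ is a tree, these local colorings propagate consistently (flipping across edges of $T$ as needed) with no monodromy obstruction. One-endedness forces each color class to contain at least two vertices and rules out a cone vertex. I would then take $\L=\L_{\mathrm{red}}\cup\L_{\mathrm{blue}}$, where each $\L_{\mathrm{red}},\L_{\mathrm{blue}}$ is a spanning tree of its color class inside $\G^c$; such trees exist because same-colored vertices are pairwise non-adjacent in $\G$, hence pairwise adjacent in $\G^c$. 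Crucially, the trees must be chosen compatibly with the suspension structure, so that the $\L$-convex hull of two equator (resp.\ pole) vertices lying in a common suspension remains inside that suspension. With such a choice, conditions $\mathcal R_1$ (each component is a tree, so no $\L$-cycle), $\mathcal R_2$ (no $\G$-edge joins same-colored vertices, so each tree is an induced subgraph of $\Theta$), $\mathcal F_1$ (every vertex lies on an $\L$-edge), and $\mathcal F_2$ (via Remark~\ref{rmk_f2_simplification}, since $\G$ is connected and $\L$ has two components) are all routine.

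The main obstacle is verifying $\mathcal R_3$ and $\mathcal R_4$. Since $\G$ is bipartite, its $4$-cycles are precisely the $2$-component squares, and I expect to reduce both conditions to the join structure inside a single suspension: in $\S_k(a,b)$ every pole is joined to every equator vertex, so once a $2$-component square and the two relevant $\L$-convex hulls are confined to one suspension, the required joins hold automatically. The heart of the argument is to show, using the valence bound on $T$ together with planarity, that every $2$-component cycle of $\Theta$ decomposes into squares each supported in a single maximal suspension, whence both the join condition $\mathcal R_3$ and the square-filling condition $\mathcal R_4$ follow. This is the one place where the Nguyen--Tran criterion is genuinely used, and I anticipate an induction on $T$ (on the number of suspensions) in the spirit of the verifications carried out in Corollaries~\ref{cor:example} and~\ref{cor:non_planar_example}. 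Once $\mathcal R_1$--$\mathcal R_4$, $\mathcal F_1$ and $\mathcal F_2$ are established for the two-component $\L$, Theorem~\ref{thm_fi_visual_raags} yields that $(G^\Theta, E(\L))$ is a RAAG system with $G^\Theta$ of index four in $W_\G$, which completes the forward direction.
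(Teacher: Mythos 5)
Your outline matches the paper's strategy: the backward direction is immediate, and the forward direction is reduced to Theorem~\ref{thm_fi_visual_raags} by producing a two-component $\L\subset\G^c$ built from the Nguyen--Tran visual decomposition tree $T$ and its valence bound. Your description of the target object is also accurate: the paper's $\L$ is indeed a pair of trees, one spanning each class of the bipartition of $\G$, realized inside $\G^c$. However, the proposal stops exactly where the proof begins. Saying the trees are ``chosen compatibly with the suspension structure'' does not specify them, and the choice is the entire content: an arbitrary pair of spanning trees of the color classes will in general violate $\mathcal R_3$ or $\mathcal R_4$ (indeed $\mathcal R_4$ constrains which $\L$-edges may exist far more tightly than ``convex hulls stay inside a suspension''). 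The paper's construction is an explicit induction over a nested exhaustion $T_1\subset\cdots\subset T_N=T$ by subtrees, adding one maximal suspension $\S_k(a,b)$ at a time; at each step the valence bound (valence of the $T$-vertex $<k$) is used to locate a $4$-cycle $\{a_1,b_j,a_2,b_{j+1}\}$ that does \emph{not} correspond to an edge of $T$, and the equatorial path of $\L$ is broken precisely at that gap. This is what keeps $\L$ acyclic, guarantees the invariant that every gluing $4$-cycle of $T$ has both diagonals realized as $\L$-edges, and makes $\mathcal R_4$ checkable. So the valence hypothesis enters the \emph{construction}, not only the verification, and your proposal never uses it in a way that would produce the trees.

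The second gap is the deferred verification of $\mathcal R_3$ and $\mathcal R_4$. Your proposed reduction --- that every $2$-component cycle of $\Theta$ decomposes into squares each supported in a single maximal suspension --- is neither proved nor quite the right statement. In the paper, a $2$-component cycle $P$ meeting the newly attached suspension $\S$ is handled by surgery: planarity forces $P\cap(\S\setminus E)$ to consist of exactly two edges through a single equatorial vertex, one then replaces $P$ by a cycle $P'$ lying entirely in the previously built graph (rerouting through a pole of the gluing $4$-cycle $E$), applies the inductive hypothesis to $P'$, and treats the two excised edges separately using a square inside $\S$; the separation properties of the embedded $4$-cycles $E\subset\mathbb S^2$ are used throughout, and the valence bound is invoked a second time to see that the two poles of $\S$ lie in different components of the old graph minus $\{c_1,c_2\}$. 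None of this is routine, and without it the appeal to Theorem~\ref{thm_fi_visual_raags} is not justified. In short: correct skeleton and correct target, but the central construction and the $\mathcal R_3$/$\mathcal R_4$ verification --- the actual proof --- are missing.
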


\begin{proof}
One direction of the theorem is obvious. 
Thus, we prove that if $W_\G$ 
satisfies these hypotheses and is quasi-isometric to a RAAG, then
$W_\G$ contains an index $4$ subgroup isomorphic to a RAAG. 
We do this by constructing a subgraph 
$\Lambda \subset \G^c$ 
with two components and satisfying the hypotheses of
Theorem \ref{thm_fi_visual_raags}.

Fix a planar embedding of $\G$ into the sphere $\mathbb{S}^2$.
Note that by Remark~\ref{rmk:cfs} and the hypotheses of the theorem, it follows that $\G$ is triangle-free, CFS and planar. 
Thus, there exists a visual decomposition tree $T$ associated to 
$\Gamma$ as described above.
Furthermore, as $W_\G$ is quasi-isometric to a RAAG, it follows from~\cite[Theorem~1.2]{nguyen-tran} 
that the valence of a vertex of $T$ corresponding to the maximal suspension $\S_k(a,b)$ is less than $k$.  

Henceforth, to simplify notation, the word suspension will always refer to a maximal suspension, and will consequently correspond to a vertex of $T$.
Given a suspension $\S_k(a,b) = \{a_1, a_2\} \star B$ we say that a labeling $\{b_1, \dots, b_k\}$ of the vertices of $B$ is  \textit{cyclic} if the following holds. If $C$ is a $4$-cycle spanning the vertices $\{a_1, b_i, a_2, b_{i+1}\}$ for some $1 \le i \le k$ or spanning the vertices $\{a_1, b_1, a_2, b_k\}$, then every vertex of $\S \setminus C$ is contained in a common component of $\mathbb{S}^2 \setminus C$.
Observe that if $E$ is a cycle corresponding to an edge of $T$ incident to the vertex of $T$ 
given by $\S_k(a, b)$, then the planarity of $\G$ implies that $E$ is one of the cycles $C$ mentioned in the previous sentence.

Let $N$ be the number of vertices of $T$.
Let $T_1 \subset \dots \subset T_N=T$ be a nested sequence of subtrees of $T$ such that
$T_1$ consists of a single vertex of $T$ and $T_i$ has exactly $i$ vertices.
Such choices are clearly possible.
For each $1 \le i \le n$, let
$\G_i$ be the subgraph of $\G$ spanned by every suspension
that corresponds to a vertex of $T_i$. 
Note that $\G_i \subset \G_{i+1}$ for all $1 \le i < N$ and that $\G_N = \G$. 
We define a nested sequence of graphs $\L_1 \subset \dots \subset
\L_N$ such that for each $1 \le i \le N$, $\L_i \subset \G_i^c$ and the following hold:
\begin{enumerate}
	\item Let $C$ be a $4$-cycle corresponding to an edge of $T$ that is incident to $T_i$. Then 
	each pair of non-adjacent vertices in $C$ is contained in a common edge of $\L_i$.
	\item The graph $\L_i$ contains exactly two components, and $\Theta_i = \Theta(\G_i, \L_i)$ satisfies conditions $\mathcal{R}_1
	- \mathcal{R}_4$,
	$\mathcal{F}_1$ and $\mathcal{F}_2$.
\end{enumerate}
The theorem clearly follows from this claim by using the graph 
$\Lambda= \Lambda_N \subset \Gamma^c$. 

We first define $\L_1 $ corresponding to the vertex $T_1 = \{v\}$.
Let $\S = \S_k(a,b) = \{a_1, a_2\} \star \{b_1, \dots, b_k\}$ be the suspension
corresponding to $v$, and assume that
$\{b_1, \dots, b_k\}$ is cyclic.
As the valence of $v$ in $T$ is less than $k$,
by possibly relabeling, we can assume that the $4$-cycle $\{a_1, b_1, a_2, b_k \}$ does not
correspond to an edge of $T$.
We define one component of $\L_1$ to be the edge $(a_1, a_2)$, and the other
component of $\L_1$ to consist of the edges $(b_1, b_2), (b_2, b_3), \dots,$
$(b_{k-1}, b_k)$.
By the observation above and our choice of labeling, any 4-cycle $C$ corresponding to an edge of $T$ incident to $v$ is of the form $\{a_1, b_i, a_2, b_{i+1} \}$ for some $1 \le i \le k-1$.  Thus condition (1) follows. Condition (2) is readily verified.

Suppose now that we have defined the graph $\L_{n-1}$ corresponding to the tree
$T_{n-1}$ satisfying conditions (1) and (2). 
We now define $\L_n$.

Let $u$ be the unique vertex in $T_n \setminus T_{n-1}$, and let $u'$ be the
unique vertex of $T_{n-1}$ that is adjacent to $u$.
Let $\S = \Sigma_k(a,b) = \{a_1, a_2\} \star \{b_1, \dots, b_k\}$ and $\S'
= \Sigma_l(c,d) = \{c_1, c_2 \} \star \{d_1, \dots, d_l \}$ be the suspension
graphs corresponding  to $u$ and $u'$ respectively. Furthermore, suppose these
labelings are cyclic.
It follows that $E = \{a_1, c_1, a_2, c_2\}$ is the $4$-cycle corresponding to
the edge in $T$ between $u$ and $u'$.
By possibly relabeling, we can assume that $c_1 = b_1$, $c_2 = b_k$, $a_1
= d_1$ and $a_2 = d_l$.
As $\L_{n-1}$ satisfies (1) above, $(a_1, a_2)$ and $(c_1, c_2)$ are edges of
$\L_{n-1}$.

As the valence of $u$ is less than $k$, there exist some $1 \le j <
k$ such that  the $4$-cycle $\{b_j, a_1, b_{j+1}, a_2\}$ 
does not correspond to an edge of $T$. 
We define $\L_n \subset \G_n^c$ to contain every edge of $\L_{n-1} \subset
\G_{n-1}^c \subset \G_n^c$ and additionally the edges: 
\[(b_1, b_2), (b_2, b_3), \dots, (b_{j-1}, b_{j}), (b_{j+1}, b_{j+2}), \dots,
(b_{k-1}, b_k) \]
This corresponds to adding one or two line segments each to a distinct vertex of $\L_{n-1}$. 
As $\L_{n-1}$ contains two components (by (2)) and does not contain any cycles (by $\mathcal{R}_1$), it follows that $\L_n$ contains two components and satisfies $\mathcal{R}_1$ as well.
Furthermore, (1) and condition $\mathcal{F}_1$ (for $\Theta_n$) follow from directly from our choices.
Condition $\mathcal{F}_2$ then follows from Remark \ref{rmk_f2_simplification}.

We now check $\mathcal{R}_2$.
Let $x, y \in \L_n$ be vertices contained in the same component of $\L_n$.
If $x$ and $y$ are both contained in $\L_{n-1}$, then the claim follows as $\L_{n-1}$ satisfies $\mathcal{R}_2$ and no new edges are added between vertices of $\G_{n-1}$ in forming $\G_n$. 
If $x$ and $y$ are both contained in $\S$, then by construction, they must lie in the same factor of the join $\S$ and there is no edge between them.
The only case left to check is that $x$ and $y$ lie in different components of $\mathbb{S}^2 \setminus E$. However, in this case there is no edge between $x$ and $y$ as $E$ separates $x$ from $y$ in the planar embedding.

We now check that $\mathcal{R}_3$ holds. Let $C$ be
a $2$-component square in $\L_{n}$. As $E$ separates every vertex of $\S
\setminus E$ from every vertex in $(\G_{n-1} \setminus E) \subset \G_n$, it
follows that either $C$ lies in $\G_{n-1} \subset \G_n$ or $C$ lies in
$\S$. In the first case the claim follows as $\Theta_{n-1}$ satisfies $\mathcal{R}_3$ (and noting that the convex hull of $C$ in $\L_n$ lies in $\L_{n-1}$). In the latter case, the claim is easily verified.

We now check $\mathcal{R}_4$.
Let $P$ be a $2$-component cycle in $\G_{n}$. 
If $P$ lies entirely in $\G_{n-1}$ then every edge of $P$ satisfies the condition in $\mathcal{R}_4$ as $\Theta_{n-1}$ satisfies $\mathcal{R}_4$. 
If $P$ lies entirely in $\S$, then $\mathcal{R}_4$ is easily verified.
Thus, we may assume that $P$ decomposes into two subpaths $P_1$ and $P_2$ such that $P_1 \subset \G_{n-1}$ and $P_2 \subset
\S \setminus E$.
As $P$ does not repeat vertices, it follows that $P_2$ consists of just two edges $(a_1, b_q)$ and $(a_2, b_q)$ for some $2 \le q \le k$.
As the valence of $u'$ is less than $l$, there exists some $1 \le q' < l$ and corresponding $4$-cycle $\{c_1, d_{q'}, c_2, d_{q'+1}\}$ such that every vertex of $\G_n$ is contained in a common component of $\mathbb{S}^2 \setminus C$. From this, we see that $a_1$ and $a_2$ are in different components of $\G_{n-1} \setminus \{c_1, c_2\}$. Thus, $P_1$ must either contain $c_1$ or $c_2$. 
Suppose that $P_1$ contains $c_1$ (the other case is similar). 
The path $P_1$ does not contain both  the edge $(a_1, c_1)$ and the edge $(a_2, c_1)$, for if it did, then $P$ would either the equal 4-cycle $\{a_1, c_1, a_2,b_q\}$ or contain it as a sub-cycle.  In the 
former case $P \subset \Sigma$, a case we have already ruled out, and in the latter case, $P$ 
necessarily repeats a vertex (which is not allowed).   
We now define a cycle $P'$ depending on which edges $P_1$ contains. We set $P'  =  (P_1 \setminus (a_1, c_1) ) 
\cup (a_2, c_1)$ if $(a_1, c_1) \subset P_1$ , $P' = (P_1  \setminus (a_2, c_1) )\cup (a_1, c_1)$ if $(a_2, c_1) \subset P_1$, and  $P' = P_1 \cup (a_1, c_1) \cup (a_2, c_1)$ if $P_1$ does not contain
either of  $(a_1, c_1)$ and $(a_2, c_1)$. In each case, it follows that $P'$ is a cycle in $\G_{n-1}$ containing every edge of $P_1$, except possibly $(a_1, c_1)$ and $(a_2, c_1)$.
Additionally, every vertex of $P'$ is a vertex of $P$,  so the $\L$-convex hull of $P'$ is contained in the $\L$-convex hull of $P$. 
From this and as $\G_{n-1}$ satisfies
$\mathcal{R}_4$, it follows that every edge of $P$ that is contained in $P_1$ satisfies $\mathcal{R}_4$ as well. 
Finally, every edge of $P \setminus P_1$ can been seen to satisfy $\mathcal{R}_4$ by using the $4$-cycle $\{a_1, b_q, a_2, c_1\}$.

We have thus checked that (2) holds. The theorem now follows.
\end{proof}

\section{Generalized reflection subgroups of RAAGs}
\label{sec:reflections} 

 Let $A_\G$ be a RAAG.
A \textit{generalized RAAG reflection} 
is a conjugate of an element of $V(\G)$, i.e.~$wsw^{-1}$ for some 
 $s \in V(\G) \cup V(\G)^{-1}$
 and $w$ a word in $A_\G$. Let $\mathcal{T}$
 be a set of reduced generalized RAAG reflections. We say that $\mathcal{T}$ is \textit{trimmed} if
  $\mathcal{T} \cap \mathcal{T}^{-1} = \emptyset$,
   and if  
 given any two distinct  generalized RAAG reflections $wsw^{-1}$ and $w's'w'^{-1}$ in $\mathcal{T}$, no expression for $w'$ has prefix $ws^{-1}$ 
 or prefix $ws$.
The following lemma follows from a straightforward adaptation of the proof of \cite[Lemma 10.1]{DL} to the setting of RAAGs.

\begin{lemma}\label{lemma_trimmed_reflections}
	Let $\mathcal{T}$ be a set of  generalized  RAAG reflections in the RAAG $A_\G$, and let $G$ be the subgroup generated by $\mathcal{T}$. Then $G$ is generated by a trimmed set of   generalized RAAG reflections which can be algorithmically obtained from $\mathcal{T}$. 
\end{lemma}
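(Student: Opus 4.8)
The plan is to build the trimmed generating set by starting from $\mathcal{T}$ and repeatedly applying two length-reducing moves, each of which replaces $\mathcal{T}$ by a new finite set of generalized reflections generating the \emph{same} subgroup $G$, until the resulting set is trimmed. Throughout I would keep every element written as a reduced generalized reflection, re-reducing words via the Tits solution to the word problem (Theorem~\ref{thm_tits_solution}), and I would track progress by the complexity $\Phi(\mathcal{T}) = \sum_{t \in \mathcal{T}} |t|$, where $|t|$ is the reduced length of $t$ in $A_\G$. Since $\Phi$ takes values in the non-negative integers, it suffices to exhibit moves that strictly decrease $\Phi$ whenever $\mathcal{T}$ is not yet trimmed. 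Termination of this process then yields the desired trimmed set, and the procedure is algorithmic because each move, as well as the test for trimmed-ness, only requires solving the word problem and comparing reduced lengths (e.g.\ deciding whether $ws$ is a reduced prefix of $w'$ amounts to checking $|w'| = |ws| + |s^{-1}w^{-1}w'|$), all of which are effective in a RAAG.

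First I would dispose of the condition $\mathcal{T} \cap \mathcal{T}^{-1} = \emptyset$: whenever $t$ and $t^{-1}$ both lie in $\mathcal{T}$ (or two elements coincide after reduction), delete the redundant one. This leaves $G$ unchanged, since $t^{-1} \in \langle t\rangle$, and strictly decreases $\Phi$. The substantive move handles a violation of the second condition. Suppose $t = wsw^{-1}$ and $r = w's'w'^{-1}$ are distinct elements of $\mathcal{T}$ and a reduced expression for $w'$ has prefix $ws$, so that $w' \gequal wsu$ with $|w'| = |w| + 1 + |u|$. Using only the cancellations $w^{-1}w = 1$ and $s^{-1}s = 1$, one computes
\[
t^{-1} r t \;\gequal\; (ws^{-1}w^{-1})(wsu\,s'\,u^{-1}s^{-1}w^{-1})(wsw^{-1}) \;\gequal\; (wu)\,s'\,(wu)^{-1}.
\]
Set $r' = (wu)s'(wu)^{-1}$. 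Since $r = t r' t^{-1}$, replacing $r$ by $r'$ in $\mathcal{T}$ preserves $G$. The case in which a reduced expression for $w'$ has prefix $ws^{-1}$ is symmetric: conjugating by $t$ rather than $t^{-1}$ produces the same reflection $r' = (wu)s'(wu)^{-1}$.

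It then remains to check that this move strictly decreases $\Phi$. The new conjugator $wu$ has reduced length at most $|w| + |u| = |w'| - 1$, so after re-reducing $r'$ (which can only shorten it) we obtain $|r'| \le 2(|w'|-1) + 1 = |r| - 2 < |r|$; hence $\Phi$ drops by at least $2$. Any condition-1 violation created by the replacement is then removed by the first move, decreasing $\Phi$ further. I would iterate: at each stage either $\mathcal{T}$ is trimmed, or some pair witnesses a failure of one of the two conditions and the corresponding move applies, strictly lowering $\Phi$. Because $\Phi$ is a non-negative integer, only finitely many moves occur, and the procedure halts at a trimmed generating set of generalized reflections.

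The main obstacle is precisely this length bookkeeping. One must verify that the prefix hypothesis is exactly what guarantees a \emph{genuine} reduction: it is the fact that $w' \gequal wsu$ can be taken reduced, with the $s$ appearing as a true geodesic prefix letter, that forces $|wu| < |w'|$ — without this hypothesis the conjugation can in fact lengthen the reflection, so the prefix condition must be read as a statement about reduced (geodesic) expressions. One must also confirm that re-reducing the replacement reflections and handling the newly introduced inverse/duplicate pairs never increases $\Phi$. This is the content that is adapted from the proof of \cite[Lemma 10.1]{DL}; the only real change is that the RACG deletion-condition arguments used there are replaced by the RAAG normal form supplied by Theorem~\ref{thm_tits_solution}, together with the conjugation identity displayed above.
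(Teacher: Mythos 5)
Your proposal is correct and is essentially the argument the paper has in mind: the paper gives no proof of its own here, deferring to \cite[Lemma 10.1]{DL}, whose proof is exactly this length-reduction scheme (delete redundant inverses/duplicates, and when a prefix violation $w' \gequal wsu$ occurs, replace $w's'w'^{-1}$ by its conjugate $(wu)s'(wu)^{-1}$ under $w s^{\pm 1} w^{-1}$, inducting on total reduced length). Your conjugation identity and the bound $|r'| \le 2(|w'|-1)+1 = |r|-2$ are the correct RAAG adaptation, and your reading of ``prefix'' as a prefix of a reduced expression is the intended one.
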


In this section, we give a new proof of a result of Dyer:
\begin{thm}[\cite{Dyer}] \label{thm_reflection_raags}
	Let $\mathcal{T}$ be a finite set of generalized RAAG reflections in $A_\G$. Then the subgroup $G < A_\G$ generated by $\mathcal{T}$ is a RAAG. Moreover, if $\mathcal{T}$ is trimmed then $(G, \mathcal{T})$ is a RAAG system.
\end{thm}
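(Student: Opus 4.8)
The plan is to deduce everything from Basarab's characterization (Theorem~\ref{thm_raag_char}). First, by Lemma~\ref{lemma_trimmed_reflections} we may replace $\mathcal T$ by a trimmed generating set of $G$, so it suffices to prove the ``moreover'' statement: if $\mathcal T$ is trimmed, then $(G,\mathcal T)$ is a RAAG system; the first assertion then follows, since a RAAG system exhibits $G$ as $A_\Delta$ for the commuting graph $\Delta$ of $\mathcal T$. Because $\mathcal T$ is trimmed we have $\mathcal T\cap\mathcal T^{-1}=\emptyset$ and $1\notin\mathcal T$, and each element of $\mathcal T$, being a conjugate of a generator in $V(\G)\cup V(\G)^{-1}$, has infinite order since $A_\G$ is torsion-free~\cite{charney}; this verifies condition (1) of Theorem~\ref{thm_raag_char}. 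It remains to check the deletion condition (Definition~\ref{def:deletion}) for $(G,\mathcal T)$.

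Write each letter of a word $w=t_1\cdots t_k$ over $\mathcal T$ as a reduced word $t_a=u_ar_au_a^{-1}$ in $A_\G$, with $r_a\in V(\G)\cup V(\G)^{-1}$, and set $\beta_a=(t_1\cdots t_{a-1})\,t_a\,(t_1\cdots t_{a-1})^{-1}\in G$. A direct computation shows that if $\beta_i=\beta_j^{-1}$ for some $i<j$, then $t_i t_{i+1}\cdots t_j \gequal t_{i+1}\cdots t_{j-1}$, so that $t_1\cdots\hat t_i\cdots\hat t_j\cdots t_k$ is an expression for $w$; this is exactly a deletion. Hence the deletion condition follows once we show:
\begin{center}
if $w$ is not reduced over $\mathcal T$, then $\beta_i=\beta_j^{-1}$ for some $i<j$.
\end{center}

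I next translate this into hyperplane language. Let $W=u_1r_1u_1^{-1}\cdots u_kr_ku_k^{-1}$ be the word over $V(\G)$ representing $w$, and call the $k$ edges carrying the middle letters $r_1,\dots,r_k$ its \emph{central} letters. Writing $V_a$ for the prefix of $W$ ending just before $r_a$, one has $\beta_a \gequal V_a r_a V_a^{-1}$, and unravelling the conjugation shows that $\beta_i=\beta_j^{-1}$ holds if and only if $r_i$ and $r_j$ form a \emph{cancelling pair} in $W$: namely $r_i=r_j^{-1}$ (same generator, opposite signs) and the subword of $W$ strictly between them commutes with $r_i$ in $A_\G$. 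Equivalently, in any bigon-free disk diagram the hyperplane dual to $r_i$ is also dual to $r_j$. Now suppose $w$ is not reduced, and choose a strictly shorter word $w'=t'_1\cdots t'_{k'}$, with $k'<k$, over $\mathcal T$ representing $w$, with associated word $W'$ over $V(\G)$; then $W(W')^{-1}\gequal 1$, so by Lemma~\ref{lem_remove_pathologies} there is a bigon-free disk diagram $D$ with this boundary label. The boundary of $D$ carries the $k$ central letters of $W$ together with the $k'$ central letters of $W'$, and since $D$ has no bigons each hyperplane is dual to exactly two boundary edges.

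The crux is the following claim, to be proved by careful analysis of $D$: \emph{the hyperplane dual to a central letter is dual at its other end to a central letter} (of $W$ or of $W'$). Granting it, the hyperplanes dual to central letters induce a perfect matching on the $k+k'$ central edges. If no two central letters of $W$ were matched to each other, then each of the $k$ central letters of $W$ would be matched to one of the $k'<k$ central letters of $W'$, forcing two of them to share a partner, which is impossible. Hence some central letters $r_i,r_j$ of $W$ are matched, i.e.\ form a cancelling pair, yielding $\beta_i=\beta_j^{-1}$ and completing the proof. The claim is exactly where the hypotheses enter: the reducedness of each reflection prevents a central letter from cancelling into its own conjugating prefix or suffix $u_a^{\pm1}$, while the trimmed condition --- which forbids any expression for one reflection's conjugator $u_b$ from having another reflection's $u_ar_a^{\pm1}$ as a prefix --- rules out a central letter's hyperplane running into the conjugating part $u_b^{\pm1}$ of a different reflection. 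I expect establishing this claim, by organizing the relevant hyperplanes into the closed chains of Definition~\ref{def:hyperplane-chain} and tracking them against the boundary word, to be the main obstacle; the algebraic reduction and the pigeonhole count above are comparatively routine.
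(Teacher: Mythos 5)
Your overall architecture matches the paper's: reduce to the trimmed case via Lemma~\ref{lemma_trimmed_reflections}, verify the two hypotheses of Basarab's characterization (Theorem~\ref{thm_raag_char}), and establish the deletion condition by finding, in a disk diagram for $ww'^{-1}$ with $w'$ a shorter expression, a hyperplane dual to two of the ``central'' edges of the $w$-part. Your endgame is in fact a little slicker than the paper's: once a hyperplane joins the central edges $e_i$ and $e_j$, your conjugates $\beta_a$ give the deletion $t_1\cdots\hat t_i\cdots\hat t_j\cdots t_k$ directly (the subword between the two edges commutes with $r_i$, so $\beta_j=\beta_i^{-1}$), whereas the paper first chooses an innermost such hyperplane and physically rearranges the diagram via Lemmas~\ref{lemma_is_raag4} and~\ref{lemma_is_raag3} to exhibit $r_l=r_{l'}^{-1}$. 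The pigeonhole count is the same in both.

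However, there is a genuine gap: the ``crux claim'' you isolate --- that the hyperplane dual to a central letter is dual at its other end to a central letter --- is precisely Lemma~\ref{lemma_is_raag5}, and it is the hardest part of the proof; you explicitly defer it rather than prove it. Your heuristic for why the hypotheses suffice is too optimistic. Reducedness of $r_a$ does rule out the central hyperplane ending in its own conjugator (Lemma~\ref{lemma_is_raag1}), and the trimmed condition does rule out it ending in the conjugator of an \emph{adjacent} reflection (Lemma~\ref{lemma_is_raag2}) --- but the trimmed condition says nothing directly about a central hyperplane ending deep inside the conjugating part $u_b^{\pm1}$ of a \emph{distant} reflection $t_b$. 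To exclude that, the paper must first prove that crossing central hyperplanes force the corresponding reflections to commute and can be swapped in the diagram while preserving boundary combinatorics (Lemma~\ref{lemma_is_raag4}), and then run a delicate induction on the complexity of ``pathological'' diagrams, splitting into the case where every intervening central hyperplane crosses $H_i$ (rearrange to make the offending reflections adjacent and invoke the trimmed condition) and the case where some intervening central hyperplane $K$ misses $H_i$ (take $K$ innermost, fold the resulting cancelling pair $r_l\gequal r_{l'}^{-1}$, and produce a lower-complexity pathology). Without this argument --- roughly a page of the paper, and the place where all the structure of trimmed reflection sets is actually consumed --- the proof is incomplete.
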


We will use the characterization of RAAGs in Theorem~\ref{thm_raag_char}
to show that $G$ is a RAAG. We first prove a series of lemmas about disk diagrams of a special type, namely, ones  whose boundary labels are words over a trimmed set of generalized RAAG reflections.

The setup for these lemmas is as follows and will be fixed for the rest of this section. We fix  a trimmed set 
$\mathcal{T}$ of reduced generalized RAAG reflections in $A_\G$. Let $z = r_1 \dots r_n$
 be an expression for the identity element where  $r_i = w_i s_i w_i^{-1} \in \mathcal{T}$ for each $1 \le i \le n$. Let $D$ be a disk diagram whose boundary $\partial D$ is labeled by $z$. 
 For $1 \le i \le n$, let $p_{r_i}$ be the subpath of $\partial D$
 which is labeled by $r_i$. 
 Furthermore let $p_{w_i}$ and $p_{w_i^{-1}}$ denote the subpaths of $\partial D$ labeled $w_i$
 and $w_i^{-1}$ respectively, and let $e_i$ denote the edge labeled $s_i$. 
Let $H_i$ be the hyperplane dual to $e_i$, and let $\mathcal{H} = \{ H_i \}_{i=1}^n$ be the collection of all such hyperplanes. Note that as $r_i$ is a reduced word, no hyperplane is dual to two edges of $p_{r_i}$ for any $i$.

In all of the following lemmas, arithmetic is taken modulo $n$.

\begin{lemma} \label{lemma_is_raag1}  
	For each $1 \le i \le n$, the hyperplane $H_i$ does not intersect a hyperplane dual to $p_{w_i}$ or a hyperplane dual to $p_{w_i^{-1}}$ 
\end{lemma}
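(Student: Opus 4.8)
The plan is to argue by contradiction, using the reducedness of $r_i = w_i s_i w_i^{-1}$ together with the no-bigon hypothesis on $D$. Write $w_i = u_1 \cdots u_m$, so that $p_{r_i}$ reads $u_1 \cdots u_m\, s_i\, u_m^{-1} \cdots u_1^{-1}$, with $e_i$ the middle edge labeled $s_i$. Recall from the setup that, since $r_i$ is reduced, no hyperplane is dual to two edges of $p_{r_i}$. I will prove that $H_i$ crosses no hyperplane dual to $p_{w_i}$; the statement for $p_{w_i^{-1}}$ then follows by the symmetric argument obtained from reading $\partial D$ in the opposite direction, which replaces $r_i$ by the equally reduced word $r_i^{-1} = w_i s_i^{-1} w_i^{-1}$, keeps $e_i$ dual to $H_i$, and interchanges the roles of $p_{w_i}$ and $p_{w_i^{-1}}$.

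So suppose $H_i$ crosses some hyperplane dual to an edge of $p_{w_i}$. Among all edges of $p_{w_i}$ whose dual hyperplane crosses $H_i$, let $f$ be the one closest to $e_i$, say at position $k$ (so $f$ is labeled $u_k$), and let $K$ be its dual hyperplane. Since $D$ has no bigons, $H_i$ and $K$ cross in a single point $P$. Let $R$ be the disk region bounded by the arc of $H_i$ from $e_i$ to $P$, the arc of $K$ from $P$ to $f$, and the boundary subpath $\sigma$ running from $f$ to $e_i$ through the edges labeled $u_{k+1}, \dots, u_m$.

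The key step is to analyze the hyperplanes dual to the intermediate edges of $\sigma$. Fix such an edge, at a position $j$ with $k < j \le m$; its dual hyperplane $K_j$ enters $R$ and must leave through $\partial R$. It cannot leave through another edge of $\sigma \subseteq p_{r_i}$, as that would make $K_j$ dual to two edges of $p_{r_i}$; and it cannot cross the $H_i$-arc, since that would be a crossing of $H_i$ by a hyperplane dual to a position strictly closer to $e_i$ than $f$, contradicting the choice of $f$. Hence $K_j$ crosses $K$, so the generator $u_j$ commutes with $u_k$. Since crossing hyperplanes have commuting labels, the crossing at $P$ also shows that $u_k$ commutes with $s_i$. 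Therefore $u_k$ commutes with every letter of the block $u_{k+1} \cdots u_m\, s_i\, u_m^{-1} \cdots u_{k+1}^{-1}$ lying between $u_k$ and its mirror $u_k^{-1}$ in $r_i$; applying Tits swaps to move $u_k$ across this block and cancelling it against $u_k^{-1}$ yields a strictly shorter expression for $r_i$, contradicting that $r_i$ is reduced.

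The main obstacle is making this innermost-region argument rigorous: one must verify that $R$ is a genuinely embedded disk whose boundary edges on $\sigma$ are exactly those at positions $k+1, \dots, m$, and that the trichotomy for how $K_j$ can leave $R$ (re-crossing $\sigma$, crossing the $H_i$-arc, or crossing the $K$-arc) is exhaustive. Both of these rest on the no-bigon hypothesis (Remark~\ref{rem:no-bigons}), which guarantees that any two hyperplanes meet in at most one point, so that the arcs of $H_i$ and $K$ bound a well-defined region and each $K_j$ meets $\partial R$ transversally in a controlled way. Once this is in place, the two exclusions — reducedness of $r_i$ forbidding a second dual edge on $\sigma$, and minimality of $f$ forbidding a crossing with $H_i$ — force every intermediate hyperplane to cross $K$, and the resulting cancellation is the promised contradiction.
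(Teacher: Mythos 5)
Your proposal is correct and follows essentially the same route as the paper: choose the crossing hyperplane $K$ dual to the edge of $p_{w_i}$ closest to $e_i$, use the fact that no hyperplane is dual to two edges of $p_{r_i}$ to force every intermediate hyperplane to cross $K$, and deduce that the label of $K$ commutes across to its mirror letter in $w_i^{-1}$, contradicting reducedness of $r_i$. You merely spell out the innermost-region justification that the paper leaves implicit, and your symmetry reduction for $p_{w_i^{-1}}$ matches the paper's ``analogous'' remark.
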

\begin{proof}
	Suppose $H_i$ intersects a hyperplane $K$ that is dual to an edge $f$ of $p_{w_i}$. Without loss of generality, we may assume that $f$ is the edge closest to $e_i$ out of all possible choices for $K$. As no hyperplane is dual to two edges of $p_{r_i}$, it follows that every hyperplane dual to an edge of $p_{w_i}$ which lies between $e_i$ and $f$ must intersect $K$. Thus, $w_i$ has suffix the word $t_1 \dots t_m$, where $t_1$ is the label of $K$ and $t_1$ commutes with
	$s_i$, as well as with $t_j$ for $2 \le j \le m$.
	This readily implies that $r_i$ is not reduced, for in $r_i = w_i s_i w_i^{-1}$, an occurrence of the RAAG generator $t_1$ in $w_i$ can be canceled with an occurrence of $t_1^{-1}$ is $w_i^{-1}$. However, this is a contradiction as $r_i$ is reduced by assumption. The argument for hyperplanes dual to $p_{w_i^{-1}}$ is analogous.
\end{proof}

\begin{lemma} \label{lemma_is_raag2}
	For each $1 \le i \le n$, 
	the hyperplane $H_i$ is not dual to $p_{w_{i+1}}$, $p_{w_{i+1}^{-1}}$, $p_{w_{i-1}^{-1}}$ or~$p_{w_{i-1}}$.
\end{lemma}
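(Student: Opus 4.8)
The plan is to argue by contradiction, treating the case of $p_{w_{i+1}}$ in detail; the other three cases should follow by symmetric arguments, reversing the orientation of the boundary word and applying the definition of trimmed to the appropriate ordered pair among $r_{i-1}, r_i, r_{i+1}$ and their inverses. So suppose $H_i$ is dual to an edge $f$ of $p_{w_{i+1}}$. Since $r_{i+1}$ is reduced, $H_i$ is dual to at most one edge of $p_{r_{i+1}}$, so $f$ is unique, and it has label $s_i^{\delta}$ for some $\delta \in \{1,-1\}$ (the hyperplane label is $s_i$, and $\delta$ records the orientation with which $f$ is traversed). Reading $\partial D$ clockwise from $e_i$, the boundary passes through $p_{w_i^{-1}}$ and then enters $p_{w_{i+1}}$, so I would write $w_{i+1} = u'\, s_i^{\delta}\, v$, where $u'$ is the prefix of $w_{i+1}$ preceding $f$. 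Let $\alpha$ be the subpath of $\partial D$ from $e_i$ to $f$, so that $\alpha$ has label $w_i^{-1} u'$.

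The heart of the argument is to establish that $w_i^{-1} u' \gequal \tau$ for some word $\tau$ all of whose letters commute with $s_i$. I would prove this by analyzing $H_i$ together with its carrier, the strip of squares it crosses between $e_i$ and $f$: the two sides of this carrier read the same word $\tau$, namely the ordered labels of the hyperplanes crossing $H_i$, each of which commutes with $s_i$. The region bounded by $\alpha$ and the $\alpha$-side of the carrier is a disk diagram with boundary label $w_i^{-1} u'\, \tau^{-1}$, giving $w_i^{-1} u' \gequal \tau$. Phrased in the style of Lemma~\ref{lemma_is_raag1}: by that lemma no hyperplane dual to $p_{w_i^{-1}}$ crosses $H_i$, and since $r_i$ is reduced no such hyperplane is dual to two edges of $p_{w_i^{-1}}$; hence each of the $|w_i|$ hyperplanes dual to $p_{w_i^{-1}}$ is dual to an edge of $u'$, while every remaining edge of $u'$ is dual to a hyperplane crossing $H_i$ and so carries a label commuting with $s_i$. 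Either route yields $u' \gequal w_i \tau$ with $\tau$ commuting with $s_i$.

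Granting this, the contradiction is immediate: since $s_i^{\delta}$ commutes with $\tau$,
\[
w_{i+1} \;=\; u'\, s_i^{\delta}\, v \;\gequal\; w_i\, \tau\, s_i^{\delta}\, v \;\gequal\; w_i\, s_i^{\delta}\, \tau\, v,
\]
so $w_{i+1}$ admits an expression with prefix $w_i s_i^{\delta}$ where $\delta = \pm 1$, contradicting the assumption that $\mathcal{T}$ is trimmed.

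I expect the main obstacle to be the middle paragraph: rigorously establishing the group-level relation $w_i^{-1} u' \gequal \tau$ with $\tau$ commuting with $s_i$. The subtlety is that the hyperplanes dual to $p_{w_i^{-1}}$ and the $s_i$-commuting hyperplanes crossing $H_i$ may be \emph{interleaved} along $u'$, so one cannot simply read $w_i$ off as a literal prefix of $u'$. The carrier argument (or, equivalently, a careful peeling-off of the matched hyperplanes while commuting the crossing ones past them, justified by planarity and the no-bigon hypothesis of Remark~\ref{rem:no-bigons}) is exactly what makes this equality hold at the level of group elements. Once this commutation relation is secured, the reduction to a forbidden prefix and the appeal to trimmedness are routine, as is the passage to the remaining three cases.
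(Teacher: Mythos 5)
Your proposal is correct and follows essentially the same route as the paper: assume $H_i$ is dual to $p_{w_{i+1}}$, use Lemma~\ref{lemma_is_raag1} to force each hyperplane dual to $p_{w_i^{-1}}$ to terminate in the prefix $u'$ of $w_{i+1}$, conclude that $w_{i+1}$ has an expression with prefix $w_i s_i^{\pm 1}$ (your carrier argument just makes explicit the paper's phrase ``the structure of the hyperplanes in $D$ implies\dots''), and contradict trimmedness. The only cosmetic difference is that the paper dispatches the $p_{w_{i-1}}$ and $p_{w_{i+1}^{-1}}$ cases by noting that $H_{i-1}$ (resp.\ $H_{i+1}$) would then be dual to $p_{w_i}$ (resp.\ $p_{w_i^{-1}}$), reducing to the first two cases, whereas you run the symmetric argument directly; both work.
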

\begin{proof}
	For a contradiction, suppose $H_i$ is dual to an edge $f$ of $p_{w_{i+1}}$. By Lemma~\ref{lemma_is_raag1}, every hyperplane dual to an edge of $p_{w_{i}^{-1}}$ must also be dual to $p_{w_{i+1}}$. 
	Write $s_iw_i^{-1} = t_1 \dots t_m$ and $w_{i+1} = k_1 \dots k_l$
	where $t_j \in V(\Gamma)$ for $1 \le j \le m$ and $k_j \in V(\Gamma)$ for $1 \le j \le l$.
	The structure of the hyperplanes in $D$ implies that 
	 $w_{i+1}$ has an expression which begins with $t_m^{-1} \dots t_1^{-1} = w_is_i^{-1}$.
	This is a contradiction as $\mathcal{T}$ is trimmed.
	 A similar argument shows that $H_i$ is not dual to $p_{w_{i-1}^{-1}}$.
	
	Suppose now that $H_i$ is dual to $p_{w_{i-1}}$. By Lemma~\ref{lemma_is_raag1}, it follows that $H_{i-1}$ is dual to $p_{w_i}$. However, this is not possible by the same argument as above. Similarly, $H_i$ cannot be dual to $p_{w_{i+1}^{-1}}$.
\end{proof}

The proof of the following lemma is similar to that of the previous one.

\begin{lemma} \label{lemma_is_raag3}
	If $H_i = H_{i+1}$ for some $1 \le i \le n$ then $r_i \gequal  r_{i+1}^{-1}$.
	\hfill{\qed}
\end{lemma}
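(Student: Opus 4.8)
The plan is to study the single hyperplane $H := H_i = H_{i+1}$, which by hypothesis is dual to both $e_i$ and $e_{i+1}$, and to read off from the boundary combinatorics that $s_{i+1} = s_i^{-1}$ and $w_i \gequal w_{i+1}$; the identity $r_i \gequal r_{i+1}^{-1}$ is then immediate. First I would record two structural facts about $H$. Since $D$ has no bigons (Remark~\ref{rem:no-bigons}), $H$ is an embedded arc whose only intersections with $\partial D$ are the midpoints of $e_i$ and $e_{i+1}$; in particular these are the only two boundary edges dual to $H$. As one traverses the boundary circle $\partial D$, the arc $H$ is crossed exactly twice, once in each transverse direction, so the directed edges $e_i$ and $e_{i+1}$ carry the same underlying generator but with opposite orientations in the boundary word. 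This gives $s_{i+1} = s_i^{-1}$. Moreover, $H$ separates $D$ into two components, and the subpath $\alpha$ of $\partial D$ running from $e_i$ to $e_{i+1}$ through $p_{w_i^{-1}}$ and $p_{w_{i+1}}$ — whose label is $w_i^{-1}w_{i+1}$ — lies, apart from its endpoints, entirely on one side of $H$.

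Next I would set up a pairing of the edges of $p_{w_i^{-1}}$ with those of $p_{w_{i+1}}$. By Lemma~\ref{lemma_is_raag1} applied to the index $i$, no hyperplane dual to $p_{w_i^{-1}}$ meets $H = H_i$; applied to the index $i+1$, no hyperplane dual to $p_{w_{i+1}}$ meets $H = H_{i+1}$. Consequently a hyperplane $K$ dual to an edge of $p_{w_i^{-1}}$ cannot cross $H$, so, being an arc with both endpoints on $\partial D$, it must have its second endpoint on $\alpha$ as well. It cannot be dual to a second edge of $p_{w_i^{-1}}$, since $r_i$ is a reduced word and hence no hyperplane is dual to two edges of $p_{r_i}$; nor to $e_i$ or $e_{i+1}$, which are dual only to $H$. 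Therefore $K$ is dual to exactly one edge of $p_{w_{i+1}}$, and symmetrically every hyperplane dual to $p_{w_{i+1}}$ is dual to exactly one edge of $p_{w_i^{-1}}$. Thus the hyperplanes crossing $\alpha$ set up a bijection between the edges of $p_{w_i^{-1}}$ and those of $p_{w_{i+1}}$.

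Finally, I would conclude that this complete cross-pairing forces $w_i^{-1}w_{i+1}\gequal 1$: the label carried across each matching hyperplane is inverse in the boundary word (exactly as in the reading-off step in the proof of Lemma~\ref{lemma_is_raag2}), so the matched letters of $w_i^{-1}$ and $w_{i+1}$ cancel after the swap operations recorded by crossings among these hyperplanes, leaving the empty word. Hence $w_i \gequal w_{i+1}$, and combining this with $s_{i+1}=s_i^{-1}$ gives $r_{i+1} = w_{i+1}s_{i+1}w_{i+1}^{-1} \gequal w_i s_i^{-1} w_i^{-1} = (w_i s_i w_i^{-1})^{-1} = r_i^{-1}$, as desired. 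The one point requiring care — the analogue of the delicate step in the previous two lemmas — is justifying that the bijection really yields $w_i^{-1}w_{i+1}\gequal 1$ rather than merely an equality of letter multisets; I expect to handle this either by the explicit innermost-hyperplane reading used for Lemma~\ref{lemma_is_raag2}, or by cutting $D$ along $H$ and collapsing its carrier to exhibit a subdiagram with boundary label $w_i^{-1}w_{i+1}$.
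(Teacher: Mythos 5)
Your argument is correct and matches the paper's intended proof: the paper writes no proof for this lemma, saying only that it is ``similar to that of the previous one'' (Lemma~\ref{lemma_is_raag2}), and your use of Lemma~\ref{lemma_is_raag1} at the indices $i$ and $i+1$ to force every hyperplane dual to $p_{w_i^{-1}}$ to pair with one dual to $p_{w_{i+1}}$, followed by reading off $w_i \gequal w_{i+1}$ and $s_{i+1}=s_i^{-1}$, is precisely that argument fleshed out. The ``point requiring care'' you flag is treated in the paper's model proof at the same level of rigor you propose (an informal reading-off of an expression from the hyperplane structure), so no further justification is expected.
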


\begin{lemma} \label{lemma_is_raag4}
	If $H_i$ intersects $H_{i+1}$, then $r_i$ and $r_{i+1}$ commute. Furthermore, there is a disk diagram $D'$ with boundary label $r_1 \dots r_{i-1}r_{i+1}r_i r_{i+2} \dots r_n$, such that the natural bijection, from $e_i$, $e_{i+1}$
	 and the 
	 edges traversed by the subpath of the boundary path of $D$ 
	 labeled by $r_{i+2}\dots r_n r_1 \dots r_{i-1}$ to the 
	edges traversed by the corresponding 
	 subpaths 
	 of the boundary path of 
	 $D'$ with the same 
 labels, 
	 preserves boundary combinatorics.
\end{lemma}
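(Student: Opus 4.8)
The plan is to reduce the entire statement to the construction of the diagram $D'$ and to obtain the commutation relation as a byproduct. Indeed, suppose we have produced a genuine square disk diagram $D'$ over $A_\G$ whose boundary label is $r_1\cdots r_{i-1}\,r_{i+1}\,r_i\,r_{i+2}\cdots r_n$, built by modifying $D$ itself. Then this word represents the identity, and since $z=r_1\cdots r_n$ also represents the identity, cancelling the common prefix $r_1\cdots r_{i-1}$ and suffix $r_{i+2}\cdots r_n$ forces $r_ir_{i+1}=r_{i+1}r_i$. Thus the commutation relation is free once $D'$ is in hand, provided the modification is supported away from the far arc labelled $r_{i+2}\cdots r_{i-1}$. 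By Remark~\ref{rem:no-bigons} I assume throughout that $D$ and every diagram built from it has no bigons, so any two hyperplanes cross at most once.

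First I would record the local picture. Since $H_i$ meets $H_{i+1}$, their labels $s_i$ and $s_{i+1}$ are distinct commuting generators of $A_\G$, and the two hyperplanes cross in a single square $Q$. Using reducedness of $r_i$ and of $r_{i+1}$ (no hyperplane is dual to two edges of $p_{r_i}$ or of $p_{r_{i+1}}$), together with Lemmas~\ref{lemma_is_raag1} and~\ref{lemma_is_raag2} and the fact that $H_i\neq H_{i+1}$, I would show that the second edge dual to $H_i$ (resp.\ $H_{i+1}$), call it $f_i$ (resp.\ $f_{i+1}$), lies strictly outside $p_{r_i}\cup p_{r_{i+1}}$. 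Consequently all of $p_{r_{i+1}}$ lies on one side of $H_i$ and all of $p_{r_i}$ lies on one side of $H_{i+1}$, so the two reflection blocks interact only in a neighbourhood of the arc labelled $r_ir_{i+1}$ together with the two strands that $H_i$ and $H_{i+1}$ send off to $f_i$ and $f_{i+1}$ in the otherwise untouched region.

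Next I would perform the surgery. Cutting $D$ along the carriers of $H_i$ and $H_{i+1}$ and reassembling with the two blocks transposed yields a complex $D'$ whose boundary reads the swapped word, while the far region and the far endpoints $f_i,f_{i+1}$ of the two strands stay fixed. The crossing hypothesis, equivalently the commuting of the generators $s_i$ and $s_{i+1}$ witnessed by $Q$, is exactly what makes the labels match along the seams, so $D'$ is again a square diagram over $A_\G$. Because the surgery only reroutes $H_i$ and $H_{i+1}$ without detaching them from $f_i$ and $f_{i+1}$ and leaves the interior over the far arc unchanged, the induced boundary bijection carries $e_i,e_{i+1}$ and the edges of $r_{i+2}\cdots r_{i-1}$ to equally labelled edges and preserves boundary combinatorics there: the hyperplane through the new $e_i$ still terminates at $f_i$, and likewise for $e_{i+1}$ and $f_{i+1}$. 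Removing any bigons created via Lemma~\ref{lem_remove_pathologies} preserves this data, and the lemma follows.

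The hard part will be the surgery step, and the reason is that one cannot transpose the blocks simply by commuting letters past one another: as soon as the conjugators $w_i$ and $w_{i+1}$ share a common tail, the concatenation $w_i^{-1}w_{i+1}$ partially cancels and the hyperplanes emanating from $p_{w_i}$ pair with those from $p_{w_{i+1}}$ instead of crossing $H_i$ or $H_{i+1}$. (Already $r_i=ws_iw^{-1}$ and $r_{i+1}=ws_{i+1}w^{-1}$ commute with exactly this feature, and here $H_i$ crosses neither conjugator hyperplane.) I therefore expect the real work to be a careful analysis, guided by Lemmas~\ref{lemma_is_raag1}--\ref{lemma_is_raag3}, of how the carriers of $H_i$ and $H_{i+1}$ weave through the carriers of the conjugator hyperplanes, verifying that the reassembled complex is genuinely cellular with boundary word exactly $r_1\cdots r_{i-1}r_{i+1}r_ir_{i+2}\cdots r_n$ and that the two strands reattach to $f_i$ and $f_{i+1}$.
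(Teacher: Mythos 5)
Your reduction of the commutation statement to the existence of $D'$ is sound, and your diagnosis of the obstruction --- that $w_i$ and $w_{i+1}$ may share a common tail, so that hyperplanes emanating from $p_{w_i^{-1}}$ pair with hyperplanes from $p_{w_{i+1}}$ rather than crossing $H_i$ or $H_{i+1}$ --- is exactly the right phenomenon to worry about. But the proposal stops there: the surgery that is supposed to produce $D'$ is never actually defined, and you yourself flag it as ``the hard part.'' As described, cutting along the carriers of $H_i$ and $H_{i+1}$ and transposing the two blocks cannot work, precisely because of the common-conjugator hyperplanes you identify: they run from $p_{w_i^{-1}}$ to $p_{w_{i+1}}$ and hence straddle both blocks, so the two carriers do not separate $p_{r_i}$ from $p_{r_{i+1}}$ and there is no well-defined pair of pieces to transpose. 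Since the construction of $D'$ is the entire content of the lemma, this is a genuine gap rather than a routine verification.

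The paper closes this gap with an algebraic normal form rather than a topological surgery. Using Lemma~\ref{lemma_is_raag1} one shows that every hyperplane dual to $p_{w_i^{-1}}$ is either dual to $p_{w_{i+1}}$ or crosses $H_{i+1}$, and symmetrically with the roles of $i$ and $i+1$ exchanged; this yields reduced expressions $w_i = ba_1$ and $w_{i+1}=ba_2$ in which every letter of $a_1 s_i$ is distinct from and commutes with every letter of $a_2 s_{i+1}$. Commutation of $r_i$ and $r_{i+1}$ is then immediate --- so in the paper it is established \emph{before} $D'$, not deduced from it --- and $D'$ is built by a concrete chain of the elementary moves of Lemma~\ref{lem:new_diagrams}: rewrite the conjugators into the forms $ba_1$ and $ba_2$ by swaps, cancel the middle $b^{-1}b$ by folds, swap the now-adjacent commuting blocks $a_1 s_i a_1^{-1}$ and $a_2 s_{i+1}a_2^{-1}$ letter by letter, re-insert $b^{-1}b$ by a spur, and undo the initial swaps. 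Each of these moves preserves boundary combinatorics on the relevant edges, which is what gives the final claim about $e_i$, $e_{i+1}$ and the far arc. To complete your argument you would need either to carry out the derivation of the $ba_1$, $ba_2$ decomposition --- at which point the surgery becomes unnecessary --- or to give an actual construction of the cut-and-reglue that accounts for the $b$-hyperplanes; the former is the path of least resistance.
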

\begin{proof}
	Suppose $H_i$ intersects $H_{i+1}$. By Lemma~\ref{lemma_is_raag1}, every hyperplane dual to $p_{w_i^{-1}}$ is either dual to $p_{w_{i+1}}$ or intersects $H_{i+1}$. Similarly, every hyperplane dual to $p_{w_{i+1}}$ is either dual to $p_{w_{i}^{-1}}$ or intersects $H_i$. It then readily follows that $w_i$ has a reduced expression $ba_1$  and $w_{i+1}$ has a reduced expression $ba_2$, where $a_1, a_2$ and $b$ are words,  
	such that the generators in the word $a_1s_i$ are all distinct from and commute with the generators in the word $a_2s_{i+1}$. 
	Consequently, $r_i$ commutes with $r_{i+1}$.
	
		We now construct the disk diagram $D'$.
	By Tits' solution to the word problem, the expression $ba_1$ 
 (resp.~$ba_2$) 
	can be obtained from $w_i$
 (resp.~$w_{i+1}$) 
	 by sequentially permuting adjacent letters.   Thus, by repeatedly applying Lemma~\ref{lem:new_diagrams}(1), we obtain a disk diagram with boundary label: 
	\[r_1 \dots r_{i-1}~ (ba_1s_ia_1^{-1}b^{-1} ) ~ ( ba_2s_{i+1}a_2^{-1}b ^{-1})~ r_{i+2} \dots r_n\]
	By repeatedly applying Lemma~\ref{lem:new_diagrams}(2), we can ``cancel'' $b^{-1}b$ and obtain a disk diagram with boundary label:
	\[r_1 \dots r_{i-1}~(ba_1s_ia_1^{-1}) ~ (a_2s_{i+1}a_2^{-1}b^{-1}) ~ r_{i+2} \dots r_n\]
	Then, by repeatedly applying Lemma~\ref{lem:new_diagrams}(1), we obtain a disk diagram with label:
	\[r_1 \dots r_{i-1}~(ba_2s_{i+1}a_2^{-1}) ~( a_1s_ia_1^{-1}b^{-1}) ~ r_{i+2} \dots r_n\]
	By Lemma~\ref{lem:new_diagrams}(3), we obtain a disk diagram with boundary label:
	\[r_1 \dots r_{i-1}~(ba_2s_{i+1}a_2^{-1}b^{-1}) ~ (ba_1s_ia_1^{-1}b^{-1}) ~ r_{i+2} \dots r_n\]
	Finally, by repeatedly applying Lemma~\ref{lem:new_diagrams}(1), we obtain a disk diagram $D'$ with boundary label:
	\[r_1 \dots r_{i-1}r_{i+1}r_i r_{i+2} \dots r_n\]
	Note that in each of these steps, the desired boundary combinatorics are preserved.
\end{proof}

\begin{lemma} \label{lemma_is_raag5}
	For every 
$1 \le i \le n$, 
	there exists some $j \neq i$ such that $H_i = H_j$.
\end{lemma}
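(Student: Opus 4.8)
The plan is to argue by contradiction. Since we may assume $D$ has no bigons (Lemma~\ref{lem_remove_pathologies} and Remark~\ref{rem:no-bigons}) and $\partial D$ is labelled by the identity, every hyperplane of $D$ is dual to exactly two edges of $\partial D$. In particular $H_i$ is dual to $e_i$ and to exactly one further edge, which I will call $f$; since $f$ and $e_i$ are dual to the same hyperplane, $f$ carries the label $s_i^{\pm1}$. The assertion of the lemma is exactly that $f$ is a central edge, i.e.\ $f=e_j$ for some $j\neq i$, so I would instead assume that $f$ is \emph{not} central, meaning $f$ lies in $p_{w_j}$ or in $p_{w_j^{-1}}$ for some index $j$, and aim for a contradiction. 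Throughout I would use that $H_i$ separates $D$ into two disks, and that by bigon-freeness any other hyperplane crosses $H_i$ at most once.

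First I would dispose of the case $j=i$, which is the model for the whole argument. Suppose $f$ lies in $p_{w_i}$ (the case $p_{w_i^{-1}}$ is symmetric) and write $w_i=\alpha\,s_i\,\beta$, where the displayed letter is the one whose edge is $f$. Along $\partial D$ the arc $H_i$ joins $f$ to $e_i$, and the boundary subpath between them is exactly $p_\beta$; together with the arc of $H_i$ this bounds a disk region $R$. Every hyperplane dual to an edge of $p_\beta\subset p_{w_i}$ cannot cross $H_i$, by Lemma~\ref{lemma_is_raag1}; since its two dual edges then both lie on $\partial R$ and it does not meet the arc $H_i$, it must be dual to a \emph{second} edge of $p_\beta$. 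Hence all hyperplanes meeting $p_\beta$ pair up edges within $p_\beta$, so the label $\beta$ represents the identity; as $\beta$ is a subword of the reduced word $w_i$, this forces $\beta$ to be empty. Then the final letter of $w_i$ is $s_i^{\pm1}$, so $w_i s_i w_i^{-1}$ admits a cancellation and is not reduced, contradicting the fact that $r_i$ is a reduced generalized reflection.

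The remaining case $j\neq i$ is the crux, and here I would argue by induction on $n$, taking a counterexample with $n$ minimal, and use the earlier structural lemmas to manufacture an adjacent cancellation and hence a shorter relation. Concretely, I would first arrange (by choosing the appropriate side of $H_i$) that one of the two boundary arcs cut off by $H_i$ begins, reading outward from $e_i$, with the whole of $p_{w_i^{-1}}$, whose hyperplanes are trapped on that side by Lemma~\ref{lemma_is_raag1}; this localizes the configuration near $e_i$. I would then track the central hyperplanes that cross $H_i$ inside the corresponding region: whenever two consecutive central hyperplanes meet, Lemma~\ref{lemma_is_raag4} lets me swap the two reflections in a new diagram preserving the boundary combinatorics of everything else (so the pairing of $H_i$ with $f$ persists), and whenever two consecutive central hyperplanes coincide, Lemma~\ref{lemma_is_raag3} gives $r_k\gequal r_{k+1}^{-1}$, allowing me to delete this inverse pair and pass to a relation with $n-2$ reflections. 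Using these two moves I would push $r_j$ (the reflection containing $f$) to be adjacent to $r_i$ without destroying the bad pairing; Lemma~\ref{lemma_is_raag2} then asserts precisely that $H_i$ is \emph{not} dual to $p_{w_{i\pm1}}$ or $p_{w_{i\pm1}^{-1}}$, which is the desired contradiction, while the deletions contradict the minimality of $n$.

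The main obstacle I anticipate is exactly the bookkeeping in the case $j\neq i$: one must guarantee that the swaps and cancellations supplied by Lemmas~\ref{lemma_is_raag3} and~\ref{lemma_is_raag4} can be arranged to bring $r_j$ adjacent to $r_i$ while (a) preserving the fact that $H_i$ is still dual to a non-central edge, and (b) controlling the central hyperplanes crossing $H_i$ so that the reduction genuinely terminates (e.g.\ that the number of reflections, or of edges of the separating arc, strictly decreases at each step). The clean same-reflection computation above is the template, but Lemma~\ref{lemma_is_raag1} by itself no longer controls the hyperplanes crossing $H_i$ once they come from other reflections, which is exactly why Lemmas~\ref{lemma_is_raag2}--\ref{lemma_is_raag4} must be invoked to complete the argument.
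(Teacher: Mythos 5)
Your overall shape matches the paper's: argue by contradiction from a hyperplane $H_i$ whose second dual edge $f$ is not one of the $e_j$, use Lemma~\ref{lemma_is_raag1} to rule out $f\in p_{r_i}$ (in fact this is immediate from $r_i$ being reduced, as noted in the setup), use Lemma~\ref{lemma_is_raag4} to swap adjacent reflections and Lemma~\ref{lemma_is_raag3} to cancel coincident ones, and land in the situation covered by Lemma~\ref{lemma_is_raag2} for the final contradiction. That part is sound.

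However, the step you yourself flag as ``the main obstacle'' is precisely the heart of the proof, and it is not carried out; as written there is a genuine gap in two respects. First, your induction is on $n$, but the reduction does not always delete reflections: when every central hyperplane between $e_i$ and $f$ crosses $H_i$, one can only perform swaps, which leave $n$ unchanged, so induction on $n$ does not terminate. The paper instead inducts on a \emph{complexity}: the number of central hyperplanes $H_j$ with $e_j$ on the boundary arc $p$ between $e_i$ and $f$, minimized over all choices of pathology-causing hyperplane and side. Second, your two moves (swap when consecutive central hyperplanes intersect, cancel when they coincide) do not cover the case of two consecutive central hyperplanes that neither intersect nor coincide, so you cannot in general ``push $r_j$ adjacent to $r_i$.'' The paper resolves this with a case split: if every hyperplane in $\mathcal H'$ crosses $H_i$, one chooses the crossing closest to $e_i$ along $H_i$ and shows all intervening central hyperplanes cross \emph{it}, so swaps alone strictly reduce the complexity; otherwise one chooses an \emph{innermost} $K\in\mathcal H'$ not crossing $H_i$, uses minimality of the complexity to see $K$ is dual to two central edges, brings them adjacent, cancels via Lemma~\ref{lemma_is_raag3}, and checks (with a separate argument when $K$ is the hyperplane through $f$) that the folded diagram is still pathological of complexity at most $c-2$. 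Without this case analysis and the correct terminating measure, the argument does not close.
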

\begin{proof}
Suppose we have a disk diagram with boundary label $z = r_1 \dots r_n$ such that, for some $1 \le i \le n$, the hyperplane
	$H_i$ is dual to an edge $f$ of $\partial D$ where $f \neq e_j$ for all $1 \le j \le n$. We call any disk diagram which has such an $H_i$ a \textit{pathological diagram} with \textit{pathology caused by $H_i$}. Given such a diagram, we define $p$ to be a path along $\partial D$ between $e_i$ and $f$, which does not include $e_i$ and $f$.
	 We also let $\mathcal{H}'$ denote the set of $H_j$ such that $e_j$ is contained in $p$.
	
	Given a pathological disk diagram $D$ we may choose a hyperplane $H_i$ causing the pathology together with a path $p$ so that the set $\mathcal{H}'$ is minimal among all possible choices of $H_i$ and $p$.
	After such a choice, we call $|\mathcal{H}'|$ the complexity of $D$. We will prove that pathological diagrams are not possible by induction on the complexity $c$ of such a diagram. The base case, when $c=0$, already follows from Lemma~\ref{lemma_is_raag2}.
	
	Now suppose we are given a pathological disk diagram $D$ with pathology caused by $H_i$ such that its complexity is $c = |\mathcal{H}'| > 0$, and suppose by induction there do not exist pathological disk diagrams of complexity smaller than $c$. 
	
	The edge 
	$f \neq e_i$ of $\partial D$ that is dual to $H_i$ 
	lies in a path $p_{r_{i'}}$ in $\partial D$ labeled by $w_{i'}s_{i'}w_{i'}^{-1}$ for some $1 \le i' \le n$ where $i \neq i'$. Let $Q$ denote the hyperplane $H_{i'}$.  Note that $Q$ may or may not be in $\mathcal H'$. 
	We prove our claim by considering two cases:
	\\\\
	\textit{Case 1:} Every hyperplane in $\mathcal{H}'$ intersects $H_i$. 
	
	We first observe that $\mathcal H'$ is non-empty (since the complexity of $D$ is positive) and does not consist of of $Q$ alone (by Lemma~\ref{lemma_is_raag2}). Therefore,
	we may choose $K \in \mathcal{H}' \setminus Q$ such that no hyperplane in $\mathcal{H}' \setminus Q$ intersects $H_i$ between $K \cap H_i$ and $H_i \cap e_i$. 
	Let $1 \le l \le n$ be such that $K$ is dual to $e_l \subset p_{r_l} \subset p$.
	Then for each $j$ with $i < j < l$, the hyperplane $H_j$ intersects $K= H_l$.  
	Thus, by repeatedly applying Lemma~\ref{lemma_is_raag4}, we can produce a new disk diagram with boundary label $r_1 \dots r_l r_i \dots r_{l-1} r_{l+1} \dots r_n$. 
	Furthermore, this new disk diagram is still pathological and has complexity smaller than $D$. However, this is not possible by our induction hypothesis.
	\\\\
	\textit{Case 2:} Some hyperplane $K \in \mathcal{H}'$ does not intersect $H_i$.
	
	We can choose such a hyperplane $K$ to be innermost, i.e. choose $K \in \mathcal{H}'$ such that $K$ does not intersect $H_i$ and  such that any hyperplane of $\mathcal H'$ dual to the subpath of $p$ between the edges dual to $K$ intersects $K$. 
	Since $H_i$ and $p$ were chosen to attain the complexity of $D$, it follows that $K$ does not cause a pathology, and is dual to distinct edges $e_l$ and $e_{l'}$  in $p$, where $1 \le l, l' \le n$.  By relabeling the $r_j$'s if necessary, we may assume that $l < l'$, and that the subpath of $\partial D$ from 
	$e_l$ to $e_{l'}$ is contained in $p$.  
By repeatedly applying Lemma~\ref{lemma_is_raag4}, 
	we can produce a new pathological disk diagram $D'$ with label 
		$r_1 \dots r_{l-1} r_{l+1} \dots r_{l'-1} r_l r_{l'} \dots r_n$
	and where some hyperplane, which we still denote by $K$, is dual to both the edge labeled by $e_l$ and the one labeled by $e_{l'}$. By Lemma~\ref{lemma_is_raag3}, 
	$r_l \gequal r_{l'}^{-1}$.  Furthermore, by repeatedly applying Lemma~\ref{lem:new_diagrams}(1) if necessary, we may assume that $r_l = r_{l'}^{-1}$ in the label of $\partial D'$.
	
	We now produce a new disk diagram $D''$ by identifying the consecutive paths in $\partial D'$ labeled by $r_l$ and $r_{l'}$,
	i.e.~we fold these two paths together. If $K \neq Q$, then we have produced a new pathological disk diagram with complexity $c - 2$, contradicting the induction hypothesis. On the other hand, if $K = Q$, note that the image of $H_i$ in $D''$ must intersect the path labeled by 
		$r_i \dots r_{l-1}r_{l+1} \dots r_{l' - 1}$
	in $\partial D''$.  
	Moreover we claim that it cannot be dual to an edge labeled by $e_j$ for $i < j \le l'-1$.  Suppose 
	it is dual to an edge labeled $e_j$.  It follows that the hyperplane $H_j$ in $D$ is dual to an edge 
	$f'$ in $p$, such that $f' \neq e_k$ for any $k$, and such that the images of $f$ and $f'$ are identified in $D''$.  This is a contradiction, as it implies that $H_j$ causes a pathology of lower complexity than $H_i$.  Thus, the image of $H_i$ in $D''$ causes a pathology of complexity at most $c-2$, which is again a contradiction. 
\end{proof}
We are now ready to prove Theorem~\ref{thm_reflection_raags}.
\begin{proof}[Proof of Theorem~\ref{thm_reflection_raags}]
	As 
	$G$ can be generated by a trimmed set of generalized RAAG reflections (by Lemma~\ref{lemma_trimmed_reflections}), we assume without loss of generality that $\mathcal{T}$ is trimmed. We will show that $(G, \mathcal{T})$ is a RAAG system by applying Theorem~\ref{thm_raag_char}. 
		Note that $\mathcal{T} \cap \mathcal{T}^{-1} = \emptyset$ as $\mathcal{T}$ is trimmed. 
	We check each condition of that theorem, by proving the corresponding two claims:
	\\\\
	\textit{i) Every $r \in \mathcal{T}$ has infinite order.}
	
	By definition, $r$ is equal to a reduced word $wsw^{-1}$ 
	with 
	$s \in V(\G) \cup V(\G)^{-1}$ 
	and $w$ a word in $W_\Gamma$.  It follows that $ws^nw^{-1}$ is an expression for $r^n$. Moreover, as $r$ is reduced, it readily follows from Theorem~\ref{thm_tits_solution}
	 that $ws^nw^{-1}$ is reduced as well. Hence, $r$ has infinite order.
	\\\\
	\textit{ii) Given any word $w = a_1 \dots a_m$, with $a_i  \in \mathcal{T}$, either $w$ is reduced over $\mathcal{T}$ or there is an expression for $w$ of the form $a_1 \dots \hat{a}_i \dots \hat{a}_j \dots a_m$.}
	
	Suppose $w = a_1 \dots a_m$ is not reduced over $\mathcal{T}$. Let $w' = b_1 \dots b_k$, with $b_i \in \mathcal{T}$ 
 and $k < m$, 
	 be an expression for $w$ which is reduced over $\mathcal{T}$. Form a disk diagram $D$ with boundary label~$ww'^{-1}$. 
	
	We relabel the generalized reflections in the word $ww'^{-1}$ by setting $r_i = a_i$ for $1 \le i \le m$, and $r_{m + i} = b_{k-i+1}^{-1}$ (the $i$th generalized RAAG reflection in $w'^{-1}$)
	 for $1 \le i \le k$.
	By Lemma~\ref{lemma_is_raag5}, every $H \in \mathcal{H}$ is only dual to edges of $\partial D$ labeled by $s_i$ for some $i$, where $r_i = w_i s_i w_i^{-1}$. 
	As $m > k$, there exists some hyperplane $H \in \mathcal{H}$ that is dual to two edges of the subpath $p$ of $\partial D$ labeled by $w$. Furthermore, we may choose an innermost such $H \in \mathcal{H}$,
	in the sense that every hyperplane in $\mathcal{H} \setminus H$ intersects $p$ at most once.

	Let $e_l$ and $e_{l'}$ be the edges dual to $H$ where $l < l' \le m$.
	By repeatedly applying 
		Lemma~\ref{lemma_is_raag4},
	 we produce a disk diagram whose boundary 
	label is $$r_1 \dots \hat{r}_l \dots r_{l'-1} r_l r_{l'} \dots r_n,$$ such that a hyperplane of $\mathcal{H}$ is still dual to  the images of the edges  $e_l$  and $e_{l'}$ under the natural map between the boundaries of the disk diagrams.
	By Lemma~\ref{lemma_is_raag3}, 
 $r_l = r_{l'}^{-1}$.
	Thus, $r_1 \dots \hat{r}_l \dots \hat{r}_{l'} \dots r_n$ is an expression for $ww'^{-1}$. 
	Consequently, $r_1 \dots \hat{r}_l \dots \hat{r}_{l'} \dots r_m = a_1 \dots \hat{a}_l \dots \hat{a}_{l'} \dots a_m$ is an expression for $w$.
\end{proof}

\bibliographystyle{amsalpha}
\bibliography{refs}

\end{document}